\title[BC Rings]{Rings of Bounded Continuous Functions}
\date{19 February 2022}
\author{Yotam Svoray}
\address{Svoray: Department of  Mathematics,
Ben Gurion University, Be'er Sheva 84105, Israel}
\email{ysavorai@post.bgu.ac.il}
\author{Amnon Yekutieli}
\address{Yekutieli: Department of  Mathematics,
Ben Gurion University, Be'er Sheva 84105, Israel}
\email{amyekut@math.bgu.ac.il}
\newtheorem{thm}[equation]{Theorem}
\newtheorem{cor}[equation]{Corollary}
\newtheorem{prop}[equation]{Proposition}
\newtheorem{lem}[equation]{Lemma}
\theoremstyle{definition}
\newtheorem{dfn}[equation]{Definition}
\newtheorem{rem}[equation]{Remark}
\newtheorem{exa}[equation]{Example}
\newtheorem{que}[equation]{Question}
\newtheorem{conv}[equation]{Convention}
\numberwithin{equation}{section}
\newcommand{\iso}{\xrightarrow{%
\smash{\raisebox{-0.5ex}{\ensuremath{\scriptstyle \simeq  \mspace{2mu}}}}}}%
\newcommand{\xar}{\xrightarrow}
\newcommand{\sub}{\subseteq}
\newcommand{\opn}{\operatorname}
\newcommand{\cat}[1]{\operatorname{\mathsf{#1}}}
\newcommand{\cd}{\mspace{2.5mu}{\cdot}\mspace{2.5mu}} 
\newcommand{\ol}{\overline}
\newcommand{\rmitem}[1]{\item[\text{\textup{(#1)}}]}
\newcommand{\mfrak}[1]{\mathfrak{#1}}
\newcommand{\mcal}[1]{\mathcal{#1}}
\newcommand{\mbf}[1]{\mathbf{#1}}
\newcommand{\mrm}[1]{\mathrm{#1}}
\newcommand{\mbb}[1]{\mathbb{#1}}
\newcommand{\OO}{\mcal{O}}
\newcommand{\la}{\lambda}
\newcommand{\ga}{\gamma}
\newcommand{\ep}{\epsilon}
\newcommand{\ze}{\zeta}
\renewcommand{\a}{\mfrak{a}}
\newcommand{\p}{\mfrak{p}}
\newcommand{\q}{\mfrak{q}}
\newcommand{\m}{\mfrak{m}}
\newcommand{\n}{\mfrak{n}}
\newcommand{\bi}{\bsym{i}}
\newcommand{\K}{\mathbb{K}}
\newcommand{\R}{\mathbb{R}}
\newcommand{\Q}{\mathbb{Q}}
\newcommand{\Z}{\mathbb{Z}}
\newcommand{\N}{\mathbb{N}}
\newcommand{\C}{\mathbb{C}}
\newcommand{\bsym}[1]{\boldsymbol{#1}}
\newcommand{\ot}{\otimes}
\newcommand{\wtil}[1]{\widetilde{#1}}
\newcommand{\til}[1]{\tilde{#1}}
\newcommand{\what}[1]{\widehat{#1}}
\newcommand{\Ast}{\mathop{\scalebox{1.1}{\raisebox{-0ex}{$\ast$}}}}%
\newcommand{\lb}{\linebreak}
\newcommand{\abs}[1]{\lvert #1 \rvert}
\newcommand{\norm}[1]{\lVert #1 \rVert}
\renewcommand{\over}{\mspace{0.2mu} / \mspace{0mu}}
\newcommand{\xover}[1]{\mspace{0.0mu} /_{\mspace{-2mu} \mrm{#1}}%
\mspace{1.5mu}}
\newcommand{\lmsp}{\mspace{1.5mu}}
\newcommand{\ltsp}{\hspace{0.15em}}
\begin{document}

\begin{abstract}
We examine several classical concepts from topology and functional analysis, 
using methods of commutative algebra. We show that these various concepts are 
all controlled by {\em BC $\R$-rings} and their {\em maximal spectra}. 

A BC $\R$-ring is a ring $A$ that is isomorphic to the ring 
$\opn{F}_{\mrm{bc}}(X, \R)$ of {\em bounded continuous} $\R$-valued functions 
on some {\em compact} topological space $X$. These rings are not 
topologized; a morphism of BC $\R$-rings is just an $\R$-ring homomorphism. 

We prove that the category of BC $\R$-rings is dual to the category of compact 
topological spaces. Next we prove  that for {\em every} topological space $X$ 
the ring $\opn{F}_{\mrm{bc}}(X, \R)$ is a BC $\R$-ring. These theorems combined 
yield an algebraic construction of the 
{\em Stone-\v{C}ech Compactification} of an arbitrary topological space. 

There is a similar notion of {\em BC $\C$-ring}. Every BC $\C$-ring $A$ has a 
{\em canonical involution}. The {\em canonical hermitian subring} of $A$ is a 
BC $\R$-ring, and this is an equivalence of categories from BC $\C$-rings to 
BC $\R$-rings.  

Let $\K$ be either $\R$ or $\C$. We prove that a BC $\K$-ring $A$ has a {\em 
canonical norm} on it, making it into a Banach $\K$-ring. We then prove that 
the forgetful functor is an equivalence from Banach$^{\Ast}$ $\K$-rings (better 
known as commutative unital $\mrm{C}^{\Ast}$ $\K$-algebras) to BC $\K$-rings. 
The quasi-inverse of the forgetful functor endows a BC $\K$-ring with its 
canonical norm, and the canonical involution when $\K = \C$. 

{\em Stone topological spaces}, also known as {\em profinite topological 
spaces}, are traditionally related to {\em boolean rings} -- this is {\em Stone 
Duality}. We give a BC ring characterization of Stone spaces. 
From that we obtain a very easy proof of the fact that the 
Stone-\v{C}ech Compactification of a discrete space is a Stone space. 

Most of the results in this paper are not new. However, most of our proofs seem 
to be new -- and our methods could potentially lead to genuine 
progress related to these classical topics.  
\end{abstract}

\maketitle

\tableofcontents


\setcounter{section}{-1}
\section{Introduction}

In this paper we look at the classical -- namely middle 20th century -- 
concepts of {\em compact topological spaces}, {\em Stone-\v{C}ech 
Compactification} and {\em commutative $\mrm{C}^{\Ast}$-algebras} from an 
algebraic geometer's point of view, using the language of categories and 
commutative algebra. Our goal in this paper is clarify some of the 
ideas involved, and to find an alternative uniform treatment. This is because, 
in our opinion, the traditional presentation in textbooks on topology and 
functional analysis tends to obscure some of the interesting aspects of 
these classical concepts. 

Our treatment is based on the unifying concept of {\em BC ring}, 
a concept that we introduce in this paper (see Definition \ref{dfn:470} below). 
The algebraic properties of BC rings turn out to control compact topological 
spaces, Stone-\v{C}ech compactification, and commutative $\mrm{C}^{\Ast}$ 
algebras over $\R$ and $\C$. See diagram (\ref{eqn:515}) for the web of 
category equivalences. Of the five categories appearing in this diagram, the 
category $\cat{Rng} \xover{bc} \R$ of BC $\R$-rings, and $\R$-ring 
homomorphisms between them (these are abstract rings, no continuity is 
involved), seems to be the easiest to work with. 

Let us say a few words about originality. Most of the results in 
this paper are not new at all. Some of them can even be found in 
textbooks, and a few are merely rephrasings of classical theorems.
Yet most of our proofs are probably new, and could potentially lead to genuine 
progress. 

In order to make our paper accessible to mathematicians working in topology or 
functional analysis, who might lack sufficient background knowledge of 
categories and commutative algebra, we have included a recollection of the 
necessary material in Section \ref{sec:prlim}, with textbook references. Very 
brief explanations of the terminology are inserted here, in the Introduction, 
to facilitate reading the main theorems. 

We call a topological space $X$ {\em compact} if it is Hausdorff 
and quasi-compact (namely it has the finite open subcovering property). The 
{\em category of topological spaces}, with continuous maps, is denoted by 
$\cat{Top}$, and its full subcategory on the {\em compact spaces} is 
$\cat{Top}_{\mrm{cp}}$. 

All rings in this paper are commutative (and unital). The category of rings is 
denoted by $\cat{Rng}$. Throughout the Introduction we let $\K$ denote either 
of the fields $\R$ or $\C$. By a {\em $\K$-ring} we mean a ring $A$ equipped 
with a ring homomorphism $\K \to A$. (Traditionally $A$ would be called a 
commutative associative unital $\K$-algebra.) The category of $\K$-rings, with 
$\K$-ring homomorphisms, is $\cat{Rng} \over \K$.
It is important to emphasize that the objects of the category 
$\cat{Rng} \over \K$ do not carry topologies, and hence there is no continuity 
condition on the morphisms in $\cat{Rng} \over \K$.

For a topological space $X$ we write $\opn{F}_{\mrm{bc}}(X, \K)$ 
for the ring of {\em bounded continuous functions} $a : X \to \K$, where the 
field $\K$ (which is either $\R$ or $\C$) 
is endowed with its standard norm. As $X$ changes we obtain a 
contravariant functor
\begin{equation} \label{eqn:315}
\opn{F}_{\mrm{bc}}(-, \K) : \cat{Top} \to \cat{Rng} \over \K . 
\end{equation}

The next definition presents the class of rings that is the focus of our 
paper. 

\begin{dfn} \label{dfn:470}
Let $\K$ be either of the fields $\R$ or $\C$.
A $\K$-ring $A$ is called a {\em BC $\K$-ring} if there is an isomorphism of 
$\K$-rings $A \cong \opn{F}_{\mrm{bc}}(X, \K)$ 
for some {\em compact topological space} $X$. 
The full subcategory of $\cat{Rng} \over \K$ on the BC rings is denoted by
$\cat{Rng} \xover{bc} \K$. 
\end{dfn}

Given a ring $A$, its prime spectrum $\opn{Spec}(A)$ is considered here only 
as a topological space, with the Zariski topology; we ignore the structure 
sheaf of the affine scheme $\opn{Spec}(A)$. The set of maximal ideals of $A$ is 
$\opn{MSpec}(A)$, and it is given the induced subspace topology from 
$\opn{Spec}(A)$. 

The prime spectrum is a contravariant functor 
\begin{equation} \label{eqn:316}
\opn{Spec} : \cat{Rng} \over \K \to \cat{Top} , 
\end{equation}
sending a $\K$-ring homomorphism $\phi : A \to B$ to the map 
\begin{equation} \label{eqn:317}
\opn{Spec}(\phi) : \opn{Spec}(B) \to \opn{Spec}(A) , \quad
\q \mapsto \phi^{-1}(\q) 
\end{equation}
in $\cat{Top}$. However, $\opn{MSpec}$ is not a contravariant functor
$\cat{Rng} \over \K \to \cat{Top}$; see Example \ref{exa:365}.

In Sections \ref{BC-rings} and \ref{BC-C-rings} we show that for a BC $\K$-ring 
$A$ and a maximal ideal $\m \sub A$, the residue field is $A / \m = \K$. 
This implies that given a homomorphism $\phi : A \to B$ of BC $\K$-rings, and 
a maximal ideal $\n \sub B$, the ideal $\phi^{-1}(\n) \sub A$ is maximal.
Therefore we get a map 
\begin{equation} \label{eqn:318}
\opn{MSpec}(\phi) : \opn{MSpec}(B) \to \opn{MSpec}(A) , \quad
\n \mapsto \phi^{-1}(\n) . 
\end{equation}    
We see that there is a contravariant functor 
\begin{equation} \label{eqn:470}
\opn{MSpec} : \cat{Rng}\xover{bc} \K \to \cat{Top} .
\end{equation}

Here is the first main result of the paper. 

\begin{thm}[Duality] \label{thm:305}
Let $\K$ be either $\R$ or $\C$. 
The contravariant functor 
\[ \opn{F}_{\mrm{bc}}(-, \K) : \cat{Top}_{\mrm{cp}} \to 
\cat{Rng}\xover{bc} \K \]
is a duality (namely a contravariant equivalence of categories), with 
quasi-inverse $\opn{MSpec}$.
\end{thm}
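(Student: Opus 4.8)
The plan is to establish that $\opn{F}_{\mrm{bc}}(-,\K)$ and $\opn{MSpec}$ are mutually quasi-inverse contravariant functors between $\cat{Top}_{\mrm{cp}}$ and $\cat{Rng}\xover{bc}\K$. This requires constructing natural isomorphisms in both directions and checking the functors land in the right categories. Since Definition~\ref{dfn:470} already tells us that every BC $\K$-ring is of the form $\opn{F}_{\mrm{bc}}(X,\K)$ for some compact $X$, and since the excerpt asserts that $\opn{MSpec}$ is a well-defined contravariant functor on $\cat{Rng}\xover{bc}\K$ (using the residue-field fact $A/\m = \K$), the real content is the two unit/counit natural isomorphisms.

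The plan would proceed in two main steps. First, I would fix a compact space $X$ and construct the \emph{evaluation map}
\[
\eta_X : X \to \opn{MSpec}\bigl(\opn{F}_{\mrm{bc}}(X,\K)\bigr), \qquad x \mapsto \m_x := \{\, a \in \opn{F}_{\mrm{bc}}(X,\K) : a(x)=0 \,\},
\]
and prove it is a homeomorphism. That $\m_x$ is maximal with residue field $\K$ is clear via evaluation $a \mapsto a(x)$. \emph{Injectivity} uses Hausdorffness plus Urysohn's lemma (compact Hausdorff spaces are normal): distinct points $x \neq y$ are separated by a continuous function. \emph{Surjectivity} is the crucial point: every maximal ideal $\m$ arises as some $\m_x$. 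Here I would argue that if no common zero existed, then for each $x$ some $a \in \m$ has $a(x)\neq 0$; by compactness finitely many such functions cover $X$, and their sum of squares (in the $\R$ case) or $\sum a_i \bar{a}_i$ (in the $\C$ case) is a nowhere-vanishing, hence invertible, element of $\m$ --- contradicting $\m \neq (1)$. \emph{Continuity and openness} of $\eta_X$ follow by comparing the Zariski subspace topology on $\opn{MSpec}$ with the original topology on $X$, using that the zero sets of functions form a basis of closed sets on both sides. Naturality of $\eta$ in $X$ reduces to the identity $\phi^{-1}(\m_{f(x)}) = \m_x$ for a continuous $f : X \to Y$ and the pullback $\phi = \opn{F}_{\mrm{bc}}(f,\K)$, which is a direct computation.

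Second, for the reverse composite I would fix a BC $\K$-ring $A$ and construct the ring homomorphism
\[
\theta_A : A \to \opn{F}_{\mrm{bc}}\bigl(\opn{MSpec}(A), \K\bigr), \qquad a \mapsto \bigl(\m \mapsto \bar{a}(\m)\bigr),
\]
where $\bar{a}(\m)$ denotes the image of $a$ in the residue field $A/\m = \K$, and prove $\theta_A$ is an isomorphism of $\K$-rings. Writing $A \cong \opn{F}_{\mrm{bc}}(X,\K)$ via Definition~\ref{dfn:470} and transporting along the homeomorphism $\eta_X$ from the first step, $\theta_A$ becomes identified with the tautological map sending a function to itself; injectivity and surjectivity then follow from the first step. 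One must check that each $\m \mapsto \bar{a}(\m)$ really is a \emph{bounded continuous} function on $\opn{MSpec}(A)$ --- boundedness and continuity are again transported through $\eta_X$ --- and that $\theta$ is natural in $A$.

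The main obstacle is the \emph{surjectivity of $\eta_X$}, i.e. showing every maximal ideal of $\opn{F}_{\mrm{bc}}(X,\K)$ is the vanishing ideal of a point. The compactness-plus-invertibility argument sketched above is exactly what forces a common zero, and without quasi-compactness it fails (a maximal ideal need not be fixed) --- this is precisely why the duality is stated for $\cat{Top}_{\mrm{cp}}$ rather than all of $\cat{Top}$. A secondary subtlety, mostly bookkeeping, is the careful matching of topologies to verify $\eta_X$ is a homeomorphism and not merely a continuous bijection; compactness of $X$ together with Hausdorffness of the target again does the work, since a continuous bijection from a quasi-compact space to a Hausdorff space is automatically a homeomorphism.
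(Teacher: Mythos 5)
Your proposal follows essentially the same route as the paper's own proof (Theorem \ref{thm:203} for $\K = \R$ and Theorem \ref{thm:420} for $\K = \C$): the unit is the algebraic reflection map $x \mapsto \opn{Ker}(\opn{ev}_x)$, shown to be a homeomorphism with injectivity from Urysohn (Theorem \ref{thm:245}, Lemma \ref{lem:230}), surjectivity by exactly your compactness-plus-invertibility argument (this is the paper's Lemma \ref{lem:420}(1); in the real case the paper cites \cite[Proposition 1.22]{Wa}), and openness by matching bases of principal open sets (Lemmas \ref{lem:232}, \ref{lem:420}); the counit is the double evaluation map $\opn{dev}_A$, proved bijective by transport along that homeomorphism (Lemmas \ref{lem:275}, \ref{lem:272}), with naturality supplied by Proposition \ref{prop:453} and Lemma \ref{lem:258}. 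The only structural difference is minor: for the counit in the complex case the paper reduces to the real case via hermitian subrings and finite base change (Lemma \ref{lem:422}, Theorem \ref{thm:458}), whereas you transport directly in both cases, which is equally valid and more uniform.

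One genuine error to flag. Your closing remark --- that the homeomorphism property of $\eta_X$ follows because a continuous bijection from a quasi-compact space to a Hausdorff space is automatically a homeomorphism --- is circular. The target $\opn{MSpec}\bigl(\opn{F}_{\mrm{bc}}(X,\K)\bigr)$ carries the Zariski topology, and its Hausdorffness is not known in advance: it is a \emph{consequence} of the homeomorphism you are trying to establish, and it fails for maximal spectra of general rings (Example \ref{exa:362}: $\opn{MSpec}(\R[t])$ is quasi-compact but not Hausdorff). So the basis-comparison argument you gave in the body of the proposal is not optional bookkeeping; it is the argument, and it is what the paper uses. Separately, a small slip in the naturality computation: the identity should read $\phi^{-1}(\m_x) = \m_{f(x)}$, not $\phi^{-1}(\m_{f(x)}) = \m_x$, since $\phi = \opn{F}_{\mrm{bc}}(f,\K)$ maps $\opn{F}_{\mrm{bc}}(Y,\K)$ into $\opn{F}_{\mrm{bc}}(X,\K)$, so preimages under $\phi$ are taken of ideals of $\opn{F}_{\mrm{bc}}(X,\K)$.
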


This result is repeated as Theorems \ref{thm:203} and \ref{thm:420} 
(for the real and complex cases, respectively), and 
proved there. A non-categorical phrasing of this result can be found in 
\cite[pages 15-16]{Wa}, where it is attributed to Stone. 
The fact that the functor $\opn{F}_{\mrm{bc}}(-, \R)$ is fully faithful is 
sometimes called the {\em Gelfand-Kolgomorov Theorem}.  

We see that BC $\K$-rings play a role similar to the role that boolean rings 
have in the context of Stone spaces; the corresponding duality there is 
called {\em Stone duality}, see \cite[Corollary II.4.4]{Jo}. Our 
Theorem \ref{thm:517} below makes the precise connection between BC rings and
Stone spaces. 

Our second main result says that BC rings occur in much greater generality than 
Definition \ref{dfn:470} indicates. 

\begin{thm} \label{thm:306} 
Let $\K$ be either $\R$ or $\C$. 
For an arbitrary topological space $X$, the $\K$-ring
$\opn{F}_{\mrm{bc}}(X, \K)$ is a BC ring. 
\end{thm}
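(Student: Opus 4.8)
The plan is to produce an explicit compact space $Y$ realizing $A := \opn{F}_{\mrm{bc}}(X, \K)$ as a ring of bounded continuous functions, namely the maximal spectrum of $A$ itself. Set $Y := \opn{MSpec}(A)$ with its Zariski topology. Everything rests on two structural facts about $A$ that must be established for an \emph{arbitrary} $X$, not merely a compact one: that $A / \m = \K$ for every maximal ideal $\m \sub A$, and that $Y$ is compact. Granting these, I would finish by a soft splitting argument rather than by an appeal to Stone--Weierstrass.

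For the first fact I would exploit that every element of $A$ is bounded. The supremum norm $\norm{a} = \sup_{x \in X} \abs{a(x)}$ makes $A$ a commutative Banach $\K$-ring, in which $a$ is invertible iff $\inf_x \abs{a(x)} > 0$; the units thus form an open set, every maximal ideal is norm-closed, and $F := A / \m$ is a Banach field over $\K$. By Gelfand--Mazur $F$ is $\R$ or $\C$. When $\K = \C$ this already gives $F = \C$. When $\K = \R$, the element $a^2 + 1$ is bounded below by $1$, hence a unit, so its image in $F$ is nonzero; therefore $F$ has no square root of $-1$ and $F = \R$. (This is the same computation as the residue-field statement recalled in the Introduction for BC rings, in which compactness of the base played no role.) Consequently a point of $Y$ is precisely a $\K$-ring homomorphism, or character, $\chi : A \to \K$. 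For the second fact I would observe that $\chi(a)$ lies in the compact set $\overline{a(X)} \sub \K$, since for $\lambda$ outside this set $a - \lambda$ is a unit and hence $\chi(a) \neq \lambda$; so $Y$ embeds into the compact product $\prod_{a \in A} \overline{a(X)}$ as the closed subset cut out by the (closed) conditions defining a $\K$-ring homomorphism, and the induced topology of pointwise convergence agrees with the Zariski topology. Thus $Y$ is compact Hausdorff.

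With $Y$ compact I would define the Gelfand transform $G : A \to \opn{F}_{\mrm{bc}}(Y, \K)$ by $G(a)(\m) := (a \bmod \m) \in A / \m = \K$; each $G(a)$ is continuous for the pointwise topology and bounded by $\norm{a}$, so $G$ is a $\K$-ring homomorphism. I would then use the continuous map $\eta : X \to Y$, $\eta(x) := \m_x := \{ a : a(x) = 0 \}$ (each $\m_x$ maximal since evaluation at $x$ gives $A / \m_x = \K$), and its pullback $E := \opn{F}_{\mrm{bc}}(\eta, \K) : \opn{F}_{\mrm{bc}}(Y, \K) \to A$. A one-line check gives $E(G(a))(x) = G(a)(\m_x) = a(x)$, so $E \circ G = \opn{id}_A$ and $G$ is split injective. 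Moreover $\eta(X)$ is dense: a basic open $\{ \m : a \notin \m \}$ missing $\eta(X)$ would force $a(x) = 0$ for all $x$, hence $a = 0$ and the set is empty. Since functions in $\opn{F}_{\mrm{bc}}(Y, \K)$ are continuous, one vanishing on the dense set $\eta(X)$ vanishes identically, so $E$ is injective; then $E \circ (G \circ E - \opn{id}) = 0$ forces $G \circ E = \opn{id}$, and $G$ is an isomorphism. As $Y$ is compact, $A \cong \opn{F}_{\mrm{bc}}(Y, \K)$ exhibits $A$ as a BC $\K$-ring, and $Y$ is the Stone--\v{C}ech compactification produced in the Introduction.

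The hard part is the pair of structural facts, and really the residue-field identity $A / \m = \K$; once characters are known to be $\K$-valued and bounded, compactness of $Y$ is a routine Tychonoff-plus-closedness argument, and the rest is formal. The delicate point is that I cannot cite the BC-ring residue-field statement as a black box, because I do not yet know $A$ is a BC ring; I must reprove it from boundedness alone. I would do this either through the Gelfand--Mazur argument above or, to keep things purely algebraic, through the fact that $A$ is closed under post-composition with continuous real functions (so $\sqrt{b} \in A$ whenever $b \geq 0$), which confines each character value to $\overline{a(X)}$ and pins it down.
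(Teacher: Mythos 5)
Your proposal is correct, but it follows a genuinely different route from the paper's. The paper works on the topological side: it sends $X$ into the product $\til{X}^{\mrm{top}} = \prod_{a \in A} Z_a$ of closed intervals via $x \mapsto (a(x))_{a \in A}$, takes the closure $\bar{X}^{\mrm{top}}$ of the image (compact by Tychonoff), and shows in Lemma \ref{lem:286} that the restriction map $\opn{F}_{\mrm{bc}}(\bar{X}^{\mrm{top}}, \R) \to \opn{F}_{\mrm{bc}}(X, \R)$ is bijective --- injective because the image of $X$ is dense, surjective because each $a \in A$ extends to $\bar{X}^{\mrm{top}}$ as its own coordinate projection; the complex case (Theorem \ref{thm:456}) is then deduced from the real one through the universal property of the SCC. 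You instead work on the algebraic side, taking the compact space to be $Y = \opn{MSpec}(A)$ itself; the price is that you must prove the residue-field identity $A / \m = \K$ for arbitrary $X$, which you do by importing Gelfand--Mazur, i.e.\ genuine Banach-algebra theory, machinery the paper deliberately keeps out of this part of the development and only quotes later (Sections \ref{real-B} and \ref{sec:inv-c-b}). What your route buys: the fields $\R$ and $\C$ are handled uniformly rather than sequentially, and the residue-field property together with the identification of $\opn{MSpec}(A)$ as the compactification (the content of Theorem \ref{thm:285}) falls out directly, rather than a posteriori by combining Theorem \ref{thm:415} with the duality Theorem \ref{thm:203}. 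In fact the two constructions produce literally the same compact set: your $\eta$ is the paper's $\wtil{\opn{refl}}^{\lmsp \mrm{top}}_X$, your characters live in the same product $\prod_{a} \overline{a(X)}$, and your density argument shows the character set equals the closure of $\eta(X)$. The one assertion in your outline that is not routine is that the Zariski topology on $\opn{MSpec}(A)$ coincides with the topology of pointwise convergence: one inclusion is immediate (each $\opn{NZer}_Y(a)$ is pointwise-open), but the reverse needs either a compact-to-Hausdorff comparison or the fact that characters commute with post-composition by continuous functions (using $\chi(a) \in \overline{a(X)}$ and Stone--Weierstrass, after first checking in the complex case that $\chi(a^{\Ast}) = \overline{\chi(a)}$ by applying your spectral containment to real-valued elements); this is exactly the identification the paper flags and declines to prove in Remark \ref{rem:410}. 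It is true, and your closing remark about closure under post-composition is the right tool; alternatively you could sidestep it entirely by taking $Y$ to be the character space with its pointwise topology, reproving density of $\eta(X)$ there via the observation that otherwise $\sum_i (a_i - \chi(a_i))^{\Ast} (a_i - \chi(a_i))$ would be a unit lying in $\opn{Ker}(\chi)$.
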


This theorem is repeated as Theorems \ref{thm:415} and \ref{thm:456} in the 
body of the paper, for the case $\K = \R$ and $\K = \C$ respectively.

A {\em Stone-\v{C}ech Compactification} (SCC) of a topological space $X$ is a 
pair $(\bar{X}, \opn{c}_X)$, consisting of a compact topological space 
$\bar{X}$ 
and a continuous map $\opn{c}_X : X \to \bar{X}$, which is universal for 
continuous maps from $X$ to compact spaces; see Definition \ref{dfn:131} for 
more details. It is clear that an SCC of $X$, if it exists, is unique up to a 
unique isomorphism. The categorical property of SCC is stated in Proposition 
\ref{prop:285}. There are several existence proofs of SCC in the literature, 
and we mention some of them in Remark \ref{rem:285}. 

Consider a topological space $X$.
For a point $x \in X$ and a function $a \in \opn{F}_{\mrm{bc}}(X, \K)$
the evaluation of $a$ at $x$ is $\opn{ev}_x(a) := a(x) \in \K$. The {\em 
algebraic reflection map} is the map 
\begin{equation} \label{eqn:320}
\opn{refl}^{\lmsp \mrm{alg}}_X : X \to \opn{MSpec} 
\bigl( \opn{F}_{\mrm{bc}}(X, \R) \bigr) ,
\quad \opn{refl}^{\lmsp \mrm{alg}}_X(x) := \opn{Ker}(\opn{ev}_x) 
\end{equation}
in $\cat{Top}$. Our third main theorem is next.

\begin{thm}[Algebraic SCC] \label{thm:309}
Given a topological space $X$, consider the topological space 
$\bar{X}^{\mrm{alg}} := \opn{MSpec} \bigl( \opn{F}_{\mrm{bc}}(X, \R) \bigr)$
and the algebraic reflection map 
$\opn{refl}^{\mrm{\lmsp alg}}_X : X \to \bar{X}^{\mrm{alg}}$. Then the pair 
$\bigl( \bar{X}^{\mrm{alg}}, \opn{refl}^{\mrm{\lmsp alg}}_X \bigr)$ 
is a Stone-\v{C}ech Compactification of $X$. 
\end{thm}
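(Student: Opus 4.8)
The plan is to reduce everything to the Duality Theorem \ref{thm:305} and Theorem \ref{thm:306}, so that the only genuinely hands-on input is the naturality of the reflection map. Write $A := \opn{F}_{\mrm{bc}}(X, \R)$, so that $\bar{X}^{\mrm{alg}} = \opn{MSpec}(A)$. According to Definition \ref{dfn:131} I must check three things: that $\bar{X}^{\mrm{alg}}$ is compact, that $\opn{refl}^{\mrm{alg}}_X$ is continuous, and the universal property (existence and uniqueness of the factorization). The first two are quick. By Theorem \ref{thm:306} the ring $A$ is a BC $\R$-ring, so $A \cong \opn{F}_{\mrm{bc}}(Z, \R)$ for some compact $Z$; applying Theorem \ref{thm:305} gives $\opn{MSpec}(A) \cong \opn{MSpec}\bigl(\opn{F}_{\mrm{bc}}(Z,\R)\bigr) \cong Z$, so $\bar{X}^{\mrm{alg}}$ is compact. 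Continuity of $\opn{refl}^{\mrm{alg}}_X$ is a direct check on the Zariski basis: for $a \in A$ the preimage of the principal open $\opn{D}(a) \cap \opn{MSpec}(A)$ is $\{x \in X : a(x) \neq 0\}$, which is open since $a$ is continuous.

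The core of the argument is a naturality statement that I would verify by a one-line computation: for every continuous $f : X \to Y$ the diagram of reflection maps commutes, namely
\[ \opn{refl}^{\mrm{alg}}_Y \circ f = \opn{MSpec}\bigl(\opn{F}_{\mrm{bc}}(f,\R)\bigr) \circ \opn{refl}^{\mrm{alg}}_X . \]
Using formula (\ref{eqn:318}), both sides send $x \in X$ to the ideal $\{b \in \opn{F}_{\mrm{bc}}(Y,\R) : b(f(x)) = 0\} = \opn{Ker}(\opn{ev}_{f(x)})$. For existence of the factorization, let $f : X \to Y$ with $Y$ compact. By Theorem \ref{thm:305} the map $\opn{refl}^{\mrm{alg}}_Y : Y \to \opn{MSpec}\bigl(\opn{F}_{\mrm{bc}}(Y,\R)\bigr)$ is a homeomorphism (it is the duality unit at the compact space $Y$), so I may define
\[ \bar{f} := (\opn{refl}^{\mrm{alg}}_Y)^{-1} \circ \opn{MSpec}\bigl(\opn{F}_{\mrm{bc}}(f,\R)\bigr) : \bar{X}^{\mrm{alg}} \to Y , \]
which is continuous. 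The naturality square immediately yields $\bar{f} \circ \opn{refl}^{\mrm{alg}}_X = f$. I emphasize that this last identity must be obtained from the naturality computation directly, and cannot be deduced from faithfulness of $\opn{F}_{\mrm{bc}}(-,\R)$, because $X$ itself need not be compact.

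For uniqueness I would dualize, and the key lemma is that $\opn{F}_{\mrm{bc}}(\opn{refl}^{\mrm{alg}}_X, \R) : \opn{F}_{\mrm{bc}}(\bar{X}^{\mrm{alg}}, \R) \to A$ is an isomorphism. Indeed, composing the evaluation isomorphism $A \iso \opn{F}_{\mrm{bc}}(\opn{MSpec}(A), \R)$ from the duality (available since $A$ is a BC ring) with $\opn{F}_{\mrm{bc}}(\opn{refl}^{\mrm{alg}}_X, \R)$ returns the identity, as the computation $a \mapsto (\m \mapsto a \bmod \m) \mapsto (x \mapsto a(x))$ shows. Now suppose $g : \bar{X}^{\mrm{alg}} \to Y$ is continuous with $g \circ \opn{refl}^{\mrm{alg}}_X = f = \bar{f} \circ \opn{refl}^{\mrm{alg}}_X$. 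Applying $\opn{F}_{\mrm{bc}}(-,\R)$ and cancelling the isomorphism $\opn{F}_{\mrm{bc}}(\opn{refl}^{\mrm{alg}}_X,\R)$ gives $\opn{F}_{\mrm{bc}}(g,\R) = \opn{F}_{\mrm{bc}}(\bar{f},\R)$; since $g$ and $\bar{f}$ are morphisms between the compact spaces $\bar{X}^{\mrm{alg}}$ and $Y$, and $\opn{F}_{\mrm{bc}}(-,\R)$ is faithful on $\cat{Top}_{\mrm{cp}}$ by Theorem \ref{thm:305}, I conclude $g = \bar{f}$.

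The substantive content is entirely carried by the two cited theorems, so once they are in hand the proof is formal. The step that I expect to require the most care is the isomorphism lemma for $\opn{F}_{\mrm{bc}}(\opn{refl}^{\mrm{alg}}_X,\R)$: this is exactly the assertion that passing from $X$ to $\bar{X}^{\mrm{alg}}$ leaves the ring of bounded continuous functions unchanged, and it is here that Theorem \ref{thm:306} is indispensable, since it guarantees that $A$ is an honest BC ring and hence that the evaluation map is invertible. The other point to watch is bookkeeping about compactness — keeping clear which identities are proved by the explicit naturality computation (valid for the possibly non-compact $X$) and which may instead invoke faithfulness of the duality functor (valid only between compact spaces).
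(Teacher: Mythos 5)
Your proof is correct, and the existence half coincides with the paper's own argument (Theorem \ref{thm:285}): the same naturality computation for $\opn{refl}^{\mrm{\lmsp alg}}$ (the paper's Lemma \ref{lem:415}), and the same formula $\bar{f} := (\opn{refl}^{\mrm{\lmsp alg}}_Y)^{-1} \circ \opn{MSpec}\bigl(\opn{F}_{\mrm{bc}}(f,\R)\bigr)$, with your caveat about not invoking faithfulness for non-compact $X$ being exactly the right point of care. Where you genuinely diverge is uniqueness. The paper argues by point-set topology: the image of $\opn{refl}^{\mrm{\lmsp alg}}_X$ is dense in $\bar{X}^{\mrm{alg}}$ (Proposition \ref{prop:320}(2)), and two continuous maps into a Hausdorff space agreeing on a dense subset are equal. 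You instead dualize: you first prove that $\opn{F}_{\mrm{bc}}(\opn{refl}^{\mrm{\lmsp alg}}_X,\R)$ is an isomorphism -- your computation $\opn{ev}_{\opn{Ker}(\opn{ev}_x)}(a) = a(x)$ is valid for arbitrary $X$, and shows this map is a two-sided inverse of $\opn{dev}_A$, which is bijective by Lemma \ref{lem:272}(3) once Theorem \ref{thm:306} puts $A$ in $\cat{Rng}\xover{bc}\R$ -- and then you cancel it and invoke faithfulness of $\opn{F}_{\mrm{bc}}(-,\R)$ on $\cat{Top}_{\mrm{cp}}$ from Theorem \ref{thm:305}. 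Both routes are sound. The paper's density argument is more elementary, needing no isomorphism lemma at all; your route stays entirely inside the categorical machinery and is close in spirit to the abstract ``idempotent monad'' proof sketched in the paper's Remark \ref{rem:301}. It also isolates a statement of independent interest: that passing from $X$ to $\bar{X}^{\mrm{alg}}$ does not change the ring of bounded continuous functions, which is the algebraic counterpart of the paper's Lemma \ref{lem:286} for the topological reflection map (and is, in fact, the mechanism behind the paper's proof of Theorem \ref{thm:306} itself). The price is that your uniqueness step consumes the full strength of the duality theorem (faithfulness), whereas the paper only uses it through the homeomorphism $\opn{refl}^{\mrm{\lmsp alg}}_Y$ for compact $Y$.
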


This is Theorem \ref{thm:285} in the body of the paper. The proof is an easy 
consequence of Theorems \ref{thm:305} and \ref{thm:306}. 

The next theorem is about Banach rings. 

\begin{thm}[Canonical Norm] \label{thm:515}
Let $\K$ be either $\R$ or $\C$. 
Every BC $\K$-ring $A$ admits a unique norm $\norm{-}$, called the 
{\em canonical norm}, satisfying the three conditions below. 
\begin{itemize}
\rmitem{i} Given a homomorphism $\phi : A \to B$ in 
$\cat{Rng} \xover{bc} \K$, and an element $a \in A$, the inequality
$\norm{\phi(a)} \leq \norm{a}$ holds. 

\rmitem{ii} For the $\K$-ring $\opn{F}_{\mrm{bc}}(X, \K)$
of bounded continuous functions on a topological space $X$, 
the canonical norm is the sup norm. 

\rmitem{iii} The canonical norm makes $A$ into a Banach $\K$-ring.  
\end{itemize}
\end{thm}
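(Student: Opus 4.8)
The plan is to leverage the Duality Theorem \ref{thm:305} and reduce everything to bookkeeping of the sup norm. For any BC $\K$-ring $A$ the maximal spectrum $X := \opn{MSpec}(A)$ is a compact space, and the duality provides a canonical isomorphism of $\K$-rings $\psi_A \colon A \iso \opn{F}_{\mrm{bc}}(X, \K)$. I would \emph{define} the canonical norm on $A$ by transporting the sup norm across $\psi_A$, that is $\norm{a} := \norm{\psi_A(a)}_{\mrm{sup}}$. Since $A / \m = \K$ for every maximal ideal $\m \sub A$, this is the same as the intrinsic formula $\norm{a} = \sup_{\m \in X} \abs{\chi_\m(a)}$, where $\chi_\m \colon A \to A / \m = \K$ is the residue character; this makes it clear that the definition does not depend on any choices. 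With this definition in hand the theorem becomes a matter of transporting the known properties of the sup norm through the duality.

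For existence I would verify the three conditions in turn. Condition (iii) is immediate: the pair $\bigl( \opn{F}_{\mrm{bc}}(X, \K), \norm{-}_{\mrm{sup}} \bigr)$ is a Banach $\K$-ring, since submultiplicativity, $\norm{1} = 1$ and isometry of $\K \to A$ are clear, and completeness holds because a uniform limit of bounded continuous functions is again bounded and continuous; thus the isometric isomorphism $\psi_A$ turns $(A, \norm{-})$ into a Banach $\K$-ring. For condition (i), given $\phi \colon A \to B$, naturality of the canonical isomorphism together with functoriality of $\opn{MSpec}$ gives $\psi_B(\phi(a)) = \psi_A(a) \circ f$, where $f := \opn{MSpec}(\phi) \colon \opn{MSpec}(B) \to \opn{MSpec}(A)$; hence $\norm{\phi(a)} = \sup_y \abs{\psi_A(a)(f(y))} \le \sup_x \abs{\psi_A(a)(x)} = \norm{a}$. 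Condition (ii) is the one point that uses the genuine \emph{content} of the Duality Theorem rather than formal manipulation: when $A = \opn{F}_{\mrm{bc}}(X, \K)$ I must check that transporting through $\psi_A$ recovers the original sup norm, which rests on the fact that for compact $X$ the reflection map $X \to \opn{MSpec}(\opn{F}_{\mrm{bc}}(X, \K))$ is a homeomorphism compatible with evaluation, so that $\psi_A(a)$ takes exactly the same set of values as $a$. I would flag this compatibility of the duality isomorphism with pointwise evaluation as the main obstacle; everything else is transport of structure and elementary estimates for the sup norm.

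For uniqueness I would show that conditions (i) and (ii) already force the assignment. Fix a BC $\K$-ring $A$ and the canonical isomorphism $\psi_A \colon A \iso B$ with $B = \opn{F}_{\mrm{bc}}(\opn{MSpec}(A), \K)$; by (ii) the norm on $B$ is the sup norm. Applying (i) to $\psi_A$ yields $\norm{\psi_A(a)}_{\mrm{sup}} \le \norm{a}$, while applying (i) to the inverse morphism $\psi_A^{-1} \colon B \to A$ yields $\norm{a} \le \norm{\psi_A(a)}_{\mrm{sup}}$; together these give $\norm{a} = \norm{\psi_A(a)}_{\mrm{sup}}$, so the norm is uniquely determined. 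The lower bound $\norm{a} \ge \sup_{\m} \abs{\chi_\m(a)}$ can also be seen more concretely by applying (i) and (ii) to the characters $\chi_\m \colon A \to \K$, noting that $\K \cong \opn{F}_{\mrm{bc}}(\mrm{pt}, \K)$ is itself a BC $\K$-ring, so that each $\chi_\m$ is a morphism in $\cat{Rng} \xover{bc} \K$.
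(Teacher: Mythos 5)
Your construction is the same as the paper's (this theorem is proved in the body as Theorems \ref{thm:486} and \ref{thm:505}): define the norm by transporting the sup norm through the double-evaluation isomorphism $\opn{dev}_A : A \iso \opn{F}_{\mrm{bc}}\bigl(\opn{MSpec}(A), \K\bigr)$, obtain condition (iii) from completeness of the sup norm, condition (i) from naturality of $\opn{dev}$ together with the fact that pullback along $\opn{MSpec}(\phi)$ can only shrink sup norms, and uniqueness by applying (i) to the isomorphism and its inverse, with (ii) invoked only for the compact space $\opn{MSpec}(A)$. All of that matches the paper, and your intrinsic formula $\norm{a} = \sup_{\m} \abs{\chi_{\m}(a)}$ is a correct and useful reformulation.

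The one genuine gap is in your verification of condition (ii), which quantifies over \emph{all} topological spaces $X$, not only compact ones. Your justification --- that the reflection map $X \to \opn{MSpec}\bigl(\opn{F}_{\mrm{bc}}(X, \K)\bigr)$ is a homeomorphism, so that $\psi_A(a)$ takes exactly the same set of values as $a$ --- holds only for compact $X$. For general $X$ the maximal spectrum is the Stone-\v{C}ech compactification of $X$ (Theorem \ref{thm:285}); the reflection map is then in general neither injective nor surjective, and $\opn{dev}_A(a)$ takes values in the \emph{closure} of $a(X)$, not necessarily in $a(X)$ itself. The sup norms do still coincide, but to see this you need two facts: the evaluation compatibility $\opn{dev}_A(a)\bigl(\opn{refl}^{\mrm{\lmsp alg}}_X(x)\bigr) = a(x)$, whose proof (the computation in Lemma \ref{lem:275}(1)) does not use compactness of $X$; and the density of $\opn{refl}^{\mrm{\lmsp alg}}_X(X)$ in $\opn{MSpec}(A)$ (Proposition \ref{prop:320}(2)), which together with continuity of $\opn{dev}_A(a)$ implies that the supremum of $\abs{\opn{dev}_A(a)}$ over the dense image equals its supremum over the whole compact spectrum. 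This is exactly the argument the paper deploys in the proof of Corollary \ref{cor:515}; to be fair, the paper's own proof of Theorem \ref{thm:486} also leaves this verification of (ii) implicit, so you should regard the density argument as the missing step in both write-ups.
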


This theorem is repeated as Theorem \ref{thm:486} for the real case, and as 
Theorem \ref{thm:505} for the complex case. 

An {\em involution} of a $\C$-ring $A$ is a ring automorphism 
$(-)^{\Ast}$ of $A$ such that $(a^{\Ast})^{\Ast} = a$ and 
$(\la \cd a)^{\Ast} = \bar{\la} \cd a^{\Ast}$ 
for all  $a \in A$ and $\la \in \C$. Here $\bar{\la}$ is the complex 
conjugate of $\la$. The prototypical example is the involution 
$(-)^{\Ast}$ of the ring $A := \opn{F}_{\mrm{bc}}(X, \C)$, where $X$ is some 
topological space, whose formula is 
\begin{equation} \label{eqn:475}
a^{\Ast}(x) = \ol{a(x)}
\end{equation}
for all $a \in A$ and $x \in X$. 

By a {\em Banach$^{\Ast}$ $\C$-ring} we mean a $\C$-ring $A$, equipped with an  
involution $(-)^{\Ast}$ and a norm $\norm{-}$, such that $A$ is a Banach ring 
with this norm, and also $\norm{a \cd a^{\Ast}} = \norm{a}^2$ for all 
$a \in A$. This object is commonly known as a {\em commutative unital 
$\mrm{C}^{\Ast}$ algebra over $\C$}.

\begin{thm}[Canonical Involution] \label{thm:470}
Every BC $\C$-ring $A$ admits a unique involution $(-)^{\Ast}$, called the 
{\em canonical involution}, satisfying the three conditions below. 
\begin{itemize}
\rmitem{i} Given a homomorphism $\phi : A \to B$ in 
$\cat{Rng}\xover{bc} \C$, and an element $a \in A$, there is equality
$\phi(a^{\Ast}) = \phi(a)^{\Ast}$ in $B$.

\rmitem{ii} For the $\C$-ring $\opn{F}_{\mrm{bc}}(X, \C)$
of bounded continuous functions on a topological space $X$, 
the canonical involution is the one from equation (\ref{eqn:475}). 

\rmitem{iii} The $\C$-ring $A$, equipped with the canonical involution 
$(-)^{\Ast}$ and the canonical norm $\norm{-}$ from Theorem \ref{thm:515}, is a 
Banach$^{\Ast}$ $\C$-ring. 
\end{itemize}
\end{thm}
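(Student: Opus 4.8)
The plan is to transport the pointwise complex conjugation across the canonical Gelfand isomorphism furnished by the Duality Theorem \ref{thm:305}. For a BC $\C$-ring $A$ write $X := \opn{MSpec}(A)$, and let $\eta_A : A \iso \opn{F}_{\mrm{bc}}(X, \C)$ be the canonical isomorphism of the equivalence, given by $\eta_A(a)(\m) = a \bmod \m$ under the identification $A/\m = \C$. I would then \emph{define} the candidate involution by $a^{\Ast} := \eta_A^{-1}\bigl( \ol{\eta_A(a)} \bigr)$, where $\ol{(-)}$ is pointwise conjugation on $\opn{F}_{\mrm{bc}}(X, \C)$. Since $\eta_A$ is a $\C$-ring isomorphism while $\ol{(-)}$ is a conjugate-$\C$-linear ring automorphism squaring to the identity, the transported map $(-)^{\Ast}$ is immediately a conjugate-$\C$-linear ring automorphism of $A$ with $(a^{\Ast})^{\Ast} = a$, i.e.\ an involution. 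This yields existence, provided conditions (i)--(iii) hold. The single elementary fact used throughout is that \emph{pullback commutes with conjugation}: for a continuous map $f$ and a bounded continuous function $b$ one has $\ol{b \circ f} = \ol{b} \circ f$.

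For condition (i), given a morphism $\phi : A \to B$ I would invoke the naturality of $\eta$ (part of the equivalence in Theorem \ref{thm:305}), which identifies $\phi$, via $\eta_A$ and $\eta_B$, with the pullback $\opn{F}_{\mrm{bc}}(f, \C)$ along $f := \opn{MSpec}(\phi)$. Because pullback commutes with conjugation, this pullback intertwines the two transported involutions, and chasing the naturality square gives $\phi(a^{\Ast}) = \phi(a)^{\Ast}$. For condition (iii), note that $\eta_A$ is an isomorphism of BC $\C$-rings, hence an isometry for the canonical norm: applying Theorem \ref{thm:515}(i) to both $\eta_A$ and $\eta_A^{-1}$ forces $\norm{\eta_A(a)} = \norm{a}$. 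By Theorem \ref{thm:515}(ii),(iii) it therefore carries the canonical norm on $A$ to the sup norm on $\opn{F}_{\mrm{bc}}(X,\C)$, under which $A$ is complete. The $\mrm{C}^{\Ast}$ identity $\norm{a \cd a^{\Ast}} = \norm{a}^2$ then reduces to the transparent computation $\sup_x \abs{a(x) \cd \ol{a(x)}} = \bigl( \sup_x \abs{a(x)} \bigr)^2$.

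Condition (ii) is where the abstract construction must be matched to the concrete conjugation, and I expect it to be the main obstacle. Take $A = \opn{F}_{\mrm{bc}}(X, \C)$ for an arbitrary space $X$, a BC ring by Theorem \ref{thm:306}. The key point is to check directly against the definitions that the composite of $\eta_A$ with pullback along the evaluation map $c_X : X \to \opn{MSpec}(A)$, $x \mapsto \opn{Ker}(\opn{ev}_x)$, is the identity of $\opn{F}_{\mrm{bc}}(X,\C)$; equivalently, $\eta_A$ is the inverse of that pullback (this is the function-theoretic content of the Stone--\v{C}ech picture of Theorem \ref{thm:309}). Since the pullback commutes with conjugation, so does $\eta_A$, whence $a^{\Ast} = \eta_A^{-1}\bigl(\ol{\eta_A(a)}\bigr) = \eta_A^{-1}\bigl(\eta_A(\ol{a})\bigr) = \ol{a}$, which is exactly pointwise conjugation. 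This identification of the intrinsic Gelfand map with the concrete reflection pullback is the one genuine bridge between the commutative-algebra side (residue fields and $\opn{MSpec}$) and the analytic side; everything else is formal.

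Finally, for uniqueness, suppose $(-)^{\diamond}$ is any involution satisfying (i) and (ii). I would apply (i) to the morphism $\eta_A : A \to \opn{F}_{\mrm{bc}}(X, \C)$ itself. Since the target is a function ring, condition (ii) gives $\eta_A(a)^{\diamond} = \ol{\eta_A(a)}$, and (i) gives $\eta_A(a^{\diamond}) = \eta_A(a)^{\diamond}$; combining the two yields $\eta_A(a^{\diamond}) = \ol{\eta_A(a)}$, so $a^{\diamond} = \eta_A^{-1}\bigl(\ol{\eta_A(a)}\bigr) = a^{\Ast}$. Thus (i) and (ii) already pin the involution down, and the canonical involution of Theorem \ref{thm:470} is the one constructed above.
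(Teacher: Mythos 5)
Your proposal is correct and is essentially the paper's own argument (Theorems \ref{thm:435} and \ref{thm:505}): the involution is defined by transporting pointwise conjugation through the double-evaluation isomorphism $\opn{dev}_A$ (your $\eta_A$), functoriality (i) follows from naturality of $\opn{dev}$ plus the fact that pullback commutes with conjugation, uniqueness comes from applying (i) to $\opn{dev}_A$ and (ii) to its target, and (iii) is obtained by transporting the $\mrm{C}^{\Ast}$ identity along the same isometry. The one place you go beyond the paper is your explicit consistency check for (ii) --- that $\eta_A$ is inverse to the pullback along the reflection map $x \mapsto \opn{Ker}(\opn{ev}_x)$ and therefore itself commutes with conjugation --- a point which the paper's proof of Theorem \ref{thm:435} leaves implicit.
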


This theorem is repeated as Theorem \ref{thm:435} and part 
of Theorem \ref{thm:505} . 

For a BC $\C$-ring $A$, with its 
canonical involution $(-)^{\Ast}$, the {\em canonical hermitian subring} is 
the $\R$-ring 
$A_0 := \{ a \in A \mid a^{\Ast} = a \}$. 
The next theorem is a combination of Corollary \ref{cor:445} and Theorem 
\ref{thm:475} in the body of the paper.

\begin{thm} \label{thm:476}  
There is an equivalence of categories
\[ H : \cat{Rng}\xover{bc} \C \to \cat{Rng}\xover{bc} \R , \]
which sends a BC $\C$-ring $A$ to its canonical hermitian subring $H(A) = A_0$.
The quasi-inverse of $H$ is the induction functor 
\[ I : \cat{Rng}\xover{bc} \R \to \cat{Rng}\xover{bc} \C \, , \ \ 
I(A_0) := \C \ot_{\R} A_0 . \]
\end{thm}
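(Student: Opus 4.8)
The plan is to check that $H$ and $I$ are well-defined functors and then to produce natural isomorphisms $\epsilon : I \circ H \iso \opn{Id}$ on $\cat{Rng}\xover{bc} \C$ and $\eta : \opn{Id} \iso H \circ I$ on $\cat{Rng}\xover{bc} \R$. First I would establish functoriality of $H$. Given a BC $\C$-ring $A$, choose an isomorphism $A \cong \opn{F}_{\mrm{bc}}(X, \C)$ with $X$ compact; by condition (ii) of Theorem \ref{thm:470} the canonical involution corresponds under this isomorphism to complex conjugation of functions, so the hermitian subring $A_0$ corresponds to the real-valued functions, namely $\opn{F}_{\mrm{bc}}(X, \R)$. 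Hence $H(A) = A_0$ is a BC $\R$-ring. For a morphism $\phi : A \to B$, condition (i) of Theorem \ref{thm:470} gives $\phi(a^{\Ast}) = \phi(a)^{\Ast}$, so $\phi$ carries hermitian elements to hermitian elements and restricts to $H(\phi) : A_0 \to B_0$; thus $H$ is a functor.

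Next I would check that $I$ is a functor. For a BC $\R$-ring $A_0 \cong \opn{F}_{\mrm{bc}}(X, \R)$, the multiplication map $\la \ot a \mapsto \la \cd a$ identifies $I(A_0) = \C \ot_\R A_0$ with $\opn{F}_{\mrm{bc}}(X, \C)$, since every bounded continuous $\C$-valued function splits uniquely as $u + i \cd v$ with $u, v$ real-valued; hence $I(A_0)$ is a BC $\C$-ring, and functoriality of $\C \ot_\R (-)$ is standard. The algebraic core is then the decomposition, valid for any $\C$-ring $A$ carrying an involution, $A = A_0 \oplus i \cd A_0$ as $\R$-modules, obtained by writing $a = \tfrac{1}{2}(a + a^{\Ast}) + i \cd \tfrac{1}{2 i}(a - a^{\Ast})$ and verifying that both summands lie in $A_0$ (the second being $i$ times a hermitian element), with the sum direct because $A_0 \cap i \cd A_0 = 0$ in characteristic zero.

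With these in hand, for a BC $\C$-ring $A$ the multiplication map $\epsilon_A : \C \ot_\R A_0 \to A$, $\la \ot a \mapsto \la \cd a$, is a $\C$-ring homomorphism that is bijective by the decomposition, yielding $\epsilon : I \circ H \iso \opn{Id}$; its naturality is immediate from the defining formula. In the other direction, computing the hermitian part of $\C \ot_\R A_0$ for the involution $(\la \ot a)^{\Ast} = \ol{\la} \ot a$ gives exactly $1 \ot A_0 \cong A_0$, which furnishes $\eta : \opn{Id} \iso H \circ I$, again visibly natural. Together $\epsilon$ and $\eta$ show that $H$ is an equivalence with quasi-inverse $I$.

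The step I expect to be the main obstacle is matching the abstractly characterized canonical involution on $I(A_0) = \C \ot_\R A_0$ with the concrete conjugation $\la \ot a \mapsto \ol{\la} \ot a$ used above. This is settled by the uniqueness clause of Theorem \ref{thm:470}: transporting $\la \ot a \mapsto \ol{\la} \ot a$ across the isomorphism $\C \ot_\R \opn{F}_{\mrm{bc}}(X, \R) \cong \opn{F}_{\mrm{bc}}(X, \C)$ produces precisely the conjugation of equation (\ref{eqn:475}), which satisfies condition (ii) and is therefore the canonical involution. The remaining verifications --- that the stated maps are ring homomorphisms and that the relevant squares commute --- are routine bookkeeping.
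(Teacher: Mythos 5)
Your proposal is correct, and its overall architecture matches the paper's proof (Theorem \ref{thm:475}, resting on Corollary \ref{cor:445}): in both arguments, $H$ and $I$ are shown to land in the right categories by passing to the models $\opn{F}_{\mrm{bc}}(X, \K)$, the canonical involution on $\C \ot_{\R} A_0$ is identified with $\la \ot a \mapsto \ol{\la} \ot a$ via conditions (i) and (ii) of the canonical-involution theorem, and the equivalence is then exhibited by a unit and a counit. The one genuinely different ingredient is your counit. The paper proves that the multiplication map $\C \ot_{\R} A_0 \to A$ is bijective (Corollary \ref{cor:445}(2)) by transporting the question along $\opn{dev}_A : A \iso \opn{F}_{\mrm{bc}}(X, \C)$ and using the splitting of a complex-valued function into real and imaginary parts; you instead use the intrinsic identity $a = \frac{1}{2}(a + a^{\Ast}) + i \cd \frac{1}{2i}(a - a^{\Ast})$, which yields $A = A_0 \oplus i \cd A_0$ for an \emph{arbitrary} involutive $\C$-ring (directness because $2$ is invertible). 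This buys a clean separation of concerns: the bijectivity of $\C \ot_{\R} A_0 \to A$ is pure algebra, and the BC hypothesis is needed only to produce the canonical involution and to pin it down on $\C \ot_{\R} A_0$ --- exactly the point you flagged as the main obstacle and resolved the same way the paper does (its Step 3). The paper's route buys uniformity, since every step is the same transport-to-function-ring move. One phrasing caveat: when you say the canonical involution on $A$ corresponds to conjugation ``by condition (ii)'' of Theorem \ref{thm:470}, you also need condition (i) applied to the chosen isomorphism $A \cong \opn{F}_{\mrm{bc}}(X, \C)$ and its inverse; your later appeal to the uniqueness clause shows you understand this, so it is a matter of wording, not a gap.
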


A {\em Banach$^{\Ast}$ $\R$-ring} is an $\R$-ring $A$, equipped with a norm 
$\norm{-}$, such that $A$ is a Banach $\R$-ring with this norm, and for every 
$a \in A$ there is equality $\norm{a^2} = \norm{a}^2$, 
and the element $1 + a^2$ is invertible in $A$. 

\begin{thm} \label{thm:472}
Let $\K$ be either $\R$ or $\C$. 
There is an equivalence of categories
\[ F : \cat{BaRng}^{\Ast} \over \K \to \cat{Rng}\xover{bc} \K , \]
which forgets the norm, and forgets the involution when $\K = \C$.
The quasi-inverse 
\[ G : \cat{Rng}\xover{bc} \K \to \cat{BaRng}^{\Ast} \over \K \]
endows a BC $\K$-ring with its canonical norm, and with its canonical 
involution when $\K = \C$. 
\end{thm}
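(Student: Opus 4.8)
The plan is to verify that $F$ and $G$ are well-defined functors and that they are mutually quasi-inverse; in fact I expect both composites to be \emph{identity} functors, so that $F$ is even an isomorphism of categories. First I would check that $G$ is well-defined. On objects this is almost immediate from the earlier results: Theorem \ref{thm:515} equips a BC $\K$-ring $A$ with its canonical norm, and part (iii) there, together with Theorem \ref{thm:470}(iii) when $\K = \C$, guarantees that $A$ with this norm and the canonical involution satisfies the Banach$^{\Ast}$ axioms. For $\K = \R$ one must separately confirm the two extra conditions $\norm{a^2} = \norm{a}^2$ and invertibility of $1 + a^2$; both hold on the sup-normed ring $\opn{F}_{\mrm{bc}}(X, \R)$ and hence on $A$ through the canonical norm. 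On morphisms, a map in $\cat{Rng}\xover{bc} \K$ is merely a $\K$-ring homomorphism $\phi$; Theorem \ref{thm:515}(i) gives $\norm{\phi(a)} \leq \norm{a}$, so $\phi$ is bounded, and Theorem \ref{thm:470}(i) gives $\phi(a^{\Ast}) = \phi(a)^{\Ast}$, so $\phi$ is a Banach$^{\Ast}$ morphism. Thus $G$ is a functor.

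Next I would show that $F$ is well-defined, i.e. that the underlying $\K$-ring of a Banach$^{\Ast}$ $\K$-ring $B$ is a BC $\K$-ring. This is the Gelfand--Naimark representation and is the heart of the argument. I would take $Y := \opn{MSpec}(B)$, show that $B / \m \cong \K$ for every maximal ideal $\m$, and that the Gelfand transform $\gamma : B \to \opn{F}_{\mrm{bc}}(Y, \K)$, $b \mapsto (\m \mapsto b \bmod \m)$, is an isomorphism of $\K$-rings with $Y$ compact. Injectivity amounts to the vanishing of the Jacobson radical, which follows from the $\mrm{C}^{\Ast}$-identity; surjectivity is a Stone--Weierstrass argument, in which the involution supplies closure under conjugation when $\K = \C$, while the conditions $\norm{a^2} = \norm{a}^2$ and invertibility of $1 + a^2$ play the analogous role when $\K = \R$. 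Since $F$ merely forgets structure, it automatically respects composition and identities, so once its target is seen to be correct it is a functor into $\cat{Rng}\xover{bc} \K$.

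With both functors in hand, the composite $F \circ G$ is literally the identity on $\cat{Rng}\xover{bc} \K$: $G$ adjoins the canonical norm and involution, and $F$ discards exactly those. For $G \circ F$, starting from $B$ I must check that the canonical norm and canonical involution of the underlying BC ring coincide with the original norm $\norm{-}_B$ and involution $(-)^{\Ast}_B$. Using the isomorphism $B \cong \opn{F}_{\mrm{bc}}(Y, \K)$ from the previous step, under which the $\mrm{C}^{\Ast}$-norm becomes the sup norm and, for $\K = \C$, the involution becomes pointwise conjugation (\ref{eqn:475}), conditions (ii) of Theorems \ref{thm:515} and \ref{thm:470} identify these with the canonical structures; equivalently, one invokes the uniqueness clauses of those theorems, after noting that the canonical norm is preserved by $\K$-ring isomorphisms (apply (i) to the isomorphism and its inverse). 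Hence $G \circ F$ is also the identity, and $F$ is an equivalence with quasi-inverse $G$.

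The main obstacle is the second step. All of the functoriality, and the recovery of the norm and involution in the fourth step, are cheap consequences of the uniqueness already established in Theorems \ref{thm:515} and \ref{thm:470}; but showing that the forgetful image of a $\mrm{C}^{\Ast}$-algebra really is a BC ring requires the full Gelfand--Naimark theorem. Within that, the delicate points are surjectivity of the Gelfand transform and the verification that the Zariski subspace topology on $\opn{MSpec}(B)$ agrees with the weak-$\ast$ topology, so that $Y$ is genuinely compact Hausdorff.
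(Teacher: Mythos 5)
Your proposal is correct in outline, and on the categorical side it coincides with what the paper does: the paper's proofs (Corollary \ref{cor:485} for $\K = \R$ and Corollary \ref{cor505} for $\K = \C$) also define $G$ via the canonical norm and involution (Theorems \ref{thm:486}, \ref{thm:505} and \ref{thm:435}), observe that $F \circ G = \opn{id}$ on the nose, and get $G \circ F = \opn{id}$ from the uniqueness clauses, exactly as in your last step. (Incidentally, the body version Theorem \ref{thm:486}(iii) already asserts that the canonical norm makes $A$ a Banach$^{\Ast}$ $\R$-ring, so your separate verification of $\norm{a^2} = \norm{a}^2$ and the invertibility of $1 + a^2$ is only needed because you quoted the weaker introduction statement.) The genuine divergence is in your second step. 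The paper does not prove the Gelfand--Naimark representation at all: it quotes it as a black box -- Theorem \ref{thm:485} (from \cite[Theorem 11.5]{Go}) in the real case, Theorem \ref{thm:375} (from \cite[Theorem VIII.2.1]{Co}) in the complex case -- and then only needs the observation that an abstract $\K$-ring isomorphism $F(A) \cong \opn{F}_{\mrm{c}}(X, \K)$ with $X$ compact makes $F(A)$ a BC ring by Definition \ref{dfn:201} (resp.\ \ref{dfn:420}). You propose instead to reprove Gelfand--Naimark internally; the ingredients you name (semisimplicity via the $\mrm{C}^{\Ast}$-identity, Stone--Weierstrass, and in the real case the two extra axioms playing the role of the involution) are the right ones, but a complete execution also needs Gelfand--Mazur for the residue fields, closedness of maximal ideals, and the isometry of the transform (so that the image is closed and Stone--Weierstrass density actually gives surjectivity) -- a sizable amount of functional analysis that the paper deliberately outsources. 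Finally, the delicate point you flag at the end, comparing the Zariski topology on $\opn{MSpec}(B)$ with the weak${}^{\Ast}$ topology, is avoidable and is avoided by the paper: since a BC ring only has to be isomorphic to $\opn{F}_{\mrm{bc}}(X, \K)$ for \emph{some} compact $X$, you may take $X$ to be the weak${}^{\Ast}$-topologized character space and never mention the Zariski topology; the paper says exactly this in Remark \ref{rem:410}. In short, your route is self-contained but long, while the paper's is short and modular, with all the analytic content concentrated in the quoted classical theorems.
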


The equivalences of categories from Theorems \ref{thm:305}, \ref{thm:476} and 
\ref{thm:472} are exhibited in the following diagram, which is commutative up 
to isomorphisms of functors. 

\begin{equation} \label{eqn:515}
\begin{tikzcd} [column sep = 6ex, row sep = 9ex]
\cat{BaRng}^{\Ast} \over \C
\ar[d, shift right =1.5ex, bend right = 20, "{F}"']
&
&
\cat{BaRng}^{\Ast} \over \R
\ar[d, shift left = 1.5ex, bend left = 20, "{F}"]
\\
\cat{Rng} \xover{bc} \C
\ar[u, shift right= 1.5ex, bend right = 20, "{G}"']
\ar[rr, shift left = 0.3ex, bend left = 10, "{H}"]
\ar[dr, shift left = 0ex,  bend left = 10, start anchor = south, 
end anchor = north west, "{\opn{MSpec}}" near end]
&
&
\cat{Rng} \xover{bc} \R
\ar[u, shift left = 1.5ex, bend left = 20, "{G}"]
\ar[ll, shift left = 0.3ex, bend left = 10, "{I}"]
\ar[dl, shift right = 0ex,  bend right = 05, start anchor = south, 
end anchor = north east, "{\opn{MSpec}}"' near end]
\\
&
(\cat{Top}_{\mrm{cp}})^{\mrm{op}}
\ar[ul, shift left = 1.5ex, bend left = 10, 
start anchor = north west, end anchor = south, "{\opn{F}_{\mrm{bc}}(-, \C)}"]
\ar[ur, shift right = 1.5ex, bend right = 10, start anchor = north east, 
end anchor = south, "{\opn{F}_{\mrm{bc}}(-, \R)}"']
\end{tikzcd}
\end{equation}

The last results to be mentioned in the Introduction concern {\em Stone 
topological spaces}, also known as {\em profinite topological spaces}. 

Let $X$ be a topological space, with function ring
$A := \opn{F}_{\mrm{bc}}(X, \R)$. A function $a \in A$ is 
called a {\em step function} if is takes only finite many values. The subring 
of $A$ consisting of step functions is denoted by $A_{\mrm{stp}}$. 

\begin{thm} \label{thm:517} 
Let $A$ be a BC $\R$-ring and let $X := \opn{MSpec}(A)$. 
The following two conditions are equivalent:
\begin{itemize}
\rmitem{i} $X$ is a Stone space. 

\rmitem{ii} The subring $A_{\mrm{stp}}$ is dense in $A$, with respect to the 
canonical norm of $A$. 
\end{itemize}
\end{thm}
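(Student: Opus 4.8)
The plan is to reduce the statement to a classical fact about compact Hausdorff spaces and then prove that fact directly. Since $A$ is a BC $\R$-ring, the Duality Theorem \ref{thm:305} identifies $A$ with $\opn{F}_{\mrm{bc}}(X, \R)$ for the compact space $X = \opn{MSpec}(A)$, and under this identification $A_{\mrm{stp}}$ becomes the ring of continuous functions $X \to \R$ taking finitely many values; by the Canonical Norm Theorem \ref{thm:515}(ii) the canonical norm is the sup norm. Recall also that a \emph{Stone space} is a compact Hausdorff space with a basis of clopen sets, and that for a compact Hausdorff space this is equivalent to the clopen sets separating points (if for every pair $x \neq y$ some clopen set contains $x$ but not $y$, then each connected component is a single point, and conversely total disconnectedness of a compact Hausdorff space yields a clopen basis). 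Thus everything comes down to the following: for compact Hausdorff $X$, the clopen sets separate points if and only if every continuous $f : X \to \R$ is a uniform limit of step functions.

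For the direction (i) $\Rightarrow$ (ii), I would fix a continuous $f$ and $\ep > 0$. By continuity every point $x$ has an open neighborhood on which $f$ oscillates by less than $\ep$; using the clopen basis I shrink it to a clopen neighborhood $V_x$ with the same property. Compactness yields finitely many $V_{x_1}, \dots, V_{x_n}$ covering $X$, which I disjointify in the usual way by setting $W_k := V_{x_k} \setminus (V_{x_1} \cup \dots \cup V_{x_{k-1}})$; each $W_k$ is clopen, and the nonempty ones form a finite clopen partition of $X$. Defining $g$ to equal some value $f(p_k)$, with $p_k \in W_k$, on each $W_k$ produces a step function (it is continuous since the $W_k$ are clopen, and it takes finitely many values), and the oscillation bound gives $\norm{f - g} < \ep$. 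Hence the step functions are dense.

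For (ii) $\Rightarrow$ (i), I would take distinct points $x, y \in X$. Since $X$ is compact Hausdorff, hence normal, Urysohn's Lemma supplies a continuous $f$ with $f(x) = 0$ and $f(y) = 1$. By hypothesis there is a step function $g$ with $\norm{f - g} < 1/3$, forcing $g(x) \neq g(y)$. A step function takes finitely many values $c_1, \dots, c_n$, and each level set $g^{-1}(c_i)$ is clopen: it is closed as a preimage of a point, and open because its complement is the finite union $\bigcup_{j \neq i} g^{-1}(c_j)$ of closed sets. Then $C := g^{-1}(g(x))$ is a clopen set with $x \in C$ and $y \notin C$, so the clopen sets separate points and $X$ is a Stone space.

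The main obstacle is the construction in (i) $\Rightarrow$ (ii): the naive level sets of a partition of the range of $f$ are only Borel, so the essential point is to replace an arbitrary open refinement by a clopen one, which is exactly where the Stone hypothesis enters, and then to disjointify it into an honest finite clopen partition while keeping the oscillation under control. The remaining verifications — that the canonical norm is the sup norm, that level sets of step functions are clopen, and the standard equivalence between clopen separation and total disconnectedness for compact Hausdorff spaces — are routine.
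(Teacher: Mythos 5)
Your proof is correct, but in the harder direction it takes a genuinely different route from the paper. The paper proves (i) $\Rightarrow$ (ii) by quoting the profinite characterization of Stone spaces (Theorem \ref{thm:495}) and then establishing a refinement lemma (Lemma \ref{lem:495}, adapted from the Stacks Project): every open cover of a Stone space is refined by the cover induced by a continuous map $f : X \to Z$ to a finite discrete space; the approximating step function is then $b \circ f$ for a suitable $b : Z \to \R$, after covering the range of $a$ by short overlapping intervals. You instead invoke zero-dimensionality (a clopen basis), shrink oscillation-controlling neighborhoods to clopen ones, pass to a finite subcover, and disjointify it into a finite clopen partition by hand; this avoids the inverse-limit machinery entirely and is shorter and more self-contained once zero-dimensionality is granted. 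The trade is in what classical fact gets cited: the paper's input is ``Stone $=$ profinite,'' yours is ``totally disconnected compact Hausdorff $\Rightarrow$ clopen basis.'' These are facts of essentially the same depth (both amount to components coinciding with quasi-components in compact Hausdorff spaces), so your citation is as legitimate as the paper's; but be aware that your closing sentence undersells this -- the implication from total disconnectedness to clopen separation is \emph{not} routine, and since the paper defines a Stone space as compact and totally disconnected, your argument genuinely depends on it. Your (ii) $\Rightarrow$ (i) is essentially the paper's argument in different clothing: the paper shows every $a \in A$ is constant on a connected set (step functions first, then by density all of $A$) and concludes by Urysohn separation; you produce a clopen level set of an approximating step function separating two given points. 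Both use exactly the same ingredients (density, Urysohn, and the observation that level sets of step functions are clopen, which is the paper's Lemma \ref{lem:490} in disguise).
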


This is repeated as Theorem \ref{thm:490} in Section \ref{stone} and proved 
there. 

\begin{cor} \label{cor:517}
Suppose $X$ is a discrete topological space, with Stone-\v{C}ech 
Compactification $\bar{X}$. Then $\bar{X}$ is a Stone topological space. 
\end{cor}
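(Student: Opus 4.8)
The plan is to deduce this from the Algebraic SCC theorem (Theorem \ref{thm:309}) together with the step-function criterion of Theorem \ref{thm:517}. First I would invoke Theorem \ref{thm:309} to identify the Stone-\v{C}ech compactification: since an SCC is unique up to a unique isomorphism, we have $\bar{X} \cong \opn{MSpec}(A)$, where $A := \opn{F}_{\mrm{bc}}(X, \R)$. Thus it suffices to prove that $\opn{MSpec}(A)$ is a Stone space, and by Theorem \ref{thm:517} this is equivalent to showing that the subring $A_{\mrm{stp}}$ of step functions is dense in $A$ with respect to the canonical norm. By Theorem \ref{thm:515}(ii) the canonical norm on $A$ is just the sup norm $\norm{-}$.

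The decisive observation is that $X$ is discrete: every function $X \to \R$ is continuous, so $A = \opn{F}_{\mrm{bc}}(X, \R)$ is precisely the ring of \emph{all} bounded functions $X \to \R$, and likewise every bounded function taking only finitely many values automatically lies in $A_{\mrm{stp}}$. It then remains to carry out a routine uniform approximation. Given $a \in A$ and $\ep > 0$, set $M := \norm{a}$, partition the interval $[-M, M]$ into finitely many subintervals each of length less than $\ep$, choose a point in each subinterval, and define $s \colon X \to \R$ by assigning to each $x \in X$ the chosen point of the subinterval containing $a(x)$. Then $s$ takes only finitely many values and $\norm{s - a} \le \ep$.

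Since $X$ is discrete the function $s$ is automatically continuous (and bounded), hence $s \in A_{\mrm{stp}}$; this shows that $A_{\mrm{stp}}$ is dense in $A$. Applying the implication (ii) $\Rightarrow$ (i) of Theorem \ref{thm:517} to the BC $\R$-ring $A$ then yields that $\opn{MSpec}(A) = \bar{X}$ is a Stone space, as required. I expect essentially no obstacle here: the whole content is that discreteness trivializes the continuity requirement, after which the classical fact that a bounded function is uniformly approximable by step functions finishes the argument. The only minor point to verify with care is that the canonical norm really coincides with the sup norm, which is exactly what Theorem \ref{thm:515}(ii) supplies.
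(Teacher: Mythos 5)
Your proof is correct and follows essentially the same route as the paper's: identify $\bar{X}$ with $\opn{MSpec}\bigl(\opn{F}_{\mrm{bc}}(X, \R)\bigr)$ via the Algebraic SCC theorem, reduce to density of the step functions via Theorem \ref{thm:517}, and conclude by uniform approximation, which works because discreteness of $X$ makes every bounded function (in particular every rounding of $a$) continuous. The only cosmetic difference is that you get the identification of the canonical norm with the sup norm over $X$ directly from Theorem \ref{thm:515}(ii), whereas the paper derives it by noting that the sup norms computed on $X$ and on $\bar{X}$ agree since the image of $X$ is dense in $\bar{X}$.
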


This fact is well-known of course, but all previous proofs of it that  
we saw are involved and indirect. Our proof (this is repeated as Corollary 
\ref{cor:515}) is an easy consequence of Theorem \ref{thm:517} -- a discrete 
space $X$ admits plenty of step functions.  

We end the Introduction with two questions. 

\begin{que} \label{que:400}
Is there a theory of "BC rings" for {\em commutative nonarchimedean Banach 
rings}? Is there result analogous to Theorem \ref{thm:472}, but with 
$\what{\Q}_p$ instead of $\R$ or $\C$~?
\end{que}

\begin{que} \label{que:465}
Is there a useful theory of "noncommutative BC rings", and a 
noncommutative version of Theorem \ref{thm:472} for {\em 
noncommutative Banach rings}?
\end{que}

\vspace{1em}
\medskip \noindent 
{\em Acknowledgments}. We wish to thank Nicolas Addington, Moshe Kamenski,
Assaf Hasson, Eli Shamovich, Francesco Saettone, Shirly Geffen, Ilan Hirshberg 
and Ken Goodearl for useful discussions.

\section{Preliminaries} 
\label{sec:prlim}

In this section we review some of the concepts on categories and commutative 
rings that will be used in our paper, for the benefit of readers who are not 
familiar with these subjects. This section also serves to introduce 
notation. Textbook references are provided. 

A {\em category} $\cat{C}$ consists of a set of objects $\opn{Ob}(\cat{C})$, 
and for each pair of objects $C_0, C_1 \in \opn{Ob}(\cat{C})$
a set of morphisms 
$\opn{Hom}_{\cat{C}}(C_0, C_1)$. A morphism 
$f \in \opn{Hom}_{\cat{C}}(C_0, C_1)$
is depicted by $f : C_0 \to C_1$. Every object 
$C \in \opn{Ob}(\cat{C})$
has an identity automorphism $\opn{id}_C : C \to C$.
There is an operation of composition 
\[ \opn{Hom}_{\cat{C}}(C_1, C_2) \times 
\opn{Hom}_{\cat{C}}(C_0, C_1) \to 
\opn{Hom}_{\cat{C}}(C_0, C_2) , \quad (f_2, f_1) \mapsto f_2 \circ f_1 \]
for each triple of objects $C_0, C_1, C_2$. 
The composition is associative, and the identity automorphisms satisfy
$\opn{id}_{C_1} \circ \, f = f \circ \opn{id}_{C_0}$ 
for all morphisms $f : C_0 \to C_1$. A morphism $f : C_0 \to C_1$ is called an 
{\em isomorphism} if there is a morphism $g : C_1 \to C_0$ 
such that $g \circ f = \opn{id}_{C_0}$ and 
$f \circ g = \opn{id}_{C_1}$. In this case $g$ is unique, it is called the 
inverse of $f$, and one writes $f^{-1} := g$. 
As customary, we will use the shorthand $C \in \cat{C}$ for an object $C$. 
We shall ignore set theoretical issues in this paper, and implicitly rely on 
the concept of {\em Grothendieck universe}, as explained in 
\cite[Section I.6]{ML} and in \cite[Section 1.1]{Ye}. 

Given categories $\cat{C}$ and $\cat{D}$, a {\em functor}
$F : \cat{C} \to \cat{D}$
is the data of a function
$F_{\mrm{ob}} : \opn{Ob}(\cat{C}) \to \opn{Ob}(\cat{D})$,
and for every pair of objects 
$C_0, C_1 \in \opn{Ob}(\cat{C})$ a function
\begin{equation} \label{eqn:360}
F_{C_0, C_1} : \opn{Hom}_{\cat{C}}(C_0, C_1) \to 
\opn{Hom}_{\cat{D}} \bigl( F_{\mrm{ob}}(C_0), F_{\mrm{ob}}(C_1) \bigr) . 
\end{equation}
The functions $F_{C_0, C_1}$ must respect compositions and identities. 
When there is no ambiguity, and in order to simplify the notation, we shall use 
the symbol $F$ both for $F_{\mrm{ob}}$ and for $F_{C_0, C_1}$. 
Each category $\cat{C}$ has its identity functor $\opn{Id}_{\cat{C}}$, which 
acts identically on objects and morphisms of $C$. 
Functors 
$F : \cat{C} \to \cat{D}$ and $G : \cat{D} \to \cat{E}$
can be composed in the obvious way, and 
$\opn{Id}_{\cat{D}} \circ \, F = F \circ \opn{Id}_{\cat{C}}$. 
Sometimes a functor is called a {\em covariant functor}. 

Suppose 
$F, G : \cat{C} \to \cat{D}$
are functors. A {\em morphism of functors} (also called a {\em natural 
transformation}) $\eta : F \to G$ is a collection
$\{ \eta_{C} \}_{C \in \opn{Ob}(\cat{C})}$ of morphisms
$\eta_C : F(C) \to G(C)$ in $\cat{D}$, such that for every morphism 
$f : C_0 \to C_1$ in $\cat{D}$ the diagram 
\[ \begin{tikzcd} [column sep = 10ex, row sep = 6ex]
F(C_0)
\ar[r, "{F(f)}"]
\ar[d, "{\eta_{C_0}}"']
&
F(C_1)
\ar[d, "{\eta_{C_1}}"]
\\
G(C_0)
\ar[r, "{G(f)}"]
&
G(C_1)
\end{tikzcd} \]
in $\cat{D}$ is commutative. 
The morphism $\eta$ is called an {\em isomorphism of functors} if for every 
object $C \in \cat{C}$ the morphism 
$\eta_C : F(C) \to G(C)$ is an isomorphism. 

A functor $F : \cat{C} \to \cat{D}$ is called {\em full} (resp.\ {\em 
faithful}) if for every pair of objects $C_0, C_1 \in \cat{C}$ the function 
$F_{C_0, C_1}$ in (\ref{eqn:360}) is surjective (resp.\ injective). 
The functor $F : \cat{C} \to \cat{D}$ is called an {\em equivalence} if 
there exists a functor $G : \cat{D} \to \cat{C}$,
and isomorphisms of functors 
$\eta : \opn{Id}_{\cat{C}} \iso G \circ F$ and 
$\ze : \opn{Id}_{\cat{D}} \iso F \circ G$.
In this case $G$ is called a quasi-inverse of $F$, and it is unique up to a 
unique isomorphism of functors. 
It is not hard to show that $F : \cat{C} \to \cat{D}$ is an equivalence iff it 
is both fully faithful (i.e.\ full and faithful) and essentially surjective on 
objects (i.e.\ for every $D \in \cat{D}$ there exists some $C \in \cat{C}$ 
with an isomorphism $F(C) \iso D$ in $\cat{D}$).  

A {\em contravariant functor} $F : \cat{C} \to \cat{D}$ 
is made up of functions $F_{\mrm{ob}}$ and  
$F_{C_0, C_1}$, but the target sets in (\ref{eqn:360}) are reversed; namely 
for each morphism $f : C_0 \to C_1$ in $\cat{C}$ there is a morphism 
$F(f) : F(C_1) \to F(C_0)$ in $\cat{D}$. The contravariant functor $F$ must 
respect compositions, in the reversed order, and identities. 
If $G : \cat{D} \to \cat{E}$ is another contravariant functor, then the 
composition $G \circ F : \cat{C} \to \cat{E}$ is a (covariant) functor. 

A category $\cat{C}$ gives rise to the {\em opposite category} 
$\cat{C}^{\mrm{op}}$, which has the same objects as $\cat{C}$, but the morphism 
sets and the compositions are reversed. There is a contravariant functor 
$\opn{Op}_{\cat{C}} : \cat{C} \to \cat{C}^{\mrm{op}}$, 
which is the identity on object sets, and the identity (in disguise)
\[ \opn{Op}_{\cat{C}, C_0, C_1} : \opn{Hom}_{\cat{C}}(C_0, C_1) \iso 
\opn{Hom}_{\cat{C}^{\mrm{op}}}(C_1, C_0) \]
on morphism sets. There is an equality of functors 
$\opn{Op}_{\cat{C}^{\mrm{op}}} \circ \opn{Op}_{\cat{C}} = \opn{Id}_{\cat{C}}$.
Every contravariant functor 
$F : \cat{C} \to \cat{D}$
can be expressed uniquely as 
$F = F^{\mrm{op}} \circ \opn{Op}_{\cat{C}}$, where 
$F^{\mrm{op}} : \cat{C}^{\mrm{op}} \to \cat{D}$ 
is the functor $F^{\mrm{op}} := F \circ  \opn{Op}_{\cat{C}^{\mrm{op}}}$;
see the commutative diagram (\ref{eqn:460}). 
The formula $F \mapsto F^{\mrm{op}}$ gives a bijection between the set of 
contravariant functors $\cat{C} \to \cat{D}$ and the set of functors 
$\cat{C}^{\mrm{op}} \to \cat{D}$, and we are going to use this bijection often.
\begin{equation} \label{eqn:460}
\begin{tikzcd} [column sep = 10ex, row sep = 6ex]
\cat{C}^{\mrm{op}}
\ar[r, "{ \opn{Op}_{\cat{C}^{\mrm{op}}} }"]
\ar[dr, "{F^{\mrm{op}}}"']
&
\cat{C}
\ar[d, "{F}"]
\\
&
\cat{E}
\end{tikzcd} 
\end{equation}

Let $\cat{C}$ be a category. A {\em full subcategory} $\cat{B}$ of $\cat{C}$ 
is a category $\cat{B}$ such that 
$\opn{Ob}(\cat{B}) \sub \opn{Ob}(\cat{C})$,
and for every pair of objects $C_0, C_1 \in \opn{Ob}(\cat{B})$ there is 
equality 
\[ \opn{Hom}_{\cat{B}}(C_0, C_1) = \opn{Hom}_{\cat{C}}(C_0, C_1) . \]
The identity morphisms and the composition of $\cat{B}$ are those of $\cat{C}$.
Thus the inclusion functor $\opn{Inc} : \cat{B} \to \cat{C}$
is fully faithful. 

For a detailed study of categories and functors in general we 
recommend the books \cite{HS} and \cite{ML}.

Now let us turn attention to a few categories that will play important roles in 
our paper. First there is the category $\cat{Set}$ of sets, whose 
objects are the sets (with an implicit bound on size, in terms of Grothendieck 
universes). The morphisms $f : S \to T$ in $\cat{Set}$ are the functions. 

The category of topological spaces and continuous maps between them is 
$\cat{Top}$. There is a forgetful functor 
$\cat{Top} \to \cat{Set}$, which sends a topological space to its 
underlying set, and does nothing to the morphisms. 

Recall that a topological space $X$ is called {\em Hausdorff} if for every pair 
of distinct points $x, y \in X$ there are open subsets 
$U, V \sub X$ such that $x \in U$, $y \in V$ and $U \cap V = \varnothing$. 
All metric spaces are Hausdorff, but there are many important Hausdorff 
topological spaces that are not metrizable (i.e.\ they do not admit metrics 
that induce their given topologies).  

A topological space $X$ is called {\em quasi-compact} 
if it has the finite open subcovering property. By this we mean that given an 
open covering 
$X = \bigcup_{i \in I} U_i$, indexed by some set $I$, there exists a finite 
subset $I_0 \sub I$ such that $X = \bigcup_{i \in I_0} U_i$.
The adjective quasi-compact is prominent in algebraic geometry, but we have not 
seen it used in classical topology publications.  

In our paper will adhere to this convention: 

\begin{conv} \label{conv:452}
A topological space $X$ is called {\em compact} if it is both Hausdorff 
and quasi-compact. 
\end{conv}

The full subcategory of $\cat{Top}$ on the compact topological spaces is denoted
by $\cat{Top}_{\mrm{cp}}$.
We will also adhere to the next convention regarding rings:

\begin{conv} \label{conv:451}
A {\em ring} means, by default, a unital commutative ring. 
A ring homomorphism $\phi : A \to B$ must respect units.
\end{conv}

The category of (commutative) rings and ring homomorphisms is denoted by 
$\cat{Rng}$. 
All rings in Sections \ref{sec:prlim}-\ref{sec:SCC} of this paper are 
without a topology; exceptions are the fields $\R$ and $\C$, which are 
sometimes seen as topological rings with their standard norm topologies, and 
when this happens we shall state it explicitly. In Sections
\ref{real-B}-\ref{sec:inv-c-b} we shall also deal with Banach rings, 
and again the presence of a norm will be stated explicitly.  

\begin{dfn} \label{dfn:365}
Fix a ring $\K$.
\begin{enumerate}
\item A $\K$-ring is a ring $A$ equipped with a ring
homomorphism $\opn{str}_{A} : \K \to A$, called the structural homomorphism.

\item Suppose $A$ and $B$ are $\K$-rings. A $\K$-ring homomorphism
$\phi : B \to C$ is a ring homomorphism $\phi$ satisfying
$\phi \circ \opn{str}_{A} = \opn{str}_{B}$.

\item The category of $\K$-rings and $\K$-ring homomorphisms is denoted by
$\cat{Rng} \over \K$.
\end{enumerate}
\end{dfn}

The structural homomorphism $\opn{str}_{A}$ shall usually remain implicit, and
we shall just talk about the $\K$-ring $A$. Most textbooks use the expression
``unital associative commutative $\K$-algebra'' to mean an $\K$-ring.
Since every ring $A$ admits a unique ring homomorphism $\Z \to A$, it follows 
that there is equality $\cat{Rng} \over \Z = \cat{Rng}$. 
In our paper we are going to be mostly concerned with $\R$-rings.

The last topic to review is {\em affine schemes}. Actually, we won't require 
the full structure of the affine scheme $(X, \OO_X) = \opn{Spec}(A)$
as a locally ringed space. The structure sheaf $\OO_X$ is going to be
ignored, and we shall only care about the underlying topological space $X$, 
whose definition we now recall. In this definition we introduce some new 
notation.

\begin{dfn}[Zariski Topology] \label{dfn:460}
Let $A$ be a ring. 
\begin{enumerate}
\item The {\em prime spectrum} of $A$ is the set $\opn{Spec}(A)$ of prime 
ideals of $A$. Let us write $X := \opn{Spec}(A)$. 

\item Given an ideal $\a \sub A$, the subset
\[ \opn{Zer}_X(\a) := 
\{ \lmsp \p \in X \mid \a \sub \p \lmsp \} \sub X \]
is called the {\em closed subset of $X$ defined by the ideal $\a$}. 

\item The {\em Zariski topology} of $X$ is the topology in which the closed  
subsets are $\opn{Zer}_X(\a)$, as $\a$ runs over all the ideals $\a \sub A$. 

\item If $\a = (a)$ is a principal ideal, then we write 
$\opn{Zer}_X(a) :=\opn{Zer}_X(\a)$. 

\item For an element $a \in A$ we write 
\[ \opn{NZer}_X(a) = \{ \lmsp \p \in X \mid a \notin \p \lmsp \} \sub X . \]
This is is called the {\em principal open subset defined by the element $a$}.
\end{enumerate}
\end{dfn}

Obviously the principal open subset $\opn{NZer}_X(a)$ is the complement of the 
principal closed subset $\opn{Zer}_X(a)$. It is easy to verify that 
\begin{equation} \label{eqn:365}
\opn{Zer}_X(\a) = \bigcap_{a \in \a} \, \opn{Zer}_X(a) .
\end{equation}
This formula implies that the principal 
open subsets of $X$ form a basis of the Zariski topology; i.e.\ for every
open subset $U \sub X$ and for every point $x \in U$ there is some element
$a \in A$ such that
$x \in \opn{NZer}_X(a) \sub U$.
It is known that the topological space
$X = \opn{Spec}(A)$ is quasi-compact. 

Suppose $\phi : A \to B$ is a ring homomorphism. If $\q \sub B$ is a prime 
ideal, then $\p := \phi^{-1}(\q) \sub A$ is also a prime ideal. The resulting 
function
\begin{equation} \label{eqn:370}
\opn{Spec}(\phi) : \opn{Spec}(B) \to \opn{Spec}(A) , \quad
\opn{Spec}(\phi)(\q) := \phi^{-1}(\q)
\end{equation}
is actually continuous (this is an easy exercise).
In this way we obtain a functor
\begin{equation} \label{eqn:368}
\opn{Spec} : \cat{Rng}^{\mrm{op}} \to \cat{Top} .
\end{equation}

For more information on the Zariski topology of affine schemes see the books 
\cite{AK} or \cite{Ei}.

\begin{dfn} \label{dfn:370}
Let $A$ be a ring. The set of maximal ideals of $A$ is denoted by
$\opn{MSpec}(A)$. The set $\opn{MSpec}(A)$ is a subset of $\opn{Spec}(A)$,
and we give it the induced Zariski topology.
\end{dfn}

\begin{prop} \label{prop:500}
The following are equivalent for a ring $A$. 
\begin{itemize}
\rmitem{i} $A = 0$. 

\rmitem{ii} $\opn{MSpec}(A) = \varnothing$.
\end{itemize}
\end{prop}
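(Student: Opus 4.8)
The plan is to prove the two implications separately, handling the harder direction by contraposition. The implication (i) $\Rightarrow$ (ii) is essentially definitional: in the zero ring we have $1 = 0$, so the only ideal is $A$ itself, and since a maximal ideal is by definition a \emph{proper} ideal that is maximal among proper ideals, the zero ring admits no maximal ideals at all. Hence $\opn{MSpec}(A) = \varnothing$.

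For (ii) $\Rightarrow$ (i) I would prove the contrapositive, i.e.\ that every nonzero ring possesses at least one maximal ideal. So assume $A \neq 0$, whence $1 \neq 0$ in $A$. Let $\Sigma$ be the set of all proper ideals of $A$, partially ordered by inclusion. This set is nonempty, since the zero ideal $(0)$ is proper (it does not contain $1$). The key step is to verify the hypothesis of Zorn's Lemma: given a nonempty chain $\{ \a_i \}_{i \in I}$ in $\Sigma$, its union $\a := \bigcup_{i \in I} \a_i$ is again an ideal — here one uses that the $\a_i$ are totally ordered, so any two elements of the union already lie in a common $\a_i$ — and it is proper, because $1 \notin \a_i$ for every $i$ forces $1 \notin \a$. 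Thus $\a \in \Sigma$ is an upper bound for the chain.

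By Zorn's Lemma $\Sigma$ then has a maximal element $\m$, which is precisely a maximal ideal of $A$, so $\m \in \opn{MSpec}(A)$ and $\opn{MSpec}(A) \neq \varnothing$. I do not anticipate any genuine obstacle in this argument; the only point that requires a little care is the verification that the union of a chain of proper ideals remains a proper ideal, which is exactly where the totally ordered condition and the uniform failure $1 \notin \a_i$ along the chain are both used.
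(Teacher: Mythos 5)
Your proposal is correct and follows exactly the route the paper takes: the implication (i) $\Rightarrow$ (ii) is immediate, and (ii) $\Rightarrow$ (i) is the standard Zorn's Lemma argument, which the paper cites as ``a standard exercise'' and which you have simply written out in full (including the two points that need care: nonemptiness of the poset of proper ideals, and properness of the union of a chain via $1 \notin \mathfrak{a}_i$ for all $i$). There is nothing to correct.
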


\begin{proof}
The implication (i) $\Rightarrow$ (ii) is trivial. 
The reverse implication is a standard exercise using Zorn's Lemma. 
\end{proof}

\begin{prop} \label{prop:501}
Let $A$ be a ring and let $X := \opn{MSpec}(A)$. 
The following are equivalent for an element $a \in A$. 
\begin{itemize}
\rmitem{i} $a$ is invertible in $A$. 

\rmitem{ii} $\opn{Zer}_X(a) = \varnothing$.
\end{itemize}
\end{prop}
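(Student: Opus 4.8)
The plan is to prove the equivalence of (i) and (ii) directly, exploiting the fact that $X = \opn{MSpec}(A)$ and that the maximal ideals are precisely the points at which we test for vanishing. The key observation is that $\opn{Zer}_X(a)$ consists of those maximal ideals $\m$ containing $a$, so the statement $\opn{Zer}_X(a) = \varnothing$ says exactly that $a$ lies in no maximal ideal of $A$. Thus the proposition is really the assertion that an element is invertible if and only if it is contained in no maximal ideal, which is a standard piece of commutative algebra.

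For the implication (i) $\Rightarrow$ (ii): if $a$ is invertible, then for any ideal $\a$ containing $a$ we would have $1 = a^{-1} \cd a \in \a$, forcing $\a = A$. In particular no proper ideal, and hence no maximal ideal $\m$, can contain $a$. Therefore $\opn{Zer}_X(a) = \{ \lmsp \m \in X \mid a \in \m \lmsp \} = \varnothing$.

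For the reverse implication (ii) $\Rightarrow$ (i), I would argue by contrapositive, which is the natural direction. Suppose $a$ is not invertible. Then the principal ideal $(a)$ is a proper ideal of $A$: if $(a) = A$ we could write $1 = b \cd a$ for some $b \in A$, making $a$ invertible. Since $(a)$ is a proper ideal, it is contained in some maximal ideal $\m$ of $A$; this is exactly the existence statement furnished by Zorn's Lemma (the same tool invoked in the proof of Proposition \ref{prop:500}). For this $\m$ we have $a \in (a) \sub \m$, so $\m \in \opn{Zer}_X(a)$, whence $\opn{Zer}_X(a) \neq \varnothing$. Taking the contrapositive gives (ii) $\Rightarrow$ (i).

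I do not anticipate a genuine obstacle here; the result is purely formal commutative algebra and does not use anything specific to BC rings or to the topology on $X$ beyond the defining description of $\opn{Zer}_X$. The only nontrivial input is the existence of a maximal ideal above a proper ideal, which is the standard Zorn's Lemma argument already cited in Proposition \ref{prop:500}; one could even phrase the whole proof as a one-line reduction to the classical fact that the non-units of $A$ are exactly the union of its maximal ideals.
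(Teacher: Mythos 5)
Your proof is correct and is in essence the paper's own argument: both hinge on producing, via Zorn's Lemma, a maximal ideal containing the proper ideal $(a)$ when $a$ is not invertible. The only difference is packaging --- the paper passes to the quotient $\bar{A} := A/(a)$, identifies $\opn{Zer}_X(a)$ with $\opn{MSpec}(\bar{A})$, and cites Proposition \ref{prop:500}, whereas you argue by contrapositive and directly invoke the classical fact that every proper ideal lies in a maximal ideal; this is the same Zorn argument in a different wrapper.
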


\begin{proof}
The implication (i) $\Rightarrow$ (ii) is trivial. 
As for the reverse implication let $\a \sub A$ be the ideal generated by $a$,
and let $\bar{A} := A / \a$. It is easy to see that there is a canonical 
bijection $\opn{Zer}_X(a) \cong \opn{MSpec}(\bar{A})$. Using Proposition 
\ref{prop:500} we see that condition (ii) implies that $\bar{A} = 0$; and this 
means that $\a = A$ and that $a$ is invertible.
\end{proof}

\begin{prop} \label{prop:395}
For every ring $A$ the topological space $\opn{MSpec}(A)$ is quasi-compact. 
\end{prop}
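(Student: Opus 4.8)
The plan is to verify the finite-subcovering property by hand. A first temptation is to deduce it from the quasi-compactness of $\opn{Spec}(A)$, but $\opn{MSpec}(A)$ is in general not closed in $\opn{Spec}(A)$, and quasi-compactness does not pass to arbitrary subspaces, so I would instead argue directly on $X := \opn{MSpec}(A)$. I would first invoke the standard topological reduction: to prove quasi-compactness it is enough to extract a finite subcover from every cover by members of a fixed basis. Since the principal open subsets $\opn{NZer}_X(a)$ form a basis of the Zariski topology of $X$ (inherited from $\opn{Spec}(A)$, as noted after equation (\ref{eqn:365})), I may assume the given cover has the form $X = \bigcup_{i \in I} \opn{NZer}_X(a_i)$ with $a_i \in A$.

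Next I would take complements. Let $\a \sub A$ denote the ideal generated by the family $\{ a_i \}_{i \in I}$. Since a maximal ideal contains every $a_i$ precisely when it contains $\a$, formula (\ref{eqn:365}) gives $\bigcap_{i \in I} \opn{Zer}_X(a_i) = \opn{Zer}_X(\a)$, and the hypothesis that the sets $\opn{NZer}_X(a_i)$ cover $X$ is equivalent to $\opn{Zer}_X(\a) = \varnothing$.

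I would then identify this with an emptiness statement for a quotient. Exactly as in the proof of Proposition \ref{prop:501}, the maximal ideals of $A$ containing $\a$ are in canonical bijection with those of $\bar A := A / \a$, so $\opn{Zer}_X(\a) \cong \opn{MSpec}(\bar A)$. Hence $\opn{Zer}_X(\a) = \varnothing$ yields $\opn{MSpec}(\bar A) = \varnothing$, and Proposition \ref{prop:500} forces $\bar A = 0$, i.e. $\a = A$. In particular $1 \in \a$, so there is an expression $1 = \sum_{i \in I_0} b_i \, a_i$ with $I_0 \sub I$ finite and $b_i \in A$. The finite subfamily $\{ a_i \}_{i \in I_0}$ then already generates the unit ideal, whence $\bigcap_{i \in I_0} \opn{Zer}_X(a_i) = \varnothing$ and $X = \bigcup_{i \in I_0} \opn{NZer}_X(a_i)$ is the sought finite subcover.

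I do not anticipate a real obstacle: the entire argument is the dictionary between principal open subsets and ideals. The one step carrying genuine content is the passage $\opn{Zer}_X(\a) = \varnothing \Rightarrow \a = A$, which is where Proposition \ref{prop:500} silently packages the Zorn's-Lemma fact that every proper ideal is contained in a maximal ideal; this is exactly what excludes the degenerate scenario of a proper ideal with empty maximal-vanishing locus.
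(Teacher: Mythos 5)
Your proof is correct, but it takes a genuinely different route from the paper's. The paper works with an arbitrary open cover of $\opn{MSpec}(A)$: it lifts each $U_i$ to an open subset $V_i \sub \opn{Spec}(A)$, writes the complement of $\bigcup_i V_i$ as $Z = \opn{Zer}(\a)$, shows by contradiction that $Z = \varnothing$ (a nonempty $Z \cong \opn{Spec}(A/\a)$ would contain a maximal ideal of $A$, contradicting $Z \cap \opn{MSpec}(A) = \varnothing$), and then extracts finiteness by invoking the quasi-compactness of $\opn{Spec}(A)$, which the paper cites as a known fact without proof. You instead stay entirely inside $X = \opn{MSpec}(A)$: you reduce to covers by principal opens $\opn{NZer}_X(a_i)$, translate coverage into $\opn{Zer}_X(\a) = \varnothing$ for the ideal $\a$ generated by the $a_i$, conclude $\a = A$ via Proposition \ref{prop:500} (exactly the same Zorn's-Lemma input the paper uses), and then extract finiteness from the algebraic identity $1 = \sum_{i \in I_0} b_i \, a_i$. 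The trade-off: your argument is self-contained, since it never appeals to the quasi-compactness of $\opn{Spec}(A)$ -- in effect you inline the standard proof of that cited fact, adapted to maximal spectra -- and it makes the source of finiteness completely explicit (the finite support of the expression of $1$); the cost is the extra topological reduction to basic covers. The paper's proof is shorter modulo the black box, handles arbitrary covers directly, and emphasizes the relationship between $\opn{MSpec}(A)$ and $\opn{Spec}(A)$, which is a recurring theme elsewhere in the paper. Both proofs hinge on the same essential point: a closed subset of the maximal spectrum cut out by a proper ideal cannot be empty.
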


\begin{proof}
Write $X := \opn{Spec}(A)$ and $X_{\mrm{max}} := \opn{MSpec}(A)$. 
Suppose $X_{\mrm{max}} = \bigcup_{i \in I} U_i$ is an open covering.
We need to prove that this has a finite subcovering. 
 
Since $X_{\mrm{max}}$ has the subspace topology induced from $X$, 
for every index $i$ there is some open subset $V_i \sub X$ such that 
$U_i = V_i \cap X_{\mrm{max}}$. Let $Z$ be the complement in $X$ of the open 
set $\bigcup_{i \in I} V_i$. Because $X$ has the Zariski topology, there is 
some ideal $\a \sub A$ such that $Z = \opn{Zer}_X(\a)$. 
Define the ring $\bar{A} := A / \a$, so $Z \cong \opn{Spec}(\bar{A})$
as topological spaces. Note that $X_{\mrm{max}} \cap Z = \varnothing$.

Assume, for the sake of contradiction, that $Z \neq \varnothing$. 
Then the ring $\bar{A}$ is nonzero, and therefore it has some maximal ideal 
$\bar{\m}$. Let $\m \sub A$ be the preimage of $\bar{\m}$ under the canonical 
surjection $A \to \bar{A}$. Then $\m$ is a 
maximal ideal of $A$, and it satisfies 
$\m \in X_{\mrm{max}} \cap Z$, which is a contradiction. We conclude that 
$Z = \varnothing$. 

It follows that $\bigcup_{i \in I} V_i = X$. Because the 
topological space $X$ is 
quasi-compact, there exists some finite subset $I_0 \sub I$ such that 
$X = \bigcup_{i \in I_0} V_i$. But then 
$X_{\mrm{max}} = \bigcup_{i \in I_0} U_i$.
\end{proof}

\begin{rem} \label{rem:395}
$\opn{MSpec}$ is not a functor on $\cat{Rng}$.
Concretely, given a ring homomorphism $\phi : A \to B$ and a maximal
ideal $\n \sub B$, the prime ideal $\p := \phi^{-1}(\n) \sub A$ is often not 
maximal; see Example \ref{exa:365} below. 

In Section \ref{sec:abst-rings} 
of the paper we will study certain categories of rings on which $\opn{MSpec}$ 
is 
a functor (see Proposition \ref{prop:450}).
\end{rem}

\begin{exa} \label{exa:365}
Consider the polynomial ring $\R[t]$ in the variable $t$ over the 
field $\R$, and its fraction field $\R(t)$. The inclusion homomorphism is
$\phi : \R[t] \to \R(t)$.
The ideal $\n := (0) \sub \R(t)$ is maximal, yet its preimage
$\p := \phi^{-1}(\n) = (0) \sub \R[t]$ is not maximal.
\end{exa}

We end this section with a discussion of the failure of the Hausdorff property 
in typical rings that appear in algebraic geometry. This is a prelude to the 
opposite behavior of {\em BC $\R$-rings}, cf.\ Lemma \ref{lem:272}(1) and 
Theorem \ref{thm:203}.

\begin{exa} \label{exa:362}
Again consider the polynomial ring $A := \R[t]$. 
Let $X := \opn{Spec}(A)$, viewed only as a
topological space with its Zariski topology. (In algebraic geometry the affine
scheme $X$ is called the affine line over $\R$, with notation
$\mbf{A}^{1}_{\R}$.) Like every affine scheme, the topological space $X$ is 
quasi-compact. But it is not Hausdorff, 
because the generic point $\p = (0) \in X$ belongs to every nonempty open 
subset of $X$.

Now we look at the topological subspace $X_{\mrm{max}} := \opn{MSpec}(A)$ of
$X = \opn{Spec}(A)$. The points of $X_{\mrm{max}}$ are the maximal ideals
$\m = (p) \sub A$, where $p = p(t)$ is a monic irreducible polynomial,
necessarily of degree $1$ or $2$.
By Proposition \ref{prop:395} the topological space $X_{\mrm{max}}$ is  
quasi-compact. We claim it is not Hausdorff. Note that the argument used in 
the previous paragraph does not apply here. 

To demonstrate that $X_{\mrm{max}}$ is not Hausdorff, we will prove that
every pair $U, V \sub X_{\mrm{max}}$ of nonempty open subsets satisfies
$U \cap V \neq \varnothing$. To see that this is true, it is enough to look at 
a 
pair of nonempty principal open subsets
$U = \opn{NZer}_{X_{\mrm{max}}}(a)$ and
$V = \opn{NZer}_{X_{\mrm{max}}}(b)$
for $a, b \in A = \R[t]$. Because these open subsets are nonempty, the 
polynomials $a$ and $b$ are nonzero. The product $a \cd b$ is then a nonzero 
polynomial, and it has finitely many irreducible factors. Take an irreducible 
polynomial $p \in A= \R[t]$ that does not divide $a \cd b$. Then the maximal 
ideal $x = \m := (p)$ belongs to $U \cap V$.
\end{exa}

\section{Abstract Function Rings} 
\label{sec:abst-rings}

In this section we study certain categories of rings on which $\opn{MSpec}$ 
is functorial. 

\begin{conv} \label{conv:450}
Throughout this section $\K$ is some fixed base field.
\end{conv}
 
Recall that for a $\K$-ring $A$ we denote the structural 
homomorphism by $\opn{str}_A : \K \to A$. See Definition \ref{dfn:365}.

\begin{dfn} \label{dfn:395}
Let $A$ be a $\K$-ring. 
\begin{enumerate}
\item A maximal ideal $\m \sub A$ is called {\em $\K$-valued} if  the 
structural homomorphism $\opn{str}_{A / \m} : \K \to A / \m$ is bijective. 

\item The $\K$-ring $A$ is said to be a {\em $\K$-valued ring} if all its 
maximal ideals are $\K$-valued.
\end{enumerate}
\end{dfn}

\begin{exa} \label{exa:450}
Assume $\K$ is an algebraically closed field, and let $A$ be a finitely 
generated $\K$-ring. The {\em Hilbert Nullstellensatz} says that $A$ is a 
$\K$-valued ring. 
\end{exa}

\begin{exa} \label{exa:395}
Assume $\K = \R$. Consider the polynomial ring $A := \R[t]$. It is an 
$\R$-ring, 
but it has maximal ideals that are not $\R$-valued, such as $\m := (t^2 + 1)$. 
So $A$ is not an $\R$-valued ring. 

Now let $S \sub A$ be the set of all finite products 
$p_1(t) \cdots p_n(t)$, where $n \geq 0$, and the $p_i(t)$ are monic 
{\em quadratic} irreducible polynomials. Let $B := A_S$, the localization of 
$A$ with respect to $S$. This localization removes all the $\C$-valued maximal 
ideals of $A$. The maximal ideals of $B$, those remaining after the 
localization, are of the form 
$\m = (t - \la)$ for $\la \in \R$. Thus $B$ is an $\R$-valued $\R$-ring.
\end{exa}

\begin{rem} \label{rem:405}
Given a $\K$-ring $A$, define $X := \opn{Spec}(A)$ and 
$X_{\mrm{max}} := \opn{MSpec}(A)$. 
Here is a standard definition from algebraic geometry: the set of {\em 
$\K$-valued points of $X$} is 
$X(\K) := \opn{Hom}_{\cat{Rng} \over \K}(A, \K)$.
There is always an inclusion 
$X(\K) \sub X_{\mrm{max}}$. It is easy to see that the ring $A$ is $\K$-valued 
if and only if $X(\K) = X_{\mrm{max}}$.
\end{rem}

\begin{lem} \label{lem:242}
Let $\phi : A \to B$ be a homomorphism in
$\cat{Rng} \over \K$, and let $\n \sub B$ be a $\K$-valued maximal ideal. Then 
the ideal $\m := \phi^{-1}(\n) \sub A$ is maximal and $\K$-valued.
\end{lem}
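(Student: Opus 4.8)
The plan is to reduce the statement to the elementary fact that the kernel of a $\K$-ring homomorphism into $\K$ itself is a $\K$-valued maximal ideal, exploiting the reformulation recorded in Remark \ref{rem:405}.

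First I would repackage the hypothesis on $\n$. Since $\n$ is $\K$-valued, the structural homomorphism $\opn{str}_{B/\n} : \K \to B/\n$ is bijective, so writing $\pi : B \to B/\n$ for the quotient map I can form the $\K$-ring homomorphism $\chi := (\opn{str}_{B/\n})^{-1} \circ \pi : B \to \K$. By construction $\opn{Ker}(\chi) = \pi^{-1}(0) = \n$. Thus a $\K$-valued maximal ideal is nothing more than the kernel of a $\K$-ring homomorphism to $\K$.

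Next, composing with $\phi$, I obtain the $\K$-ring homomorphism $\psi := \chi \circ \phi : A \to \K$, whose kernel is $\psi^{-1}(0) = \phi^{-1}(\chi^{-1}(0)) = \phi^{-1}(\n) = \m$. So it remains only to check the general claim: for any $\K$-ring homomorphism $\psi : A \to \K$, the ideal $\m := \opn{Ker}(\psi)$ is maximal and $\K$-valued. This is where the $\K$-ring structure does the work. Because $\psi$ respects the structural homomorphisms, $\psi \circ \opn{str}_A = \opn{id}_\K$; in particular $\psi$ is surjective, so the induced map $\bar\psi : A/\m \to \K$ is a $\K$-ring isomorphism. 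Consequently $A/\m$ is a field, so $\m$ is maximal, and $\opn{str}_{A/\m} = \bar\psi^{-1}$ is bijective, so $\m$ is $\K$-valued.

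I do not expect a genuine obstacle here: the argument is pure bookkeeping with structural homomorphisms. The one point demanding care is to verify at each stage that the maps in sight are honest morphisms in $\cat{Rng} \over \K$ --- that $\chi$, $\psi$ and $\bar\psi$ commute with the structural maps --- since it is precisely the $\K$-ring condition $\psi \circ \opn{str}_A = \opn{id}_\K$ (and not merely that $\psi$ is a ring map) that forces surjectivity and hence maximality of $\m$. An equivalent route avoiding the explicit splitting would be to observe that $\phi$ induces an injection $A/\m \hookrightarrow B/\n$ and that the composite $\K \to A/\m \to B/\n$ is the bijective map $\opn{str}_{B/\n}$, forcing $A/\m \to B/\n$ to be onto and hence bijective; I find the $\K$-point formulation cleaner.
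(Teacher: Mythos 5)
Your proof is correct, and its main route differs from the paper's in a way worth noting. The paper argues directly with quotients: since $\m = \phi^{-1}(\n)$, the induced $\K$-ring homomorphism $\bar{\phi} : A/\m \to B/\n$ is injective, and because $\opn{str}_{B/\n} = \bar{\phi} \circ \opn{str}_{A/\m}$ is bijective, $\bar{\phi}$ must be surjective, hence an isomorphism; this gives $A/\m \cong \K$, so $\m$ is maximal and $\K$-valued. That is exactly the ``equivalent route'' you sketch in your closing paragraph. Your primary argument instead packages the hypothesis as a character: $\n = \opn{Ker}(\chi)$ for the $\K$-ring homomorphism $\chi := (\opn{str}_{B/\n})^{-1} \circ \pi : B \to \K$, and then $\m = \opn{Ker}(\chi \circ \phi)$, reducing everything to the self-contained fact that the kernel of any $\K$-ring homomorphism $\psi : A \to \K$ is a $\K$-valued maximal ideal (since $\psi \circ \opn{str}_A = \opn{id}_{\K}$ forces $\psi$ surjective and $\opn{str}_{A/\m} = \bar{\psi}^{-1}$). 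What your route buys is that it makes explicit, and reusable, the bijection implicit in Remark \ref{rem:405} between $\K$-valued maximal ideals and $\K$-points $X(\K) = \opn{Hom}_{\cat{Rng} \over \K}(A, \K)$: the lemma becomes the trivial statement that $\K$-points pull back contravariantly along ring homomorphisms. The paper's route is marginally shorter and never introduces the splitting $\chi$, working with ideals and quotients alone. Both arguments are complete; your verifications that $\chi$, $\psi$, $\bar{\psi}$ are genuinely morphisms in $\cat{Rng} \over \K$ are exactly the points that need checking, and they all hold.
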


\begin{proof}
There is an injective $\K$-ring homomorphism
$\bar{\phi} : A / \m \to B / \n$. By assumption the structural
homomorphism $\K \to B / \n$ is bijective, and this implies that $\bar{\phi}$
is surjective. Thus $\bar{\phi}$ is an isomorphism, 
$A / \m \cong \K$, and $\m$ is maximal.
\end{proof}

\begin{dfn} \label{dfn:450}
The full subcategory of $\cat{Rng} \over \K$ on the $\K$-valued rings is 
denoted by $\cat{Rng} \xover{val} \K$. 
\end{dfn}

\begin{prop} \label{prop:450}  
There is a unique functor
\[ \opn{MSpec} : (\cat{Rng} \xover{val} \K)^{\mrm{op}} \to 
\cat{Top} \]
sending a ring $A \in \cat{Rng}\xover{val} \K$
to the topological space $\opn{MSpec}(A)$, and sending a homomorphism 
$\phi : A \to B$ in $\cat{Rng}\xover{val} \K$
to the continuous map
\[ \tag{$*$} \opn{MSpec}(\phi) : \opn{MSpec}(B) \to \opn{MSpec}(A) , \quad
\n \mapsto \phi^{-1}(\n) . \]
\end{prop}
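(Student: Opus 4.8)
The plan is to realize $\opn{MSpec}$ as a restriction of the functor $\opn{Spec} : \cat{Rng}^{\mrm{op}} \to \cat{Top}$ from (\ref{eqn:368}), with Lemma \ref{lem:242} supplying the one nontrivial ingredient. On objects there is nothing to do: for $A \in \cat{Rng}\xover{val}\K$ the set $\opn{MSpec}(A)$ is already a topological space, being a subspace of $\opn{Spec}(A)$ with the induced topology.

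The essential step is well-definedness on morphisms. Given $\phi : A \to B$ in $\cat{Rng}\xover{val}\K$ and a maximal ideal $\n \sub B$, the ideal $\n$ is $\K$-valued because $B$ is a $\K$-valued ring; hence Lemma \ref{lem:242} shows that $\phi^{-1}(\n) \sub A$ is maximal (indeed $\K$-valued). So the assignment $\n \mapsto \phi^{-1}(\n)$ really does carry $\opn{MSpec}(B)$ into $\opn{MSpec}(A)$, and by construction it is nothing but the set-theoretic restriction of $\opn{Spec}(\phi)$ to these subsets. This is precisely the point that fails over $\cat{Rng}$, cf.\ Remark \ref{rem:395} and Example \ref{exa:365}; the $\K$-valued hypothesis is exactly what is needed to repair it.

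Continuity and functoriality then come for free. Since $\opn{MSpec}(B)$ and $\opn{MSpec}(A)$ carry the subspace topologies, and a map into a subspace is continuous precisely when its composite with the subspace inclusion is, the restriction $\opn{MSpec}(\phi)$ inherits continuity from $\opn{Spec}(\phi)$. Because $\opn{MSpec}(\phi)$ is literally a restriction of $\opn{Spec}(\phi)$, and restriction commutes with composition and preserves identities, the contravariant functoriality of $\opn{MSpec}$ descends from that of $\opn{Spec}$. Uniqueness is automatic: the formula prescribes the action on objects and on morphisms, and the topology on each $\opn{MSpec}(A)$ is the specified subspace topology, leaving no freedom. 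Thus the only genuine obstacle, the maximality of $\phi^{-1}(\n)$, has already been isolated in Lemma \ref{lem:242}, and everything else is a formal transport along the subspace inclusions.
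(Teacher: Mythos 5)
Your proposal is correct and follows essentially the same route as the paper's proof: Lemma \ref{lem:242} gives well-definedness of $\opn{MSpec}(\phi)$ on maximal ideals, continuity is inherited from $\opn{Spec}(\phi)$ via the subspace topologies, and functoriality follows formally from the formula ($*$). Your explicit remark on uniqueness and the framing of everything as a restriction of the functor $\opn{Spec}$ are harmless refinements of the same argument.
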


\begin{proof}
Take a homomorphism $\phi : A \to B$ in
$\cat{Rng}\xover{val} \K$ and some $\n \in \opn{MSpec}(B)$. 
By Lemma \ref{lem:242} the ideal $\phi^{-1}(\n) \sub A$ is 
maximal, so the map of sets $\opn{MSpec}(\phi)$ is well-defined.
As for continuity of $\opn{MSpec}(\phi)$, this is because the map
\[ \opn{Spec}(\phi) : \opn{Spec}(B) \to \opn{Spec}(A) , \quad
\q \mapsto \phi^{-1}(\q) \]
is continuous for the Zariski topologies of $\opn{Spec}(A)$ and 
$\opn{Spec}(B)$, and the spaces $\opn{MSpec}(A)$ and $\opn{MSpec}(B)$
have the respective induced subspace topologies.
We have thus shown that $\opn{MSpec}(\phi)$ is a morphism in 
$\cat{Top}$. 

Trivially $\opn{MSpec}$ respects identity automorphisms.
Finally, regarding compositions: given 
$A \xar{\phi} B \xar{\psi} C$ in $\cat{Rng}\xover{bc} \R$,
formula ($*$) shows that 
\[ \opn{MSpec}(\phi) \circ \opn{MSpec}(\psi) = \opn{MSpec}(\psi \circ \phi) . 
\qedhere \] 
\end{proof}

\begin{dfn} \label{dfn:265}
Let $A$ be a $\K$-ring and $\m \sub A$ a $\K$-valued 
maximal ideal, with canonical surjection $\opn{pr}_{\m} : A \to A / \m$.
Define the {\em evaluation ring homomorphism} to be the $\K$-ring 
homomorphism
\[ \opn{ev}_{\m} := (\opn{str}_{A / \m})^{-1} \circ \opn{pr}_{\m} : 
A \to \K . \]
\end{dfn}

In other words, for all $a \in A$ there is equality 
$\opn{str}_{A / \m}(\opn{ev}_{\m}(a)) = a + \m \in A / \m$. 
In a commutative diagram it looks like this:
\[ \begin{tikzcd} [column sep = 10ex, row sep = 6ex]
\K
\ar[r, "{ \opn{str}_{A} }"]
\ar[dr, "{ \opn{str}_{A / \m} }"', "{\simeq}"]
\ar[rr, bend left = 20, start anchor = north east, end anchor = north west,
, "{\opn{id}_{\K}}", "{\simeq}"']
&
A 
\ar[d, "{ \opn{pr}_{\m} }"]
\ar[r, dashed, "{ \opn{ev}_{\m} }"]
&
\K
\ar[dl, "{ \opn{str}_{A / \m} }", "{\simeq}"']
\\
&
A / \m
\end{tikzcd} \]

\begin{dfn} \label{dfn:452}
Let $X$ be a topological space. We denote by $\opn{F}(X, \K)$ the $\K$-ring 
of 
all functions $a : X \to \K$.
\end{dfn}

Clearly, as $X$ changes we have a functor
\[ \opn{F}(-, \K) : \cat{Top}^{\mrm{op}} \to \cat{Rng} \over \K . \]

\begin{dfn} \label{dfn:270}
Let $A$ be a $\K$-valued $\K$-ring.
We define the {\em double evaluation ring homomorphism} 
\[ \opn{dev}_A : A \to \opn{F} \bigl( \opn{MSpec}(A), \K \bigr) \]
to be the $\K$-ring homomorphism with formula 
$\opn{dev}_A(a)(\m) := \opn{ev}_{\m}(a) \in \K$ for $a \in A$ and 
$\m \in  \opn{MSpec}(A)$. 
\end{dfn}

Another way to express $\opn{dev}_A$ is this: for every  
$a \in A$ and $\m \in \opn{MSpec}(A)$ the element 
$\opn{dev}_A(a)(\m) \in \K$ satisfies
\begin{equation} \label{eqn:270}
\opn{str}_{A / \m} \bigl( \opn{dev}_A(a)(\m) \bigr) = 
a + \m \in A / \m . 
\end{equation}

The name "double evaluation" is used because the element
$\opn{dev}_{A}(a)(\m)$ is a function of two arguments: $a$ and $\m$. 

\begin{prop} \label{prop:453}  
The $\K$-ring homomorphism 
$\opn{dev}_A : A \to \opn{F} \bigl( \opn{MSpec}(A) , \K \bigr)$ 
is functorial in the object $A \in \cat{Rng}\xover{val} \K$. 
\end{prop}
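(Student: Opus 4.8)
The plan is to read the proposition as the statement that the homomorphisms $\opn{dev}_A$ constitute a morphism of functors $\cat{Rng}\xover{val}\K \to \cat{Rng}\over\K$. Both its source and target are \emph{covariant} functors on $\cat{Rng}\xover{val}\K$: the source is the inclusion functor $A \mapsto A$, and the target is the composite $A \mapsto \opn{F}\bigl(\opn{MSpec}(A), \K\bigr)$, which is covariant precisely because it is the composition of the two contravariant functors $\opn{MSpec}$ of Proposition \ref{prop:450} and $\opn{F}(-, \K)$ of Definition \ref{dfn:452}. Thus I would reformulate the goal as: for every homomorphism $\phi : A \to B$ in $\cat{Rng}\xover{val}\K$ the square
\[
\begin{tikzcd}[column sep = 12ex, row sep = 7ex]
A \ar[r, "{\phi}"] \ar[d, "{\opn{dev}_A}"'] & B \ar[d, "{\opn{dev}_B}"] \\
\opn{F} \bigl( \opn{MSpec}(A), \K \bigr) \ar[r] & \opn{F} \bigl( \opn{MSpec}(B), \K \bigr)
\end{tikzcd}
\]
commutes in $\cat{Rng}\over\K$, where the bottom arrow is $\opn{F}\bigl(\opn{MSpec}(\phi), \K\bigr)$, namely precomposition with the continuous map $\opn{MSpec}(\phi) : \opn{MSpec}(B) \to \opn{MSpec}(A)$.

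Next I would unwind both composites of the square pointwise. Fix $a \in A$ and a maximal ideal $\n \sub B$, and set $\m := \phi^{-1}(\n)$, which is a $\K$-valued maximal ideal of $A$ by Lemma \ref{lem:242}. Tracing $a$ clockwise and evaluating at $\n$ gives $\opn{dev}_B(\phi(a))(\n) = \opn{ev}_{\n}(\phi(a))$, while tracing $a$ counterclockwise and evaluating at $\n$ gives $\bigl(\opn{dev}_A(a) \circ \opn{MSpec}(\phi)\bigr)(\n) = \opn{dev}_A(a)(\m) = \opn{ev}_{\m}(a)$. Hence the entire proposition reduces to verifying the single identity $\opn{ev}_{\n}(\phi(a)) = \opn{ev}_{\phi^{-1}(\n)}(a)$ in $\K$.

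To establish that identity I would use the injective $\K$-ring homomorphism $\bar{\phi} : A / \m \to B / \n$ induced by $\phi$, exactly as in the proof of Lemma \ref{lem:242}. Since $\bar{\phi}$ respects the structural homomorphisms, $\bar{\phi} \circ \opn{str}_{A / \m} = \opn{str}_{B / \n}$. Applying $\bar{\phi}$ to the defining relation $\opn{str}_{A / \m}(\opn{ev}_{\m}(a)) = a + \m$ of Definition \ref{dfn:265}, and using $\bar{\phi}(a + \m) = \phi(a) + \n$, yields $\opn{str}_{B / \n}(\opn{ev}_{\m}(a)) = \phi(a) + \n = \opn{str}_{B / \n}(\opn{ev}_{\n}(\phi(a)))$. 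Because $\m$ is $\K$-valued the map $\opn{str}_{B / \n}$ is bijective, hence injective, so $\opn{ev}_{\m}(a) = \opn{ev}_{\n}(\phi(a))$, which is the required identity.

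The argument is essentially a bookkeeping exercise, so I do not expect a genuine obstacle. The one place demanding care is tracking the two contravariances, so that the bottom arrow of the square is correctly identified as precomposition with $\opn{MSpec}(\phi)$ rather than some other map, and that evaluating $\opn{F}\bigl(\opn{MSpec}(\phi), \K\bigr)(\opn{dev}_A(a))$ at $\n$ really does produce $\opn{dev}_A(a)$ evaluated at $\phi^{-1}(\n)$; once the pointwise reduction is set up correctly, the residue-field computation closes the proof immediately.
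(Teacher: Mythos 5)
Your proof is correct and takes essentially the same route as the paper's: both reduce functoriality to the pointwise identity $\opn{ev}_{\phi^{-1}(\n)}(a) = \opn{ev}_{\n}(\phi(a))$, and both settle it by pushing the defining relation of $\opn{ev}_{\m}$ through the induced isomorphism $\bar{\phi} : A / \m \to B / \n$ and using injectivity of the structural homomorphism. The only slip is attributional: the bijectivity of $\opn{str}_{B / \n}$ follows from $\n$ being $\K$-valued (which holds since $B$ is a $\K$-valued ring), not from $\m$ being $\K$-valued.
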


\begin{proof}
Let's use the abbreviations 
$\opn{F} := \opn{F}(-, \K)$ and $\opn{M} := \opn{MSpec}$.
Given a homomorphism $\phi : A \to B$ in $\cat{Rng}\xover{val} \K$, we 
need to prove that the diagram 
\begin{equation} \label{eqn:265}
\begin{tikzcd} [column sep = 10ex, row sep = 6ex]
A
\ar[r, "{\opn{dev}_A}"]
\ar[d, "{\phi}"']
&
(\opn{F} \circ \opn{M})(A)
\ar[d, "{(\opn{F} \circ \opn{M})(\phi)}"]
\\
B
\ar[r, "{\opn{dev}_B}"]
&
(\opn{F} \circ \opn{M})(B)
\end{tikzcd} 
\end{equation}
in $\cat{Rng} \over \K$ is commutative.

For the proof we shall require the commutative diagram of ring isomorphisms 
\begin{equation} \label{eqn:280}
\begin{tikzcd} [column sep = 12ex, row sep = 6ex] 
\K
\ar[rr, bend left = 20, start anchor = north east, end anchor = north west,
, "{\opn{str}_{B / \n}}", "{\simeq}"']
\ar[r, "{\opn{str}_{A / \m}}", "{\simeq}"']
&
A / \m
\ar[r, "{\bar{\phi}}", "{\simeq}"']
&
B / \n
\end{tikzcd}
\end{equation}
where $\bar{\phi}(a + \m) = \phi(a) + \n$ for $a \in A$.

Take an element $a \in A$ and a maximal ideal $\n \in \opn{M}(B)$. 
Let $b := \phi(a) \in B$, 
$a' := \opn{dev}_A(a) \in \opn{F}(\opn{M}(A))$,
$b' := \opn{dev}_B(b) \in \opn{F}(\opn{M}(B))$
and $\m := \opn{M}(\phi)(\n) = \phi^{-1}(\n) \in \opn{M}(A)$. 
Next let $\mu := a'(\m) \in \K$ and $\nu := b'(\n) \in \K$. 
We have 
\[ \bigl( (\opn{F} \circ \opn{M})(\phi) \circ 
\opn{dev}_A \bigr) (a)(\n) = 
\bigl( (\opn{F} \circ \opn{M})(\phi) \bigr) (a')(\n) = 
(a' \circ \opn{M}(\phi)) (\n) = a' (\m) = \mu \]
and 
\[ (\opn{dev}_B \circ \, \phi)(a)(\n) = 
\opn{dev}_B(b)(\n) = b'(\n) = \nu ; \]
and it remains to prove that $\mu = \nu$. 

According to formula (\ref{eqn:270}) we know that 
$\opn{str}_{A / \m}(\mu) = a + \m \in A / \m$ and 
$\opn{str}_{B / \n}(\nu) = b + \n \in A / \n$. 
Since $\bar{\phi}(a + \m) = (b + \n)$, 
the commutative diagram of isomorphisms (\ref{eqn:280}) says that $\mu = \nu$. 
\end{proof}

The next theorem explains how the definitions in this section behave with 
respect to finite base change. This theorem will 
be used in Section \ref{BC-C-rings} of the paper.

\begin{thm} \label{thm:458} 
Let $\mbb{L}$ be a finite extension field of $\K$.
Let $A$ be a $\K$-valued $\K$-ring, and define the $\mbb{L}$-ring 
$B := \mbb{L} \lmsp \ot_{\K} \lmsp A$, with $\K$-ring homomorphism
$\ga : A \to B$, $\ga(a) := 1 \ot a$. Then:
\begin{enumerate}
\item $B$ is an $\mbb{L}$-valued ring.

\item For every $\n \in \opn{MSpec}(B)$ the ideal 
$\m := A \cap \n = \ga^{-1}(\n) \sub A$ is maximal.

\item Let $X := \opn{MSpec}(A)$ and $Y := \opn{MSpec}(B)$.  
Then the map $f : Y \to X$, $f(\n) := \ga^{-1}(\n)$, 
is a homeomorphism . 

\item The diagram 
\[ \begin{tikzcd} [column sep = 10ex, row sep = 6ex]
A
\ar[r, "{\ga}"]
\ar[d, "{\opn{dev}_{A}}"']
&
B
\ar[d, "{\opn{dev}_{B}}"]
\\
\opn{F}(X, \K)
\ar[r, "{\opn{F}(f, \opn{str}_{\mbb{L}})}"]
&
\opn{F}(Y, \mbb{L})
\end{tikzcd} \]
in $\cat{Rng} \over \K$ is commutative.
\end{enumerate}
\end{thm}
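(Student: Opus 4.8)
The key structural fact I would use is that, after fixing a $\K$-basis $e_1, \dots, e_n$ of $\mbb{L}$ with $n := [\mbb{L} : \K]$, the homomorphism $\ga$ makes $B = \mbb{L} \ot_{\K} A$ a free $A$-module with basis $e_1 \ot 1, \dots, e_n \ot 1$. In particular $\ga$ is injective and $B$ is module-finite over $A$. I would establish parts (1) and (2) simultaneously. Start with a maximal ideal $\n \sub B$ and put $\m := \ga^{-1}(\n)$; this is a prime ideal, so $A / \m$ is a domain that embeds into the field $B / \n$ via the induced map $\bar{\ga}$. Since $B / \n$ is a quotient of the finitely generated $A$-module $B$, it is a finitely generated $A / \m$-module. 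Here I would invoke the elementary lemma that a field which is module-finite over a subdomain $D$ forces $D$ to be a field (for $0 \neq d \in D$ the inverse $d^{-1}$ is integral over $D$, and clearing denominators in its integral equation exhibits $d^{-1} \in D$). Hence $\m$ is maximal, which is part (2). For part (1), the $\K$-valuedness of $A$ gives $A / \m \cong \K$ (Definition \ref{dfn:395}), so $B / \m B \cong \mbb{L} \ot_{\K} (A / \m) \cong \mbb{L}$ is a field; thus $\m B$ is a maximal ideal contained in the proper ideal $\n$, forcing $\m B = \n$ and $B / \n \cong \mbb{L}$.

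Part (3) is then mostly bookkeeping built on the identity $\m B = \n$. I would define the candidate inverse $g : X \to Y$, $g(\m) := \m B$; it lands in $Y$ because $B / \m B \cong \mbb{L}$ is a field. The relation $\m B = \n$ gives $g \circ f = \opn{id}_Y$, while $f \circ g = \opn{id}_X$ holds because $\ga^{-1}(\m B)$ is maximal by part (2) and contains $\m$, hence equals it. Continuity of $f$ is the restriction to $\opn{MSpec}$ of the continuous map $\opn{Spec}(\ga)$ (as in Proposition \ref{prop:450}), concretely $f^{-1}\bigl(\opn{Zer}_X(a) \cap X\bigr) = \opn{Zer}_Y(\ga(a)) \cap Y$. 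For continuity of $g$ I would compute preimages of subbasic closed sets: writing $b = \sum_{i} e_i \ot a_i$ with $a_i \in A$, and using $\m B = \bigoplus_{i} e_i \ot \m$, one sees $b \in \m B$ iff every $a_i \in \m$, so $g^{-1}\bigl(\opn{Zer}_Y(b) \cap Y\bigr) = \opn{Zer}_X\bigl((a_1, \dots, a_n)\bigr) \cap X$ is closed. Since the principal sets form a basis (formula \ref{eqn:365}), $g$ is continuous and $f$ is a homeomorphism.

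For part (4) I would follow the pattern of Proposition \ref{prop:453}. Fix $a \in A$ and $\n \in Y$, set $\m := f(\n)$, $\mu := \opn{dev}_A(a)(\m) \in \K$ and $\nu := \opn{dev}_B(\ga(a))(\n) \in \mbb{L}$; unwinding the two composites, commutativity at $\n$ is exactly the assertion $\nu = \opn{str}_{\mbb{L}}(\mu)$. This follows from the commutative square of structural isomorphisms
\[ \begin{tikzcd}[column sep = 8ex, row sep = 5ex]
\K \ar[r, "{\opn{str}_{\mbb{L}}}"] \ar[d, "{\opn{str}_{A / \m}}"', "{\simeq}"] & \mbb{L} \ar[d, "{\opn{str}_{B / \n}}", "{\simeq}"'] \\
A / \m \ar[r, "{\bar{\ga}}"] & B / \n
\end{tikzcd} \]
obtained from the identity $\ga \circ \opn{str}_A = \opn{str}_B \circ \opn{str}_{\mbb{L}}$ by passing to the residue fields, the vertical maps being bijective by $\K$-valuedness of $A$ and part (1). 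Applying $\bar{\ga}$ to $\opn{str}_{A / \m}(\mu) = a + \m$ gives $\ga(a) + \n = \opn{str}_{B / \n}\bigl(\opn{str}_{\mbb{L}}(\mu)\bigr)$; comparing with $\opn{str}_{B / \n}(\nu) = \ga(a) + \n$ and using injectivity of $\opn{str}_{B / \n}$ yields $\nu = \opn{str}_{\mbb{L}}(\mu)$.

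The main obstacle is the joint step (1)--(2), namely that the contraction $\m = \ga^{-1}(\n)$ of a maximal ideal stays maximal. Lemma \ref{lem:242} does not help here, since $\n$ is $\mbb{L}$-valued rather than $\K$-valued; the module-finiteness of $B$ over $A$ coming from the basis of $\mbb{L}$ is exactly what substitutes for it, and once $\m B = \n$ is in hand the remaining parts follow routinely.
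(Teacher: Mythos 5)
Your proposal is correct and follows essentially the same route as the paper's proof: contraction of maximal ideals via module-finiteness and integrality (you prove inline the lemma that the paper cites from \cite{AK}), the inverse map $\m \mapsto \m B = \mbb{L} \ot_{\K} \m$ for part (3), a basis expansion of elements of $B$ for the topological statement, and the residue-field square for part (4). The only differences are that you make the identity $\n = \m B$ explicit (which renders the paper's ``easy calculation'' that $f$ and $g$ are mutual inverses transparent), you verify continuity of $g$ on closed sets rather than showing $f$ is open (equivalent, and your formulation in fact avoids a small union-versus-intersection slip in the paper's computation of $\opn{NZer}_Y(b)$), and you carry out the part (4) calculation that the paper omits.
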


The homomorphisms $\opn{dev}_{A}$ and $\opn{dev}_B$ exist because 
the rings $A$ and $B$ are \lb $\K$-valued and $\mbb{L}$-valued, respectively.

\begin{proof} \mbox{}

\smallskip \noindent
(1, 2) Given a maximal ideal $\n \sub B$, define the prime ideal 
$\m := A \cap \n \sub A$ and the rings $L := B / \n$ and 
$K := A / \m$. So $L$ is a field and $K \sub L$ is a subring. 
Since the ring homomorphism 
$\mbb{L} \ot_{\K} K \to L$ is surjective we see that 
$L$ is a finitely generated $K$-module. According to 
\cite[Theorem 10.18]{AK}, $L$ is an integral extension of $K$. Next,  
according to \cite[Lemma 14.1]{AK} the ring $K$ is also a field. Therefore 
the ideal $\m \sub A$ is maximal. Because $A$ is a $\K$-valued ring, it 
follows that $K = A / \m \cong \K$. The surjective ring homomorphism 
$\mbb{L} \ot_{\K} K \to L$ now says that $L \cong \mbb{L}$. 

\medskip \noindent
(3) The map $f : Y \to X$ exists by item (2). 
We shall prove that $f$ is bijective by producing its inverse. 
Given a maximal ideal $\m \sub A$, there is an exact sequence 
\[ 0 \to \m \to A \xar{\opn{ev}_{\m}} \K \to 0 . \]
Applying $\mbb{L} \ot_{\K} (-)$ to this sequence, we obtain the exact sequence 
\[ 0 \to \mbb{L} \ot_{\K} \m \to B \xar{\opn{id} \ot \opn{ev}_{\m}} \mbb{L} \to 
0 . \]
Thus $\n := \mbb{L} \lmsp \ot_{\K} \lmsp \m$ is a maximal ideal of $B$.
In this way we obtain a function $g : X \to Y$, $\m \mapsto \n$. 
An easy calculation shows that the functions $f$ and $g$ are mutual 
inverses; and therefore $f$ is a bijection. 

Let us prove that $f : Y \to X$ is a homeomorphism. 
The map $f$ is continuous because it is the restriction to maximal 
spectra of the continuous map 
$\opn{Spec}(\ga) : \opn{Spec}(B) \to \opn{Spec}(A)$.
It remains to prove that $f$ is an open map. 
Choose a $\K$-basis $(\la_1, \ldots, \la_n)$ of $\mbb{L}$. 
Take a principal open set 
$V = \opn{NZer}_Y(b) \sub Y$ for some $b \in B$. We can expand $b$ into a sum
$b = \sum_{i = 1, \ldots, n} \la_i \cd a_i$ with $a_i \in A$. Then
$V = \bigcap_{i = 1, \ldots, n} \opn{NZer}_Y(a_i)$, 
and hence 
$f(V) = \bigcap_{i = 1, \ldots, n} \opn{NZer}_{X}(a_i)$,
which is an open set of $X$. 

\medskip \noindent
(4) This is an easy calculation.
\end{proof}

\section{Rings of Bounded Continuous Real Valued Functions}
\label{BC-rings}

In the current section we introduce a category of $\R$-rings that plays a 
central role in our paper (and is referred to in the title). These are the {\em 
BC $\R$-rings}, see Definition \ref{dfn:201}. 

The base field $\R$ (and later, in Sections \ref{BC-C-rings}--\ref{sec:inv-c-b},
also the base field $\C$) has two distinct incarnations in this paper: 
sometimes it is just an abstract ring (namely without a topology); and at other 
times it is a topological ring, with its standard norm topology. To minimize 
confusion, {\em by default $\R$ will be considered as an abstract ring}. When 
$\R$ is considered as a topological ring, this will be stated explicitly (like 
in Definition \ref{dfn:200} below). {\em All other rings are abstract rings} 
(with the exception of the Banach rings in Section \ref{sec:inv-c-b}). 
In particular, morphisms in the category 
$\cat{Rng} \over \R$ do not have any continuity condition.

\begin{dfn} \label{dfn:200} 
Let $X$ be a topological space. 
\begin{enumerate}
\item The $\R$-ring consisting of all {\em continuous functions} $a : X \to \R$
is denoted by $\opn{F}_{\mrm{c}}(X, \R)$. 

\item The $\R$-ring  consisting of all {\em bounded 
continuous functions} $a : X \to \R$ is denoted by 
$\opn{F}_{\mrm{bc}}(X, \R)$.
\end{enumerate}
Here continuity and boundedness are with respect to the standard norm on the 
field $\R$. 
\end{dfn}

We repeat: the rings 
$\opn{F}_{\mrm{c}}(X, \R)$ and $\opn{F}_{\mrm{bc}}(X, \R)$ are {\em not 
topologized}. Of course if $X$ is a discrete topological space then 
$\opn{F}_{\mrm{c}}(X, \R) = \opn{F}_{}(X, \R)$, 
and if $X$ is a compact topological space then 
$\opn{F}_{\mrm{bc}}(X, \R) = \opn{F}_{\mrm{c}}(X, \R)$.  
For the empty topological space the ring $\opn{F}(\varnothing, \R)$ is the 
zero ring. 

\begin{rem} \label{rem:325}
The standard notation for the ring $\opn{F}_{\mrm{c}}(X, \R)$ is
$\opn{C}(X)$; but this is not good notation for our paper, for three 
reasons. First, $\opn{C}(X)$ often refers to the ring 
$\opn{F}_{\mrm{c}}(X, \mbb{C})$
of continuous $\C$-valued functions, which will show up in Sections 
\ref{BC-C-rings} and \ref{sec:inv-c-b}, so the notation $\mrm{C}(X)$ would 
cause 
unwanted ambiguity. Second, the ring $\opn{C}(X)$ -- in either of its two 
meanings -- is often viewed as a topological ring, with the sup norm, whereas 
for us it is mostly an abstract ring. And third, in our paper we require more 
refined notions, and hence also more refined notation. 
\end{rem}

It is obvious that 
\begin{equation} \label{eqn:331}
\opn{F}_{\mrm{bc}}(-, \R) , \  
\opn{F}_{\mrm{c}}(-, \R), \ \opn{F}(-, \R) \ : \
\cat{Top}^{\mrm{op}} \ \to \ \cat{Rng} \over \R
\end{equation}
are functors. Furthermore, each of them is a subfunctor of the next 
one. This means that for every topological space $X$ there are 
inclusion of rings 
$\opn{F}_{\mrm{bc}}(X, \R) \sub \opn{F}_{\mrm{c}}(X, \R)
\sub \opn{F}(X, \R)$,
and these inclusions are functorial in $X$. 

\begin{dfn} \label{dfn:201}
An $\R$-ring $A$ is called a {\em BC $\R$-ring} if it is isomorphic,
as an $\R$-ring, to the ring $\opn{F}_{\mrm{bc}}(X, \R)$ for some {\em 
compact} 
topological space $X$. The full subcategory of $\cat{Rng} \over \R$ on the BC 
$\R$-rings is denoted by $\cat{Rng}\xover{bc} \R$.
\end{dfn}

Thus the  category $\cat{Rng}\xover{bc} \R$ is 
the essential image of the functor 
\begin{equation} \label{eqn:330}
\opn{F}_{\mrm{bc}}(-, \R) : (\cat{Top}_{\mrm{cp}})^{\mrm{op}} \to 
\cat{Rng} \over \R . 
\end{equation}
The expression "BC" stands for "bounded continuous". 

\begin{rem} \label{rem:396}
In Theorem \ref{thm:415}  we shall see that for {\em every} topological 
space $X$ the ring $\opn{F}_{\mrm{bc}}(X, \R)$ belongs to 
$\cat{Rng}\xover{bc} \R$. 
This fact is closely related to the existence of the
Stone-\v{C}ech Compactification of $X$.  
\end{rem}

\begin{dfn} \label{dfn:230}
Given a topological space $X$ and a point $x \in X$, let
\[ \opn{ev}_{x} : \opn{F}_{\mrm{bc}}(X, \R) \to \R \]
be the $\R$-ring homomorphism $\opn{ev}_{x}(a) := a(x)$. It is called the 
{\em evaluation at $x$ homomorphism}. 
\end{dfn}

It is clear that $\m := \opn{Ker}(\opn{ev}_{x})$ is an $\R$-valued maximal 
ideal of the ring $A := \opn{F}_{\mrm{bc}}(X, \R)$.

\begin{dfn} \label{dfn:231}
For a topological space $X$ define the map of sets
\[ \opn{refl}^{\mrm{\lmsp alg}}_X : X \to
\opn{MSpec} \bigl( \opn{F}_{\mrm{bc}}(X, \R) \bigr) , \quad
x \mapsto \opn{Ker}(\opn{ev}_{x}) . \]
We call it the {\em algebraic reflection map}. 
\end{dfn}

Here is a general topological analogue of the definition for prime spectra,
see Definition \ref{dfn:460}. 

\begin{dfn} \label{dfn:325}
Given a topological space $X$ and a function $a \in \opn{F}_{\mrm{c}}(X, \R)$,
we define the open subset
\[ \opn{NZer}_X(a) := \{ x \in X \mid a(x) \neq 0 \} \sub X , \]
and call it the {\em principal open set} determined by $a$.
\end{dfn}

The subset $\opn{NZer}_X(a)$ is open in $X$ is because the function
$a : X \to \R$ is continuous for the norm topology of $\R$. 
The relation between Definitions \ref{dfn:230} and \ref{dfn:325} is this:
\begin{equation} \label{eqn:325}
\opn{NZer}_X(a) = \{ x \in X \mid \opn{ev}_{x}(a) \neq 0 \} 
\end{equation}

\begin{prop} \label{prop:320}   
Let $X$ be a topological space, and define the topological space
$\bar{X} := \opn{MSpec} \bigl( \opn{F}_{\mrm{bc}}(X, \R) \bigr)$.
\begin{enumerate}
\item The algebraic reflection map
$\opn{refl}^{\mrm{\lmsp alg}}_X : X \to \bar{X}$ is continuous.

\item The image of $\opn{refl}^{\mrm{\lmsp alg}}_X$ is dense in $\bar{X}$.
\end{enumerate}
\end{prop}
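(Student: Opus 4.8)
The plan is to handle both parts through the single observation that the principal open sets $\opn{NZer}_{\bar X}(a)$, for $a \in A := \opn{F}_{\mrm{bc}}(X, \R)$, form a basis of the Zariski topology on $\bar X = \opn{MSpec}(A)$. Throughout I would write $\m_x := \opn{refl}^{\mrm{\lmsp alg}}_X(x) = \opn{Ker}(\opn{ev}_x)$, so that for every $a \in A$ the condition $a \notin \m_x$ means exactly $\opn{ev}_x(a) = a(x) \neq 0$. Reducing everything to this basis is what makes both statements transparent.

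For part (1), I would use that a map into $\bar X$ is continuous as soon as the preimage of every basic open set is open, and simply compute the preimage of $\opn{NZer}_{\bar X}(a)$. A point $x$ lies in this preimage precisely when $a \notin \m_x$, that is, when $a(x) \neq 0$; by equation (\ref{eqn:325}) this preimage is exactly the principal open set $\opn{NZer}_X(a) \sub X$, which is open because $a$ is continuous. Hence $\opn{refl}^{\mrm{\lmsp alg}}_X$ is continuous.

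For part (2), I would invoke the criterion that a subset is dense iff it meets every nonempty member of a basis, and show that each nonempty $\opn{NZer}_{\bar X}(a)$ contains some $\m_x$. The key point is that $A$ is genuinely a ring of functions: if $a(x) = 0$ for all $x \in X$, then $a$ is the zero element of $A$, so $a$ lies in every maximal ideal and $\opn{NZer}_{\bar X}(a) = \varnothing$. Contrapositively, if $\opn{NZer}_{\bar X}(a) \neq \varnothing$, then $a(x) \neq 0$ for some $x$, whence $a \notin \m_x$ and $\m_x = \opn{refl}^{\mrm{\lmsp alg}}_X(x) \in \opn{NZer}_{\bar X}(a)$. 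Thus the image meets every nonempty basic open set and is dense.

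The argument is short and uses neither compactness of $X$ nor the $\R$-valuedness machinery of the previous sections. The only conceptual point I would keep in mind is that $\bar X$ typically carries many maximal ideals not of the form $\m_x$ --- this is precisely the content of the Stone--\v{C}ech compactification --- so density is a genuine (if easy) statement rather than a tautology. There is no real obstacle here; restricting attention to the basis of principal open sets, and exploiting that elements of $A$ are literally functions, does all the work.
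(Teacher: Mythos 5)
Your proof is correct and follows essentially the same route as the paper's own argument: part (1) is the identical computation $h^{-1}\bigl( \opn{NZer}_{\bar{X}}(a) \bigr) = \opn{NZer}_X(a)$ against the basis of principal open sets, and part (2) is the same observation (stated by you in contrapositive form) that a nonempty basic open set $\opn{NZer}_{\bar{X}}(a)$ forces $a$ to be nonzero as a function, so some $\opn{refl}^{\mrm{\lmsp alg}}_X(x)$ lies in it. Your closing remark that density is non-tautological because $\bar{X}$ generally contains maximal ideals not of the form $\opn{Ker}(\opn{ev}_x)$ is a nice touch, consistent with the paper's later results.
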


\begin{proof} \mbox{}

\smallskip (1) Let's write $A := \opn{F}_{\mrm{bc}}(X, \R)$
and $h := \opn{refl}^{\mrm{\lmsp alg}}_X$. 
Recall that $\bar{X} = \opn{MSpec}(A)$ has the Zariski topology. Thus, to 
prove continuity of $h$ it suffices to show that for every
principal open set $\bar{U} = \opn{NZer}_{\bar{X}}(a) \sub \bar{X}$, determined 
by an element $a \in A$, the preimage $h^{-1}(\bar{U}) \sub X$ is open. But
$h^{-1}(\bar{U}) = \opn{NZer}_{X}(a)$, and this is open in $X$ 
as explained above. 

\medskip \noindent
(2) Consider a nonempty principal open set 
$\bar{U} = \opn{NZer}_{\bar{X}}(a) \sub \bar{X}$ for some $a \in A$. 
Since $\bar{U} \neq \varnothing$ the element $a \in A$ is nonzero, so as a 
function $a : X \to \R$ must be nonzero. But 
then there must be some point $x \in X$ such that $a(x) \neq 0$. This $x$
satisfies $\opn{refl}^{\mrm{\lmsp alg}}_X(x) \in \bar{U}$. 
\end{proof}

\section{Duality for Compact Topological Spaces}
\label{sec:comp-aff}

In this section we prove Theorem \ref{thm:203}, which asserts that the 
categories $\cat{Top}_{\mrm{cp}}$ and $\cat{Rng}\xover{bc} \R$ are dual 
to each other. Recall that $\cat{Rng}\xover{bc} \R$ is the category of 
BC $\R$-rings, without a topology (Definition \ref{dfn:201}), and 
$\cat{Top}_{\mrm{cp}}$ is the category of compact topological spaces and 
continuous maps (see Convention \ref{conv:452}).

The property of compact topological spaces that is important for us is stated 
in the next theorem. This is a classical result, usually appearing as two 
distinct theorems, as the proof shows. 

\begin{thm}[Separation by Continuous Functions] \label{thm:245}
Let $X$ be a compact topological space, and let $Y_0$ and $Y_1$ be disjoint 
closed subsets of $X$. Let $Z := [0, 1]$, the closed interval in $\R$, with its 
usual norm topology. Then there is a continuous function 
$f : X \to Z$ such that $f(Y_0) \sub \{ 0 \}$ and $f(Y_1) \sub \{ 1 \}$.
\end{thm}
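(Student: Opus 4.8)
The plan is to prove this in the two classical steps the statement alludes to: first that a compact space is \emph{normal} (disjoint closed sets are separated by disjoint open sets), and then Urysohn's construction of the separating function out of normality.

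For normality I would first establish an auxiliary separation of a point from a closed set. Given $x \in X$ and a closed set $K \sub X$ with $x \notin K$, note that $K$ is quasi-compact, being a closed subset of a quasi-compact space. For each $y \in K$ the Hausdorff property supplies disjoint open sets $U_y \ni x$ and $V_y \ni y$; the $V_y$ cover $K$, so by quasi-compactness finitely many $V_{y_1}, \ldots, V_{y_n}$ suffice, and then $U := \bigcap_i U_{y_i}$ and $V := \bigcup_i V_{y_i}$ are disjoint open neighborhoods of $x$ and of $K$ respectively. Running this argument a second time --- now separating each point $x \in Y_0$ from the closed set $Y_1$, covering the quasi-compact set $Y_0$, and extracting a finite subcover --- produces disjoint open sets containing $Y_0$ and $Y_1$. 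This yields normality, and in particular the ability, for any closed set contained in an open set, to insert a third open set between them whose closure is still contained in the outer one.

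The heart of the proof is Urysohn's interpolation. Indexing by the dyadic rationals $D \sub [0,1]$, I would build a family of open sets $\{ W_r \}_{r \in D}$ satisfying $Y_0 \sub W_0$, $W_1 = X \setminus Y_1$, and the nesting condition $\ol{W_r} \sub W_s$ whenever $r < s$. The construction proceeds by induction on the dyadic level: when a new dyadic $t$ is introduced between already-constructed consecutive values $r < s$, normality applied to the inclusion $\ol{W_r} \sub W_s$ inserts an open $W_t$ with $\ol{W_r} \sub W_t \sub \ol{W_t} \sub W_s$, preserving the nesting.

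Finally I would define $f(x) := \inf \{ r \in D \mid x \in W_r \}$, with $f(x) := 1$ when $x \in Y_1$. The nesting condition forces $f(Y_0) \sub \{ 0 \}$ and $f(Y_1) \sub \{ 1 \}$. Continuity follows by checking preimages of the subbasic open sets of $Z = [0,1]$: one verifies that $f^{-1}([0,a)) = \bigcup_{r < a} W_r$ and $f^{-1}((a,1]) = \bigcup_{r > a} (X \setminus \ol{W_r})$ are both open, using the nesting condition for the second identity. I expect the main obstacle to be the bookkeeping in the inductive interpolation --- confirming that the nesting condition survives as new dyadic levels are inserted --- rather than anything conceptually deep; the compactness arguments establishing normality are routine.
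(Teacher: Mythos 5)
Your proposal is correct and follows exactly the same route as the paper: the paper's proof consists of citing the two classical theorems (every compact Hausdorff space is normal, per Munkres or Kelley, and the Urysohn Lemma) whose proofs you have written out in full. Your finite-subcover argument for normality and the dyadic interpolation with $f(x) := \inf \{ r \in D \mid x \in W_r \}$ are precisely the standard proofs of those cited results, and the details you give (including the two preimage identities used for continuity) are sound.
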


\begin{proof}
According to \cite[Theorem 32.3]{Mn} or \cite[Theorem 5.9]{Ke}
the space $X$ is normal, and according to 
\cite[Theorem 33.1]{Mn} or \cite[Lemma 4.4]{Ke} (the Urysohn Lemma) 
such a function $a$ exists. 
\end{proof}

\begin{lem} \label{lem:230}
Let $X$ be a compact topological space. 
\begin{enumerate}
\item The elements of $\opn{F}_{\mrm{bc}}(X, \R)$ separate the points of $X$.
Namely, given two distinct points $x, y \in X$, there exists a function 
$a \in \opn{F}_{\mrm{bc}}(X, \R)$ such that $a(x) \neq a(y)$.

\item The principal open sets form a basis of the topology of $X$. By this we 
mean that for every open set $U \sub X$ and every point $x \in U$ there exists 
some $a \in \opn{F}_{\mrm{bc}}(X, \R)$ such that 
$x \in \opn{NZer}_X(a) \sub U$. 
\end{enumerate}
\end{lem}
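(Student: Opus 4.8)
The plan is to derive both parts directly from Theorem \ref{thm:245}, the only conceptual input being the observation that in a compact space---which is Hausdorff by Convention \ref{conv:452}---every singleton $\{x\}$ is a closed subset. Once this is in place, each part reduces to choosing the right pair of disjoint closed sets and reading off the separating function, noting each time that a function valued in $[0,1]$ is automatically bounded and continuous, hence lies in $\opn{F}_{\mrm{bc}}(X, \R)$.

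For part (1), given distinct points $x, y \in X$, I would take $Y_0 := \{x\}$ and $Y_1 := \{y\}$. These are closed (singletons in a Hausdorff space) and disjoint, so Theorem \ref{thm:245} supplies a continuous $f : X \to [0,1]$ with $f(x) = 0$ and $f(y) = 1$. Then $f \in \opn{F}_{\mrm{bc}}(X, \R)$ and $f(x) = 0 \neq 1 = f(y)$, which is exactly the required separation.

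For part (2), given an open set $U \sub X$ and a point $x \in U$, I would instead take $Y_0 := X \setminus U$ and $Y_1 := \{x\}$. The first is closed as the complement of an open set, the second is closed as a singleton, and they are disjoint precisely because $x \in U$. Theorem \ref{thm:245} then produces a bounded continuous $a : X \to [0,1]$ with $a(z) = 0$ for all $z \in X \setminus U$ and $a(x) = 1$. Since $a$ vanishes off $U$, the nonvanishing locus satisfies $\opn{NZer}_X(a) \sub U$; and $a(x) = 1 \neq 0$ yields $x \in \opn{NZer}_X(a)$. Hence $x \in \opn{NZer}_X(a) \sub U$, as demanded.

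I do not expect a genuine obstacle, as the full weight of the argument is carried by Theorem \ref{thm:245}, which already packages the normality of compact spaces together with the Urysohn Lemma. The only point deserving attention is the \emph{orientation} of the separation in part (2): one must assign the complement $X \setminus U$ the value $0$ and the point $x$ the value $1$, so that the nonvanishing set $\opn{NZer}_X(a)$ is trapped inside $U$ rather than inside its complement.
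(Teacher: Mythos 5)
Your proposal is correct and matches the paper's own proof: both parts are obtained by applying Theorem \ref{thm:245} to exactly the same pairs of disjoint closed sets (two singletons for part (1), and $\{x\}$ together with $X \setminus U$ for part (2)), with the same orientation in part (2) so that $\opn{NZer}_X(a) \sub U$. The only addition is your explicit remark that singletons are closed because $X$ is Hausdorff, which the paper leaves implicit.
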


\begin{proof} 
(1) Consider the  disjoint closed subsets $\{ x \}$ and $\{ y \}$ of $X$. 
By the Theorem \ref{thm:245} there is a bounded continuous function 
$a : X \to \R$ such that $a(x) = 1$ and $a(y) = 0$. 

\medskip \noindent
(2) Consider the disjoint closed subsets $\{ x \}$ and 
$Z := X - U$ of $X$. By Theorem \ref{thm:245}  there is a bounded continuous 
function $a : X \to \R$ such that $a(x) = 1$ and $a(z) = 0$ for all $z \in Z$. 
For such $a$ we have $x \in \opn{NZer}_X(a) \sub U$. 
\end{proof}

Given a ring $A$, its maximal spectrum $\opn{MSpec}(A)$ is equipped with the 
Zariski topology (recalled in Section \ref{sec:prlim}).

\begin{lem} \label{lem:232} 
Let $X$ be a compact topological space, and let 
$A := \opn{F}_{\mrm{bc}}(X, \R)$.
\begin{enumerate}
\item Given $\m \in \opn{MSpec}(A)$, there is a point $x \in X$ such 
that $\m = \opn{Ker}(\opn{ev}_{x}) = \opn{refl}^{\mrm{\lmsp alg}}_X(x)$. 

\item The algebraic reflection map 
$\opn{refl}^{\mrm{\lmsp alg}}_X : X \to \opn{MSpec}(A)$
is a homeomorphism.
\end{enumerate}
\end{lem}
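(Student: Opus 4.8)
The plan is to prove part (1) first — that every maximal ideal of $A := \opn{F}_{\mrm{bc}}(X, \R)$ is the kernel of an evaluation homomorphism — and then to deduce part (2) almost formally. For part (1), I would fix $\m \in \opn{MSpec}(A)$ and consider the common zero locus $Z(\m) := \{ x \in X \mid a(x) = 0 \text{ for all } a \in \m \}$. The crux is to show $Z(\m) \neq \varnothing$: once a point $x \in Z(\m)$ is found we have $\m \sub \opn{Ker}(\opn{ev}_x)$, and since $\opn{ev}_x : A \to \R$ is surjective its kernel is a proper (indeed maximal) ideal; maximality of $\m$ then forces $\m = \opn{Ker}(\opn{ev}_x) = \opn{refl}^{\mrm{\lmsp alg}}_X(x)$, as required.

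To show $Z(\m) \neq \varnothing$ I would argue by contradiction. If $Z(\m) = \varnothing$, then every $x \in X$ lies outside the zero set of some $a \in \m$, so the principal open sets $\opn{NZer}_X(a)$, for $a \in \m$, form an open covering of $X$. Here quasi-compactness of $X$ enters: there is a finite subcovering $X = \bigcup_{i=1}^{n} \opn{NZer}_X(a_i)$ with $a_i \in \m$. The idea is then to replace these by the single function $b := \sum_{i=1}^{n} a_i^2 \in \m$, which is strictly positive at every point of $X$. A continuous strictly positive function on a quasi-compact space is bounded below by a positive constant (cover $X$ by the increasing opens $\{ b > 1/n \}$ and extract a finite, hence a single, member), so $1/b$ is again bounded and continuous, giving $1/b \in A$. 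Thus $b$ is invertible, which contradicts $b \in \m$ since the maximal ideal $\m$ is proper. Hence $Z(\m) \neq \varnothing$.

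For part (2), write $h := \opn{refl}^{\mrm{\lmsp alg}}_X$ and $\bar{X} := \opn{MSpec}(A)$. Continuity of $h$ is Proposition \ref{prop:320}(1), surjectivity is exactly part (1), and injectivity follows from Lemma \ref{lem:230}(1): if $x \neq y$ there is $a \in A$ with $a(x) \neq a(y)$, and then $a - a(x) \cd 1$ lies in $\opn{Ker}(\opn{ev}_x)$ but not in $\opn{Ker}(\opn{ev}_y)$, so the two kernels differ. It remains to see that $h$ is open, and here the plan is to identify images of basic opens directly: I claim $h \bigl( \opn{NZer}_X(a) \bigr) = \opn{NZer}_{\bar{X}}(a)$ for every $a \in A$. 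Indeed $x \in \opn{NZer}_X(a)$ iff $a(x) \neq 0$ iff $a \notin \opn{Ker}(\opn{ev}_x) = h(x)$ iff $h(x) \in \opn{NZer}_{\bar{X}}(a)$; combined with surjectivity this yields the claimed equality of sets. Since the $\opn{NZer}_X(a)$ form a basis of $X$ by Lemma \ref{lem:230}(2) and the $\opn{NZer}_{\bar{X}}(a)$ form a basis of the Zariski topology on $\bar{X}$, the bijection $h$ matches a basis with a basis, so it is a homeomorphism.

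The main obstacle is the nonemptiness of $Z(\m)$ in part (1); everything else is either cited from earlier results or a short formal argument. The essential input is the interplay between quasi-compactness of $X$ (to pass to a finite subcover) and the ring structure of $A$ (to form the sum of squares $b$ and invert it using boundedness of $1/b$), which together force any proper ideal to have a common zero. A minor point to handle carefully is the verification that $1/b \in \opn{F}_{\mrm{bc}}(X, \R)$, i.e.\ that a positive continuous function on a quasi-compact space is bounded away from $0$; notably the Hausdorff hypothesis plays no role in this part of the argument.
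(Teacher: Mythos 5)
Your proposal is correct, and it differs from the paper in one substantive way: the paper does not prove part (1) at all, but instead cites \cite[Proposition 1.22]{Wa} for a full proof (and \cite[Exercise 14.26]{AK}), reserving its own argument for part (2). Your zero-locus argument for part (1) --- cover $X$ by the sets $\opn{NZer}_X(a)$ with $a \in \m$, extract a finite subcover by quasi-compactness, form $b := \sum_i a_i^2 \in \m$, and show $b$ is invertible because a strictly positive continuous function on a quasi-compact space is bounded away from $0$ --- is sound, and it is in fact precisely the technique the paper itself uses later for the \emph{complex} analogue, Lemma \ref{lem:420}(1), where $\sum_i a_i^{\Ast} \cd a_i$ plays the role of your sum of squares; so your writeup makes the real case self-contained where the paper chose brevity via a citation. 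Your part (2) coincides with the paper's proof: injectivity from Lemma \ref{lem:230}(1), surjectivity from part (1), continuity from Proposition \ref{prop:320}(1), and the identification $\opn{refl}^{\mrm{\lmsp alg}}_X\bigl( \opn{NZer}_X(a) \bigr) = \opn{NZer}_{\bar{X}}(a)$ (where, as you correctly note, surjectivity is needed for the reverse inclusion), so that the bijection matches the basis of $X$ supplied by Lemma \ref{lem:230}(2) with the basis of principal open sets of the Zariski topology. Your closing observation that the Hausdorff hypothesis is irrelevant to part (1) --- quasi-compactness alone suffices there, while Hausdorffness enters part (2) through the separation results behind Lemma \ref{lem:230} --- is accurate and worth keeping.
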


\begin{proof} 
These assertions are not new, cf.\ \cite[Exercise 14.26]{AK}. Item (1) 
has a full proof in \cite[Proposition 1.22]{Wa}. For the convenience of the 
reader we are going to provide a proof of item (2). 

Let's write $\bar{X} := \opn{MSpec}(A)$.
Lemma \ref{lem:230}(1) implies that the map 
$\opn{refl}^{\mrm{\lmsp alg}}_X : X \to \bar{X}$ is injective. 
Indeed, if $x, y \in X$ are distinct, and $a \in A$ is such 
that $a(x) = 1$ and $a(y) = 0$, then $a \in \opn{refl}^{\mrm{\lmsp alg}}_X(y)$ 
but 
$a \notin \opn{refl}^{\mrm{\lmsp alg}}_X(x)$, so these maximal ideals are 
distinct. 
Item (1) above shows that the map $\opn{refl}^{\mrm{\lmsp alg}}_X$ is 
surjective.
We see that $\opn{refl}^{\mrm{\lmsp alg}}_X : X \to \bar{X}$ is bijective.

It remains to prove that $\opn{refl}^{\mrm{\lmsp alg}}_X : X \to \bar{X}$ is a 
homeomorphism. Given an element $a \in A$, consider the principal open sets 
$U := \opn{NZer}_X(a) \sub X$ and 
$\bar{U} := \opn{NZer}_{\bar{X}}(a) \sub \bar{X}$. An easy calculation shows 
that 
$\opn{refl}^{\mrm{\lmsp alg}}_X(U) = \bar{U}$. We know (see Section 
\ref{sec:prlim}) 
that the principal open sets $U'$ form a basis of the Zariski topology of 
$\bar{X}$, and according to Lemma \ref{lem:230}(2) the principal open sets 
$U$ form a basis of the given topology of $X$. Therefore 
$\opn{refl}^{\mrm{\lmsp alg}}_X$ is a homeomorphism.
\end{proof}

\begin{lem} \label{lem:251}
If $A$ is a BC $\R$-ring then it is $\R$-valued.
\end{lem}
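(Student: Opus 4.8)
The plan is to reduce at once to the concrete model and then feed in the point-evaluation description of maximal ideals supplied by Lemma \ref{lem:232}(1). The property of being $\R$-valued (Definition \ref{dfn:395}) is formulated purely in terms of the $\R$-ring structure of a ring, so it is preserved under isomorphisms in $\cat{Rng} \over \R$. Hence it suffices to verify the assertion for the ring $A = \opn{F}_{\mrm{bc}}(X, \R)$ attached to a compact topological space $X$, as provided by Definition \ref{dfn:201}.

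First I would fix an arbitrary maximal ideal $\m \sub A$ and use Lemma \ref{lem:232}(1) to find a point $x \in X$ with $\m = \opn{Ker}(\opn{ev}_{x})$, where $\opn{ev}_{x} : A \to \R$ is the evaluation homomorphism of Definition \ref{dfn:230}. The key elementary point is that $\opn{ev}_{x}$ is a \emph{surjective} $\R$-ring homomorphism: the structural map $\opn{str}_A$ sends a scalar $\la \in \R$ to the constant function with value $\la$, which lies in $\opn{F}_{\mrm{bc}}(X, \R)$, and evaluating it at $x$ returns $\la$; thus $\opn{ev}_{x} \circ \opn{str}_A = \opn{id}_\R$.

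Next I would pass to the quotient. Since $\opn{ev}_{x}$ is surjective with kernel $\m$, it factors as $\opn{ev}_{x} = \overline{\opn{ev}_{x}} \circ \opn{pr}_{\m}$ for an isomorphism of $\R$-rings $\overline{\opn{ev}_{x}} : A / \m \iso \R$. Because this isomorphism intertwines the structural homomorphisms, and the structural homomorphism of the field $\R$ viewed as an $\R$-ring is $\opn{id}_\R$, the structural homomorphism $\opn{str}_{A / \m} : \R \to A / \m$ must itself be bijective. Therefore $\m$ is $\R$-valued, and since $\m$ was arbitrary, $A$ is an $\R$-valued ring.

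I do not expect a real obstacle in this argument. Essentially all of the content has been front-loaded into Lemma \ref{lem:232}(1), which identifies every maximal ideal of $\opn{F}_{\mrm{bc}}(X, \R)$ with the kernel of a point evaluation; granting that, the remaining verification that $\opn{ev}_{x}$ is a surjective $\R$-ring homomorphism is immediate from the presence of constant functions in the ring.
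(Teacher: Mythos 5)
Your proposal is correct and follows essentially the same route as the paper's proof: reduce to the model $A = \opn{F}_{\mrm{bc}}(X, \R)$ for compact $X$, invoke Lemma \ref{lem:232}(1) to write an arbitrary maximal ideal as $\m = \opn{Ker}(\opn{ev}_x)$, and conclude that $\opn{ev}_x$ induces an $\R$-ring isomorphism $A / \m \iso \R$. The only difference is that you spell out the surjectivity of $\opn{ev}_x$ via constant functions and the compatibility with structural homomorphisms, details the paper leaves implicit.
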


\begin{proof}
We can assume that $A = \opn{F}_{\mrm{bc}}(X, \R)$ for some compact 
topological 
space $X$. Let $\m$ be a maximal ideal of $A$. 
By Lemma \ref{lem:232} there is a point $x \in X$ such that 
$\m =  \opn{Ker}(\opn{ev}_x)$. But $\opn{ev}_x$ is an $\R$-ring homomorphism 
$\opn{ev}_x : A \to \R$, so it induces an $\R$-ring isomorphism 
$A / \m \iso \R$. 
\end{proof}

\begin{rem} \label{rem:345}
Lemma \ref{lem:251} can be understood as an analogy of the Hilbert 
Nullstellensatz, see Example \ref{exa:450}. 
However a BC $\R$-ring $A$ is almost always infinitely generated as an 
$\R$-ring, and almost never noetherian, so the analogy to classical 
algebraic geometry breaks down fast. 
\end{rem}

\begin{prop} \label{prop:451}    
The maximal spectrum is a functor
\[ \opn{MSpec} : (\cat{Rng}\xover{bc} \R)^{\mrm{op}} \to 
\cat{Top} . \]
Is sends a homomorphism $\phi : A \to B$ in $\cat{Rng}\xover{bc} \R$
to the continuous map 
$\opn{MSpec}(\phi) : \opn{MSpec}(B) \to \opn{MSpec}(A)$,
$\opn{MSpec}(\phi)(\n) = \phi^{-1}(\n)$. 
\end{prop}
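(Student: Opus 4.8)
The plan is to recognize that this proposition is almost entirely a corollary of what has already been set up, and that the one genuinely new ingredient is Lemma \ref{lem:251}. The strategy is to exhibit $\cat{Rng}\xover{bc} \R$ as a full subcategory of $\cat{Rng}\xover{val} \R$ and then simply restrict the functor constructed in Proposition \ref{prop:450}.

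Concretely, I would first invoke Lemma \ref{lem:251}, which states that every BC $\R$-ring is $\R$-valued. Since both $\cat{Rng}\xover{bc} \R$ and $\cat{Rng}\xover{val} \R$ are by construction full subcategories of $\cat{Rng} \over \R$ (Definitions \ref{dfn:201} and \ref{dfn:450}), this inclusion of object classes upgrades to a fully faithful inclusion of categories $\cat{Rng}\xover{bc} \R \hookrightarrow \cat{Rng}\xover{val} \R$. Next I would recall that Proposition \ref{prop:450} already supplies a functor $\opn{MSpec} : (\cat{Rng}\xover{val} \R)^{\mrm{op}} \to \cat{Top}$, sending $\phi : A \to B$ to the continuous map $\n \mapsto \phi^{-1}(\n)$. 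Composing this functor with the inclusion $(\cat{Rng}\xover{bc} \R)^{\mrm{op}} \hookrightarrow (\cat{Rng}\xover{val} \R)^{\mrm{op}}$ produces the asserted functor, and because the inclusion acts as the identity on objects and on morphisms, the explicit formula $\opn{MSpec}(\phi)(\n) = \phi^{-1}(\n)$ transfers verbatim, with no recomputation required.

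The \emph{main obstacle}, to the extent there is one, has already been absorbed into the earlier results and need not be re-proved here: the well-definedness of $\n \mapsto \phi^{-1}(\n)$ as a map of maximal spectra rests on Lemma \ref{lem:242} (a BC $\R$-ring is $\R$-valued, so each of its maximal ideals is $\R$-valued, and the preimage of such an ideal is again maximal), while continuity and functoriality come from the corresponding facts about $\opn{Spec}(\phi)$ and the subspace topology, exactly as in the proof of Proposition \ref{prop:450}. I therefore expect no real difficulty beyond correctly citing Lemma \ref{lem:251} to land inside the $\R$-valued world and then restricting; the entire content of the statement is that BC $\R$-rings are $\R$-valued, which Lemma \ref{lem:251} has already granted.
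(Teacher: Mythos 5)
Your proposal is correct and matches the paper's own proof exactly: the paper's proof reads, in its entirety, ``Use Lemma \ref{lem:251} and Proposition \ref{prop:450},'' which is precisely your strategy of observing that BC $\R$-rings are $\R$-valued and then restricting the functor of Proposition \ref{prop:450} along the full inclusion $\cat{Rng}\xover{bc} \R \hookrightarrow \cat{Rng}\xover{val} \R$. Your write-up simply makes explicit the details the paper leaves implicit.
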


\begin{proof}
Use Lemma \ref{lem:251} and Proposition \ref{prop:450}.
\end{proof}

By definition there is a functor 
\[ \opn{F}_{\mrm{bc}}(-, \R) : (\cat{Top}_{\mrm{cp}})^{\mrm{op}} \to 
\cat{Rng}\xover{bc} \R . \]
In view of Proposition \ref{prop:451} there is a composed functor
\[ \opn{MSpec} \circ \opn{F}_{\mrm{bc}}(-, \R) : 
\cat{Top}_{\mrm{cp}} \to \cat{Top} . \]
There is also the inclusion functor
$\opn{Inc} : \cat{Top}_{\mrm{cp}} \to \cat{Top}$.

\begin{lem} \label{lem:258}
The algebraic reflection map
\[ \opn{refl}^{\mrm{\lmsp alg}}_X : X \to
\opn{MSpec} \bigl( \opn{F}_{\mrm{bc}}(X, \R) \bigr) \]
in $\cat{Top}$ is functorial in the object $X \in \cat{Top}_{\mrm{cp}}$.
In other words, as $X$ changes there is a morphism 
\[ \opn{refl}^{\mrm{\lmsp alg}} : \opn{Inc} \to \opn{MSpec} \circ 
\opn{F}_{\mrm{bc}}(-, \R) \]
of functors $\cat{Top}_{\mrm{cp}} \to \cat{Top}$.
\end{lem}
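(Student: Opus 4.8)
The plan is to recognize that the assertion has exactly two ingredients: each component of the proposed natural transformation must be a morphism in $\cat{Top}$, and the naturality squares must commute. The first ingredient is already available: by Proposition \ref{prop:320}(1) the map $\opn{refl}^{\mrm{\lmsp alg}}_X : X \to \opn{MSpec}\bigl(\opn{F}_{\mrm{bc}}(X, \R)\bigr)$ is continuous, hence a genuine morphism in $\cat{Top}$, for every $X$. So the entire content of the lemma is the commutativity of the square associated to an arbitrary morphism $f : X \to X'$ in $\cat{Top}_{\mrm{cp}}$.

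To set up that square, I would abbreviate $A := \opn{F}_{\mrm{bc}}(X, \R)$ and $A' := \opn{F}_{\mrm{bc}}(X', \R)$, and let $\phi := \opn{F}_{\mrm{bc}}(f, \R) : A' \to A$ be the pullback homomorphism $\phi(a') = a' \circ f$. The functor $\opn{Inc}$ sends $f$ to $f$ itself, whereas $\opn{MSpec} \circ \opn{F}_{\mrm{bc}}(-, \R)$ sends $f$ to $\opn{MSpec}(\phi)$, which carries a maximal ideal $\m \sub A$ to $\phi^{-1}(\m) \sub A'$ (Proposition \ref{prop:451}). Thus the one thing to verify is that, for every point $x \in X$, there is an equality of maximal ideals of $A'$, namely $\opn{MSpec}(\phi)\bigl(\opn{refl}^{\mrm{\lmsp alg}}_X(x)\bigr) = \opn{refl}^{\mrm{\lmsp alg}}_{X'}(f(x))$.

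The verification is then a direct unwinding of the definitions. The left-hand side equals $\phi^{-1}\bigl(\opn{Ker}(\opn{ev}_x)\bigr)$, which consists of those $a' \in A'$ for which $\phi(a') = a' \circ f$ vanishes at $x$, i.e.\ those $a'$ with $a'(f(x)) = 0$. The right-hand side is $\opn{Ker}(\opn{ev}_{f(x)})$, which is by definition the set of $a' \in A'$ with $a'(f(x)) = 0$. The two ideals coincide, the square commutes, and the lemma follows. There is no serious obstacle here; the only point requiring care is bookkeeping of the contravariance of $\opn{F}_{\mrm{bc}}(-, \R)$ and of $\opn{MSpec}$, so that the square is oriented correctly — both functors being contravariant, their composite is covariant and matches $\opn{Inc}$, while the pullback $\phi$ runs from $A'$ to $A$ and $\opn{MSpec}(\phi)$ runs back the other way.
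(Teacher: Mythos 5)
Your proposal is correct and follows essentially the same route as the paper's proof: continuity of each component is quoted from Proposition \ref{prop:320}(1), and the naturality square for a map $f$ is verified pointwise by the identical computation $\phi^{-1}\bigl(\opn{Ker}(\opn{ev}_x)\bigr) = \{\lmsp a' \mid a'(f(x)) = 0 \lmsp\} = \opn{Ker}(\opn{ev}_{f(x)})$, only with the roles of source and target named differently. No gap to report.
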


\begin{proof}
We shall use the abbreviations 
$\opn{M} := \opn{MSpec}$ and 
$\opn{F}_{\mrm{bc}} := \opn{F}_{\mrm{bc}}(-, \R)$.
Given a map $f : Y \to X$ in $\cat{Top}_{\mrm{cp}}$, we must prove that 
the diagram
\begin{equation} \label{eqn:260}
\begin{tikzcd} [column sep = 10ex, row sep = 6ex]
Y
\ar[r, "{\opn{refl}^{\mrm{\lmsp alg}}_Y}"]
\ar[d, "{f}"']
&
(\opn{M} \circ \opn{F}_{\mrm{bc}})(Y)
\ar[d, "{(\opn{M} \circ \opn{F}_{\mrm{bc}})(f)}"]
\\
X
\ar[r, "{\opn{refl}^{\mrm{\lmsp alg}}_X}"]
&
(\opn{M} \circ \opn{F}_{\mrm{bc}})(X)
\end{tikzcd} 
\end{equation}
in $\cat{Top}$ is commutative. 

Write 
$A := \opn{F}_{\mrm{bc}}(X)$, $B := \opn{F}_{\mrm{bc}}(Y)$ and 
$\phi := \opn{F}_{\mrm{bc}}(f) : A \to B$.   
Take a point $y \in Y$, and define
$x := f(y) \in X$, $\n := \opn{refl}^{\mrm{\lmsp alg}}_Y(y) \in \opn{M}(B)$ and
$\m := \opn{M}(\phi)(\n) = \phi^{-1}(\n) \in \opn{M}(A)$.
We need to prove that $\m = \opn{refl}^{\mrm{\lmsp alg}}_X(x)$. 
By definition we have 
$\n = \opn{Ker}(\opn{ev}_{y}) \sub B$, and also 
$\phi(a)(y) = a(f(y)) = a(x)$. Now we calculate:
\[ \begin{aligned}
&
\m = \phi^{-1}(\n) = \phi^{-1}(\opn{Ker}(\opn{ev}_{y})) = 
\opn{Ker}(\opn{ev}_{y} \circ \, \phi) = 
\{ a \in A \mid \phi(a)(y) = 0 \}
\\ & \quad 
= \{ a \in A \mid a(x) = 0 \} = 
\opn{Ker}(\opn{ev}_{x}) = \opn{refl}^{\mrm{\lmsp alg}}_X(x) &  
\end{aligned} \]
as required.
\end{proof}

\begin{lem} \label{lem:275} 
Let $X$ be a compact topological space, and define 
$A := \opn{F}_{\mrm{bc}}(X, \R)$.
\begin{enumerate}
\item Take $a \in A$ and $x \in X$. Let 
$\m := \opn{refl}^{\mrm{\lmsp alg}}_X(x) \in \opn{MSpec}(A)$. 
Then there is equality 
$\opn{dev}_A(a)(\m) = a(x)$ in $\R$. 

\item For every $a \in A$ the function 
$\opn{dev}_A(a) : \opn{MSpec}(A) \to \R$ is continuous (\,for the norm 
topology of $\R$) and bounded; i.e.\
$\opn{dev}_A(a) \in  
\opn{F}_{\mrm{bc}} \bigl( \opn{MSpec}(A), \R \bigr)$. 

\item The $\R$-ring homomorphism 
$\opn{dev}_A : A \to \opn{F}_{\mrm{bc}} 
\bigl( \opn{MSpec}(A), \R \bigr) $
is bijective. 
\end{enumerate}
\end{lem}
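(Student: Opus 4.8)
The plan is to derive everything from the single pointwise identity in item (1), and then to exploit the fact that $h := \opn{refl}^{\mrm{\lmsp alg}}_X : X \to \opn{MSpec}(A)$ is a homeomorphism (Lemma \ref{lem:232}(2)). Note first that $A$ is $\R$-valued by Lemma \ref{lem:251}, so $\opn{dev}_A$ is defined.

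For item (1), I would simply unwind the definitions. Writing $\m = \opn{Ker}(\opn{ev}_x)$, Definitions \ref{dfn:270} and \ref{dfn:265} give $\opn{dev}_A(a)(\m) = \opn{ev}_{\m}(a) = (\opn{str}_{A / \m})^{-1}(a + \m)$. On the other hand, the evaluation homomorphism $\opn{ev}_x : A \to \R$ of Definition \ref{dfn:230} factors through the quotient $\opn{pr}_{\m} : A \to A / \m$, inducing an $\R$-ring isomorphism $\bar{\opn{ev}}_x : A / \m \iso \R$ with $\bar{\opn{ev}}_x(a + \m) = a(x)$. Both $\bar{\opn{ev}}_x$ and $(\opn{str}_{A / \m})^{-1}$ are $\R$-ring isomorphisms $A / \m \to \R$; since the only $\R$-ring endomorphism of $\R$ is the identity (it must fix the image of $\opn{str}_{\R}$, which is all of $\R$), these two isomorphisms coincide. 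Hence $\opn{dev}_A(a)(\m) = a(x)$.

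Once item (1) holds, it reads as the functional identity $\opn{dev}_A(a) \circ h = a$ on $X$, equivalently $\opn{dev}_A(a) = a \circ h^{-1}$ since $h$ is bijective. Item (2) is then immediate: $h^{-1}$ is continuous (as $h$ is a homeomorphism) and $a$ is bounded and continuous, so $a \circ h^{-1}$ is bounded and continuous, i.e.\ $\opn{dev}_A(a) \in \opn{F}_{\mrm{bc}}\bigl( \opn{MSpec}(A), \R \bigr)$. For item (2) one also checks that the sup of $\abs{\opn{dev}_A(a)}$ equals that of $\abs{a}$, since $h$ is surjective. Consequently $\opn{dev}_A$ corestricts to a homomorphism $A \to \opn{F}_{\mrm{bc}}\bigl( \opn{MSpec}(A), \R \bigr)$. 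For item (3), pullback along $h$ yields an $\R$-ring isomorphism $h^{*} : \opn{F}_{\mrm{bc}}\bigl( \opn{MSpec}(A), \R \bigr) \iso A$, $b \mapsto b \circ h$ (its inverse being pullback along $h^{-1}$), and by item (1) we have $h^{*} \circ \opn{dev}_A = \opn{id}_A$. Composing with $(h^{*})^{-1}$ gives $\opn{dev}_A = (h^{*})^{-1}$, which is therefore bijective.

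The main obstacle is item (1): the conceptual content there is the uniqueness of $\R$-ring isomorphisms onto $\R$, which is exactly what is needed to match the abstractly defined $\opn{ev}_{\m}$ (built from $\opn{str}_{A / \m}$) with the concrete evaluation $\opn{ev}_x$. Everything downstream is formal bookkeeping with the homeomorphism $h$ and the contravariant functoriality of $\opn{F}_{\mrm{bc}}(-, \R)$.
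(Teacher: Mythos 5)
Your proposal is correct and follows essentially the same route as the paper: item (1) via the induced isomorphism $A/\m \iso \R$ (your $\bar{\opn{ev}}_x$ is the paper's $\psi$) together with the fact that $\R$-ring maps into $\R$ are forced, and items (2)--(3) via the identity $\opn{dev}_A(a) = a \circ h^{-1}$ and the homeomorphism $h = \opn{refl}^{\mrm{\lmsp alg}}_X$ from Lemma \ref{lem:232}(2). No gaps.
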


\begin{proof} (1) Let 
$\mu := \opn{ev}_{\m}(a) = \opn{dev}_A(a)(\m) \in \R$ and 
$\nu := \opn{ev}_{x}(a) = a(x) \in \R$. We need to prove that $\mu = \nu$. 

Recall that $\m = \opn{Ker}(\opn{ev}_x)$, so there is a unique $\R$-ring 
isomorphism $\psi : A / \m \iso \R$ such that the diagram 
\begin{equation} \label{eqn:281}
\begin{tikzcd} [column sep = 10ex, row sep = 6ex]
A
\ar[dr, two heads, "{\opn{ev}_x}"]
\ar[d, two heads, "{\opn{pr_{\m}}}"']
\\
A / \m
\ar[r, tail, two heads, "{\psi}"]
&
\R
\end{tikzcd} 
\end{equation}
in $\cat{Rng} \over \R$ is commutative. 
Then $\psi$ satisfies $\psi(a + \m) = \opn{ev}_x(a) = \nu$.
But by formula (\ref{eqn:270}) we know that 
$\opn{str}_{A / \m}(\mu) = a + \m \in A / \m$, so 
$\psi(a + \m) = (\psi \circ \opn{str}_{A / \m})(\mu) = \mu$. 

\medskip \noindent 
(2) Item (1) says that the diagram 
\begin{equation} \label{eqn:282}
\begin{tikzcd} [column sep = 10ex, row sep = 6ex]
X
\ar[dr, "{a}"']
\ar[r, "{\opn{refl}^{\mrm{\lmsp alg}}_X}", "{\simeq}"']
&
\opn{MSpec}(A)
\ar[d, "{\opn{dev}_A(a)}"]
\\
&
\R
\end{tikzcd} 
\end{equation}
in $\cat{Set}$ is commutative. Therefore 
$\opn{dev}_A(a) = a \circ (\opn{refl}^{\mrm{\lmsp alg}}_X)^{-1}$. 
By Lemma \ref{lem:232}(2) the map $(\opn{refl}^{\mrm{\lmsp alg}}_X)^{-1}$ is 
continuous, and by definition $a$ is bounded continuous. Hence $\opn{dev}_A(a)$ 
is bounded continuous.

\medskip \noindent 
(3) The commutative diagram (\ref{eqn:282}) shows that the function 
$\opn{dev}_A$ is bijective, with inverse 
$\bar{a} \mapsto \opn{refl}^{\mrm{\lmsp alg}}_X \circ \, \bar{a}$. 
\end{proof}

\begin{lem} \label{lem:272}
Let $A \in \cat{Rng}\xover{bc} \R$ and 
$X := \opn{MSpec}(A)$. 
\begin{enumerate}
\item The topological space $X$ is compact.  

\item For every $a \in A$ the function $\opn{dev}_A(a) : X \to \R$ is 
continuous (\,for the norm topology of $\R$) and bounded.

\item The $\R$-ring homomorphism 
$\opn{dev}_A : A \to \opn{F}_{\mrm{bc}}(X, \R)$
is bijective. 
\end{enumerate}
\end{lem}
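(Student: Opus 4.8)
The plan is to reduce the three assertions to the corresponding statements about genuine rings of functions, which are already available, by transporting them along a chosen isomorphism. Since $A$ is a BC $\R$-ring, Definition \ref{dfn:201} supplies a compact topological space $Y$ together with an $\R$-ring isomorphism $\al : A \iso B$, where $B := \opn{F}_{\mrm{bc}}(Y, \R)$. By Lemma \ref{lem:251} the ring $A$ is $\R$-valued, so both the functor $\opn{MSpec}$ (Proposition \ref{prop:451}) and the homomorphism $\opn{dev}_A$ (Definition \ref{dfn:270}) are defined on $A$; the same applies to $B$.

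For part (1) I would simply apply $\opn{MSpec}$ to $\al$. As $\al$ is an isomorphism and $\opn{MSpec}$ is a functor (Proposition \ref{prop:451}), the map $\opn{MSpec}(\al) : \opn{MSpec}(B) \to X$ is a homeomorphism. On the other hand, Lemma \ref{lem:232}(2) shows that the algebraic reflection map $\opn{refl}^{\mrm{\lmsp alg}}_Y : Y \iso \opn{MSpec}(B)$ is a homeomorphism. Composing the two homeomorphisms yields $X \cong Y$ in $\cat{Top}$, and since $Y$ is compact, so is $X$.

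For parts (2) and (3) I would invoke the functoriality of $\opn{dev}$ (Proposition \ref{prop:453}) for the morphism $\al : A \to B$, giving the commutative square
\[ \begin{tikzcd} [column sep = 14ex, row sep = 6ex]
A
\ar[r, "{\opn{dev}_A}"]
\ar[d, "{\al}"', "{\simeq}"]
&
\opn{F}(X, \R)
\ar[d, "{\theta}", "{\simeq}"']
\\
B
\ar[r, "{\opn{dev}_B}"]
&
\opn{F} \bigl( \opn{MSpec}(B), \R \bigr)
\end{tikzcd} \]
in $\cat{Rng} \over \R$, in which the right-hand map $\theta$ is pullback of functions along the homeomorphism $\opn{MSpec}(\al)$. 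Because $\opn{MSpec}(\al)$ is a homeomorphism, $\theta$ is an $\R$-ring isomorphism that both preserves and reflects boundedness and continuity, hence restricts to an isomorphism $\opn{F}_{\mrm{bc}}(X, \R) \iso \opn{F}_{\mrm{bc}}(\opn{MSpec}(B), \R)$. Now Lemma \ref{lem:275}(2) gives that $\opn{dev}_B(\al(a))$ is bounded continuous for every $a \in A$; chasing the square and using that $\theta$ reflects bounded continuity yields part (2), namely $\opn{dev}_A(a) \in \opn{F}_{\mrm{bc}}(X, \R)$. Finally, co-restricting every edge of the square to the rings of bounded continuous functions, three of the four edges are bijective ($\al$ is an isomorphism, $\theta$ restricts to one, and $\opn{dev}_B$ is bijective by Lemma \ref{lem:275}(3)), so the fourth edge $\opn{dev}_A : A \to \opn{F}_{\mrm{bc}}(X, \R)$ is bijective, establishing part (3).

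The only point requiring genuine care — as opposed to formal diagram chasing — is the verification that pullback along the homeomorphism $\opn{MSpec}(\al)$ preserves and reflects boundedness and continuity, so that $\theta$ truly restricts to a bijection between the two rings of bounded continuous functions. Everything else is a transport of Lemmas \ref{lem:232} and \ref{lem:275} across the fixed isomorphism $\al$, and I expect no further obstacle.
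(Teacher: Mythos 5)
Your proposal is correct and follows essentially the same route as the paper's own proof: the paper likewise obtains part (1) by transporting the homeomorphism of Lemma \ref{lem:232}(2) across the defining isomorphism, and proves parts (2) and (3) by applying the functoriality of $\opn{dev}$ (Proposition \ref{prop:453}) to that isomorphism and chasing the resulting square against Lemma \ref{lem:275}, exactly as you do (see diagram (\ref{eqn:445})). The one point you flag as needing care --- that pullback along the homeomorphism $\opn{MSpec}(\al)$ preserves and reflects bounded continuity --- is handled in the paper by observing that $\opn{F}_{\mrm{bc}}(-,\R)$ is a subfunctor of $\opn{F}(-,\R)$, which is the same fact in slightly different packaging.
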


\begin{proof}
(1) \ltsp We can assume that $A = \opn{F}_{\mrm{bc}}(Y, \R)$ for some compact 
topological space $Y$. By Lemma \ref{lem:232}(2) there is a homeomorphism 
$\opn{refl}^{\mrm{\lmsp alg}}_Y : Y \iso X = \opn{MSpec}(A)$. So $X$ is 
compact. 

\medskip \noindent 
(2, 3) \ltsp Let's introduce the abbreviations 
$\opn{M} := \opn{MSpec}$, $\opn{F} := \opn{F}(-, \R)$ 
and $\opn{F}_{\mrm{bc}} := \opn{F}_{\mrm{bc}}(-, \R)$.
Choose a ring isomorphism $\phi : A \iso B$, where
$B = \opn{F}_{\mrm{bc}}(Y)$ for some compact topological space $Y$. 
By Proposition \ref{prop:451} we know that  
$\opn{M} :  (\cat{Rng}\xover{bc} \R)^{\mrm{op}} \to \cat{Top}$
is a functor. There are also the functors
$\opn{F}_{\mrm{bc}}, \opn{F} : \cat{Top}^{\mrm{op}} \to \cat{Rng} \over \R$, 
and $\opn{F}_{\mrm{bc}} \sub \opn{F}$ is a subfunctor. 
According to Proposition \ref{prop:453} there is a morphism 
$\opn{dev} : \opn{Inc} \to \opn{F} \circ \opn{M}$
of functors 
$\cat{Rng}\xover{bc} \R \to \cat{Rng} \over \R$,
where 
$\opn{Inc} : \cat{Rng}\xover{bc} \R \to \cat{Rng} \over \R$
is the inclusion functor. 
Therefore we have this solid commutative diagram
\begin{equation} \label{eqn:445}
\begin{tikzcd} [column sep = 14ex, row sep = 6ex]
A
\ar[r, "{\phi}", "{\simeq}"']
\ar[d, dashed, "{\opn{dev}_A}"', "{\simeq}"]
\ar[dd, bend right = 35, start anchor = west, end anchor = north west,
"{\opn{dev}_A}"']
&
B
\ar[d, dashed, "{\opn{dev}_B}", "{\simeq}"']
\ar[dd, bend left = 35, start anchor = east, end anchor = north east,
"{\opn{dev}_B}"]
\\
(\opn{F}_{\mrm{bc}} \circ \opn{M})(A)
\ar[r, "{(\opn{F}_{\mrm{bc}} \circ \opn{M})(\phi)}", "{\simeq}"']
\ar[d, "{\opn{inc}}"]
&
(\opn{F}_{\mrm{bc}} \circ \opn{M})(B)
\ar[d, "{\opn{inc}}"']
\\
(\opn{F} \circ \opn{M})(A)
\ar[r, "{(\opn{F} \circ \opn{M})(\phi)}", "{\simeq}"']
&
(\opn{F} \circ \opn{M})(B)
\end{tikzcd} 
\end{equation}
in $\cat{Rng} \over \R$. 
The arrows marked $\opn{inc}$ are inclusions of rings. 
Lemma \ref{lem:275} says that the homomorphism $\opn{dev}_B$ on the 
dashed 
arrow 
going down from $B$ exists, and it is an isomorphism. 
An easy diagram chase shows  the homomorphism $\opn{dev}_A$ on the dashed 
arrow going down from $A$ also exists, it is an isomorphism, and the whole 
diagram (\ref{eqn:445}) is commutative. 
\end{proof}

\begin{rem} \label{rem:406}
The continuity in Lemmas \ref{lem:275}(2) and \ref{lem:272}(2) seems to be 
quite rare. When $A$ is an arbitrary $\R$-valued $\R$-ring, there is no reason 
for the functions $\opn{dev}_A(a) : \opn{MSpec}(A) \to \R$, with $a \in 
A$, to be continuous (for the Zariski topology on $\opn{MSpec}(A)$ and the 
norm topology on $\R$). Below is a counterexample.
\end{rem}

\begin{exa} \label{exa:465}
Take the $\R$-ring $B$ from Example \ref{exa:395}. We showed there that 
$B$ is an $\R$-valued ring. The element $t \in B$ gives a bijection 
of sets $\opn{dev}_B (t) : \opn{MSpec}(B) \to \R$,
but the map $\opn{dev}_B (t)$ is not continuous. 
\end{exa}

\begin{thm}[Duality] \label{thm:203}
The functor 
\[ \opn{F}_{\mrm{bc}}(-, \R) : (\cat{Top}_{\mrm{cp}})^{\mrm{op}} \to 
\cat{Rng}\xover{bc} \R \]
is an equivalence of categories, with quasi-inverse $\opn{MSpec}$.
\end{thm}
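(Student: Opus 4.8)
The plan is to assemble the theorem directly from the lemmas already proved, by exhibiting the two natural isomorphisms required by the definition of equivalence recalled in Section~\ref{sec:prlim}. First I would observe that by Lemma~\ref{lem:272}(1) the functor $\opn{MSpec}$ of Proposition~\ref{prop:451} in fact lands in $\cat{Top}_{\mrm{cp}}$, so it may be regarded as a contravariant functor $\opn{MSpec} : \cat{Rng}\xover{bc} \R \to \cat{Top}_{\mrm{cp}}$, going the right way to be a candidate quasi-inverse of $\opn{F}_{\mrm{bc}}(-, \R)$. Composing the two contravariant functors in each order then produces a covariant endofunctor $\opn{MSpec} \circ \opn{F}_{\mrm{bc}}(-, \R)$ of $\cat{Top}_{\mrm{cp}}$ and a covariant endofunctor $\opn{F}_{\mrm{bc}}(-, \R) \circ \opn{MSpec}$ of $\cat{Rng}\xover{bc} \R$.

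The first natural isomorphism is the algebraic reflection map. By Lemma~\ref{lem:258} the collection $\{ \opn{refl}^{\mrm{\lmsp alg}}_X \}_{X}$ is a morphism of functors $\opn{refl}^{\mrm{\lmsp alg}} : \opn{Id}_{\cat{Top}_{\mrm{cp}}} \to \opn{MSpec} \circ \opn{F}_{\mrm{bc}}(-, \R)$; here $\opn{Inc}$ from Lemma~\ref{lem:258} becomes $\opn{Id}_{\cat{Top}_{\mrm{cp}}}$ once the target of $\opn{MSpec}$ has been restricted to $\cat{Top}_{\mrm{cp}}$. By Lemma~\ref{lem:232}(2) every component $\opn{refl}^{\mrm{\lmsp alg}}_X$ is a homeomorphism, so $\opn{refl}^{\mrm{\lmsp alg}}$ is an isomorphism of functors.

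The second natural isomorphism is the double evaluation map. By Proposition~\ref{prop:453} the homomorphisms $\opn{dev}_A$ are functorial in $A$, and by Lemma~\ref{lem:272}(2,3), for a BC $\R$-ring $A$ the homomorphism $\opn{dev}_A$ corestricts to an isomorphism $A \iso \opn{F}_{\mrm{bc}} \bigl( \opn{MSpec}(A), \R \bigr)$ onto the bounded continuous functions. Since $\opn{F}_{\mrm{bc}}(-, \R) \sub \opn{F}(-, \R)$ is a subfunctor, cf.\ (\ref{eqn:331}), this corestriction remains natural, and therefore $\opn{dev} : \opn{Id}_{\cat{Rng}\xover{bc} \R} \to \opn{F}_{\mrm{bc}}(-, \R) \circ \opn{MSpec}$ is an isomorphism of functors.

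Having produced isomorphisms of functors $\opn{Id}_{\cat{Top}_{\mrm{cp}}} \iso \opn{MSpec} \circ \opn{F}_{\mrm{bc}}(-, \R)$ and $\opn{Id}_{\cat{Rng}\xover{bc} \R} \iso \opn{F}_{\mrm{bc}}(-, \R) \circ \opn{MSpec}$, the definition of equivalence recalled in Section~\ref{sec:prlim} shows at once that $\opn{F}_{\mrm{bc}}(-, \R)$ is a duality with quasi-inverse $\opn{MSpec}$. I expect the only real care to be bookkeeping rather than mathematics: verifying that $\opn{dev}$ genuinely corestricts to the subfunctor $\opn{F}_{\mrm{bc}}$ in a natural way, and keeping the variances straight on passing to the opposite categories, so that the two natural isomorphisms play the roles of the unit and counit of the duality. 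All the substantive content — the homeomorphism property of $\opn{refl}^{\mrm{\lmsp alg}}_X$ and the bijectivity of $\opn{dev}_A$ — is already contained in Lemmas~\ref{lem:232} and~\ref{lem:272}, so no serious analytic obstacle remains.
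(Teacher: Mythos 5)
Your proposal is correct and follows essentially the same route as the paper's own proof: it assembles the equivalence from Lemma \ref{lem:258} and Lemma \ref{lem:232}(2) for the isomorphism $\opn{refl}^{\mrm{\lmsp alg}}$, and from Proposition \ref{prop:451}, Lemma \ref{lem:272} and Proposition \ref{prop:453} for the isomorphism $\opn{dev}$. The only difference is that you make explicit the bookkeeping about corestricting $\opn{dev}$ to the subfunctor $\opn{F}_{\mrm{bc}}(-,\R) \sub \opn{F}(-,\R)$, which the paper handles implicitly inside the proof of Lemma \ref{lem:272}.
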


\begin{proof}
Let's use the abbreviations $\opn{M} := \opn{MSpec}$ and
$\opn{F}_{\mrm{bc}} := \opn{F}_{\mrm{bc}}(-, \R)$.
Lemma \ref{lem:258} gives a functorial map 
$\opn{refl}^{\mrm{\lmsp alg}}_X : X \to (\opn{M} \circ \opn{F}_{\mrm{bc}})(X)$
in $\cat{Top}$ for every $X \in \cat{Top}_{\mrm{cp}}$, and by Lemma 
\ref{lem:232}(2) this is a homeomorphism, i.e.\ an isomorphism in 
$\cat{Top}$. 

In the reverse direction, the functor
$\opn{M} : (\cat{Rng}\xover{bc} \R)^{\mrm{op}} \to 
\cat{Top}_{\mrm{cp}}$ 
exists by Proposition \ref{prop:451}  and Lemma \ref{lem:272}(1). 
For every $A \in \cat{Rng}\xover{bc} \R$,
Lemma \ref{lem:272}(3) gives a ring isomorphism
$\opn{dev}_A : A \iso (\opn{F}_{\mrm{bc}} \circ \opn{M})(A)$;
and by Proposition \ref{prop:453} this isomorphism is functorial in $A$.
\end{proof}

\section{The \texorpdfstring{Stone-\v{C}ech Compactification}{SCC}}
\label{sec:SCC}

There are several distinct definitions of Stone-\v{C}ech Compactification (SCC) 
in the literature. We prefer Definition \ref{dfn:131} below, which is the most 
general; it is stated, with a proof of existence, in 
\cite[Lemma tag = 
\texttt{\href{https://stacks.math.columbia.edu/tag/0908}{0908}}]{SP}.
Weaker definitions and results can be 
found in the books \cite{GJ}, \cite{Wa} and \cite{Ke}. See Remark 
\ref{rem:285} for a brief discussion. 

The uniqueness of SCC, as it is defined in Definition \ref{dfn:131}, is 
trivial, and so is its functoriality (Proposition \ref{prop:285}). The 
interesting aspect is proving existence. In this section we give an algebraic 
proof of the existence of the SCC (Theorem \ref{thm:285}). 
We also prove Theorem \ref{thm:415}  regarding the functor 
$\opn{F}_{\mrm{bc}}(-, \R)$.

Recall that in this paper a topological space $X$ is called compact if it is
Hausdorff and quasi-compact. The category of topological spaces and continuous 
maps is denoted by $\cat{Top}$, and its full subcategory on the compact spaces 
is denoted by $\cat{Top}_{\mrm{cp}}$.

\begin{dfn} \label{dfn:131}
Let $X$ be a topological space. A {\em Stone-\v{C}ech Compactification of $X$}
is a pair $(\bar{X}, \opn{c}_X)$, consisting of a compact topological space
$\bar{X}$ and a continuous map $\opn{c}_X : X \to  \bar{X}$, such that
the following universal property holds:
\begin{itemize}
\rmitem{C} Given a continuous map $f : X \to Y$, where
the target $Y$ is a compact topological space, there is a unique continuous map
$\bar{f} : \bar{X} \to Y$ such that 
$f = \bar{f} \circ \opn{c}_X$.
\end{itemize}
\end{dfn}

The commutative diagram below, in the category $\cat{Top}$, illustrates 
the universal property (C) in Definition \ref{dfn:131}. 
\begin{equation} \label{eqn:135}
\begin{tikzcd} [column sep = 10ex, row sep = 6ex]
X
\ar[r, "{\opn{c}_X}"]
\ar[dr, "{f}"']
&
\bar{X}
\ar[d, "{\bar{f}}"]
\\
&
Y
\end{tikzcd} 
\end{equation}

As we mentioned above, it is clear that if a Stone-\v{C}ech Compactification 
$(\bar{X}, \opn{c}_X)$ of a space $X$ exists, then it is unique up to a unique 
isomorphism. The next proposition is also immediate from the definition. 

\begin{prop} \label{prop:285}
Assume that every topological space $X$ admits a Stone-\v{C}ech 
Compactification $(\bar{X}, \opn{c}_X)$. Then:
\begin{enumerate}
\item There is a functor 
$\opn{SCC} : \cat{Top} \to \cat{Top}_{\mrm{cp}}$,
sending an object $X \in \cat{Top}$ to its Stone-\v{C}ech Compactification
$\opn{SCC}(X) := \bar{X}$, and sending a map 
$f : X \to Y$ in $\cat{Top}$ to the unique map
$\opn{SCC}(f) : \opn{SCC}(X) \to \opn{SCC}(Y)$
in $\cat{Top}_{\mrm{cp}}$ such that the diagram 
\[ \begin{tikzcd} [column sep = 10ex, row sep = 6ex]
X
\ar[d, "{\opn{c}_X}"']
\ar[r, "{f}"]
&
Y
\ar[d, "{\opn{c}_Y}"]
\\
\opn{SCC}(X) = \bar{X}
\ar[r, "{\opn{SCC}(f) }"]
&
\opn{SCC}(Y) = \bar{Y}
\end{tikzcd} \]
in $\cat{Top}$ is commutative. 

\item A topological space $X$ is compact if and only if 
$\opn{c}_X : X \to \bar{X}$ is a homeomorphism.
\end{enumerate}
\end{prop}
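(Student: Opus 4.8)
The plan is to prove both parts purely formally, using the universal property (C) and its uniqueness clause; there is no topological content beyond a single application of the identity map in part (2).

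For part (1), I would define the functor on morphisms as follows. Given a continuous map $f : X \to Y$, consider the composite $\opn{c}_Y \circ f : X \to \bar{Y}$. Its target $\bar{Y}$ is compact, so applying the universal property (C) to the space $X$ yields a \emph{unique} continuous map $\bar{X} \to \bar{Y}$, which I take to be $\opn{SCC}(f)$, characterized by $\opn{SCC}(f) \circ \opn{c}_X = \opn{c}_Y \circ f$. This is exactly the commutativity of the square in the statement. To verify functoriality I would invoke uniqueness twice. For the identity, note that $\opn{id}_{\bar{X}}$ satisfies $\opn{id}_{\bar{X}} \circ \opn{c}_X = \opn{c}_X = \opn{c}_X \circ \opn{id}_X$, so by uniqueness $\opn{SCC}(\opn{id}_X) = \opn{id}_{\bar{X}}$. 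For composition, given $f : X \to Y$ and $g : Y \to Z$, I would check that $\opn{SCC}(g) \circ \opn{SCC}(f)$ satisfies the defining equation for $g \circ f$: precomposing with $\opn{c}_X$ and substituting the defining equations in turn gives $\opn{SCC}(g) \circ \opn{SCC}(f) \circ \opn{c}_X = \opn{SCC}(g) \circ \opn{c}_Y \circ f = \opn{c}_Z \circ g \circ f$, so uniqueness forces $\opn{SCC}(g \circ f) = \opn{SCC}(g) \circ \opn{SCC}(f)$.

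For part (2), the direction $(\Leftarrow)$ is immediate: if $\opn{c}_X$ is a homeomorphism then $X$ is homeomorphic to the compact space $\bar{X}$, hence compact. For $(\Rightarrow)$, suppose $X$ is compact. Then $\opn{id}_X : X \to X$ is a continuous map into a compact space, so (C) provides a unique continuous $r : \bar{X} \to X$ with $r \circ \opn{c}_X = \opn{id}_X$. It remains to show $\opn{c}_X \circ r = \opn{id}_{\bar{X}}$. I would observe that both $\opn{c}_X \circ r$ and $\opn{id}_{\bar{X}}$ are continuous self-maps of $\bar{X}$ that restore $\opn{c}_X$ after precomposition with $\opn{c}_X$, since $(\opn{c}_X \circ r) \circ \opn{c}_X = \opn{c}_X \circ (r \circ \opn{c}_X) = \opn{c}_X$. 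Applying the uniqueness in (C) to the map $\opn{c}_X : X \to \bar{X}$, whose target $\bar{X}$ is compact, yields $\opn{c}_X \circ r = \opn{id}_{\bar{X}}$. Hence $\opn{c}_X$ and $r$ are mutually inverse continuous maps, so $\opn{c}_X$ is a homeomorphism.

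There is no serious obstacle: the statement is Yoneda-style formalism and everything reduces to substitution into the defining equations. The only point deserving care is the uniqueness step in $(\Rightarrow)$, where one must remember that (C) characterizes maps \emph{out of} $\bar{X}$ only when the target is compact; this is why it is invoked with target $\bar{X}$ itself rather than with target $X$.
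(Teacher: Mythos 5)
Your proof is correct; the paper actually omits any proof of this proposition, declaring it ``immediate from the definition,'' and your argument is precisely the routine verification via the uniqueness clause of property (C) that the authors had in mind. Your care at the one delicate point — in part (2), invoking the uniqueness of (C) with target $\bar{X}$ (which is compact) rather than with target $X$ — is exactly the right way to make the ``immediate'' claim rigorous.
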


Recall that for a topological space $X$ we denote by 
$\opn{F}_{\mrm{bc}}(X, \R)$ the ring of bounded continuous functions
$a : X \to \R$, where $\R$ has its usual norm topology.
As $X$ changes this becomes a functor 
\begin{equation} \label{eqn:353}
\opn{F}_{\mrm{bc}}(-, \R) : \cat{Top}^{\mrm{op}} \to \cat{Rng} \over \R . 
\end{equation}

The next definition is very similar to material in \cite[page 152]{Ke}.

\begin{dfn} \label{dfn:295}
Let $X$ be some nonempty topological space, and let 
$A := \opn{F}_{\mrm{bc}}(X, \R)$. 
\begin{enumerate}
\item Put on $\R$ its usual norm topology. For each $a \in A$ let $Z_{a}$ 
be the smallest closed interval in $\R$ containing the set $a(X)$. 
Give $Z_a$ the induced subspace topology from $\R$. 

\item Define the topological space 
$\til{X}^{\mrm{top}} := \prod_{a \in A} Z_a$, 
with the product topology.

\item Define the  map of sets 
$\wtil{\opn{refl}}^{\lmsp \mrm{top}}_X : X \to \til{X}^{\mrm{top}}$ to be 
$\wtil{\opn{refl}}^{\mrm{\lmsp top}}_X(x) := \{ a(x) \}_{a \in A}$.

\item Define the topological subspace 
$\bar{X}^{\mrm{top}} \sub \til{X}^{\mrm{top}}$
to be the closure of 
$\wtil{\opn{refl}}^{\lmsp \mrm{top}}_X(X)$, 
with its induced subspace topology. 

\item Let $\opn{refl}^{\mrm{top}}_X : X \to \bar{X}^{\mrm{top}}$
be the induced map of sets. We call it the {\em topological reflection 
map}. 
\end{enumerate}
\end{dfn}

Observe the similarity between the topological reflection map 
$\opn{refl}^{\mrm{\lmsp top}}_X$ defined above and the algebraic reflection map 
$\opn{refl}^{\mrm{\lmsp alg}}_X$ from Definition \ref{dfn:231}. 

\begin{lem} \label{lem:285}
Let $X$ be a nonempty topological space.
\begin{enumerate}
\item The topological space $\bar{X}^{\mrm{top}}$ is compact.

\item The topological reflection map 
$\opn{refl}^{\mrm{top}}_X : X \to \bar{X}^{\mrm{top}}$ is continuous.

\item The subset $\opn{refl}^{\mrm{top}}_X(X) \sub \bar{X}^{\mrm{top}}$ is 
dense.
\end{enumerate}
\end{lem}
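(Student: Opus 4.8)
The plan is to handle the three assertions in order, exploiting the fact that $\bar{X}^{\mrm{top}}$ is, by its very construction, a closed subspace of the product $\til{X}^{\mrm{top}} = \prod_{a \in A} Z_a$, and that the reflection map factors through this product. With the construction of Definition \ref{dfn:295} in hand, the only assertion carrying genuine content is part (1); parts (2) and (3) will be formal consequences.

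For part (1), the key input is Tychonoff's theorem. First I would observe that each factor $Z_a$ is a closed and bounded interval in $\R$: it is bounded precisely because $a \in \opn{F}_{\mrm{bc}}(X, \R)$ is a bounded function, so the set $a(X)$ is bounded, and it is closed by construction. By the Heine--Borel theorem $Z_a$ is therefore quasi-compact, and it is Hausdorff as a subspace of the metric space $\R$. Tychonoff's theorem then shows that $\til{X}^{\mrm{top}}$ is quasi-compact, while a product of Hausdorff spaces is Hausdorff, so $\til{X}^{\mrm{top}}$ is compact in the sense of Convention \ref{conv:452}. Since $\bar{X}^{\mrm{top}}$ is a closed subset of $\til{X}^{\mrm{top}}$, it is quasi-compact (a closed subset of a quasi-compact space) and Hausdorff (a subspace of a Hausdorff space), hence compact.

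For part (2), I would invoke the universal property of the product topology: a map into $\til{X}^{\mrm{top}}$ is continuous exactly when each of its coordinate components is continuous. The $a$-th component of $\wtil{\opn{refl}}^{\lmsp \mrm{top}}_X$ is simply the function $a : X \to Z_a$, which is continuous because $a \in A = \opn{F}_{\mrm{bc}}(X, \R)$. Hence $\wtil{\opn{refl}}^{\lmsp \mrm{top}}_X : X \to \til{X}^{\mrm{top}}$ is continuous. Because $\bar{X}^{\mrm{top}}$ carries the induced subspace topology and contains the image, the corestriction $\opn{refl}^{\mrm{top}}_X : X \to \bar{X}^{\mrm{top}}$ is continuous as well.

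Part (3) is immediate from the definition: in Definition \ref{dfn:295}(4) the space $\bar{X}^{\mrm{top}}$ was defined to be the closure of $\wtil{\opn{refl}}^{\lmsp \mrm{top}}_X(X)$ inside $\til{X}^{\mrm{top}}$, and the image $\opn{refl}^{\mrm{top}}_X(X)$ is exactly that set viewed inside $\bar{X}^{\mrm{top}}$. A set is always dense in its own closure, so density follows at once. Thus I do not expect any real obstacle here: the sole nontrivial ingredient in the whole lemma is the use of Tychonoff's theorem in part (1), and everything else is formal manipulation of the product topology and of closures.
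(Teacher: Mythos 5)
Your proposal is correct and follows essentially the same route as the paper's own proof: Tychonoff for quasi-compactness of the product $\til{X}^{\mrm{top}}$ (you add the Heine--Borel and Hausdorff details that the paper leaves implicit under Convention \ref{conv:452}), the universal property of the product topology for continuity of $\wtil{\opn{refl}}^{\lmsp \mrm{top}}_X$, and density by definition of the closure. No gaps.
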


\begin{proof} 
(1) Since $\til{X}^{\mrm{top}}$ is a product of compact topological spaces, it 
is compact by the Tychonoff Theorem (see \cite[Theorem 5.13]{Ke}). 
Therefore the closed subspace $\bar{X}^{\mrm{top}} \sub \til{X}^{\mrm{top}}$
is also compact. 

\medskip \noindent
(2) For every $a \in A$ let 
$p_a : \til{X}^{\mrm{top}} \to Z_a$
be the projection on the factor indexed by $a$.
Then 
$p_a \circ \wtil{\opn{refl}}^{\lmsp \mrm{top}}_X = a : X \to \R$
is continuous. By the universal property of the product topology 
the map
$\wtil{\opn{refl}}^{\lmsp \mrm{top}}_X : X \to \til{X}^{\mrm{top}}$
is continuous. But $\bar{X}^{\mrm{top}}$ has the subspace topology induced from 
$\til{X}^{\mrm{top}}$, and therefore the map 
$\opn{refl}^{\lmsp \mrm{top}}_X : X \to \bar{X}^{\mrm{top}}$ is continuous.

\medskip \noindent
(3) This immediate from the definition of $\bar{X}^{\mrm{top}}$.
\end{proof}

Here is the key lemma of this section. 

\begin{lem} \label{lem:286}
For every nonempty topological space $X$, the $\R$-ring homomorphism 
\[  \opn{F}_{\mrm{bc}}(\opn{refl}^{\mrm{top}}_X, \R) : 
\opn{F}_{\mrm{bc}}(\bar{X}^{\mrm{top}}, \R) \to \opn{F}_{\mrm{bc}}(X, \R) \]
is an isomorphism. 
\end{lem}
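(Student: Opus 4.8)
The plan is to show that the pullback homomorphism
$\Phi := \opn{F}_{\mrm{bc}}(\opn{refl}^{\mrm{top}}_X, \R)$, which sends
$g \mapsto g \circ \opn{refl}^{\mrm{top}}_X$, is bijective; since it is an
$\R$-ring homomorphism, bijectivity makes it an isomorphism. I would establish
injectivity and surjectivity separately. Almost all of the real work has in
fact been front-loaded into Definition \ref{dfn:295} and Lemma \ref{lem:285},
so the proof is essentially a matter of invoking those in the right order.

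For injectivity, suppose $g \in \opn{F}_{\mrm{bc}}(\bar{X}^{\mrm{top}}, \R)$
satisfies $g \circ \opn{refl}^{\mrm{top}}_X = 0$. Then $g$ vanishes on the
subset $\opn{refl}^{\mrm{top}}_X(X) \sub \bar{X}^{\mrm{top}}$. Because $g$ is
continuous and $\R$ is Hausdorff, the zero locus of $g$ is closed; as it
contains $\opn{refl}^{\mrm{top}}_X(X)$, which is dense by Lemma
\ref{lem:285}(3), it must be all of $\bar{X}^{\mrm{top}}$. Hence $g = 0$, and
$\Phi$ is injective.

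For surjectivity, I would exhibit an explicit preimage for each
$a \in A := \opn{F}_{\mrm{bc}}(X, \R)$ using the coordinate projection. Let
$p_a : \til{X}^{\mrm{top}} \to Z_a$ be the projection onto the factor indexed
by $a$ (as in the proof of Lemma \ref{lem:285}(2)), and set
$g_a := p_a|_{\bar{X}^{\mrm{top}}}$. This $g_a$ is continuous as the
restriction of a projection, and it is bounded because its image lies in
$Z_a$, which is a bounded closed interval; thus
$g_a \in \opn{F}_{\mrm{bc}}(\bar{X}^{\mrm{top}}, \R)$. By the very definition of
$\wtil{\opn{refl}}^{\lmsp \mrm{top}}_X$ in Definition \ref{dfn:295}(3) one has
$p_a \circ \wtil{\opn{refl}}^{\lmsp \mrm{top}}_X = a$, and restricting to
$\bar{X}^{\mrm{top}}$ gives
$\Phi(g_a) = g_a \circ \opn{refl}^{\mrm{top}}_X = a$. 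So $a$ lies in the image
of $\Phi$, proving surjectivity.

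The only point requiring genuine care, and the closest thing to an obstacle, is
confirming that the candidate preimage $g_a$ really lands in
$\opn{F}_{\mrm{bc}}$: boundedness of $a$ forces $a(X)$ to be a bounded set, so
the smallest closed interval $Z_a$ containing it is compact, and hence $g_a$ is
bounded. This is precisely where the distinction between $\opn{F}_{\mrm{bc}}$
and $\opn{F}_{\mrm{c}}$ is used, and it is the reason the product
$\til{X}^{\mrm{top}}$ was built from the \emph{bounded} intervals $Z_a$ in the
first place. With injectivity and surjectivity in hand, $\Phi$ is a bijective
$\R$-ring homomorphism, hence an isomorphism, as claimed.
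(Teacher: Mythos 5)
Your proposal is correct and follows essentially the same route as the paper's proof: injectivity via density of $\opn{refl}^{\mrm{top}}_X(X)$ in $\bar{X}^{\mrm{top}}$ (Lemma \ref{lem:285}(3)) together with continuity, and surjectivity by pulling back the coordinate projection $p_a$ restricted to $\bar{X}^{\mrm{top}}$, which is exactly the preimage $\bar{a}$ the paper constructs. Your explicit remark that boundedness of $g_a$ comes from the compactness of the interval $Z_a$ is a small but welcome clarification that the paper leaves implicit.
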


The lemma makes sense because 
$\opn{refl}^{\mrm{top}}_X : X \to \bar{X}^{\mrm{top}}$ is a map in 
$\cat{Top}$, and $\opn{F}_{\mrm{bc}}(-, \R)$ is a functor, see formula 
(\ref{eqn:353}). See Remark \ref{rem:285} regarding the similarity between 
proof 
of this lemma and other proofs of existence of SCC.

\begin{proof} 
We shall use the abbreviations 
$A := \opn{F}_{\mrm{bc}}(X, \R)$,
$\bar{A} := \opn{F}_{\mrm{bc}}(\bar{X}^{\mrm{top}}, \R)$
and 
$\phi := \opn{F}_{\mrm{bc}}(\opn{dev}^{\mrm{top}}_X, \R) : \bar{A} \to A$.
We need to prove that the $\R$-ring homomorphism $\phi : \bar{A} \to A$ is 
bijective. 

First we'll prove that $\phi$ is injective. Suppose 
$\bar{a}_1, \bar{a}_2 \in \bar{A}$ satisfy 
$\phi(\bar{a}_1) = \phi(\bar{a}_2)$ in $\R$. 
This means that their restrictions to the subspace 
$Y := \opn{refl}^{\mrm{top}}_X(X)$ of $\bar{X}^{\mrm{top}}$ satisfy 
$\bar{a}_1|_{Y} = \bar{a}_2|_{Y}$
as continuous functions $Y \to \R$, where $\R$ if given its standard norm 
topology. By Lemma \ref{lem:286} the space $Y$ is dense in 
$\bar{X}^{\mrm{top}}$, and 
therefore $\bar{a}_1 = \bar{a}_2$.  

Now we will prove that $\phi$ is surjective.
Take some function $a \in A$. Let  
$p_a : \til{X}^{\mrm{top}} \to Z_a$ 
be the projection on the factor indexed by $a$.
Consider the diagram 
\begin{equation} \label{eqn:289}
\begin{tikzcd} [column sep = 10ex, row sep = 6ex]
X
\ar[r, "{\opn{refl}^{\mrm{top}}_X}"]
\ar[dr, "{a}"']
&
\bar{X}^{\mrm{top}}
\ar[r, "{f}"]
\ar[d, "{\bar{a}}"]
&
\til{X}^{\mrm{top}}
\ar[d, "{p_a}"]
\\
&
\R
&
Z_a
\ar[l, "{g}"']
\end{tikzcd} 
\end{equation}
in $\cat{Top}$, where $\R$ has its standard norm topology, $f$ and $g$ are 
the inclusions, and $\bar{a} := g \circ p_a \circ f$. 
The square subdiagram is obviously commutative. 
The definition of $\opn{refl}^{\mrm{top}}_X$ directly implies that the two 
outer 
paths from $X$ to $\R$ are equal. Hence the whole diagram is commutative.
We see that $\bar{a} \circ \opn{refl}^{\mrm{top}}_X = a$. But 
$\bar{a} \circ \opn{refl}^{\mrm{top}}_X = \phi(\bar{a})$. 
\end{proof}

Recall from Definition \ref{dfn:201} that a BC $\R$-ring is an $\R$-ring $A$ 
that is isomorphic to  $\opn{F}_{\mrm{bc}}(X, \R)$ for some compact 
topological 
space $X$. The next theorem shows that these rings occur in much greater
generality.

\begin{thm} \label{thm:415}  For an arbitrary topological space $X$, the ring
$\opn{F}_{\mrm{bc}}(X, \R)$ is a BC $\R$-ring.
\end{thm}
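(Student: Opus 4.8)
The plan is to read off the theorem directly from the compactification $\bar{X}^{\mrm{top}}$ constructed in Definition \ref{dfn:295} and the two lemmas just proved about it, after separately treating the degenerate case of the empty space. The whole point is that Lemma \ref{lem:286} already produces a compact space whose bounded-continuous-function ring is isomorphic to $\opn{F}_{\mrm{bc}}(X, \R)$, which is exactly what Definition \ref{dfn:201} asks for.

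First I would dispose of the case $X = \varnothing$, which is excluded from Lemma \ref{lem:286}. Here $\opn{F}_{\mrm{bc}}(\varnothing, \R)$ is the zero ring. The empty space is itself compact (it is vacuously Hausdorff and quasi-compact), so taking the witnessing compact space to be $\varnothing$ in Definition \ref{dfn:201} already exhibits the zero ring as a BC $\R$-ring.

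Next, assuming $X \neq \varnothing$, I would invoke Lemma \ref{lem:285}(1), which says that $\bar{X}^{\mrm{top}}$ is a compact topological space, together with Lemma \ref{lem:286}, which says that the induced $\R$-ring homomorphism $\opn{F}_{\mrm{bc}}(\opn{refl}^{\mrm{top}}_X, \R) : \opn{F}_{\mrm{bc}}(\bar{X}^{\mrm{top}}, \R) \to \opn{F}_{\mrm{bc}}(X, \R)$ is an isomorphism. Combining these, $\opn{F}_{\mrm{bc}}(X, \R)$ is isomorphic as an $\R$-ring to the ring of bounded continuous functions on the compact space $\bar{X}^{\mrm{top}}$, and hence is a BC $\R$-ring by Definition \ref{dfn:201}. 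This completes the argument.

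The real difficulty does not lie in this theorem, whose proof is pure bookkeeping once its prerequisites are in place; it lies in those prerequisites. The genuine obstacle is Lemma \ref{lem:286} --- showing that restriction along the dense map $\opn{refl}^{\mrm{top}}_X$ induces a \emph{bijection} on bounded continuous functions (surjectivity is the subtle half, relying on the explicit realization of $\bar{X}^{\mrm{top}}$ as the closure of $X$ inside the product $\prod_{a \in A} Z_a$ of compact intervals together with the accompanying projection maps) --- and, behind it, the appeal to the Tychonoff Theorem that makes $\bar{X}^{\mrm{top}}$ compact in the first place. With both of those already established above, nothing further is required here.
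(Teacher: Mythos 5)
Your proposal is correct and is essentially identical to the paper's own proof: the paper likewise reduces to nonempty $X$ and then combines Lemma \ref{lem:285}(1) (compactness of $\bar{X}^{\mrm{top}}$) with Lemma \ref{lem:286} (the isomorphism induced by $\opn{refl}^{\mrm{top}}_X$) to conclude via Definition \ref{dfn:201}. Your explicit treatment of the empty space (the zero ring as $\opn{F}_{\mrm{bc}}(\varnothing, \R)$ with $\varnothing$ compact) is a small clarification the paper leaves implicit, but the argument is the same.
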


\begin{proof}
We may assume $X$ is a nonempty topological space. 
According to Lemma \ref{lem:286} there is an $\R$-ring isomorphism 
$\opn{F}_{\mrm{bc}}(\bar{X}^{\mrm{top}}, \R) \cong 
\opn{F}_{\mrm{bc}}(X, \R)$,
and by Lemma \ref{lem:285}(1) the topological space $\bar{X}^{\mrm{top}}$ is 
compact. Therefore $\opn{F}_{\mrm{bc}}(X, \R)$ is a BC $\R$-ring. 
\end{proof}

The category $\cat{Rng}\xover{bc} \R$
is the full subcategory of $\cat{Rng} \over \R$ on the BC rings.
Proposition \ref{prop:451} says that there is a functor 
\begin{equation} \label{eqn:335}
\opn{MSpec} : (\cat{Rng}\xover{bc} \R)^{\mrm{op}} \to \cat{Top} .
\end{equation}
According to Theorem \ref{thm:203} the functor 
\begin{equation} \label{eqn:300}
\opn{F}_{\mrm{bc}}(-, \R) : (\cat{Top}_{\mrm{cp}})^{\mrm{op}} \to 
\cat{Rng}\xover{bc} \R 
\end{equation}
is an equivalence of categories, with quasi-inverse $\opn{MSpec}$.
An immediate consequence of Theorem \ref{thm:415} is: 

\begin{cor} \label{cor:415}
The functor $\opn{F}_{\mrm{bc}}(-, \R)$ from formula (\ref{eqn:353})
factors through, and the functor $\opn{F}_{\mrm{bc}}(-, \R)$ from formula 
(\ref{eqn:300}) extends to, a functor 
\[ \opn{F}_{\mrm{bc}}(-, \R) : \cat{Top}^{\mrm{op}} \to 
\cat{Rng}\xover{bc} \R . \]
\end{cor}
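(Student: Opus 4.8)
The plan is to treat this corollary as a purely formal consequence of Theorem \ref{thm:415}, using crucially that $\cat{Rng}\xover{bc} \R$ is a \emph{full} subcategory of $\cat{Rng} \over \R$ (Definition \ref{dfn:201}). Write $\opn{Inc} : \cat{Rng}\xover{bc} \R \to \cat{Rng} \over \R$ for the inclusion functor; it is fully faithful.

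First I would handle objects. For every topological space $X$, Theorem \ref{thm:415} asserts that the $\R$-ring $\opn{F}_{\mrm{bc}}(X, \R)$ belongs to $\cat{Rng}\xover{bc} \R$. Hence the object assignment of the functor in formula (\ref{eqn:353}) already takes values in $\opn{Ob}(\cat{Rng}\xover{bc} \R)$, with no further work needed.

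Next I would handle morphisms. Given a continuous map $f : Y \to X$, the induced $\R$-ring homomorphism $\opn{F}_{\mrm{bc}}(f, \R) : \opn{F}_{\mrm{bc}}(X, \R) \to \opn{F}_{\mrm{bc}}(Y, \R)$ is a morphism of $\cat{Rng} \over \R$ whose source and target both lie in $\cat{Rng}\xover{bc} \R$. Because the subcategory is full, the two Hom-sets coincide, so this homomorphism is automatically a morphism of $\cat{Rng}\xover{bc} \R$. I would therefore \emph{define} $\opn{F}_{\mrm{bc}}(-, \R) : \cat{Top}^{\mrm{op}} \to \cat{Rng}\xover{bc} \R$ by the very same rule on objects and morphisms. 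Preservation of identities and of composition is inherited verbatim from the functor in (\ref{eqn:353}), since the identities and the composition of a full subcategory are those of the ambient category.

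Finally I would verify the two compatibility clauses, both of which hold on the nose by construction. For the ``factors through'' assertion, the equality $\opn{Inc} \circ \opn{F}_{\mrm{bc}}(-, \R) = \opn{F}_{\mrm{bc}}(-, \R)$ of functors $\cat{Top}^{\mrm{op}} \to \cat{Rng} \over \R$ is immediate, the right-hand side being the functor of (\ref{eqn:353}). For the ``extends'' assertion, restricting the newly defined functor along $(\cat{Top}_{\mrm{cp}})^{\mrm{op}} \hookrightarrow \cat{Top}^{\mrm{op}}$ returns precisely the functor of (\ref{eqn:300}), again because the object and morphism assignments agree. I do not expect any genuine obstacle here: the sole substantive input is Theorem \ref{thm:415}, which forces objects into the subcategory, while fullness forces morphisms in for free, and the remaining checks are entirely formal.
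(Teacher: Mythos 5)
Your proposal is correct and matches the paper's intent exactly: the paper offers no separate argument, presenting the corollary as an ``immediate consequence'' of Theorem \ref{thm:415}, and your write-up simply spells out why it is immediate (objects land in $\cat{Rng}\xover{bc} \R$ by that theorem, morphisms come for free since the subcategory is full, and the factoring/extension clauses hold by construction). Nothing is missing.
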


\begin{cor} \label{cor:456}
Let $A$ be an $\R$-ring, with $X := \opn{MSpec}(A)$.
The following two conditions are equivalent:
\begin{itemize}
\rmitem{i} $A$ is a BC $\R$-ring.

\rmitem{ii} 
$A$ is an $\R$-valued ring, for every $a \in A$ the function 
$\opn{dev}_A(a) : X \to \R$ is continuous, and the double evaluation
$\R$-ring homomorphism 
$\opn{dev}_A : A \to \opn{F}_{\mrm{c}}(X, \R)$
is bijective. 
\end{itemize}
\end{cor}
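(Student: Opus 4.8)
The plan is to prove the two implications separately, with (i) $\Rightarrow$ (ii) being essentially a repackaging of earlier lemmas, and (ii) $\Rightarrow$ (i) resting on a short topological argument whose crux is establishing the Hausdorff property.

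For (i) $\Rightarrow$ (ii) I would simply assemble the results already proved for BC $\R$-rings. Lemma \ref{lem:251} says that $A$ is $\R$-valued, which is what makes $\opn{dev}_A$ defined in the first place and guarantees that $\opn{ev}_{\m}(a) = 0$ exactly when $a \in \m$. Lemma \ref{lem:272}(2) supplies the continuity of each function $\opn{dev}_A(a)$, and Lemma \ref{lem:272}(3) gives bijectivity of $\opn{dev}_A$ onto $\opn{F}_{\mrm{bc}}(X, \R)$. The one point worth noting is that by Lemma \ref{lem:272}(1) the space $X$ is compact, and on a compact space one has $\opn{F}_{\mrm{bc}}(X, \R) = \opn{F}_{\mrm{c}}(X, \R)$ (as recorded after Definition \ref{dfn:200}); hence $\opn{dev}_A$ is a bijection onto $\opn{F}_{\mrm{c}}(X, \R)$, which is precisely the assertion of (ii).

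For (ii) $\Rightarrow$ (i), the decisive step is to show that $X = \opn{MSpec}(A)$ is Hausdorff. Given distinct maximal ideals $\m_1, \m_2$, maximality forces $\m_1 \not\sub \m_2$, so I may choose $a \in \m_1 \setminus \m_2$. Since $A$ is $\R$-valued, the function $f := \opn{dev}_A(a)$ satisfies $f(\m_1) = 0$ and $f(\m_2) \neq 0$, and by hypothesis $f$ is continuous for the norm topology of $\R$. Taking the $f$-preimages of two disjoint open intervals of $\R$ that separate $0$ from $f(\m_2)$ then produces disjoint open neighborhoods of $\m_1$ and $\m_2$. This separation argument is the only substantive point, and it is inexpensive precisely because the hypotheses hand us enough continuous functions; I expect it to be the main obstacle only in the sense of being the step one must not overlook.

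Once $X$ is known to be Hausdorff, I would combine this with the quasi-compactness of $\opn{MSpec}(A)$ from Proposition \ref{prop:395} to conclude that $X$ is compact in the sense of Convention \ref{conv:452}. On such a compact space $\opn{F}_{\mrm{c}}(X, \R) = \opn{F}_{\mrm{bc}}(X, \R)$, so the hypothesis that the $\R$-ring homomorphism $\opn{dev}_A : A \to \opn{F}_{\mrm{c}}(X, \R)$ is bijective upgrades to an $\R$-ring isomorphism $A \iso \opn{F}_{\mrm{bc}}(X, \R)$ with $X$ compact. By Definition \ref{dfn:201} this directly exhibits $A$ as a BC $\R$-ring. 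Note that Theorem \ref{thm:415} is not needed in this direction, since the required isomorphism onto bounded continuous functions on a \emph{compact} space is produced outright.
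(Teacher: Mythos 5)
Your proof is correct, and in the direction (ii) $\Rightarrow$ (i) it takes a genuinely different route from the paper. The paper never establishes that $X = \opn{MSpec}(A)$ is Hausdorff: it only invokes quasi-compactness (Proposition \ref{prop:395}) to get $\opn{F}_{\mrm{c}}(X, \R) = \opn{F}_{\mrm{bc}}(X, \R)$, obtains the isomorphism $\opn{dev}_A : A \iso \opn{F}_{\mrm{bc}}(X, \R)$ for this a priori possibly non-Hausdorff space, and then appeals to Theorem \ref{thm:415} (every ring $\opn{F}_{\mrm{bc}}(X, \R)$ is BC), which in turn rests on the Stone-\v{C}ech machinery of Lemmas \ref{lem:285} and \ref{lem:286}, hence on Tychonoff's theorem. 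You instead extract Hausdorffness directly from the continuity hypothesis in (ii): for distinct $\m_1, \m_2$ you pick $a \in \m_1 \setminus \m_2$, note that $f := \opn{dev}_A(a)$ satisfies $f(\m_1) = 0 \neq f(\m_2)$, and pull back disjoint intervals along the (Zariski-to-norm) continuous $f$; this is exactly where the continuity assumption earns its keep. Combined with Proposition \ref{prop:395}, this makes $X$ compact in the sense of Convention \ref{conv:452}, so Definition \ref{dfn:201} applies outright and Theorem \ref{thm:415} is bypassed entirely. What each approach buys: the paper's argument is a one-liner given machinery already developed; yours is more elementary and self-contained, and it yields the sharper conclusion that $\opn{MSpec}(A)$ itself is the compact space realizing $A$, rather than deducing BC-ness via an abstractly constructed auxiliary compactification. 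Your direction (i) $\Rightarrow$ (ii) coincides with the paper's, assembling Lemmas \ref{lem:251} and \ref{lem:272}(1)--(3) in the same way.
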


\begin{proof} \mbox{}

\smallskip \noindent
(i) $\Rightarrow$ (ii): 
By Lemma \ref{lem:272}(1) the topological space $X$ is compact, and therefore
$\opn{F}_{\mrm{bc}}(X, \R) = \opn{F}_{\mrm{c}}(X, \R)$.
Lemma \ref{lem:251} says that $A$ is an $\R$-valued ring.  
According to Lemma \ref{lem:272}(2), for every $a \in A$ the function 
$\opn{dev}_A(a) : X \to \R$ is 
continuous. Finally, by Lemma \ref{lem:272}(3) the ring homomorphism 
$\opn{dev}_A : A \to \opn{F}_{\mrm{bc}}(X, \R) = 
\opn{F}_{\mrm{c}}(X, \R)$
is bijective.

\medskip \noindent 
(ii) $\Rightarrow$ (i): Proposition \ref{prop:395} says that the topological 
space $X$ is quasi-compact. This implies that 
$\opn{F}_{\mrm{bc}}(X, \R) = \opn{F}_{\mrm{c}}(X, \R)$,
and hence there is an isomorphism
$\opn{dev}_A : A \iso \opn{F}_{\mrm{bc}}(X, \R)$ of $\R$-rings. 
According to Theorem \ref{thm:415}, $A$ is a BC $\R$-ring. 
\end{proof}

\begin{exa} \label{exa:474}
Take the $\R$-ring $B$ from Examples \ref{exa:395} and \ref{exa:465}. 
It is an $\R$-valued ring. Yet for the element $t \in B$ the map 
$\opn{dev}_B (t) : \opn{MSpec}(B) \to \R$ is not continuous. Therefore $B$ is 
not a BC $\R$-ring.  
\end{exa}

Let us denote by 
$\opn{Inc} : \cat{Top}_{\mrm{cp}} \to \cat{Top}$
the inclusion functor. 
The algebraic reflection map
\begin{equation} \label{eqn:354}
\opn{refl}^{\mrm{\lmsp alg}}_X : X \to 
\opn{MSpec} \bigl( \opn{F}_{\mrm{bc}}(X, \R) \bigr) , 
\end{equation}
for an arbitrary topological space $X$, was introduced in Definition 
\ref{dfn:231}. By Lemma \ref{lem:258}, as $X$ moves in 
$\cat{Top}_{\mrm{cp}}$, this becomes a morphism
\begin{equation} \label{eqn:336}
\opn{refl}^{\mrm{\lmsp alg}} : \opn{Inc} \to \opn{MSpec} \circ 
\opn{F}_{\mrm{bc}}(-, \R)
\end{equation}
of functors $\cat{Top}_{\mrm{cp}} \to \cat{Top}$.
There is also the identity functor 
$\opn{Id} : \cat{Top} \to \cat{Top}$.
By Corollary \ref{cor:415} and Proposition \ref{prop:451} there is 
a functor 
\begin{equation} \label{eqn:355}
\opn{MSpec} \circ \opn{F}_{\mrm{bc}}(-, \R) : \cat{Top} \to \cat{Top} .
\end{equation}

\begin{lem} \label{lem:415}
The algebraic reflection map 
$\opn{refl}^{\mrm{\lmsp alg}}_X$ from formula (\ref{eqn:354}) is functorial in 
$X \in \cat{Top}$. Therefore the morphism of functors 
$\opn{refl}^{\mrm{\lmsp alg}}$ in 
formula (\ref{eqn:336}) extends to a morphism
\[ \opn{refl}^{\mrm{\lmsp alg}} : \opn{Id} \to \opn{MSpec} \circ 
\opn{F}_{\mrm{bc}}(-, \R) \]
of functors $\cat{Top} \to \cat{Top}$. 
\end{lem}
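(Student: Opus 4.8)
The plan is to notice that the calculation carried out in the proof of Lemma \ref{lem:258} never used compactness, so that the only genuinely new input is the well-definedness of the target functor on all of $\cat{Top}$, and this is exactly what Theorem \ref{thm:415} supplies.

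First I would confirm that the composed functor $\opn{MSpec} \circ \opn{F}_{\mrm{bc}}(-, \R) : \cat{Top} \to \cat{Top}$ of formula (\ref{eqn:355}) really is defined on the whole of $\cat{Top}$. By Theorem \ref{thm:415} the ring $\opn{F}_{\mrm{bc}}(X, \R)$ is a BC $\R$-ring for every topological space $X$, so Corollary \ref{cor:415} provides a functor $\opn{F}_{\mrm{bc}}(-, \R) : \cat{Top}^{\mrm{op}} \to \cat{Rng}\xover{bc} \R$; composing it with the functor $\opn{MSpec}$ of Proposition \ref{prop:451} yields the desired endofunctor of $\cat{Top}$. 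I would also note that for an arbitrary $X$ the map $\opn{refl}^{\mrm{\lmsp alg}}_X$ of Definition \ref{dfn:231} is continuous by Proposition \ref{prop:320}(1), so it is a legitimate morphism in $\cat{Top}$ and serves as the $X$-component of the proposed natural transformation.

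Next I would verify naturality. Abbreviating $\opn{M} := \opn{MSpec}$ and $\opn{F}_{\mrm{bc}} := \opn{F}_{\mrm{bc}}(-, \R)$, for a continuous map $f : Y \to X$ in $\cat{Top}$ (now with no compactness assumption) I must show that the square
\[ \begin{tikzcd} [column sep = 10ex, row sep = 6ex]
Y
\ar[r, "{\opn{refl}^{\mrm{\lmsp alg}}_Y}"]
\ar[d, "{f}"']
&
(\opn{M} \circ \opn{F}_{\mrm{bc}})(Y)
\ar[d, "{(\opn{M} \circ \opn{F}_{\mrm{bc}})(f)}"]
\\
X
\ar[r, "{\opn{refl}^{\mrm{\lmsp alg}}_X}"]
&
(\opn{M} \circ \opn{F}_{\mrm{bc}})(X)
\end{tikzcd} \]
commutes. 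Setting $\phi := \opn{F}_{\mrm{bc}}(f)$, fixing $y \in Y$ with $x := f(y)$, and writing $\n := \opn{refl}^{\mrm{\lmsp alg}}_Y(y) = \opn{Ker}(\opn{ev}_y)$ and $\m := \phi^{-1}(\n)$, the identity $\phi(a)(y) = a(f(y)) = a(x)$ yields $\m = \opn{Ker}(\opn{ev}_x) = \opn{refl}^{\mrm{\lmsp alg}}_X(x)$. This is the verbatim chain of equalities from the proof of Lemma \ref{lem:258}; the essential point is that it is a purely set-theoretic manipulation of kernels of evaluation homomorphisms together with the functoriality relation $\phi(a)(y) = a(f(y))$, and hence is insensitive to whether $X$ and $Y$ are compact.

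The main -- and really the only -- obstacle is conceptual rather than computational: one has to be sure that $\opn{M}(\phi)$ makes sense for a homomorphism $\phi$ between the function rings of non-compact spaces, i.e.\ that preimages under $\phi$ of maximal ideals remain maximal. This is guaranteed because, by Theorem \ref{thm:415} and Lemma \ref{lem:251}, the rings $\opn{F}_{\mrm{bc}}(X, \R)$ are $\R$-valued, so Proposition \ref{prop:451} applies. With that in hand, the extension of $\opn{refl}^{\mrm{\lmsp alg}}$ from $\cat{Top}_{\mrm{cp}}$ to $\cat{Top}$ is immediate.
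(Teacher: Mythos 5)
Your proposal is correct and follows essentially the same route as the paper: the paper's proof of Lemma \ref{lem:415} likewise consists of observing that the computation in the proof of Lemma \ref{lem:258} is insensitive to compactness, the only new ingredient being that $\opn{MSpec} \circ \opn{F}_{\mrm{bc}}(-, \R)$ is a well-defined functor on all of $\cat{Top}$, which the paper (like you) obtains from Theorem \ref{thm:415}, Corollary \ref{cor:415} and Proposition \ref{prop:451}. Your additional remarks -- continuity of $\opn{refl}^{\mrm{\lmsp alg}}_X$ via Proposition \ref{prop:320}(1) and maximality of $\phi^{-1}(\n)$ via Lemma \ref{lem:251} -- are exactly the inputs the paper relies on implicitly.
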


\begin{proof} 
Given a map $f : Y \to X$ in $\cat{Top}$, we must prove that the diagram
\[ \begin{tikzcd} [column sep = 10ex, row sep = 6ex]
Y
\ar[r, "{\opn{refl}^{\mrm{\lmsp alg}}_Y}"]
\ar[d, "{f}"']
&
(\opn{M} \circ \opn{F}_{\mrm{bc}})(Y)
\ar[d, "{(\opn{M} \circ \opn{F}_{\mrm{bc}})(f)}"]
\\
X
\ar[r, "{\opn{refl}^{\mrm{\lmsp alg}}_X}"]
&
(\opn{M} \circ \opn{F}_{\mrm{bc}})(X)
\end{tikzcd} \]
in $\cat{Top}$ is commutative. The proof of Lemma \ref{lem:258} works here 
without any modification, except that now we know that 
$\opn{M} \circ \opn{F}_{\mrm{bc}}$ is 
a functor from $\cat{Top}$ to itself (i.e.\ $X$ and $Y$ do not need to be 
compact). 
\end{proof}

The functor $\opn{F}_{\mrm{bc}}(-, \R)$ in Corollary \ref{cor:415}
is not an equivalence, but it does provide an algebraic construction 
of the SCC. 

\begin{thm}[Algebraic SCC] \label{thm:285}
Given a topological space $X$, consider the topological space 
$\bar{X}^{\mrm{alg}} := \opn{MSpec} \bigl( \opn{F}_{\mrm{bc}}(X, \R) \bigr)$
and the algebraic reflection map
$\opn{refl}^{\mrm{\lmsp alg}}_X : X \to \bar{X}^{\mrm{alg}}$ 
from Definition \ref{dfn:231}. Then the pair 
$\big( \bar{X}^{\mrm{alg}}, \opn{refl}^{\mrm{\lmsp alg}}_X \bigr)$ 
is a Stone-\v{C}ech Compactification of $X$. 
\end{thm}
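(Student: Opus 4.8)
The plan is to verify the three requirements of Definition \ref{dfn:131} for the pair $(\bar X^{\mrm{alg}}, \opn{refl}^{\mrm{\lmsp alg}}_X)$: that $\bar X^{\mrm{alg}}$ is compact, that the map $\opn{refl}^{\mrm{\lmsp alg}}_X$ is continuous, and that the universal property (C) holds. Throughout I abbreviate $\opn{M} := \opn{MSpec}$ and $\opn{F}_{\mrm{bc}} := \opn{F}_{\mrm{bc}}(-, \R)$, and I may assume $X$ is nonempty, the empty case being trivial. The first two requirements are immediate from earlier results: writing $A := \opn{F}_{\mrm{bc}}(X, \R)$, Theorem \ref{thm:415} shows that $A$ is a BC $\R$-ring, so Lemma \ref{lem:272}(1) gives that $\bar X^{\mrm{alg}} = \opn{M}(A)$ is compact, and continuity of $\opn{refl}^{\mrm{\lmsp alg}}_X$ is precisely Proposition \ref{prop:320}(1).

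For the universal property, let $f : X \to Y$ be a continuous map with $Y$ compact. By Corollary \ref{cor:415} and Proposition \ref{prop:451} the composite $\opn{M} \circ \opn{F}_{\mrm{bc}} : \cat{Top} \to \cat{Top}$ of formula (\ref{eqn:355}) is a covariant functor, so applying it to $f$ produces a continuous map $(\opn{M} \circ \opn{F}_{\mrm{bc}})(f) : \bar X^{\mrm{alg}} \to \opn{M}(\opn{F}_{\mrm{bc}}(Y, \R))$. Since $Y$ is compact, Lemma \ref{lem:232}(2) says that $\opn{refl}^{\mrm{\lmsp alg}}_Y : Y \to \opn{M}(\opn{F}_{\mrm{bc}}(Y, \R))$ is a homeomorphism, and I define the continuous map
\[ \bar f := (\opn{refl}^{\mrm{\lmsp alg}}_Y)^{-1} \circ (\opn{M} \circ \opn{F}_{\mrm{bc}})(f) : \bar X^{\mrm{alg}} \to Y . \]
The naturality of $\opn{refl}^{\mrm{\lmsp alg}}$ on all of $\cat{Top}$, established in Lemma \ref{lem:415}, gives the identity $(\opn{M} \circ \opn{F}_{\mrm{bc}})(f) \circ \opn{refl}^{\mrm{\lmsp alg}}_X = \opn{refl}^{\mrm{\lmsp alg}}_Y \circ f$; composing with $(\opn{refl}^{\mrm{\lmsp alg}}_Y)^{-1}$ on the left yields $\bar f \circ \opn{refl}^{\mrm{\lmsp alg}}_X = f$, which is the existence half of (C).

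Uniqueness will follow from density. If $\bar f_1, \bar f_2 : \bar X^{\mrm{alg}} \to Y$ both satisfy $\bar f_i \circ \opn{refl}^{\mrm{\lmsp alg}}_X = f$, then they agree on the image of $\opn{refl}^{\mrm{\lmsp alg}}_X$, which is dense in $\bar X^{\mrm{alg}}$ by Proposition \ref{prop:320}(2). As $Y$ is compact, hence Hausdorff, two continuous maps into $Y$ agreeing on a dense subset must be equal, so $\bar f_1 = \bar f_2$.

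The substance of this theorem has already been invested in the machinery it cites -- the duality of Theorem \ref{thm:203}, the generality of Theorem \ref{thm:415}, the functoriality of $\opn{refl}^{\mrm{\lmsp alg}}$ in Lemma \ref{lem:415}, and the fact that $\opn{refl}^{\mrm{\lmsp alg}}_Y$ is a homeomorphism for compact $Y$ in Lemma \ref{lem:232}(2) -- so I expect no serious obstacle. The one point demanding genuine care is the bookkeeping of variance: since $\opn{M}$ and $\opn{F}_{\mrm{bc}}$ are each contravariant, one must confirm that their composite is covariant and that the factorizing map $\bar f$ together with the naturality square are oriented correctly. Beyond the cited results, the sole topological input is the elementary separation fact that continuous maps into a Hausdorff space are determined by their restriction to a dense subset.
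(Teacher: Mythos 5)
Your proof is correct and follows essentially the same route as the paper's: compactness via Theorem \ref{thm:415} and Lemma \ref{lem:272}(1), continuity via Proposition \ref{prop:320}(1), the map $\bar f := (\opn{refl}^{\mrm{\lmsp alg}}_Y)^{-1} \circ (\opn{M} \circ \opn{F}_{\mrm{bc}})(f)$ built from Lemma \ref{lem:232}(2) and the naturality of Lemma \ref{lem:415}, and uniqueness from density (Proposition \ref{prop:320}(2)) plus the Hausdorff property of $Y$. The only cosmetic difference is that you cite Proposition \ref{prop:451} where the paper cites Proposition \ref{prop:450}, and you spell out the naturality identity and the dense-agreement argument a bit more explicitly.
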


\begin{proof}
We will use the abbreviations 
$\opn{M} := \opn{MSpec}$ and $\opn{F}_{\mrm{bc}} := \opn{F}_{\mrm{bc}}(-, \R)$.
We may assume $X$ is nonempty. 

By Proposition \ref{prop:320}(1) the map
$\opn{refl}^{\mrm{\lmsp alg}}_X : X \to \bar{X}^{\mrm{alg}}$ 
is continuous. By Theorem \ref{thm:415}  the ring
$A := \opn{F}_{\mrm{bc}}(X)$
belongs to $\cat{Rng}\xover{bc} \R$, and therefore by Lemma 
\ref{lem:272}(1) the topological space 
$\bar{X}^{\mrm{alg}} = \opn{M}(A)$ is compact. 

We need to verify that the pair 
$\bigl( \bar{X}^{\mrm{alg}}, \opn{refl}^{\mrm{\lmsp alg}}_X \bigr)$ has the 
universal property (C) from Definition \ref{dfn:131}. 
Suppose $f : X \to Y$ is a map in $\cat{Top}$ with compact target $Y$. 
Due to Proposition \ref{prop:450} and Corollary \ref{cor:415} the map 
\[ (\opn{M} \circ \opn{F}_{\mrm{bc}})(f) : 
(\opn{M} \circ \opn{F}_{\mrm{bc}})(X) \to
(\opn{M} \circ \opn{F}_{\mrm{bc}})(Y) \]
exists in $\cat{Top}$, see formula (\ref{eqn:355}). 
Consider the following solid diagram in $\cat{Top}$. 
\[ \begin{tikzcd} [column sep = 10ex, row sep = 6ex]
X
\ar[r, "{\opn{refl}^{\mrm{\lmsp alg}}_X}"]
\ar[d, "{f}"']
&
\bar{X}^{\mrm{alg}} = (\opn{M} \circ \opn{F}_{\mrm{bc}})(X)
\ar[d, "{(\opn{M} \circ \opn{F}_{\mrm{bc}})(f)}"]
\ar[dl, dashed, "{\bar{f}}"']
\\
Y
\ar[r, "{\opn{refl}^{\mrm{\lmsp alg}}_Y}" near end]
&
(\opn{M} \circ \opn{F}_{\mrm{bc}})(Y)
\end{tikzcd} \]
By Lemma \ref{lem:415} this is a commutative diagram, and by Lemma
\ref{lem:232}(2) the map $\opn{refl}^{\mrm{\lmsp alg}}_Y$ is an isomorphism. 
The 
map
\[ \bar{f} := (\opn{refl}^{\mrm{\lmsp alg}}_Y)^{-1} \circ (\opn{M} \circ 
\opn{F}_{\mrm{bc}})(f) : \bar{X}^{\mrm{alg}} \to  Y \]
in $\cat{Top}$ satisfies $\bar{f} \circ \opn{refl}^{\mrm{\lmsp alg}}_X = f$. 
Such a map $\bar{f}$ is unique because $\opn{refl}^{\mrm{\lmsp alg}}_X(X)$ is 
dense in $\bar{X}^{\mrm{alg}}$, see Proposition \ref{prop:320}(2), and $Y$ is 
Hausdorff. 
\end{proof}

\begin{rem} \label{rem:301}
It is possible to give a more abstract (but more complicated) proof that 
$\bigl( \bar{X}^{\mrm{alg}}, \opn{refl}^{\mrm{\lmsp alg}}_X \bigr)$ has the 
universal property (C) from Definition \ref{dfn:131}.
One first shows that the pair 
$(\opn{M} \circ \opn{F}_{\mrm{bc}}, \opn{refl}^{\mrm{\lmsp alg}})$
is a {\em projector}, also called an {\em idempotent pointed functor} and 
an {\em idempotent monad}. 
The idempotence is proved just like in the proof of Theorem \ref{thm:285}.
Then the categorical properties of projectors 
can be used. See \cite[Section 4.1]{KS} or \cite[Section 2]{VY} for details on 
this abstract approach.
\end{rem}

\begin{rem} \label{rem:285}
The existence of an SCC in the strongest sense, namely the one in Definition
\ref{dfn:131}, is proved in 
\cite[Lemma tag = 
\texttt{\href{https://stacks.math.columbia.edu/tag/0908}{0908}}]{SP}.
Textbooks on topology, such as \cite{GJ}, \cite{Wa} and \cite{Ke},
provide proofs of special cases only; for example \cite[Theorem 5.24]{Ke} only 
considers the SCC of a Tychonoff space. 

All proofs we are aware of, namely those mentioned above, rely on variations of 
the same idea; in our paper this idea occurs in the proof of the Lemma 
\ref{lem:286}.
\end{rem}

\section{Real Banach Rings}
\label{real-B}

In this section we study {\em Banach$^{\Ast}$ $\R$-rings}, better known as {\em 
commutative unital $\mrm{C}^{\Ast}$ $\R$-algebras}. 
In Corollary \ref{cor:485} we prove that the forgetful functor is an 
equivalence from the category of Banach$^{\Ast}$ $\R$-rings to the category of 
BC $\R$-rings. 
Our source for the material here is the book \cite{Go}. 

By a {\em norm} on an $\R$-module $M$ we mean a function
$\norm{-} : M \to \R$ satisfying the following conditions for all $m, n \in M$ 
and $\la \in \R$: 
nonnegativity, $\norm{m} \geq 0$;
nondegeneracy, $\norm{m} = 0$ iff $m = 0$;
the triangle inequality, 
$\norm{m + n} \leq \norm{m} + \norm{n}$; and the homothety equality,
$\norm{\la \cd m} = \abs{\la} \cd \norm{m}$.
The norm induces a metric on $M$ in the obvious way. 
A normed $\R$-module $M$ is called a {\em Banach $\R$-module} if $M$ is 
complete as a metric space. Of course all textbooks will say "$\R$-vector 
space" whenever we say "$\R$-module", and "$\R$-algebra" whenever we say 
"$\R$-ring".

\begin{dfn} \label{dfn:485}
A {\em commutative Banach $\R$-ring} 
is a commutative $\R$-ring $A$, equipped with a norm 
$\norm{-} : A \to \R$, making $A$ into a 
Banach $\R$-module. Moreover, the norm must satisfy these extra conditions: 
$\norm{a \cd b} \leq \norm{a} \cd \norm{b}$ for all $a, b \in A$, and 
$\norm{1_A} = 1$, where $1_A \in A$ is the unit element.
\end{dfn}

\begin{dfn} \label{dfn:486}
A {\em commutative Banach$^{\Ast}$ $\R$-ring} 
is a commutative Banach $\R$-ring $A$, such that for every $a \in A$ 
there is equality $\norm{a^2} = \norm{a}^2$, 
and the element $1 + a^2$ is invertible in $A$. 
\end{dfn}

\begin{dfn} \label{dfn:505} 
Let $A$ and $B$ be commutative Banach $\R$-rings. A {\em Banach $\R$-ring 
homomorphism} is an $\R$-ring homomorphism $\phi : A \to B$ such that 
$\norm{\phi(a)} \leq \norm{a}$ for every element $a \in A$.

The category of commutative Banach $\R$-rings, with Banach $\R$-ring 
homomorphisms, is denoted by $\cat{BaRng} \over \R$. 
The full subcategory of $\cat{BaRng} \over \R$ on the Banach$^{\Ast}$ 
$\R$-rings is denoted by $\cat{BaRng}^{\Ast} \over \R$.
\end{dfn}

\begin{rem} \label{rem:485}
Definition \ref{dfn:486} is a slight modification of the definition of a 
commutative real {\em $\mrm{C}^{\Ast}$ algebra} from \cite[Chapter 8]{Go}. The 
definition in \cite{Go} allows the ring $A$ to have an involution $\ga$; here 
the involution $\ga$ is trivial. We think that the case of a nontrivial 
involution, and the corresponding Arens-Kaplansky Theorem 
(see \cite[Chapter 12]{Go}), should be treated as a {\em noncommutative} case.
\end{rem}

\begin{exa} \label{exa:485}
Suppose $X$ is a compact topological space. Then the ring 
$A := \opn{F}_{\mrm{c}}(X, \R)$, endowed with the $\opn{sup}$ norm, is a 
Banach$^{\Ast}$ $\R$-ring.
\end{exa}

The key classical result on Banach$^{\Ast}$ $\R$-rings is the next one.
It is part of the more general Arens-Kaplansky Theorem, see 
\cite[Theorem 12.5]{Go}.

\begin{thm}[{\cite[Theorem 11.5]{Go}}] \label{thm:485}
Let $A$ be a commutative Banach$^{\Ast}$ $\R$-ring. 
\begin{enumerate}
\item The set $X$ of complex characters of $A$, suitably topologized, is 
compact.  

\item The Gelfand transform $A \to \opn{F}_{\mrm{c}}(X, \R)$
is an isomorphism of Banach$^{\Ast}$ $\R$-rings. 
\end{enumerate}
\end{thm}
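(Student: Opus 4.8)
The plan is to realize $A$ as a ring of continuous real functions on its character space, following the classical commutative Gelfand theory adapted to the real setting. A \emph{complex character} is a nonzero $\R$-ring homomorphism $\chi : A \to \C$, and I equip the set $X$ of all complex characters with the topology of pointwise convergence, so that $X$ is a subspace of $\C^{A}$. The first key observation is that the Banach$^{\Ast}$ hypotheses force every character to be \emph{real-valued}: the ideal $\m := \opn{Ker}(\chi)$ is maximal with $A / \m$ a field extension of $\R$ sitting inside $\C$, hence equal to $\R$ or $\C$; but if some $a$ satisfied $\chi(a) = i$, then $1 + a^2$ would lie in $\m$ and fail to be invertible, contradicting the defining hypothesis of a Banach$^{\Ast}$ $\R$-ring. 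Thus $A / \m = \R$, every $\chi$ takes values in $\R$, and $\Gamma(a)(\chi) := \chi(a)$ defines an $\R$-ring homomorphism $\Gamma : A \to \opn{F}(X, \R)$, the Gelfand transform.

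Next I would prove part (1), compactness of $X$. Each character is norm-decreasing, $\abs{\chi(a)} \le \norm{a}$ (standard for characters of Banach algebras, via invertibility of $a - \la$ for $\abs{\la}$ large, after extension to the complexification). Hence $\Gamma(a)(X) \sub [-\norm{a}, \norm{a}]$, so $X$ embeds into the product $\prod_{a \in A} [-\norm{a}, \norm{a}]$, which is compact by the Tychonoff Theorem. Inside this product, $X$ is cut out by the closed conditions expressing additivity, $\R$-linearity, multiplicativity and unitality of a point $(\xi_a)_{a \in A}$, so $X$ is a closed, hence compact, subspace; it is Hausdorff since distinct characters differ at some $a$. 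This argument also shows each $\Gamma(a) : X \to \R$ is continuous, so $\Gamma$ lands in $\opn{F}_{\mrm{c}}(X, \R)$.

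For part (2), the decisive input is that the norm equals the spectral radius. Iterating $\norm{a^2} = \norm{a}^2$ gives $\norm{a^{2^n}} = \norm{a}^{2^n}$, so along the subsequence $2^n$ the spectral radius formula yields $r(a) = \norm{a}$; and real Gelfand theory identifies $r(a)$ with $\sup_{\chi \in X} \abs{\chi(a)} = \norm{\Gamma(a)}_{\sup}$. Therefore $\Gamma$ is an \emph{isometry} for the sup norm, in particular injective, with closed image. The image $\Gamma(A) \sub \opn{F}_{\mrm{c}}(X, \R)$ is an $\R$-subalgebra that contains the constants and separates the points of $X$ (by the very definition of the topology on $X$), so by the Stone--Weierstrass theorem it is dense; being also closed, it is all of $\opn{F}_{\mrm{c}}(X, \R)$. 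Thus $\Gamma$ is a bijective isometry of $\R$-rings, and since the involutions on both sides are trivial (cf.\ Remark \ref{rem:485}), this is an isomorphism of Banach$^{\Ast}$ $\R$-rings, the target being such by Example \ref{exa:485}.

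The main obstacle is the spectral theory underlying the identification $r(a) = \sup_{\chi \in X} \abs{\chi(a)}$ in the \emph{real} commutative Banach setting. This rests on a real Gelfand--Mazur theorem: a maximal-ideal quotient of a commutative Banach $\R$-ring is $\R$, $\C$, or $\mathbb{H}$, with $\mathbb{H}$ excluded by commutativity and $\C$ excluded here by the invertibility of $1 + a^2$. Setting this up carefully --- either directly, or by complexifying to $A_{\C} := \C \ot_{\R} A$, transporting spectral radii and characters across the complexification, and tracking the induced involution --- is the technical heart of the statement, and is precisely the content imported from \cite{Go}.
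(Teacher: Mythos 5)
The paper does not actually prove this statement: Theorem \ref{thm:485} is quoted verbatim from \cite[Theorem 11.5]{Go} and used as a black box --- it is the sole analytic input in the real case, consistent with the paper's stated strategy of importing the Gelfand--Naimark-type results from the literature and doing everything else algebraically. Your proposal, by contrast, opens the box and sketches the classical proof, and the sketch is essentially sound: real-valuedness of characters follows exactly as you say from the invertibility of $1 + a^2$; compactness of $X$ is the standard Tychonoff-plus-closed-conditions argument; isometry of the Gelfand transform comes from iterating $\norm{a^2} = \norm{a}^2$ against the spectral radius formula; and surjectivity is real Stone--Weierstrass plus closedness of the (isometric, hence complete) image. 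Two points would need attention in a full write-up. First, once characters are known to be real-valued, the bound $\abs{\chi(a)} \leq \norm{a}$ needs no complexification at all: for real $\la$ with $\abs{\la} > \norm{a}$ the element $\la - a$ is invertible by the geometric series in $A$ itself. Second, the identification $r(a) = \sup_{\chi \in X} \abs{\chi(a)}$ --- including the fact that $X \neq \varnothing$ when $A \neq 0$, which your isometry claim silently uses --- is indeed the technical heart: it requires complexifying $A$, knowing that maximal ideals of $A_{\C}$ are closed with quotient $\C$ (Gelfand--Mazur), matching characters of $A_{\C}$ bijectively with your complex characters of $A$, and checking that the complexification norm restricts to the given norm so that the spectral radius computed in $A_{\C}$ equals $\lim_n \norm{a^n}^{1/n}$ taken in $A$. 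You flag this honestly as the content imported from \cite{Go}; since that identification carries most of the weight, your argument is best read as a correct reduction of the theorem to standard commutative Gelfand theory rather than a self-contained proof --- which is nonetheless strictly more than the paper itself provides for this statement.
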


\begin{cor} \label{cor:486}
Let $A$ be a commutative Banach$^{\Ast}$ $\R$-ring. Then, after forgetting the 
norm, $A$ is a BC $\R$-ring.  
\end{cor}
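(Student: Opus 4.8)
The plan is to invoke Theorem \ref{thm:485} essentially verbatim, since it already packages all the analytic content. Write $A$ for the given commutative Banach$^{\Ast}$ $\R$-ring, and let $X$ denote its set of complex characters, topologized as in Theorem \ref{thm:485}. Part (1) of that theorem tells us that $X$ is compact, in the sense of Convention \ref{conv:452} (Hausdorff and quasi-compact), and part (2) tells us that the Gelfand transform is an isomorphism of Banach$^{\Ast}$ $\R$-rings
\[ A \iso \opn{F}_{\mrm{c}}(X, \R) . \]

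Next I would forget the norm. An isomorphism of Banach$^{\Ast}$ $\R$-rings is a fortiori an isomorphism of $\R$-rings, so discarding the norm (and, were this the complex case, the involution) leaves an $\R$-ring isomorphism $A \cong \opn{F}_{\mrm{c}}(X, \R)$. Then, since $X$ is compact, every continuous function $X \to \R$ is automatically bounded, whence $\opn{F}_{\mrm{c}}(X, \R) = \opn{F}_{\mrm{bc}}(X, \R)$; this identity is recorded immediately after Definition \ref{dfn:200}. Composing the two identifications yields an $\R$-ring isomorphism $A \cong \opn{F}_{\mrm{bc}}(X, \R)$ for the compact space $X$, which is precisely the condition in Definition \ref{dfn:201} for $A$ to be a BC $\R$-ring.

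There is no genuine obstacle here: the substantive spectral-theoretic work -- constructing $X$, topologizing it so as to be compact, and proving that the Gelfand transform is surjective with the correct analytic behavior -- has all been done upstream in the Arens--Kaplansky machinery and is cited rather than reproved. The only points demanding a moment's care are purely bookkeeping: confirming that the notion of ``compact'' appearing in Theorem \ref{thm:485}(1) agrees with the paper's Convention \ref{conv:452}, and observing that the passage from ``isomorphism of Banach$^{\Ast}$ $\R$-rings'' to ``isomorphism of $\R$-rings'' loses no structure relevant to Definition \ref{dfn:201}. Both are immediate, so the corollary reduces to assembling the two cited facts.
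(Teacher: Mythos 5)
Your proposal is correct and follows exactly the paper's route: the paper's own proof is the single sentence ``This is immediate from Theorem \ref{thm:485},'' and your write-up simply unpacks that immediacy (compactness of the character space, forgetting the norm, and the identification $\opn{F}_{\mrm{c}}(X,\R) = \opn{F}_{\mrm{bc}}(X,\R)$ on a compact space). Nothing is missing; you have merely made explicit the bookkeeping the paper leaves to the reader.
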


\begin{proof}
This is immediate from Theorem \ref{thm:485}. 
\end{proof}

\begin{thm} \label{thm:486} 
Every BC $\R$-ring $A$ admits a unique norm $\norm{-}$, called the 
{\em canonical norm}, satisfying the three conditions below. 
\begin{itemize}
\rmitem{i} Given a homomorphism $\phi : A \to B$ in 
$\cat{Rng} \xover{bc} \R$, and an element $a \in A$, the inequality
$\norm{\phi(a)} \leq \norm{a}$ holds. 

\rmitem{ii} For the $\R$-ring $\opn{F}_{\mrm{bc}}(X, \R)$
of bounded continuous functions on a topological space $X$, 
the canonical norm is the sup norm. 

\rmitem{iii} The canonical norm makes $A$ into a Banach$^{\Ast}$ $\R$-ring. 
\end{itemize}
\end{thm}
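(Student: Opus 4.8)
The plan is to \emph{construct} the canonical norm explicitly from the duality of Section \ref{sec:comp-aff}, verify the three properties, and then read off uniqueness from them. For a BC $\R$-ring $A$ I would set $X := \opn{MSpec}(A)$, which is compact by Lemma \ref{lem:272}(1), so that $\opn{F}_{\mrm{bc}}(X, \R) = \opn{F}_{\mrm{c}}(X, \R)$. By Lemma \ref{lem:272}(3) the double evaluation homomorphism $\opn{dev}_A : A \iso \opn{F}_{\mrm{bc}}(X, \R)$ is an isomorphism of $\R$-rings, and I would \emph{define} the candidate norm by transporting the sup norm, $\norm{a} := \sup_{\m \in X} \abs{\opn{dev}_A(a)(\m)}$. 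Property (iii) is then immediate: by Example \ref{exa:485} the ring $\opn{F}_{\mrm{c}}(X, \R)$ with its sup norm is a Banach$^{\Ast}$ $\R$-ring, and the isomorphism $\opn{dev}_A$ transports this structure to $A$.

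Next I would verify (i). Given a homomorphism $\phi : A \to B$ in $\cat{Rng}\xover{bc}\R$, Proposition \ref{prop:451} supplies the continuous map $\opn{MSpec}(\phi) : \opn{MSpec}(B) \to \opn{MSpec}(A)$, $\n \mapsto \phi^{-1}(\n)$, and the functoriality of $\opn{dev}$ from Proposition \ref{prop:453} gives $\opn{dev}_B(\phi(a))(\n) = \opn{dev}_A(a)(\phi^{-1}(\n))$ for every $\n \in \opn{MSpec}(B)$. Thus every value taken by $\opn{dev}_B(\phi(a))$ is already a value of $\opn{dev}_A(a)$, so the supremum can only shrink, yielding $\norm{\phi(a)} \leq \norm{a}$.

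The main work is property (ii), because there $X$ is an \emph{arbitrary} (possibly non-compact) topological space; note $A := \opn{F}_{\mrm{bc}}(X, \R)$ is a BC $\R$-ring by Theorem \ref{thm:415}, hence $\R$-valued by Lemma \ref{lem:251}, so $\opn{dev}_A$ is defined. Writing $\bar{X} := \opn{MSpec}(A)$, I must show the candidate norm $\sup_{\m \in \bar{X}} \abs{\opn{dev}_A(a)(\m)}$ equals the honest sup norm $\sup_{x \in X} \abs{a(x)}$. The key identity is $\opn{dev}_A(a)(\opn{refl}^{\mrm{\lmsp alg}}_X(x)) = a(x)$, which holds because $\opn{refl}^{\mrm{\lmsp alg}}_X(x) = \opn{Ker}(\opn{ev}_x)$ (the computation of Lemma \ref{lem:275}(1) does not use compactness); thus on the image of $\opn{refl}^{\mrm{\lmsp alg}}_X$ the continuous function $\opn{dev}_A(a)$ takes exactly the values $a(X)$. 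Since that image is dense in $\bar{X}$ by Proposition \ref{prop:320}(2), and a continuous real-valued function has the same supremum over a space as over any dense subset, the two suprema coincide. I expect this density-plus-continuity step to be the crux; everything else is bookkeeping through the duality.

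Finally, for uniqueness, suppose $\norm{-}$ is any norm assignment (on all of $\cat{Rng}\xover{bc}\R$) satisfying (i)--(iii). For a fixed $A$, both $\opn{dev}_A : A \iso \opn{F}_{\mrm{bc}}(\opn{MSpec}(A), \R)$ and its inverse are morphisms in $\cat{Rng}\xover{bc}\R$, so applying (i) in each direction forces $\norm{a} = \norm{\opn{dev}_A(a)}$. By (ii) the right-hand side is the sup norm of $\opn{dev}_A(a)$, which is precisely the formula defining the norm constructed above; hence any such assignment agrees with it, proving uniqueness.
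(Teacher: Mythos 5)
Your proof is correct, and its skeleton is the same as the paper's: define the canonical norm by transporting the sup norm along $\opn{dev}_A : A \iso \opn{F}_{\mrm{bc}}(\opn{MSpec}(A), \R)$, obtain (iii) from Example \ref{exa:485}, obtain (i) from the functoriality of $\opn{dev}$ (Proposition \ref{prop:453}), and obtain uniqueness by applying (i) to $\opn{dev}_A$ and its inverse and then invoking (ii). The one genuine difference is your treatment of condition (ii). The paper's proof puts the sup norm on $B := \opn{F}_{\mrm{bc}}(\opn{MSpec}(A), \R)$ ``as imposed by condition (ii)'' and never explicitly checks that, for a \emph{non-compact} space $X$, the canonical norm of $\opn{F}_{\mrm{bc}}(X, \R)$ -- which under the construction is the supremum of $\abs{\opn{dev}_A(a)}$ over $\opn{MSpec}\bigl(\opn{F}_{\mrm{bc}}(X,\R)\bigr)$ -- agrees with the sup norm over $X$ itself, which is exactly what (ii) asserts. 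Your density argument supplies this missing verification: the identity $\opn{dev}_A(a)\bigl(\opn{refl}^{\mrm{\lmsp alg}}_X(x)\bigr) = a(x)$ holds for arbitrary $X$ (the computation in Lemma \ref{lem:275}(1) uses only that $\opn{Ker}(\opn{ev}_x)$ is an $\R$-valued maximal ideal, not compactness), the image of $\opn{refl}^{\mrm{\lmsp alg}}_X$ is dense by Proposition \ref{prop:320}(2), $\opn{dev}_A(a)$ is continuous by Lemma \ref{lem:272}(2), and a continuous real-valued function has equal suprema over a space and over any dense subset. So on this point your write-up is more complete than the paper's, and you correctly identified it as the crux. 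Your pointwise verification of (i) -- every value of $\opn{dev}_B(\phi(a))$ is a value of $\opn{dev}_A(a)$, so the supremum can only shrink -- is also a cleaner rendering of the paper's diagram chase through (\ref{eqn:520}); the underlying content, Proposition \ref{prop:453}, is identical.
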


Note that $\opn{F}_{\mrm{bc}}(X, \R)$ is indeed a BC $\R$-ring, by 
Theorem \ref{thm:415}.

\begin{proof}
Given an arbitrary BC $\R$-ring $A$, let $X := \opn{MSpec}(A)$
and $B := \opn{F}_{\mrm{bc}}(X, \R)$.
By Lemma \ref{lem:272}(3) the $\R$-ring homomorphism
$\opn{dev}_A : A \to \opn{F}_{\mrm{bc}}(X, \R) = B$
is bijective. The ring $B$ has the norm imposed by condition (ii),
and it is a Banach$^{\Ast}$ $\R$-ring; cf.\ Example \ref{exa:485}.
. 
The canonical norm of $A$ is defined to be the unique 
norm such that $\opn{dev}_A : A \to B$ is an 
isomorphism of Banach$^{\Ast}$ $\R$-rings. Conditions (i) and (ii) force this 
norm of $A$ to be unique. 

We need to prove functoriality, i.e.\ that condition (i) holds for an arbitrary 
homomorphism  $\phi : A \to B$ in $\cat{Rng} \xover{bc} \R$. 
Define the topological spaces $X := \opn{MSpec}(A)$ and 
$Y := \opn{MSpec}(B)$. In view of Lemma \ref{lem:251} and Proposition 
\ref{prop:450} there is an induced map 
$f := \opn{MSpec}(\phi) : Y \to X$ in $\cat{Top}$.
Consider the following diagram 
\begin{equation} \label{eqn:520}
\begin{tikzcd} [column sep = 12ex, row sep = 6ex]
A
\ar[r, "{\phi}"]
\ar[d, "{\opn{dev}_A}"', "{\simeq}"]
\ar[dd, bend right = 35, start anchor = west, end anchor = north west,
"{\opn{dev}_A}"']
&
B
\ar[d, "{\opn{dev}_B}", "{\simeq}"']
\ar[dd, bend left = 35, start anchor = east, end anchor = north east,
"{\opn{dev}_B}"]
\\
\opn{F}_{\mrm{bc}}(X, \R)
\ar[r, "{\opn{F}_{\mrm{bc}}(f, \R)}"]
\ar[d, "{\ep_X}"]
&
\opn{F}_{\mrm{bc}}(Y, \R)
\ar[d, "{\ep_Y}"']
\\
\opn{F}(X, \R)
\ar[r, "{\opn{F}(f, \R)}"]
&
\opn{F}(Y, \R)
\end{tikzcd} 
\end{equation}
in $\cat{Rng} \over \R$.  
The vertical arrows marked $\ep_X$ and $\ep_Y$ are the inclusions, and 
the bottom square is commutative, since $\opn{F}_{\mrm{bc}}(-, \R)$ is a 
subfunctor of  $\opn{F}(-, \R)$. 
The two outer paths from $A$ to $\opn{F}(Y, \R)$ are equal, by Proposition 
\ref{prop:453}. Because $\ep_Y$ is an injection, the top square is commutative 
too: 
$\opn{F}_{\mrm{bc}}(f, \R) \circ \opn{dev}_A = 
\opn{dev}_B \circ \, \phi$. 
The $\R$-ring homomorphism $\opn{F}_{\mrm{bc}}(f, \R)$ respects the sup 
norms on the function rings, and by construction the $\R$-ring 
isomorphisms $\opn{dev}_A : A \iso \opn{F}_{\mrm{bc}}(X, \R)$
and $\opn{dev}_B : B \iso \opn{F}_{\mrm{bc}}(Y, \R)$
respect the canonical norms on these rings. It follows that 
$\phi : A \to B$ respects the canonical norms. 
\end{proof}

\begin{cor} \label{cor:485}
The forgetful functor 
\[ F : \cat{BaRng}^{\Ast} \over \R \to \cat{Rng} \xover{bc} \R \]
is an equivalence of categories. 
\end{cor}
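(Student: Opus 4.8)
The plan is to exhibit a quasi-inverse $G$ to $F$ and check the two composites are isomorphic to the identities; equivalently, I verify that $F$ is essentially surjective, faithful and full. First note that $F$ is well defined into $\cat{Rng} \xover{bc} \R$ by Corollary \ref{cor:486}. The quasi-inverse $G$ sends a BC $\R$-ring to itself equipped with its canonical norm (Theorem \ref{thm:486}); this is functorial because condition (i) of Theorem \ref{thm:486} guarantees that every $\R$-ring homomorphism between BC $\R$-rings is norm-decreasing for the canonical norms, hence is a morphism in $\cat{BaRng}^{\Ast} \over \R$.

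Essential surjectivity is immediate from Theorem \ref{thm:486}: a BC $\R$-ring $C$ equals $F(A)$ for $A := G(C)$. Faithfulness is automatic, since a morphism in $\cat{BaRng}^{\Ast} \over \R$ carries no data beyond its underlying $\R$-ring homomorphism. The composite $F \circ G$ is then literally the identity functor on $\cat{Rng} \xover{bc} \R$.

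The crux, and the step I expect to require the most care, is identifying $G \circ F$ with the identity, i.e.\ showing that the canonical norm of a BC $\R$-ring is its \emph{only} Banach$^{\Ast}$ norm. Here I would invoke the deep input Theorem \ref{thm:485}: for a Banach$^{\Ast}$ $\R$-ring $(A, \norm{-})$ the Gelfand transform is an \emph{isomorphism} of Banach$^{\Ast}$ $\R$-rings $\gamma : A \iso \opn{F}_{\mrm{c}}(X, \R)$ onto the function ring with its $\opn{sup}$ norm, where $X$ is compact. Being an isomorphism in $\cat{BaRng}^{\Ast} \over \R$, the map $\gamma$ is isometric, so $\norm{a} = \norm{\gamma(a)}_{\opn{sup}}$. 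On the other hand $\gamma$ is in particular an isomorphism of BC $\R$-rings (note $\opn{F}_{\mrm{c}}(X, \R) = \opn{F}_{\mrm{bc}}(X, \R)$ since $X$ is compact), so applying condition (i) of Theorem \ref{thm:486} to $\gamma$ and to $\gamma^{-1}$ shows that $\gamma$ is isometric for the \emph{canonical} norms as well; combined with condition (ii), which computes the canonical norm of $\opn{F}_{\mrm{bc}}(X, \R)$ as the $\opn{sup}$ norm, this yields $\norm{a}_{\mrm{can}} = \norm{\gamma(a)}_{\opn{sup}} = \norm{a}$. Thus the given norm is the canonical one, so $G \circ F$ is the identity, and fullness follows at once: any $\R$-ring homomorphism $\phi : F(A) \to F(B)$ is norm-decreasing for the given (hence canonical) norms by condition (i), and therefore lifts to a morphism in $\cat{BaRng}^{\Ast} \over \R$.

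The real content sits entirely in the classical Theorem \ref{thm:485}; the remaining argument is categorical bookkeeping, the only subtlety being the double use of property (i) to promote $\gamma$ to a canonical-norm isometry. Being faithful, full and essentially surjective, $F$ is an equivalence---indeed, since it is then a bijection on objects and on morphism sets, an isomorphism of categories.
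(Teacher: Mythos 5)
Your proposal is correct and follows essentially the same route as the paper: the quasi-inverse $G$ equips a BC $\R$-ring with its canonical norm from Theorem \ref{thm:486}, functoriality of $G$ comes from condition (i), $F \circ G = \opn{id}$ is immediate, and the only substantive step is $G \circ F = \opn{id}$, i.e.\ that the given Banach$^{\Ast}$ norm must be the canonical one. Your argument for that step -- the Gelfand transform of Theorem \ref{thm:485} is isometric both for the given norms (being an isomorphism in $\cat{BaRng}^{\Ast} \over \R$) and for the canonical norms (by the double use of condition (i), plus condition (ii) on the target) -- is exactly the ``short calculation using the conditions in Theorem \ref{thm:486}'' that the paper leaves to the reader, so you have simply made its omitted step explicit.
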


\begin{proof}
Given a BC $\R$-ring $B$, the canonical norm of $B$ makes it into a 
Banach$^{\Ast}$ $\R$-ring, which we denote by $G(B)$. 
Condition (i) in Theorem \ref{thm:486} says that 
\[ G : \cat{Rng} \xover{bc} \R \to \cat{BaRng}^{\Ast} \over \R \]
is a functor. It is clear that $F \circ G = \opn{id}$. A short calculation 
using the conditions in Theorem \ref{thm:486} shows that 
$G \circ F = \opn{id}$ too.
\end{proof}

\section{Stone Spaces}
\label{stone}

Recall that a topological space $X$ is called a {\em Stone space} if it is 
compact and totally disconnected (i.e.\ the only connected subsets of $X$ are 
the singletons). 
It is known that the category of Stone spaces is dual to the category of {\em 
boolean rings}. This is called {\em Stone duality}, see 
\cite[Corollary II.4.4]{Jo}, and it is very similar to 
Theorem \ref{thm:203} above. In this section we are going to study Stone spaces 
via their rings of continuous $\R$-valued functions. 

A topological space $X$ called a {\em profinite 
space} if $X \cong \opn{lim}_{\leftarrow i} X_i$, where 
$\{ X_i \}_{i \in I}$ is an inverse system of finite discrete spaces, and the 
inverse limit is taken in $\cat{Top}$. 

First we need to quote the next classical theorem. For proofs see 
\cite[Theorem 3.4.7]{BJ} or 
\cite[Lemma tag = 
\texttt{\href{https://stacks.math.columbia.edu/tag/08ZY}{08ZY}}]{SP}.

\begin{thm} \label{thm:495}
The following conditions are equivalent for a topological space $X$. 
\begin{enumerate}
\rmitem{i} $X$ is a Stone space. 

\rmitem{ii} $X$ is a profinite space.  
\end{enumerate}
\end{thm}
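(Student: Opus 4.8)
The plan is to prove the two implications separately, the reverse one being the substantial part. (Since the authors quote this result, I treat it as something to reconstruct from scratch.)

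First I would dispose of (ii) $\Rightarrow$ (i). Each finite discrete space is trivially compact, Hausdorff, and totally disconnected, hence a Stone space. An inverse limit $X \cong \opn{lim}_{\leftarrow i} X_i$ taken in $\cat{Top}$ is realized as the subspace of the product $\prod_i X_i$ consisting of the compatible families, and this subspace is closed precisely because the $X_i$ are Hausdorff. By the Tychonoff Theorem a product of compact spaces is compact, a product of Hausdorff spaces is Hausdorff, and total disconnectedness passes to products; a closed subspace of a compact Hausdorff totally disconnected space inherits all three properties. Therefore $X$ is a Stone space.

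The interesting direction is (i) $\Rightarrow$ (ii). The crux is the classical fact that \emph{in a compact Hausdorff space the connected component of a point coincides with its quasi-component}, the quasi-component of $x$ being the intersection of all clopen subsets containing $x$. Granting this, total disconnectedness of a Stone space $X$ forces every quasi-component to be a singleton; equivalently, any two distinct points are separated by a clopen subset. Combined with compactness this yields that the clopen subsets form a basis of the topology (given $x$ in an open $U$, cover the compact complement $X \setminus U$ by clopen sets missing $x$, extract a finite subcover, and intersect their complements). I would then build the inverse system explicitly: index it by the finite partitions $\mcal{P} = \{ U_1, \ldots, U_n \}$ of $X$ into nonempty clopen blocks, ordered by refinement. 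This index set is directed, since the nonempty sets among the pairwise intersections of two clopen partitions form a common clopen refinement. To each $\mcal{P}$ associate the finite discrete space $X_{\mcal{P}}$ of blocks with the continuous quotient map $q_{\mcal{P}} : X \to X_{\mcal{P}}$, and to a refinement $\mcal{Q}$ of $\mcal{P}$ the evident transition map $X_{\mcal{Q}} \to X_{\mcal{P}}$; these assemble into an inverse system.

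It remains to check that the canonical continuous map $q : X \to \opn{lim}_{\leftarrow} X_{\mcal{P}}$ is a homeomorphism. It is injective because clopen sets separate points (a separating clopen set $U$ gives the two-block partition $\{ U, X \setminus U \}$ on which the two points have distinct images). It is surjective by a finite-intersection-property argument: a point of the inverse limit is a compatible family of blocks $(b_{\mcal{P}})$ with $b_{\mcal{Q}} \sub b_{\mcal{P}}$ whenever $\mcal{Q}$ refines $\mcal{P}$; these are nonempty clopen sets whose finite intersections contain the block of a common refinement and are thus nonempty, so by compactness $\bigcap_{\mcal{P}} b_{\mcal{P}} \neq \varnothing$, and any point there maps to the given family. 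A continuous bijection from a compact space to a Hausdorff space is a homeomorphism, giving $X \cong \opn{lim}_{\leftarrow} X_{\mcal{P}}$, an inverse limit of finite discrete spaces.

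The step I expect to be the main obstacle is the component/quasi-component coincidence in compact Hausdorff spaces; everything else is formal once zero-dimensionality is in hand. That lemma is genuinely topological: one shows the quasi-component $Q$ of $x$ is connected by contradiction — if $Q = A \sqcup B$ with $A \ni x$ and $B$ nonempty, both closed in $X$, use normality to separate $A, B$ by disjoint opens $U, V$, then compactness of the complement to trap $Q$ inside a single clopen set $C \sub U \cup V$, whence $C \cap U = C \setminus V$ is clopen, contains $x$, and misses $B$, contradicting $B \sub Q$. I would either cite this (e.g. the references already given) or reproduce this short argument.
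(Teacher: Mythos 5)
Your proof is correct, but note that the paper does not actually prove Theorem \ref{thm:495} at all: it explicitly quotes it as a classical result and refers the reader to \cite[Theorem 3.4.7]{BJ} and to the Stacks Project lemma cited there. So there is no internal proof to compare against; what you have done is reconstruct the standard argument that those references give, and you have reconstructed it faithfully. The easy direction (closed subspace of a product of finite discrete spaces, via Tychonoff and the fact that the compatibility locus is closed when the factors are Hausdorff) is exactly as in the references, and for the hard direction you correctly identify the one genuinely topological ingredient -- that in a compact Hausdorff space components coincide with quasi-components -- and your sketch of its proof (normality plus a compactness argument trapping the quasi-component in a clopen set inside $U \cup V$) is the standard one and is sound. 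The remaining construction, indexing by finite clopen partitions ordered by refinement and checking that the canonical map to the inverse limit is a continuous bijection from a compact space to a Hausdorff space, is also the argument used in the Stacks Project; your verification of surjectivity via the finite intersection property is complete. The only thing worth flagging is cosmetic: this machinery (directed systems of finite clopen decompositions refining a given open cover) reappears in the paper's own Lemma \ref{lem:495}, so your reconstruction is consistent with, and would splice cleanly into, the way Section \ref{stone} uses the theorem.
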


Suppose $X$ is a topological space and $Y \sub X$ is a subset. The {\em 
indicator function} of $Y$ is the function $1_Y : X \to \R$ 
given by the rule 
$1_Y(x) := 1$ if $x \in Y$ and $1_Y(x) := 0$ if $x \notin Y$.

\begin{lem} \label{lem:490}
Let $X$ be a topological space and $e : X \to \R$ a function. 
The following conditions are equivalent.
\begin{itemize}
\rmitem{i} $e$ is an idempotent, i.e.\ $e^2 = e$, and it is continuous. 

\rmitem{ii} $e = 1_Y$ for some open-closed subset $Y \sub X$.  
\end{itemize} 
\end{lem}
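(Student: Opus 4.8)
The plan is to prove the two implications separately, resting everything on one pointwise observation: the functional equation $e^2 = e$ is equivalent to $e(x) \in \{0, 1\}$ for every $x \in X$. Indeed, evaluating $e^2 = e$ at a point $x$ gives $e(x)^2 = e(x)$ in $\R$, and the only solutions of $t^2 = t$ in $\R$ are $t = 0$ and $t = 1$.

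First I would handle (i) $\Rightarrow$ (ii). Assuming $e$ is a continuous idempotent, the observation above shows that $e$ takes only the values $0$ and $1$, so $e = 1_Y$ where $Y := \{ x \in X \mid e(x) = 1 \} = e^{-1}(\{1\})$. It then remains to check that $Y$ is open-closed. Since $e$ is continuous and $\{1\} \sub \R$ is closed, the subset $Y = e^{-1}(\{1\})$ is closed in $X$. On the other hand, because $e$ takes no values strictly between $0$ and $1$, we also have $Y = e^{-1}\bigl( (1/2, \infty) \bigr)$, which is the preimage of an open subset of $\R$ and hence open in $X$. Thus $Y$ is open-closed, as required.

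For the reverse implication (ii) $\Rightarrow$ (i), suppose $e = 1_Y$ for some open-closed subset $Y \sub X$. The equality $e^2 = e$ is immediate, since $1_Y$ takes values in $\{0, 1\}$ and $0^2 = 0$, $1^2 = 1$. For continuity I would verify that the preimage under $1_Y$ of an arbitrary open set $U \sub \R$ is open: this preimage is determined by whether $0 \in U$ and whether $1 \in U$, and in each case it is one of the four sets $\varnothing$, $Y = 1_Y^{-1}(\{1\})$, $X \setminus Y = 1_Y^{-1}(\{0\})$, or $X$. Since $Y$ is open-closed, both $Y$ and $X \setminus Y$ are open, so the preimage is always open and $1_Y$ is continuous.

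Because the statement is elementary, I do not expect any genuine obstacle. The only point that warrants care is recognizing that the functional equation $e^2 = e$ forces the image of $e$ into the two-point set $\{0, 1\}$; this is precisely what allows a single continuous scalar function to be identified with the indicator function of a subset, and it is what makes the clopen condition come out of continuity via preimages of the closed set $\{1\}$ and the open set $(1/2, \infty)$.
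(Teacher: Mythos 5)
Your proof is correct and follows essentially the same route as the paper: both rest on the observation that a real-valued idempotent takes only the values $0$ and $1$, and then verify that $Y = \{x \mid e(x) = 1\}$ is open-closed using preimages under continuous functions (the paper writes this as $Y = \opn{NZer}_X(e) = \opn{Zer}_X(1_X - e)$, while you use $e^{-1}(\{1\})$ and $e^{-1}\bigl((1/2,\infty)\bigr)$, which is the same idea). Your treatment of (ii) $\Rightarrow$ (i) is in fact more detailed than the paper's one-line dismissal.
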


\begin{proof} \mbox{}

\smallskip \noindent
(i) $\Rightarrow$ (ii): An idempotent $\R$-valued function must take the 
values $1$ and $0$ only. Letting
$Y := \{ x \in X \mid e(x) = 1 \}$ we see that $e = 1_Y$.
Since $e$ and $1_X - e$ are continuous, the set 
$Y = \opn{NZer}_X(e) = \opn{Zer}_X(1_X - e)$ is open-closed. 

\medskip \noindent
(ii) $\Rightarrow$ (i): The function $1_Y \in A$ is idempotent and continuous.
\end{proof}

\begin{dfn} \label{dfn:487}
Let $X$ be a topological space and 
$A := \opn{F}_{\mrm{bc}}(X, \R)$. 
\begin{enumerate}
\item A function $a \in A$ is called a {\em step function} if $a$ takes only 
finitely many values. 

\item The set of step functions in $A$ is denoted by $A_{\mrm{stp}}$. 
\end{enumerate}
\end{dfn}

It is easy to see that $A_{\mrm{stp}}$ is an $\R$-subring of $A$. 

\begin{lem} \label{lem:491}
Let $X$ be a topological space and $A := \opn{F}_{\mrm{bc}}(X, \R)$. 
The following conditions are equivalent for $a \in A$. 
\begin{itemize}
\rmitem{i} $a$ is a step function.

\rmitem{ii} $a = \sum_i \la_i \cd e_i$, a finite sum, with $e_i \in A$ 
idempotents and $\la_i \in \R$.  

\rmitem{iii} $a$ factors through a finite discrete topological space $Z$. 
\end{itemize} 
\end{lem}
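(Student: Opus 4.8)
The plan is to prove the three conditions equivalent by running the cycle (i) $\Rightarrow$ (ii) $\Rightarrow$ (iii) $\Rightarrow$ (i). The one piece of genuine content is the observation that the level sets of a step function are open-closed; once that is in hand, Lemma \ref{lem:490} does all the translating between continuous idempotents and indicators of open-closed subsets, and the remaining implications are formal bookkeeping.

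For (i) $\Rightarrow$ (ii), I would let $\la_1, \ldots, \la_n \in \R$ be the distinct values taken by $a$ and set $Y_i := \{ x \in X \mid a(x) = \la_i \}$, so that the $Y_i$ partition $X$. Each $Y_i = a^{-1}(\{\la_i\})$ is closed by continuity of $a$. To see it is open, I would pick an open interval $I_i \sub \R$ with $\la_i \in I_i$ and $\la_j \notin I_i$ for $j \ne i$; since $a$ takes only the values $\la_1, \ldots, \la_n$, this forces $Y_i = a^{-1}(I_i)$, which is open. Thus each $Y_i$ is open-closed, so by Lemma \ref{lem:490} the indicator $e_i := 1_{Y_i}$ is a continuous idempotent, i.e.\ $e_i \in A$. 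Because the $Y_i$ partition $X$, the equality $a = \sum_i \la_i \cd e_i$ holds pointwise, giving (ii).

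For (ii) $\Rightarrow$ (iii), suppose $a = \sum_i \la_i \cd e_i$ with each $e_i \in A$ idempotent. By Lemma \ref{lem:490} each $e_i$ is the indicator of an open-closed set, hence takes values in $\{0,1\}$ with continuous corestriction $X \to \{0,1\}$. I would then form $h := (e_1, \ldots, e_n) : X \to Z$, where $Z := \{0,1\}^n$ is a finite discrete space; $h$ is continuous because the preimage of each point $(\ep_1, \ldots, \ep_n) \in Z$ is the finite intersection $\bigcap_i e_i^{-1}(\ep_i)$ of open-closed sets. Defining $g : Z \to \R$ by $g(\ep_1, \ldots, \ep_n) := \sum_i \la_i \cd \ep_i$ gives $a = g \circ h$, so $a$ factors through $Z$. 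Finally (iii) $\Rightarrow$ (i) is immediate: if $a = g \circ h$ with $Z$ finite, then $a(X) \sub g(Z)$ is finite, so $a$ takes only finitely many values.

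The main (and essentially only) obstacle is the clopenness of the level sets $Y_i$ in the step (i) $\Rightarrow$ (ii); this is the one place where the continuity of $a$ and the discreteness of a finite subset of $\R$ are genuinely used. Everywhere else the argument is formal, leaning on Lemma \ref{lem:490} to pass back and forth between continuous idempotents and indicators of open-closed subsets.
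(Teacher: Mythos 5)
Your proof is correct, but it runs the cycle in the opposite direction from the paper, so none of the three implications you prove actually coincides with an implication proved there. The paper proves (i) $\Rightarrow$ (iii) $\Rightarrow$ (ii) $\Rightarrow$ (i): for (i) $\Rightarrow$ (iii) it simply takes $Z := a(X) \sub \R$, which is finite and hence discrete in the subspace topology, so $a$ factors through its own image --- this one-line step absorbs the clopenness-of-level-sets argument that you carry out by hand; for (iii) $\Rightarrow$ (ii) it sets $U_z := f^{-1}(z)$, notes these are pairwise disjoint open-closed sets, and writes $a = \sum_{z \in Z} b(z) \cd 1_{U_z}$ via Lemma \ref{lem:490}; and (ii) $\Rightarrow$ (i) is immediate because the image of a finite sum $\sum_i \la_i \cd e_i$ of scaled idempotents is finite. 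Your ordering forces two extra pieces of work: the direct verification that the level sets $Y_i$ are open-closed (correct, and this is indeed where the discreteness of a finite subset of $\R$ enters for you), and the construction of the map $h = (e_1, \ldots, e_n) : X \to \{0,1\}^n$ into the discrete cube for (ii) $\Rightarrow$ (iii), a step the paper's ordering avoids entirely. What the paper's route buys is that each implication is nearly a one-liner, with all the topology concentrated in the single observation that a finite subspace of $\R$ is discrete; what your route buys is a self-contained (i) $\Rightarrow$ (ii) exhibiting the canonical decomposition $a = \sum_i \la_i \cd 1_{a^{-1}(\la_i)}$ explicitly. Both arguments use Lemma \ref{lem:490} as the bridge between continuous idempotents and open-closed sets in exactly the same way.
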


\begin{proof} \mbox{}

\smallskip \noindent
(i) $\Rightarrow$ (iii): Let $Z := a(x) \sub \R$. Then $Z$, with the induced 
topology from $\R$, is a finite discrete space. 

\medskip \noindent
(iii) $\Rightarrow$ (ii): Suppose 
$a = b \circ f$ with $f : X \to Z$ continuous. 
Letting $U_z := f^{-1}(z) \sub X$, we obtain a 
finite covering $X = \bigcup_{z \in Z} U_z$ by pairwise disjoint open-closed 
subsets. The function $e_z := 1_{U_z}$ is an idempotent element of $A$ 
by Lemma \ref{lem:490}; and 
$a = \sum_{z \in Z} b(z) \cd e_z$. 

\medskip \noindent
(ii) $\Rightarrow$ (i): The image of $a$ is contained in the finite set 
$\{ \la_i \} \sub \R$. 
\end{proof}

\begin{dfn} \label{dfn:495}
Let $X$ be a topological space, and let $f : X \to Z$ be a continuous map 
to a finite discrete topological space $Z$. For every $z \in Z$ let 
$V_z := f^{-1}(z)$. Then $\bsym{V} := \{ V_z \}_{z \in Z}$
is an open covering of $Z$, and we call it {\em the covering induced by 
$f : X \to Z$}.
\end{dfn}

Observe that an open covering $\{ V_z \}_{z \in Z}$ of $X$ induced by a 
map to a finite discrete space $Z$ is the same as a finite covering 
$\{ V_i \}_{i \in I}$ of $X$
such that each $V_i$ is open and closed, and such that 
$V_i \cap V_j = \varnothing$ for $i \neq j$. Such coverings are considered 
in 
\cite[Lemma tag = 
\texttt{\href{https://stacks.math.columbia.edu/tag/08ZZ}{08ZZ}}]{SP}.

Let $X$ be a topological space, and let 
$\bsym{U} = \{ U_i \}_{i \in I}$ and $\bsym{V} = \{ V_j \}_{j \in J}$
be open coverings of $X$. We say that $\bsym{V}$ is a refinement of $\bsym{U}$ 
is there is a function $\rho : J \to I$ such that 
$V_{j} \sub U_{\rho(i)}$ for every $j \in J$. 

\begin{lem} \label{lem:495}
Let $X$ be a Stone topological space, and let 
$\bsym{U} = \{ U_i \}_{i \in I}$
be an open covering of $X$. Then there is an open covering $\bsym{V}$ of $X$ 
that refines $\bsym{U}$, and $\bsym{V}$ is induced by a continuous map
$X \to Z$ to a finite discrete topological space $Z$. 
\end{lem}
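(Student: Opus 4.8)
The plan is to produce a finite partition of $X$ into pairwise disjoint clopen (open-closed) subsets, each contained in some member of $\bsym{U}$, and then to invoke the correspondence recorded just after Definition \ref{dfn:495} between such partitions and coverings induced by maps to finite discrete spaces. The one external ingredient I would use is that the clopen subsets of a Stone space form a basis of its topology. I would justify this from Theorem \ref{thm:495}: a profinite space $X \cong \opn{lim}_{\lto j} X_j$ sits as a subspace of $\prod_j X_j$, a product of finite discrete spaces; the cylinder sets of that product are clopen and form a basis, so their restrictions give a clopen basis of $X$.

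First I would shrink the cover locally. For each point $x \in X$ choose an index $i(x) \in I$ with $x \in U_{i(x)}$; since the clopen sets form a basis, there is a clopen subset $W_x \sub X$ with $x \in W_x \sub U_{i(x)}$. The family $\{W_x\}_{x \in X}$ is then an open covering of $X$ refining $\bsym{U}$. Because $X$ is compact, hence quasi-compact by Convention \ref{conv:452}, finitely many of them, say $W_{x_1}, \dots, W_{x_n}$, already cover $X$. I abbreviate $W_k := W_{x_k}$ and set $i_k := i(x_k)$, so that $W_k \sub U_{i_k}$.

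Next I would disjointify. Put $V_k := W_k \setminus (W_1 \cup \dots \cup W_{k-1})$ for $k = 1, \dots, n$, with $V_1 := W_1$. A finite Boolean combination of clopen sets is clopen, so each $V_k$ is clopen; the $V_k$ are pairwise disjoint by construction, they cover $X$ because the $W_k$ do, and $V_k \sub W_k \sub U_{i_k}$. Discarding the empty ones and letting $Z := \{ k \mid V_k \neq \varnothing \}$ with the discrete topology, the rule $f(x) := k$ for $x \in V_k$ is a well-defined map $f : X \to Z$, continuous because $f^{-1}(k) = V_k$ is open. By the observation following Definition \ref{dfn:495}, the covering $\bsym{V} = \{V_k\}$ is precisely the covering induced by $f$, and it refines $\bsym{U}$ via $\rho(k) := i_k$.

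The only genuinely nontrivial point, and the step I expect to be the main obstacle, is the clopen-basis property; this is where compactness and the Hausdorff condition are essential (for a general totally disconnected space it can fail), and it is exactly what Theorem \ref{thm:495} supplies. Everything after that is elementary point-set manipulation.
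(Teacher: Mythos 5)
Your proof is correct, but it takes a genuinely different route from the paper's. The paper works directly inside the inverse system: writing $X \cong \opn{lim}_{\leftarrow k} X_k$ with projections $f_k : X \to X_k$ (via Theorem \ref{thm:495}), it uses the embedding of $X$ into $\prod_{k} X_k$ together with directedness of the index set to find, for each point $x$, a single index $k_x$ with $f_{k_x}^{-1}(f_{k_x}(x)) \sub U_{i_x}$; after passing to a finite subcover and applying directedness once more, it produces one index $k_0$ such that the covering induced by the canonical map $f_{k_0} : X \to X_{k_0}$ already refines $\bsym{U}$. You instead invoke the profinite presentation only once, to extract the fact that the clopen subsets form a basis of $X$ (your cylinder-set argument for this is valid, since the inverse limit carries the subspace topology from the product, and basic open sets of a product of finite discrete spaces are clopen), and from there you argue by elementary point-set topology: pointwise refinement by basic clopen sets, a finite subcover by quasi-compactness, and the standard disjointification $V_k := W_k \setminus (W_1 \cup \cdots \cup W_{k-1})$, yielding a finite clopen partition subordinate to $\bsym{U}$; the observation following Definition \ref{dfn:495} then converts this partition into a covering induced by a continuous map to a finite discrete space. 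Your route avoids the directedness bookkeeping entirely and is arguably more transparent; what the paper's argument buys is the marginally stronger conclusion that the refining covering can be taken to be induced by one of the structure maps $f_{k_0}$ of the given inverse system, though this extra precision is not needed for the application in Theorem \ref{thm:490}.
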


\begin{proof}
Our proof is a slight modification of the proof of 
\cite[Lemma tag = 
\texttt{\href{https://stacks.math.columbia.edu/tag/08ZZ}{08ZZ}}]{SP}.
According to Theorem \ref{thm:495} there is a homeomorphism 
$X \cong \opn{lim}_{\leftarrow k} X_k$, where 
$\{ X_k \}_{k \in K}$ is an inverse system of finite discrete spaces.
Let $f_k : X \to X_k$ be corresponding map.  
We will prove that for some $k_0 \in K$ the covering induced by 
$f_{k_0} : X \to X_{k_0}$ refines $\bsym{U}$. 

Take some point $x \in X$. Because the space $X$ is homeomorphic to a closed 
subspace of the product $\prod_{k \in K} X_k$, there is a finite subset 
$K_x \sub K$ and an index $i_x \in I$ such that 
$x \in \bigcap_{k \in K_x} f_{k}^{-1}(f_{k}(x)) \sub U_{i_x}$.
But $K$ is a directed set, so there is some $k_x \in K$ that dominates the 
finite set $K_x$. This means that 
$x \in f_{k_x}^{-1}(f_{k_x}(x)) \sub U_{i_x}$.

We see that the collection 
$\bigl\{ f_{k_x}^{-1}(f_{k_x}(x)) \bigr\}_{x \in X}$ is an open covering of $X$ 
that refines $\bsym{U}$. Because $X$ is compact, we can pass to a subcovering  
$\bigl\{ f_{k_x}^{-1}(f_{k_x}(x)) \bigr\}_{x \in X_0}$
indexed by some finite subset $X_0 \sub X$. 

The finite set $\{ k_x \}_{x \in X_0}$ is dominated by some $k_0 \in K$.
For every $x \in X_0$ there is equality 
$f_{k_x}^{-1}(f_{k_x}(x)) = 
\bigcup_{y \in f_{k_0 / k_x}^{-1}(x)} f_{k_0}^{-1}(y) \sub X$,
where $f_{k_0 / k_x} : X_{k_0} \to X_{k_x}$ is the corresponding map. 
This means that 
$f_{k_0}(X) \sub  \bigcup_{x \in X_0} f_{k_0 / k_x}^{-1}(f_{k_x}(x))$,
and that 
$\bigl\{ f_{k_0}^{-1}(y) \bigr\}_{y \in f_{k_0}(X)}$ 
is a covering of $X$ refines $\bsym{U}$. 
But this covering is precisely the covering induced 
by $f_{k_0} : X \to X_{k_0}$. 
\end{proof}

\begin{thm} \label{thm:490}
Let $A$ be a BC $\R$-ring and $X := \opn{MSpec}(A)$. 
The following two conditions are equivalent:
\begin{itemize}
\rmitem{i} $X$ is a Stone space. 

\rmitem{ii} The subring $A_{\mrm{stp}}$ is dense in $A$, with respect to the 
canonical norm of $A$. 
\end{itemize}
\end{thm}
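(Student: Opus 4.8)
The plan is to first reduce to a concrete model. Since $A$ is a BC $\R$-ring and $X = \opn{MSpec}(A)$, Lemma \ref{lem:272} tells me that $X$ is compact and that $\opn{dev}_A : A \iso \opn{F}_{\mrm{bc}}(X, \R) = \opn{F}_{\mrm{c}}(X, \R)$ is an isomorphism of $\R$-rings; by Theorem \ref{thm:486}(ii) this isomorphism carries the canonical norm of $A$ to the sup norm. Since the property of taking finitely many values is intrinsic (the value of $a$ at $\m \in X$ is $\opn{dev}_A(a)(\m)$), under $\opn{dev}_A$ the subring $A_{\mrm{stp}}$ corresponds exactly to the continuous functions $X \to \R$ with finite image. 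Thus the statement becomes: for a \emph{compact} space $X$, the step functions are sup-dense in $\opn{F}_{\mrm{c}}(X, \R)$ if and only if $X$ is a Stone space.

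For (i) $\Rightarrow$ (ii), I would fix $a \in \opn{F}_{\mrm{c}}(X, \R)$ and $\ep > 0$. The image $a(X) \sub \R$ is bounded, so I can cover it by finitely many open intervals $I_1, \ldots, I_n$ each of length $< \ep$; the preimages $U_j := a^{-1}(I_j)$ form an open covering $\bsym{U}$ of $X$. Because $X$ is a Stone space, Lemma \ref{lem:495} produces a refinement $\bsym{V} = \{ V_z \}_{z \in Z}$ of $\bsym{U}$ induced by a continuous map $f : X \to Z$ to a finite discrete space; by Definition \ref{dfn:495} the $V_z$ are pairwise disjoint clopen sets covering $X$. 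Choosing a point $x_z$ in each nonempty $V_z$ and letting $b$ be the function equal to $a(x_z)$ on $V_z$, I obtain a step function (it is continuous with finite image, cf.\ Lemma \ref{lem:491}). Since each $V_z$ lies inside some $U_j = a^{-1}(I_j)$, both $a(x)$ and $a(x_z)$ land in the interval $I_j$ of length $< \ep$ for every $x \in V_z$, whence $\norm{a - b} \le \ep$; this proves density.

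For (ii) $\Rightarrow$ (i), since $X$ is already compact I only need total disconnectedness, and it suffices to show $X$ is totally separated, which always implies totally disconnected. Given distinct $x, y \in X$, Lemma \ref{lem:230}(1) (via Theorem \ref{thm:245}) gives $a \in \opn{F}_{\mrm{c}}(X, \R)$ with $a(x) = 0$ and $a(y) = 1$. By hypothesis there is a step function $b$ with $\norm{a - b} < \tfrac{1}{3}$, so that $b(x) < \tfrac{1}{3} < \tfrac{2}{3} < b(y)$ are distinct values of $b$. Viewing $b$ as a continuous map from $X$ to its finite image equipped with the discrete topology, the level set $Y := b^{-1}(b(x))$ is clopen (Lemmas \ref{lem:490} and \ref{lem:491}), contains $x$, and excludes $y$. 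Thus every pair of distinct points is separated by a clopen set, so $X$ is totally separated, hence totally disconnected, hence a Stone space.

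The main obstacle is the (i) $\Rightarrow$ (ii) direction: an open cover coming from preimages of short intervals need not consist of clopen sets, and the whole point is to replace it by a \emph{clopen partition} on which $a$ is nearly constant. This is precisely where the profinite structure of a Stone space enters, and it is delivered by the refinement Lemma \ref{lem:495}; the remaining verifications — that $b$ is a genuine step function and that $\norm{a - b} \le \ep$ — are routine.
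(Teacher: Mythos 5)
Your proof is correct, and for the substantive direction (i) $\Rightarrow$ (ii) it is essentially the paper's argument: both cover the bounded image $a(X)$ by finitely many intervals of length $< \ep$, pull back to an open cover of $X$, invoke Lemma \ref{lem:495} to refine it by a clopen partition $\{ V_z \}_{z \in Z}$ induced by a map to a finite discrete space, and define the approximating step function to be constant on each $V_z$ (the paper assigns the grid value $\mu_{\rho(z)}$, you sample the value $a(x_z)$ at a chosen point of $V_z$ --- a cosmetic difference; note also that your estimate yields $\norm{a - b} \le \ep$ rather than $< \ep$, which is still enough for density). Where you genuinely diverge is in (ii) $\Rightarrow$ (i): the paper fixes a connected subset $Y \sub X$, observes that step functions are constant on $Y$, passes to uniform limits to conclude that \emph{every} $a \in A$ is constant on $Y$, and only then uses the separation Theorem \ref{thm:245} to force $Y$ to be a singleton. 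You instead separate two given distinct points directly: approximate the Urysohn function by a step function $b$ to within $1/3$, and take the clopen level set $b^{-1}(b(x))$, concluding that $X$ is totally separated and hence totally disconnected. Both routes use the same ingredients (density, Theorem \ref{thm:245}, and the finiteness of the value set of a step function), but yours has the merit of exhibiting the separating clopen sets explicitly, while the paper's limit argument avoids any numerical threshold; the two are of equal length and difficulty, so this is a matter of taste rather than of substance.
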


\begin{proof}
The topological space $X$ is compact by Theorem 
\ref{thm:203} (or, more precisely, by Lemma \ref{lem:272}(1)).
For convenience we identify the ring $A$ with the ring 
$\opn{F}_{\mrm{bc}}(X, \R)$, via the $\R$-ring isomorphism $\opn{dev}_A$, see 
Lemma \ref{lem:272}(3). Thus every element $a \in A$ is seen as a 
continuous function $a : X \to  \R$. The canonical norm on the 
Banach ring $A$ coincides with the sup norm on the compact space $X$. 

\medskip \noindent
(ii) $\Rightarrow$ (i): We already noted that $X$ is compact. It remains to 
prove that $X$ is totally disconnected. 
In other words, given a connected subset $Y$ of $X$, we must prove that $Y$ is 
a singleton. 

If $a \in A$ is a step function then $a(X)$ is a finite discrete subspace of 
$\R$. Because $Y$ is connected, the set $a(Y) \sub \R$ has to be a singleton.
We conclude that the function $a|_Y$ is constant.

Next take an arbitrary function $a \in A$. Given $\ep > 0$, condition (ii) says 
that there is a step function $a'$ such that $\norm{a - a'} < \ep$. We already 
know that $a'|_Y$ is constant. It follows that $a|_Y$ is constant. 

By Theorem \ref{thm:245} the elements of $A$ separate the points of $X$. 
We have just shown that every $a \in A$ is constant on $Y$. 
It follows that $Y$ has only one element.

\medskip \noindent
(i) $\Rightarrow$ (ii): Take an arbitrary $a \in A$ and $\ep > 0$. We need to 
produce a step function $a' \in A$ such that 
$\norm{a - a'} < \ep$. 

The function $a$ is bounded. Let $[\la_0, \la_1]$ be a finite closed 
interval in $\R$ that contains $a(X)$. Write $\la := \la_1 - \la_0$. 
Choose $k \in \N$ large enough such that
$2^{-k} \cd \la < 2^{-1} \cd \ep$. Define the numbers 
$\mu_i := \la_0 + i \cd 2^{-k} \cd \la \in \R$, so 
$\mu_0 = \la_0$ and $\mu_{2^k} = \la_1$. 
Next define the indexing set $I := \{ 0, \ldots, 2^k \}$. 
For every $i \in I$ define the open interval 
$W_i := (\mu_{i - 1}, \mu_{i + 1})$ in $\R$. 
Then $[\la_0, \la_1] \sub \bigcup_i W_i$, and the length of $W_i$ is $< \ep$. 

We now move to the space $X$. For every $i \in I$
let $U_i := a^{-1}(W_i) \sub X$. The collection of open sets 
$\bsym{U} := \{ U_i \}_{i \in I}$ is an open covering of $X$. 
According to Lemma \ref{lem:495} there is a continuous map 
$f : X \to Z$ to a finite discrete space $Z$, such that associated 
open covering $\bsym{V} := \{ V_z \}_{z \in Z}$,
where $V_z := f^{-1}(z)$, refines $\bsym{U}$. 
So there is a function $\rho : Z \to I$ such that 
$V_z \sub U_{\rho(z)}$ for all $z \in Z$. 
Let $b : Z \to \R$ be the function $b(z) := \mu_{\rho(z)}$. 
Finally let $a' := b \circ f : X \to \R$. Clearly $a'$ is a step function. 
For every point $x \in V_z$ we have $a(x), a'(x) \in U_{\rho(z)}$,
so $\abs{a(x) - a'(x)} < \ep$. But
$X = \bigcup_{z \in Z} V_z$, so $\norm{a - a'} < \ep$, as required. 
\end{proof}

The next corollary is classical. However, all the proofs we found in the 
literature are indirect and quite difficult. We give a straightforward proof, 
using Theorem \ref{thm:490}. 

\begin{cor} \label{cor:515}
Suppose $X$ is a discrete topological space, with Stone-\v{C}ech 
Compactification $\bar{X}$. Then $\bar{X}$ is a Stone topological space. 
\end{cor}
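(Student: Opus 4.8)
The plan is to reduce the corollary to the density criterion of Theorem \ref{thm:490}. Set $A := \opn{F}_{\mrm{bc}}(X, \R)$. By Theorem \ref{thm:415} this is a BC $\R$-ring, and by Theorem \ref{thm:285} (Algebraic SCC) the maximal spectrum $\opn{MSpec}(A)$, together with the algebraic reflection map, is the Stone-\v{C}ech Compactification of $X$; hence $\bar{X} = \opn{MSpec}(A)$ up to the unique isomorphism. Applying Theorem \ref{thm:490} to the ring $A$ (so that its ``$X$'' is $\opn{MSpec}(A) = \bar{X}$), it suffices to show that the subring $A_{\mrm{stp}}$ of step functions is dense in $A$ for the canonical norm. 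By Theorem \ref{thm:486}(ii) that canonical norm is simply the sup norm on $\opn{F}_{\mrm{bc}}(X, \R)$, so density is to be verified in the sup norm.

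To verify the density I would exploit that $X$ is discrete, so every subset of $X$ is open-closed and every indicator function $1_Y$ with $Y \sub X$ is continuous. Given $a \in A$ and $\ep > 0$, I first pick a bounded closed interval $[\la_0, \la_1] \sub \R$ containing $a(X)$ (possible since $a$ is bounded) and subdivide it by points $\la_0 = \mu_0 < \mu_1 < \cdots < \mu_n = \la_1$ into subintervals each of length $< \ep$. Taking $U_i := a^{-1}\bigl([\mu_{i-1}, \mu_i)\bigr)$ for $1 \leq i \leq n$ (with the right endpoint adjoined to the last interval) yields a finite partition of $X$, and I set $a' := \sum_i \mu_{i-1} \cd 1_{U_i}$. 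Each $U_i$ is open-closed, so the $1_{U_i}$ lie in $A$; the function $a'$ takes only finitely many values, hence $a' \in A_{\mrm{stp}}$; and $\abs{a(x) - a'(x)} < \ep$ for every $x \in X$, so $\norm{a - a'} < \ep$. This establishes condition (ii) of Theorem \ref{thm:490}, and the equivalence there delivers condition (i), namely that $\bar{X}$ is a Stone space.

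I do not expect a genuine obstacle: the whole content is the observation that on a discrete space every subset is open-closed, making step functions abundant enough to approximate any bounded function uniformly. The one point requiring a word of care is the meaning of ``step function'' for the abstract ring $A$ in Theorem \ref{thm:490}; here one invokes Lemma \ref{lem:491}, by which being a step function is equivalent to being a finite $\R$-linear combination of idempotents, an intrinsic ring-theoretic property preserved by the isomorphism $\opn{dev}_A : A \iso \opn{F}_{\mrm{bc}}(\bar{X}, \R)$. Consequently the step functions I construct on $X$ correspond exactly to step functions on $\bar{X}$, so the computation above really verifies the hypothesis of Theorem \ref{thm:490}.
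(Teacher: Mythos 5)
Your proof is correct and takes essentially the same route as the paper's: reduce via Theorem \ref{thm:285} and the criterion of Theorem \ref{thm:490} to density of step functions, then uniformly approximate an arbitrary bounded function by a step function built from a fine partition of an interval containing its image, using that every subset of the discrete space $X$ is open-closed. The only cosmetic difference is in the bookkeeping: you transfer the canonical norm to the sup norm on $X$ by citing Theorem \ref{thm:486}(ii) and justify the step-function identification intrinsically via Lemma \ref{lem:491}, whereas the paper gets the norm identification from the density of $\opn{c}_X(X)$ in $\bar{X}$ -- both are valid, and your handling of the Lemma \ref{lem:491} point is if anything slightly more explicit than the paper's.
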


\begin{proof}
Let's write $A :=  \opn{F}_{\mrm{bc}}(X, \R)$.
According to Theorem \ref{thm:285}, we may assume that 
$\bar{X} = \opn{MSpec}(A)$, and the compactification map 
$\opn{c}_X : X \to \bar{X}$ is $\opn{refl}^{\mrm{\lmsp alg}}_X$. 
Let us write $\bar{A} := \opn{F}_{\mrm{bc}}(\bar{X}, \R)$. 
By Theorem \ref{thm:490} it is enough to prove that $\bar{A}_{\mrm{stp}}$ is 
dense in $\bar{A}$ for its canonical norm, which is the sup norm. 

The $\R$-ring homomorphism $\opn{dev}_A : A \iso \bar{A}$ 
is bijective, see Lemma \ref{lem:272}(3).
Since $\opn{c}_X(X)$ is dense inside $\bar{X}$, the sup norm of 
a function $a \in A$, when calculated on $X$, equals the sup norm of 
$\bar{a} = \opn{dev}_A(a) \in \bar{A}$, when calculated on $\bar{X}$. 
We conclude that it suffices to prove that $A_{\mrm{stp}}$ is dense in $A$ for 
the sup norm on $X$.

Take some function $a \in A$ and some $\ep > 0$. We need to produce a step 
function $a'$ such that $\norm{a - a'} < \ep$ in the sup norm on $X$. 
Now the set $a(X)$ is contained 
in some bounded closed interval $Z = [\la_0, \la_1] \sub \R$
of length $\la := \la_1 - \la_0$. Take $k \in \N$ sufficiently large such that 
$2^{-k} \cd \la < \ep$. Define the numbers 
$\mu_i := \la_0 + i \cd 2^{-k} \cd \la$, and the intervals 
$Z_1 := [\mu_{0}, \mu_{1}]$ and 
$Z_i := (\mu_{i - 1}, \mu_{i}]$ for $2 \leq i \leq 2^k$, so 
$\{ Z_i \}_{1 \leq i \leq 2^k}$ is a partition of $Z$ into 
intervals of length $< \ep$. 
Let $a' : X \to \R$ be the step function that has the value $\mu_i$ 
on the subset $a^{-1}(Z_i) \sub X$. 
Then $\norm{a - a'} < \ep$ as required.
\end{proof}

\begin{rem} \label{rem:495}
Stone spaces have become more interesting recently, since they form the 
background upon which {\em condensed mathematics} is built; see the notes 
\cite{Sc} by Scholze. In this theory the Stone-\v{C}ech compactifications of 
discrete spaces play an important role. 
\end{rem}

\section{Rings of Bounded Continuous Complex Valued Functions}
\label{BC-C-rings}

Here we introduce {\em BC $\C$-rings}, which are the complex variant of BC 
$\R$-rings. The main result is Theorem \ref{thm:435}; it states that BC 
$\C$-rings admit canonical involutions. Theorem \ref{thm:475} then says that 
taking canonical hermitian subrings is an equivalence from the category of BC 
$\C$-rings to the category of BC $\R$-rings. 

In analogy to Definition \ref{dfn:200}, given a topological space 
$X$, the following $\C$-rings exist: the ring $\opn{F}(X, \C)$ of all functions 
$a : X \to \C$, the ring $\opn{F}_{\mrm{c}}(X, \C)$ of continuous functions 
$a : X \to \C$, and the ring $\opn{F}_{\mrm{bc}}(X, \C)$ of bounded continuous 
functions $a : X \to \C$. Here continuity and boundedness are with respect to 
the standard norm on the field $\C$. As in the real case, the rings 
$\opn{F}_{}(X, \C)$, $\opn{F}_{\mrm{c}}(X, \C)$ and $\opn{F}_{\mrm{bc}}(X, \C)$ 
are not topologized. If $X$ is a discrete topological space then 
$\opn{F}_{\mrm{c}}(X, \C) = \opn{F}_{}(X, \C)$, 
and if $X$ is a compact topological space then 
$\opn{F}_{\mrm{bc}}(X, \C) = \opn{F}_{\mrm{c}}(X, \C)$. 

\begin{dfn} \label{dfn:420}
A $\C$-ring $A$ is called a {\em BC $\C$-ring} if it is isomorphic,
as a $\C$-ring, to the ring $\opn{F}_{\mrm{bc}}(X, \C)$ for some compact 
topological space $X$. 
The full subcategory of $\cat{Rng} \over \C$ on the BC $\C$-rings is denoted by 
$\cat{Rng}\xover{bc} \C$.
\end{dfn}

Thus, like in the real case, the  category $\cat{Rng}\xover{bc} \C$ is 
the essential image of the functor 
$\opn{F}_{\mrm{bc}}(-, \C) : (\cat{Top}_{\mrm{cp}})^{\mrm{op}} \to 
\cat{Rng} \over \C$.

The next definition is the complex analogue of Definitions \ref{dfn:230} and 
\ref{dfn:231}.

\begin{dfn} \label{dfn:423}
Let $X$ be a topological space.
\begin{enumerate}
\item For a point $x \in X$, let
$\opn{ev}_{x} : \opn{F}_{\mrm{bc}}(X, \C) \to \C$
be the $\C$-ring homomorphism $\opn{ev}_{x}(a) := a(x)$. It is called the 
{\em evaluation homomorphism}. 

\item Define the map of sets
$\opn{refl}^{\mrm{\lmsp alg}}_X : X \to 
\opn{MSpec} \bigl( \opn{F}_{\mrm{bc}}(X, \C) \bigr)$
to be $x \mapsto \opn{Ker}(\opn{ev}_{x})$. This map is called the {\em algebraic
reflection map}. 
\end{enumerate}
\end{dfn}

Here is the complex analogue of Lemma \ref{lem:232}.

\begin{lem} \label{lem:420} 
Let $X$ be a compact topological space, and let 
$A := \opn{F}_{\mrm{bc}}(X, \C)$.
\begin{enumerate}
\item Given $\m \in \opn{MSpec}(A)$, there is a point $x \in X$ such 
that $\m = \opn{Ker}(\opn{ev}_{x}) = \opn{refl}^{\mrm{\lmsp alg}}_X(x)$. 

\item The algebraic reflection map 
$\opn{refl}^{\mrm{\lmsp alg}}_X : X \to \opn{MSpec}(A)$
is a homeomorphism.
\end{enumerate}
\end{lem}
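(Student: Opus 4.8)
The plan is to deduce the complex statement from the real statement, Lemma \ref{lem:232}, by means of the finite base-change Theorem \ref{thm:458} applied to the field extension $\R \sub \C$. First I would identify $A = \opn{F}_{\mrm{bc}}(X, \C)$ with $\C \ot_{\R} A_0$, where $A_0 := \opn{F}_{\mrm{bc}}(X, \R)$ and the structural $\R$-ring homomorphism $\ga : A_0 \to A$ is the inclusion $b \mapsto 1 \ot b$; this is simply the decomposition of a bounded continuous function $X \to \C$ into its real and imaginary parts, $a = a_1 + i \cd a_2$ with $a_1, a_2 \in A_0$. Since $X$ is compact, $A_0$ is a BC $\R$-ring and hence an $\R$-valued ring by Lemma \ref{lem:251}, so the hypotheses of Theorem \ref{thm:458} are met.

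Next I would invoke Theorem \ref{thm:458} with $\K = \R$, $\mbb{L} = \C$, and the ring $A_0$ in the role of ``$A$''. Part (2) shows that $\ga^{-1}(\m)$ is a maximal ideal of $A_0$ for every $\m \in \opn{MSpec}(A)$, and part (3) shows that the induced map $f : \opn{MSpec}(A) \to \opn{MSpec}(A_0)$, $\m \mapsto \ga^{-1}(\m)$, is a homeomorphism. Writing $r_{\R} : X \to \opn{MSpec}(A_0)$ for the real algebraic reflection map of Definition \ref{dfn:231}, and $\opn{refl}^{\mrm{\lmsp alg}}_X : X \to \opn{MSpec}(A)$ for the complex one of Definition \ref{dfn:423}, I would then check the one-line compatibility $f \circ \opn{refl}^{\mrm{\lmsp alg}}_X = r_{\R}$: for $x \in X$ we have $\ga^{-1}(\opn{Ker}(\opn{ev}_x)) = \{ b \in A_0 \mid b(x) = 0 \} = \opn{Ker}(\opn{ev}_x)$, the kernel of the real evaluation at $x$.

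Finally, Lemma \ref{lem:232}(2) tells us that $r_{\R}$ is a homeomorphism, and $f$ is a homeomorphism by the previous step, so $\opn{refl}^{\mrm{\lmsp alg}}_X = f^{-1} \circ r_{\R}$ is a homeomorphism; this is assertion (2). In particular $\opn{refl}^{\mrm{\lmsp alg}}_X$ is surjective, which is precisely assertion (1): every $\m \in \opn{MSpec}(A)$ equals $\opn{refl}^{\mrm{\lmsp alg}}_X(x) = \opn{Ker}(\opn{ev}_x)$ for some $x \in X$. The real substance of the argument is absorbed entirely into Theorem \ref{thm:458}, so the only points needing genuine care are the base-change identification $A \cong \C \ot_{\R} A_0$ as $\C$-rings (so that $f$ really is the map produced by that theorem) and the compatibility of the two reflection maps; both are routine. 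I expect no serious obstacle beyond arranging these identifications so that Theorem \ref{thm:458} applies verbatim.
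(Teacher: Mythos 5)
Your proof is correct, but it takes a genuinely different route from the paper's. The paper proves part (1) directly by an analytic argument: assuming no point $x$ works, it extracts (by compactness) a finite family $a_i \in \m$ whose non-vanishing sets cover $X$, forms $b := \sum_i a_i^{\Ast} \cd a_i \in \m$, and observes that $b$ is strictly positive on the compact space $X$, hence invertible in $A$ -- a contradiction; part (2) then uses the real subring $\opn{F}_{\mrm{bc}}(X, \R) \sub A$ together with Lemma \ref{lem:230} for injectivity and the basis-of-topology argument, exactly as in the real case. (The authors even remark that they include this direct proof because they could not locate a reference for the complex case.) You instead transfer the whole statement from the real Lemma \ref{lem:232} via the base-change Theorem \ref{thm:458}, using the identification $A \cong \C \ot_{\R} A_0$ and the compatibility $f \circ \opn{refl}^{\mrm{\lmsp alg}}_X = r_{\R}$, both of which you verify correctly; there is no circularity, since Lemma \ref{lem:232}, Lemma \ref{lem:251} and Theorem \ref{thm:458} all precede Lemma \ref{lem:420} and are independent of it. What your approach buys is economy and uniformity: the analytic content is confined to the real case, and the passage to $\C$ becomes pure commutative algebra -- indeed this is precisely the use the paper itself makes of Theorem \ref{thm:458}, only later, in the proof of Lemma \ref{lem:422}(3). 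What the paper's approach buys is a self-contained proof of the complex statement with independent interest (the positivity trick with $a^{\Ast} a$ is the standard $\mrm{C}^{\Ast}$-algebra argument, here done without any norm), and it avoids having to set up the isomorphism $A \cong \C \ot_{\R} A_0$ and check that the map produced by Theorem \ref{thm:458} matches the map $\n \mapsto A_0 \cap \n$ under that isomorphism -- the two identification steps you rightly flag as the only places needing care.
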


\begin{proof} \mbox{}

\smallskip \noindent
(1) Unlike the real case, which is well-known, we could not find a reference 
for the complex case, so here is the proof. 

Assume for the sake of 
contradiction that no such point $x$ exists. This means that for every 
$x \in X$ there is some element $a \in \m$ such that $a(x) \neq 0$. 
By continuity of $a$, there is an open neighborhood $U$ of $x$ such that 
$a(x') \neq 0$ for all $x' \in U$. Therefore we can find an open covering 
$X = \bigcup_{i \in I} U_i$, and elements $a_i \in \m$, such that 
$U_i \sub \opn{NZer}_X(a_i)$ for all $i$. The compactness of $X$ implies that 
there is a subcovering $X = \bigcup_{i \in I_0} U_i$ indexed by some finite 
subset $I_0 \sub I$. For every $i \in I_0$ the conjugate function 
$a_i^{\Ast}$, namely $a_i^{\Ast}(x) := \ol{a_i(x)}$, belongs to 
$A$. Therefore the function 
$b := \sum_{i \in I_0} a_i^{\Ast} \cd a_i$ belongs to the maximal ideal $\m$. 
The function $a_i^{\Ast} \cd a_i$ is positive on $U_i$, and hence the function 
$b$ is positive on all of $X$, and thus nonzero. The multiplicative inverse 
$b^{-1} : X \to \C$ is continuous, and by compactness it is bounded. Therefore 
the function $b$ is invertible in the ring $A$. This contradicts the 
fact that $b \in \m$. 

\medskip \noindent
(2) Let's write $\bar{X} := \opn{MSpec}(A)$.
Because $\opn{F}_{\mrm{bc}}(X, \R) \sub A$, Lemma \ref{lem:230}(1) implies 
that the map $\opn{refl}^{\mrm{\lmsp alg}}_X : X \to \bar{X}$ is injective.  
Item (1) above shows that the map $\opn{refl}^{\mrm{\lmsp alg}}_X$ is 
surjective. We see that $\opn{refl}^{\mrm{\lmsp alg}}_X : X \to \bar{X}$ is 
bijective.

It remains to prove that $\opn{refl}^{\mrm{\lmsp alg}}_X : X \to \bar{X}$ is a 
homeomorphism. Given an element $a \in A$, consider the principal open sets 
$U := \opn{NZer}_X(a) \sub X$ and 
$\bar{U} := \opn{NZer}_{\bar{X}}(a) \sub \bar{X}$. An easy calculation shows 
that $\opn{refl}^{\mrm{\lmsp alg}}_X(U) = \bar{U}$. We know (see Section 
\ref{sec:prlim}) that the principal open sets $\bar{U}$ form a basis of the 
Zariski topology of $\bar{X}$.
On the other hand, since $\opn{F}_{\mrm{bc}}(X, \R) \sub A$, Lemma 
\ref{lem:230}(2) implies that principal open sets form a basis of the given 
topology of $X$. Therefore $\opn{refl}^{\mrm{\lmsp alg}}_X$ is a homeomorphism.
\end{proof}

\begin{lem} \label{lem:421}
If $A$ is a BC $\C$-ring then it is $\C$-valued.
\end{lem}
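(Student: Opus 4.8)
The plan is to mirror the proof of Lemma \ref{lem:251} (the real case) as closely as possible. Since being a BC $\C$-ring means $A \cong \opn{F}_{\mrm{bc}}(X, \C)$ for some compact space $X$, I may assume outright that $A = \opn{F}_{\mrm{bc}}(X, \C)$ with $X$ compact. Let $\m \sub A$ be any maximal ideal. By Lemma \ref{lem:420}(1) there is a point $x \in X$ with $\m = \opn{Ker}(\opn{ev}_x)$, where $\opn{ev}_x : A \to \C$ is the evaluation homomorphism from Definition \ref{dfn:423}(1).

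The key point is that $\opn{ev}_x$ is a surjective $\C$-ring homomorphism onto the field $\C$ (surjective because $A$ contains all the constant functions). Passing to the quotient, the induced map $\bar{\opn{ev}}_x : A/\m \to \C$ is a $\C$-ring isomorphism. This means exactly that the structural homomorphism $\opn{str}_{A/\m} : \C \to A/\m$ is bijective, since $\bar{\opn{ev}}_x$ is its inverse: for a constant $\la \in \C$ we have $\opn{ev}_x(\la) = \la$, so $\bar{\opn{ev}}_x \circ \opn{str}_{A/\m} = \opn{id}_{\C}$. Therefore $\m$ is $\C$-valued in the sense of Definition \ref{dfn:395}(1). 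As $\m$ was arbitrary, every maximal ideal of $A$ is $\C$-valued, so $A$ is a $\C$-valued ring.

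I do not anticipate any real obstacle here, since all the genuine work has already been carried out in establishing Lemma \ref{lem:420}(1); that surjectivity-onto-a-field argument is the analogue of the hard step in the real case. Given that lemma, the present statement is essentially a formal consequence, exactly as Lemma \ref{lem:251} followed formally from Lemma \ref{lem:232}. The only thing to be mildly careful about is that the reduction ``assume $A = \opn{F}_{\mrm{bc}}(X, \C)$'' is legitimate because the $\C$-valued property is preserved under $\C$-ring isomorphism, which is immediate.
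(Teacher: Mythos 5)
Your proof is correct and is essentially identical to the paper's own argument: both reduce to $A = \opn{F}_{\mrm{bc}}(X, \C)$ with $X$ compact, invoke Lemma \ref{lem:420}(1) to write $\m = \opn{Ker}(\opn{ev}_x)$, and conclude that the induced map $A/\m \to \C$ is a $\C$-ring isomorphism. The extra remarks you add (surjectivity of $\opn{ev}_x$ via constants, invariance of the $\C$-valued property under isomorphism) are just the details the paper leaves implicit.
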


\begin{proof}
We can assume that $A = \opn{F}_{\mrm{bc}}(X, \C)$ for some compact topological 
space $X$. Let $\m$ be a maximal ideal of $A$. 
By Lemma \ref{lem:420}(1) there is a point $x \in X$ such that 
$\m =  \opn{Ker}(\opn{ev}_x)$. But $\opn{ev}_x$ is a $\C$-ring homomorphism 
$\opn{ev}_x : A \to \C$, so it induces a $\C$-ring isomorphism 
$A / \m \iso \C$. 
\end{proof}

Next is the complex analogue of Theorem \ref{thm:415}.

\begin{thm} \label{thm:456}   
For an arbitrary topological space $X$, the ring
$\opn{F}_{\mrm{bc}}(X, \C)$ is a BC $\C$-ring.
\end{thm}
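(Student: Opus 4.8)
The plan is to reduce to the real case already established in Theorem \ref{thm:415}, by exploiting base change from $\R$ to $\C$. The key fact I would record first is that, for \emph{every} topological space $Y$, there is a canonical isomorphism of $\C$-rings
\[ \opn{F}_{\mrm{bc}}(Y, \C) \cong \C \ot_{\R} \opn{F}_{\mrm{bc}}(Y, \R) . \]
This comes from writing a function $a : Y \to \C$ as $a = a_1 + i \cd a_2$ with $a_1, a_2 : Y \to \R$. Since $\abs{a_1}, \abs{a_2} \leq \abs{a}$ and both the real and imaginary part maps $\C \to \R$ are continuous, the function $a$ is bounded continuous if and only if $a_1$ and $a_2$ are. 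The multiplicative structure of $\opn{F}_{\mrm{bc}}(Y, \C)$ matches that of the tensor product, so the assignment $a \mapsto 1 \ot a_1 + i \ot a_2$ is a $\C$-ring isomorphism, natural in $Y$.

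Second, I would apply Theorem \ref{thm:415}. The $\R$-ring $A_0 := \opn{F}_{\mrm{bc}}(X, \R)$ is a BC $\R$-ring, so there is a compact topological space $\bar{X}$ and an isomorphism of $\R$-rings $A_0 \cong \opn{F}_{\mrm{bc}}(\bar{X}, \R)$. Applying $\C \ot_{\R} (-)$ and using the first step twice -- once for the space $X$, and once for the compact space $\bar{X}$ -- I obtain
\[ \opn{F}_{\mrm{bc}}(X, \C) \cong \C \ot_{\R} A_0 \cong \C \ot_{\R} \opn{F}_{\mrm{bc}}(\bar{X}, \R) \cong \opn{F}_{\mrm{bc}}(\bar{X}, \C) . \]
Since $\bar{X}$ is compact, the right-hand ring is a BC $\C$-ring by Definition \ref{dfn:420}, and hence so is $\opn{F}_{\mrm{bc}}(X, \C)$. (The empty case is trivial, as $\opn{F}_{\mrm{bc}}(\varnothing, \C)$ is the zero ring.)

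The only point requiring care -- the main, though modest, obstacle -- is the base-change isomorphism of the first step: one must check that it respects multiplication and the structural map from $\C$, not merely the $\R$-module structure, and that boundedness transfers correctly between $a$ and its two components. Once that is in place the argument is purely formal. I would note that this is the complex counterpart of the construction used for Theorem \ref{thm:415}; an alternative, fully parallel proof is available by imitating the topological reflection construction of Definition \ref{dfn:295}, with closed disks $Z_a \sub \C$ in place of the closed intervals used there. Routing through Theorem \ref{thm:415} via base change is shorter, however, and it foreshadows the induction functor $I(A_0) = \C \ot_{\R} A_0$ appearing in Theorem \ref{thm:476}.
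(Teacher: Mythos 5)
Your proposal is correct, but it takes a genuinely different route from the paper. The paper's proof takes the Stone--\v{C}ech Compactification $(\bar{X}, \opn{c}_X)$ of $X$, whose existence is Theorem \ref{thm:285}, and shows that the restriction homomorphism $\opn{F}_{\mrm{bc}}(\bar{X}, \C) \to \opn{F}_{\mrm{bc}}(X, \C)$ induced by $\opn{c}_X$ is bijective: a function $a \in \opn{F}_{\mrm{bc}}(X, \C)$ has image inside some closed disc $Z \sub \C$, which is compact, so the universal property (C) of Definition \ref{dfn:131} extends $a$ uniquely to $\bar{a} : \bar{X} \to Z$. That is essentially the ``fully parallel'' alternative you mention at the end, except that the paper invokes the universal property of the SCC rather than redoing the reflection construction with discs. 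Your main line -- the natural $\C$-ring isomorphism $\C \ot_{\R} \opn{F}_{\mrm{bc}}(Y, \R) \iso \opn{F}_{\mrm{bc}}(Y, \C)$ for every space $Y$, followed by Theorem \ref{thm:415} applied to $A_0 = \opn{F}_{\mrm{bc}}(X, \R)$ -- is sound: the map $\la \ot a_0 \mapsto \la \cd a_0$ is a $\C$-ring homomorphism, surjective because $a = a_1 + i \cd a_2$ with $a_1, a_2$ bounded continuous, and injective because $a_1 + i \cd a_2 = 0$ forces $a_1 = a_2 = 0$ pointwise. What your route buys: it needs only the statement of Theorem \ref{thm:415}, not the universal property of the SCC, and the complexification lemma you isolate is exactly the fact the paper later uses without proof, for compact spaces, in Lemma \ref{lem:422}(2) and in the isomorphism (\ref{eqn:476}); proving it once, for arbitrary $Y$, is a genuine economy, and, as you note, it anticipates the induction functor $I$ of Theorem \ref{thm:475}. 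What the paper's route buys: it exhibits the compactifying space concretely as the SCC of $X$ itself, keeping the real and complex developments visibly parallel.
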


\begin{proof}
We may assume $X$ is a nonempty topological space. 
Consider the Stone-\v{C}ech Compactification
$(\bar{X}, \opn{c}_X)$ of $X$ (see Definition \ref{dfn:131} and Theorem 
\ref{thm:285}), and the $\C$-ring homomorphism 
\[ \opn{F}_{\mrm{bc}}(X, \opn{c}_X) : \opn{F}_{\mrm{bc}}(\bar{X}, \C) \to 
\opn{F}_{\mrm{bc}}(X, \C) . \]
Given a function $a \in \opn{F}_{\mrm{bc}}(X, \C)$, let
$Z \sub \C$ be a closed disc in $\C$ that contains $a(X)$. Because $Z$ is a 
compact topological space, the universal property of the SCC says that $a$ 
extends uniquely to a continuous function 
$\bar{a} : \bar{X} \to Z$. The assignment $a \mapsto \bar{a}$ is an inverse of 
the ring homomorphism $\opn{F}_{\mrm{bc}}(X, \opn{c}_X)$, so the latter is 
an isomorphism. But the topological space $\bar{X}$ is compact, 
and hence $\opn{F}_{\mrm{bc}}(\bar{X}, \C)$ is a BC $\C$-ring. 
\end{proof}

\begin{lem} \label{lem:422}
Let $A \in \cat{Rng}\xover{bc} \C$ and 
$X := \opn{MSpec}(A)$. 
\begin{enumerate}
\item The topological space $X$ is compact.  

\item There is a BC $\R$-ring $A_0$, with an isomorphism of $\C$-rings
$\C \ot_{\R} A_0 \cong A$. 

\item For every $a \in A$ the function $\opn{dev}_A(a) : X \to \C$ is 
continuous and bounded with respect to the standard norm of $\C$ .

\item The $\C$-ring homomorphism 
$\opn{dev}_A : A \to \opn{F}_{\mrm{bc}}(X, \C)$
is bijective. 
\end{enumerate}
\end{lem}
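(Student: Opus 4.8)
The plan is to reduce the complex statement to the real case, which is already available as Lemma \ref{lem:272}, using the finite base change Theorem \ref{thm:458}. Since $A$ is a BC $\C$-ring, I first fix a compact topological space $Y$ together with a $\C$-ring isomorphism $A \cong \opn{F}_{\mrm{bc}}(Y, \C)$. Part (1) is then immediate: by Lemma \ref{lem:420}(2) the algebraic reflection map $\opn{refl}^{\mrm{\lmsp alg}}_Y : Y \iso \opn{MSpec}(A) = X$ is a homeomorphism, and since $Y$ is compact so is $X$.

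For part (2) I would take $A_0 := \opn{F}_{\mrm{bc}}(Y, \R)$, which is a BC $\R$-ring by definition. The natural $\C$-ring homomorphism $\C \ot_{\R} A_0 \to \opn{F}_{\mrm{bc}}(Y, \C)$, $\la \ot a \mapsto \la \cd a$, is bijective: using the splitting $\C = \R \oplus \R \cd i$, every bounded continuous function $Y \to \C$ decomposes uniquely as $\opn{Re} + i \cd \opn{Im}$ with real and imaginary parts lying in $A_0$, and these parts are again bounded continuous. Composing with the fixed isomorphism $A \cong \opn{F}_{\mrm{bc}}(Y, \C)$ produces the required isomorphism $\C \ot_{\R} A_0 \cong A$.

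For parts (3) and (4) I would apply Theorem \ref{thm:458} with $\K = \R$, $\mbb{L} = \C$, and the $\R$-valued $\R$-ring $A_0$ (it is $\R$-valued by Lemma \ref{lem:251}). Identifying $A = \C \ot_{\R} A_0$ via part (2), writing $X_0 := \opn{MSpec}(A_0)$, and letting $\ga : A_0 \to A$ be the map $b \mapsto 1 \ot b$, the theorem supplies a homeomorphism $f : X \iso X_0$ and the commutative square $\opn{dev}_A \circ \ga = \opn{F}(f, \opn{str}_{\C}) \circ \opn{dev}_{A_0}$. Because $A_0$ is a BC $\R$-ring, Lemma \ref{lem:272} (the real case) says each $\opn{dev}_{A_0}(b) : X_0 \to \R$ is bounded continuous and that $\opn{dev}_{A_0} : A_0 \iso \opn{F}_{\mrm{bc}}(X_0, \R)$. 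Writing an arbitrary element of $A$ as $\ga(b_1) + i \cd \ga(b_2)$ with $b_1, b_2 \in A_0$, the square gives $\opn{dev}_A(\ga(b_j)) = \opn{str}_{\C} \circ \opn{dev}_{A_0}(b_j) \circ f$, a composite of the homeomorphism $f$, a bounded continuous real-valued function, and the continuous inclusion $\opn{str}_{\C} : \R \hookrightarrow \C$, hence bounded continuous; so $\opn{dev}_A(a)$ is bounded continuous, proving (3). For (4) I would exhibit $\opn{dev}_A$ as a composite of isomorphisms: precomposition by $f$ gives an isomorphism $f^{*} : \opn{F}_{\mrm{bc}}(X_0, \R) \iso \opn{F}_{\mrm{bc}}(X, \R)$, whence $f^{*} \circ \opn{dev}_{A_0} : A_0 \iso \opn{F}_{\mrm{bc}}(X, \R)$; applying $\C \ot_{\R} (-)$ and then the canonical isomorphism of part (2) for the compact space $X$ yields a $\C$-ring isomorphism $A = \C \ot_{\R} A_0 \iso \opn{F}_{\mrm{bc}}(X, \C)$, which is $\C$-linear and agrees with $\opn{dev}_A$ on the $\C$-spanning set $1 \ot A_0 = \ga(A_0)$ by the formula just derived, and therefore equals $\opn{dev}_A$.

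The only genuinely delicate point is the bookkeeping in (4): one must be sure that the base-changed map really is $\opn{dev}_A$ and not some twist, and this is exactly what the compatibility square of Theorem \ref{thm:458}(4) guarantees, together with $\C$-linearity and the fact that $1 \ot A_0$ generates $\C \ot_{\R} A_0$ as a $\C$-module. Everything else is a formal transfer of the real statements through the homeomorphism $f$ and the decomposition $\C = \R \oplus \R \cd i$.
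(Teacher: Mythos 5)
Your proposal is correct and follows essentially the same route as the paper's proof: part (1) via Lemma \ref{lem:420}(2), part (2) via the real/imaginary decomposition of $\opn{F}_{\mrm{bc}}(Y,\C)$, and parts (3) and (4) by reducing to the real case (Lemma \ref{lem:272}) through the base-change Theorem \ref{thm:458}. The only differences are cosmetic -- you take $A_0 := \opn{F}_{\mrm{bc}}(Y,\R)$ and transport it, where the paper takes the subring $\psi^{-1}(B_0) \sub A$, and in (4) you verify agreement with $\opn{dev}_A$ on the $\C$-spanning set $\ga(A_0)$, where the paper tensors the commutative diagram (\ref{eqn:430}) with $\C$; both settle the same bookkeeping point.
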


\begin{proof} \mbox{}

\smallskip \noindent
(1) This is like the proof of Lemma \ref{lem:272}(1), but now we use Lemma 
\ref{lem:420} (2) instead of Lemma \ref{lem:232}(2).

\medskip \noindent
(2) By definition there is a $\C$-ring isomorphism 
$\psi : A \iso B$ where $B := \opn{F}_{\mrm{bc}}(Y, \C)$ for some compact 
topological space $Y$. The ring
$B_0 := \opn{F}_{\mrm{bc}}(Y, \R)$ is a BC $\R$-ring, and the 
inclusion $\ga_B : B_0 \to B$ induces a $\C$-ring isomorphism 
$\C \ot_{\R} B_0 \iso B$.  
Then the subring $A_0 := \psi^{-1}(B_0) \sub A$ is also a BC $\R$-ring,
and the inclusion $\ga_A : A_0 \to A$ induces a $\C$-ring isomorphism 
$\C \ot_{\R} A_0 \iso A$.

\medskip \noindent
(3) We continue with $Y$, $B$, $B_0$ and $A_0$ as above. According to 
Theorem \ref{thm:203} there is a homeomorphism 
$\opn{refl}^{\mrm{\lmsp alg}}_Y : Y \iso \opn{MSpec}(B_0)$.
Define $X_0 := \opn{MSpec}(A_0)$. Theorem \ref{thm:203} says that 
the isomorphism $\psi_0 : A_0 \iso B_0$ in $\cat{Rng}\xover{bc} \R$
induces a homeomorphism $f := \opn{MSpec}(\psi_0) : Y \iso X_0$. By Theorem  
\ref{thm:458}(3) there is a homeomorphism 
$p =  \opn{MSpec}(\ga_A) : X \iso X_0$. By Proposition \ref{prop:453} 
the homomorphisms $\opn{dev}_{(-)}$ are functorial on 
$\cat{Rng}\xover{bc} \C$, 
so we may as well identify $X$ and $X_0$ via $p$. By 
Theorem \ref{thm:458}(4) there is this commutative diagram 
\begin{equation} \label{eqn:430}
\begin{tikzcd} [column sep = 10ex, row sep = 6ex]
A_0
\ar[r, "{\ga_{A}}"]
\ar[d, "{\opn{dev}_{A_0}}"', "{\simeq}"]
&
A
\ar[d, "{\opn{dev}_{A}}"]
\\
\opn{F}(X, \R)
\ar[r, "{\opn{F}(X, \ga_{\C})}"]
&
\opn{F}(X, \C)
\end{tikzcd}
\end{equation}
in $\cat{Rng} \over \R$. Here $\ga_{\C} : \R \to \C$ is the inclusion. 

Now the element $a \in A$ can be expressed as 
$a = a_0 + b_0 \cd \bi$ with $a_0, b_0 \in A_0$. 
The commutative diagram (\ref{eqn:430}) says that 
\[ \opn{dev}_{A}(a) = \opn{dev}_{A_0}(a_0) + 
\opn{dev}_{A_0}(b_0) \cd \bi \in \opn{F}(X, \C) . \]
By Lemma \ref{lem:272}(3) the functions 
$\opn{dev}_{A_0}(a_0), \opn{dev}_{A_0}(b_0) : X \to \R$
are continuous and bounded. It follows that $\opn{dev}_{A}(a)$ is continuous 
and bounded.

\medskip \noindent
(4) It remains to prove that 
$\opn{dev}_A : A \to \opn{F}_{\mrm{bc}}(X, \C)$
is bijective. By item (3) we know that there is the commutative diagram 
(\ref{eqn:430}) in $\cat{Rng} \over \R$, and the the left vertical arrow in it 
is a bijection.
We now pass to the induced commutative diagram in $\cat{Rng} \over \C$
\begin{equation} \label{eqn:434}
\begin{tikzcd} [column sep = 16ex, row sep = 6ex]
\C \ot_{\R} A_0
\ar[r, "{\opn{id}_{\C} \ot \, \ga_{A}}", "{\simeq}"']
\ar[d, "{\opn{id}_{\C} \ot \, \opn{dev}_{A_0}}"', "{\simeq}"]
&
A
\ar[d, "{\opn{dev}_{A}}"]
\\
\C \ot_{\R} \opn{F}_{\mrm{bc}}(X, \R)
\ar[r, "{\opn{id}_{\C} \ot \, \opn{F}_{\mrm{bc}}(X, \ga_{\C})}", 
"{\simeq}"']
&
\opn{F}_{\mrm{bc}}(X, \C)
\end{tikzcd}
\end{equation}
We see that the right vertical arrow here is bijective. 
\end{proof}

Next is the complex version of Corollary \ref{cor:456}.

\begin{thm} \label{thm:524} 
Let $A$ be a $\C$-ring, with $X := \opn{MSpec}(A)$. 
The following two conditions are equivalent:
\begin{itemize}
\rmitem{i} $A$ is a BC $\C$-ring. 

\rmitem{ii} $A$ is a $\C$-valued ring, for every $a \in A$ the function 
$\opn{dev}_A(a) : X \to \C$ is continuous, and the $\C$-ring homomorphism 
$\opn{dev}_A : A \to \opn{F}_{\mrm{c}}(X, \C)$
is bijective. 
\end{itemize}
\end{thm}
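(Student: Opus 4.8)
The plan is to mirror the proof of Corollary \ref{cor:456} essentially verbatim, replacing each real-valued ingredient with its complex counterpart established earlier in this section. The whole argument hinges on a single elementary observation: on a quasi-compact space every continuous $\C$-valued function is automatically bounded, so that $\opn{F}_{\mrm{bc}}(X, \C) = \opn{F}_{\mrm{c}}(X, \C)$ whenever $X = \opn{MSpec}(A)$.

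For the implication (i) $\Rightarrow$ (ii), I would first invoke Lemma \ref{lem:422}(1) to conclude that $X$ is compact; in particular it is quasi-compact, so $\opn{F}_{\mrm{bc}}(X, \C) = \opn{F}_{\mrm{c}}(X, \C)$. Lemma \ref{lem:421} then gives that $A$ is $\C$-valued, Lemma \ref{lem:422}(3) gives continuity of each function $\opn{dev}_A(a) : X \to \C$, and Lemma \ref{lem:422}(4) gives that $\opn{dev}_A : A \to \opn{F}_{\mrm{bc}}(X, \C) = \opn{F}_{\mrm{c}}(X, \C)$ is bijective. This is precisely condition (ii).

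For the reverse implication (ii) $\Rightarrow$ (i), I would use Proposition \ref{prop:395}, which says that $X = \opn{MSpec}(A)$ is quasi-compact for an \emph{arbitrary} ring $A$. As above this forces $\opn{F}_{\mrm{bc}}(X, \C) = \opn{F}_{\mrm{c}}(X, \C)$, so the bijective $\C$-ring homomorphism $\opn{dev}_A : A \to \opn{F}_{\mrm{c}}(X, \C)$ supplied by (ii) is in fact an isomorphism $\opn{dev}_A : A \iso \opn{F}_{\mrm{bc}}(X, \C)$ of $\C$-rings. By Theorem \ref{thm:456} the target $\opn{F}_{\mrm{bc}}(X, \C)$ is a BC $\C$-ring, and hence so is $A$.

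Since all the heavy lifting has already been carried out in the preceding lemmas, there is no serious obstacle. The only point requiring a word of care is the equality $\opn{F}_{\mrm{bc}}(X, \C) = \opn{F}_{\mrm{c}}(X, \C)$ in the second implication: there $X$ is only known to be quasi-compact, not Hausdorff, but this already suffices, because the image of a quasi-compact space under a continuous map into the Hausdorff space $\C$ is quasi-compact, hence closed and bounded, so every continuous function on $X$ is bounded.
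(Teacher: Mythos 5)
Your proof is correct and follows essentially the same route as the paper's: Lemma \ref{lem:422}(1),(3),(4) for (i) $\Rightarrow$ (ii), and Proposition \ref{prop:395} together with Theorem \ref{thm:456} for (ii) $\Rightarrow$ (i), with the equality $\opn{F}_{\mrm{bc}}(X, \C) = \opn{F}_{\mrm{c}}(X, \C)$ justified by quasi-compactness exactly as the paper does. In fact your version is marginally more complete, since you explicitly invoke Lemma \ref{lem:421} to verify that $A$ is $\C$-valued --- a part of condition (ii) that the paper's own proof leaves implicit.
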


\begin{proof} \mbox{}

\smallskip \noindent
(i) $\Rightarrow$ (ii): 
By Lemma \ref{lem:422}(1) the topological space $X$ is compact, and therefore
$\opn{F}_{\mrm{bc}}(X, \C) = \opn{F}_{\mrm{c}}(X, \C)$.
According to Lemma \ref{lem:422}(3), for every $a \in A$ the function 
$\opn{dev}_A(a) : X \to \C$ is 
continuous. Finally, by Lemma \ref{lem:422}(4) the homomorphism 
$\opn{dev}_A : A \to \opn{F}_{\mrm{bc}}(X, \C) = 
\opn{F}_{\mrm{c}}(X, \C)$
is bijective.

\medskip \noindent 
(ii) $\Rightarrow$ (i): Proposition \ref{prop:395} says that the topological 
space $X$ is quasi-compact. This implies that 
$\opn{F}_{\mrm{bc}}(X, \C) = \opn{F}_{\mrm{c}}(X, \C)$,
and hence there is an isomorphism of $\C$-rings
$\opn{dev}_A : A \iso \opn{F}_{\mrm{bc}}(X, \C)$. 
According to Theorem \ref{thm:456} , $A$ is a BC $\C$-ring. 
\end{proof}

\begin{exa} \label{exa:466}
Take the $\C$-ring $A := \C[t]$, the polynomial ring in one variable over $\C$,
and let $X := \opn{MSpec}(A)$. The ring $A$ is a $\C$-valued ring. The function 
$\opn{dev}_A(t) : X \to \C$ is a bijection, but it is not continuous. Theorem 
\ref{thm:524} says that $A$ is not a BC $\C$-ring. 
\end{exa}

Here is the complex version of Theorem \ref{thm:203}. 

\begin{thm}[Duality] \label{thm:420}
The functor 
\[ \opn{F}_{\mrm{bc}}(-, \C) : (\cat{Top}_{\mrm{cp}})^{\mrm{op}} \to 
\cat{Rng}\xover{bc} \C \]
is an equivalence of categories, with quasi-inverse $\opn{MSpec}$.
\end{thm}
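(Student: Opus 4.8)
The plan is to mimic, step for step, the proof of Theorem~\ref{thm:203}, substituting the complex analogues of the lemmas used there. Throughout I will abbreviate $\opn{M} := \opn{MSpec}$ and $\opn{F}_{\mrm{bc}} := \opn{F}_{\mrm{bc}}(-, \C)$.

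First I would record that $\opn{M}$ is a well-defined functor $(\cat{Rng}\xover{bc} \C)^{\mrm{op}} \to \cat{Top}_{\mrm{cp}}$. By Lemma~\ref{lem:421} every BC $\C$-ring is $\C$-valued, so Proposition~\ref{prop:450} (applied with base field $\C$) shows that $\opn{M}$ is functorial on $\cat{Rng}\xover{bc} \C$; and Lemma~\ref{lem:422}(1) guarantees that $\opn{M}(A)$ is compact, so this functor lands in $\cat{Top}_{\mrm{cp}}$. This is the exact complex counterpart of Proposition~\ref{prop:451} together with Lemma~\ref{lem:272}(1).

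Next I would exhibit the two natural isomorphisms witnessing the equivalence. In one direction, the algebraic reflection map $\opn{refl}^{\mrm{\lmsp alg}}_X : X \to (\opn{M} \circ \opn{F}_{\mrm{bc}})(X)$ is functorial in $X \in \cat{Top}_{\mrm{cp}}$: the proof of Lemma~\ref{lem:258} transfers verbatim to the complex setting, since it only uses that each $\opn{ev}_x$ is a ring homomorphism and the defining formula $\n \mapsto \phi^{-1}(\n)$ for $\opn{M}(\phi)$. By Lemma~\ref{lem:420}(2) each $\opn{refl}^{\mrm{\lmsp alg}}_X$ is a homeomorphism, hence an isomorphism in $\cat{Top}$, giving a natural isomorphism $\opn{Id}_{\cat{Top}_{\mrm{cp}}} \iso \opn{M} \circ \opn{F}_{\mrm{bc}}$. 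In the other direction, Proposition~\ref{prop:453} provides the natural transformation $\opn{dev}_A : A \to (\opn{F}_{\mrm{bc}} \circ \opn{M})(A)$ (it is functorial on $\C$-valued rings, hence on $\cat{Rng}\xover{bc} \C$ by Lemma~\ref{lem:421}), and Lemma~\ref{lem:422}(4) says each $\opn{dev}_A$ is bijective, so this is a natural isomorphism $\opn{Id}_{\cat{Rng}\xover{bc} \C} \iso \opn{F}_{\mrm{bc}} \circ \opn{M}$. Together these two natural isomorphisms establish the claimed duality.

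I do not anticipate a genuine obstacle, because all the analytic content has already been isolated in the complex lemmas --- most crucially in Lemma~\ref{lem:420}(1), whose proof (forming $b := \sum_i a_i^{\Ast} \cd a_i$ and inverting it) is the one place where the field $\C$ genuinely intervenes. The only point needing a word of care is the functoriality of $\opn{refl}^{\mrm{\lmsp alg}}$ over $\cat{Top}_{\mrm{cp}}$, which the paper has not stated separately in the complex case; I would either prove it by copying the computation in Lemma~\ref{lem:258}, or else factor the argument so that the complex duality reduces formally to the already-proved real duality of Theorem~\ref{thm:203} through the isomorphism $\C \ot_{\R} A_0 \cong A$ of Lemma~\ref{lem:422}(2). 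The direct copy is cleaner, and is what I would actually write.
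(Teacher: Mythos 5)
Your proposal is correct and follows essentially the same route as the paper's own proof: the paper likewise invokes Lemma \ref{lem:420}(2) together with the complex version of Lemma \ref{lem:258} for the natural isomorphism on the topological side, and Lemma \ref{lem:422}(4) together with Proposition \ref{prop:453} for the natural isomorphism $\opn{dev}$ on the ring side. Your extra care about the well-definedness of $\opn{MSpec}$ as a functor into $\cat{Top}_{\mrm{cp}}$ (via Lemma \ref{lem:421}, Proposition \ref{prop:450} and Lemma \ref{lem:422}(1)) is exactly the complex counterpart of Proposition \ref{prop:451} and Lemma \ref{lem:272}(1), which the paper leaves implicit in its phrase ``almost identical to the proof of Theorem \ref{thm:203}.''
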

 
\begin{proof}
The proof is almost identical to the proof of Theorem \ref{thm:203}.
By Lemma \ref{lem:420}(2) the map  
$\opn{refl}^{\mrm{\lmsp alg}}_X : X \to \opn{MSpec}(A)$
is a homeomorphism, and the complex version of Lemma \ref{lem:258} shows that 
$\opn{refl}^{\mrm{\lmsp alg}}_X$ is functorial in $X$. 

In the reverse direction, Lemma \ref{lem:422}(4) shows that the $\C$-ring 
homomorphism 
$\opn{dev}_A : A \to \opn{F}_{\mrm{bc}}(X, \C)$
is bijective. By Proposition \ref{prop:453} the homomorphism 
$\opn{dev}_A$ is functorial in $A$. 
\end{proof}

\section{Involutive Complex Rings}
\label{inv-c}

In this section we prove that every BC $\C$-ring $A$ admits a canonical 
involution. This is Theorem \ref{thm:435}. The canonical involution is used to 
prove Theorem \ref{thm:475}, which states that the categories 
$\cat{Rng} \xover{bc} \C$ and $\cat{Rng} \xover{bc} \R$ are equivalent.

\begin{dfn}[Involutive $\C$-Rings] \label{dfn:435} \mbox{}
\begin{enumerate}
\item An {\em involution} of a $\C$-ring $A$ is an $\R$-ring automorphism 
$(-)^{\Ast} : A \to A$, satisfying $(a^{\Ast})^{\Ast} = a$ and
$(\la \cd a)^{\Ast} = \bar{\la} \cd a^{\Ast}$ 
for all  $a \in A$ and $\la \in \C$. Here $\bar{\la}$ is the complex 
conjugate of $\la$.

\item An {\em involutive $\C$-ring}
is a $\C$-ring $A$ equipped with an involution $(-)^{\Ast}$. 

\item Suppose $A$ and $B$ are involutive $\C$-rings. A {\em homomorphism of 
involutive $\C$-rings} $\phi : A \to B$ is a $\C$-ring homomorphism $\phi$ 
satisfying $\phi(a^{\Ast}) = \phi(a)^{\Ast}$ for every element $a \in A$.

\item The category of commutative involutive $\C$-rings, 
with involutive $\C$-ring homomorphisms, is denoted by 
$\cat{Rng}^{\Ast} \over \C$.  
\end{enumerate}
\end{dfn}

In functional analysis books, e.g.\ \cite{Co}, the names for the notions in 
items (2) and (3) above are {\em $\Ast$-algebra} and 
{\em $\Ast$-homomorphism}, respectively. 
(If $A$ is not commutative, then an involution has to be an anti-automorphism, 
i.e.\ $(a \cd b)^{\Ast} = b^{\Ast} \cd a^{\Ast}$.)

\begin{dfn} \label{dfn:436}
Suppose $A$ is an involutive commutative $\C$-ring.
Define the subset of {\em hermitian elements} of $A$ to be 
\[ A_0 := \{ a \in A \mid a^{\Ast} = a \} \sub A . \]
This is an $\R$-subring of $A$.  
\end{dfn}

\begin{exa} \label{exa:440}
Let $A_0$ be an $\R$-ring, and define the $\C$-ring
$A :=  \C \ot_{\R} A_0$. The ring $A$ has an involution $(-)^{\Ast}$, 
with formula $(\la \ot a_0)^{\Ast} := \bar{\la} \ot a_0$ for $\la \in \C$ and 
$a_0 \in A_0$. The hermitian subring of $A$ is $A_0$. 
\end{exa}

\begin{exa} \label{exa:441}
Let $X$ be a topological space, and define the $\C$-ring 
$A := \opn{F}_{\mrm{bc}}(X, \C)$. The ring $A$ has an involution $(-)^{\Ast}$, 
with formula $a^{\Ast}(x) := \ol{a(x)}$ for $a \in A$ and $x \in X$. The 
hermitian subring of $A$ is $A_0 := \opn{F}_{\mrm{bc}}(X, \R)$. 
\end{exa}

\begin{thm} \label{thm:435}
Every BC $\C$-ring $A$ admits a unique involution $(-)^{\Ast}$, called the 
{\em canonical involution}, satisfying the two conditions below. 
\begin{itemize}
\rmitem{i} Functoriality: Given a homomorphism $\phi : A \to B$ in 
$\cat{Rng}\xover{bc} \C$, and an element $a \in A$, there is equality
$\phi(a^{\Ast}) = \phi(a)^{\Ast}$ in $B$.

\rmitem{ii} Normalization: For the $\C$-ring $\opn{F}_{\mrm{bc}}(X, \C)$
of bounded continuous functions on a topological space $X$, 
the canonical involution is the one described in Example \ref{exa:441}.
\end{itemize}
\end{thm}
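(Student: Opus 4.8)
The plan is to define the canonical involution by transporting the pointwise conjugation across the canonical identification $\opn{dev}_A \colon A \iso \opn{F}_{\mrm{bc}}\bigl( \opn{MSpec}(A), \C \bigr)$ supplied by Lemma \ref{lem:422}(4), and then to show this single involution is forced by conditions (i) and (ii). I would dispatch uniqueness first. Suppose $(-)^{\Ast}$ is any involution on $A$ satisfying (i) and (ii), and write $X := \opn{MSpec}(A)$. The map $\opn{dev}_A \colon A \to \opn{F}_{\mrm{bc}}(X, \C)$ is an isomorphism in $\cat{Rng}\xover{bc} \C$, so applying functoriality (i) to $\phi = \opn{dev}_A$ gives $\opn{dev}_A(a^{\Ast}) = \opn{dev}_A(a)^{\Ast}$; by normalization (ii) the involution on $\opn{F}_{\mrm{bc}}(X, \C)$ is the pointwise conjugation of Example \ref{exa:441}. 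Hence $a^{\Ast} = \opn{dev}_A^{-1}\bigl( \ol{\opn{dev}_A(a)} \bigr)$, a formula that pins down $(-)^{\Ast}$ uniquely.

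For existence I would take that formula as the definition. It is manifestly an involution, being the transport of pointwise conjugation along the ring isomorphism $\opn{dev}_A$: conjugation is an $\R$-ring automorphism squaring to the identity and satisfying $(\la \cd a)^{\Ast} = \bar{\la} \cd a^{\Ast}$, and all three properties survive conjugation by an isomorphism. To verify functoriality (i), given $\phi \colon A \to B$ in $\cat{Rng}\xover{bc} \C$ I would set $f := \opn{MSpec}(\phi)$ and invoke naturality of $\opn{dev}$ (Proposition \ref{prop:453}, applicable since BC $\C$-rings are $\C$-valued by Lemma \ref{lem:421}) to obtain $\opn{dev}_B \circ \phi = \opn{F}_{\mrm{bc}}(f, \C) \circ \opn{dev}_A$. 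The pullback $\opn{F}_{\mrm{bc}}(f, \C)$ is precomposition with $f$, hence commutes with the pointwise conjugation on both function rings; combining this with injectivity of $\opn{dev}_B$ yields $\phi(a^{\Ast}) = \phi(a)^{\Ast}$.

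The real obstacle, and the step requiring genuine work, is normalization (ii) for an \emph{arbitrary} space $X$ (where $\opn{F}_{\mrm{bc}}(X, \C)$ is a BC $\C$-ring by Theorem \ref{thm:456}), since here $X$ need not be compact and so need not coincide with $\opn{MSpec}\bigl( \opn{F}_{\mrm{bc}}(X, \C) \bigr)$. Writing $A := \opn{F}_{\mrm{bc}}(X, \C)$ and $Y := \opn{MSpec}(A)$, I must show the transported involution agrees with pointwise conjugation, i.e.\ $\opn{dev}_A(a^{\Ast}) = \ol{\opn{dev}_A(a)}$ in $\opn{F}_{\mrm{bc}}(Y, \C)$, where $a^{\Ast}$ on the left is the pointwise conjugate on $X$. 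I would evaluate both functions, which are continuous by Lemma \ref{lem:422}(3), on the image of the algebraic reflection map $\opn{refl}^{\mrm{\lmsp alg}}_X \colon X \to Y$: for $\m_x := \opn{refl}^{\mrm{\lmsp alg}}_X(x) = \opn{Ker}(\opn{ev}_x)$ one has $\opn{ev}_{\m_x} = \opn{ev}_x$, so by the same computation as in Lemma \ref{lem:275}(1) one gets $\opn{dev}_A(b)(\m_x) = b(x)$ for every $b \in A$, whence both sides send $\m_x$ to $\ol{a(x)}$. The crux is then density: imitating the proof of Proposition \ref{prop:320}(2), any nonempty basic open set $\opn{NZer}_Y(b)$ forces $b \neq 0$ as a function on $X$, producing a point $x$ with $b(x) \neq 0$ and thus $\m_x \in \opn{NZer}_Y(b)$, so $\opn{refl}^{\mrm{\lmsp alg}}_X(X)$ is dense in $Y$. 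Since two continuous maps into the Hausdorff space $\C$ that agree on a dense subset are equal, condition (ii) follows.
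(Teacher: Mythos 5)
Your proposal is correct, and its core coincides with the paper's own proof: define the canonical involution by transporting pointwise conjugation along the isomorphism $\opn{dev}_A \colon A \iso \opn{F}_{\mrm{bc}}\bigl( \opn{MSpec}(A), \C \bigr)$ of Lemma \ref{lem:422}(4); obtain uniqueness by applying condition (i) to the morphism $\opn{dev}_A$ itself and condition (ii) to its target; and prove functoriality by combining the naturality of $\opn{dev}$ (Proposition \ref{prop:453}, routed through the inclusion into $\opn{F}(-, \C)$ and the injectivity of that inclusion) with the trivial fact that pullback along a continuous map commutes with pointwise conjugation. The one place where you go beyond the paper is your last paragraph. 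The paper's proof treats condition (ii) as holding ``by construction'': it only ever equips the ring $\opn{F}_{\mrm{bc}}\bigl( \opn{MSpec}(A), \C \bigr)$, attached to the \emph{compact} space $\opn{MSpec}(A)$, with conjugation, and never explicitly checks that for a non-compact space $X$ the transported involution on $A = \opn{F}_{\mrm{bc}}(X, \C)$ agrees with pointwise conjugation on $X$ -- which is exactly what clause (ii) asserts. You correctly isolate this as the nontrivial point and close it with the density argument: $\opn{dev}_A(a^{\Ast})$ and $\ol{\opn{dev}_A(a)}$ are continuous on $\opn{MSpec}(A)$ by Lemma \ref{lem:422}(3), they agree on the image of $\opn{refl}^{\mrm{alg}}_X$ by the complex analogue of Lemma \ref{lem:275}(1), and that image is dense by the complex analogue of Proposition \ref{prop:320}(2), so they are equal since $\C$ is Hausdorff. (Alternatively one could deduce (ii) from the already-proved functoriality applied to the isomorphism $\opn{F}_{\mrm{bc}}(\opn{c}_X, \C)$ coming from the Stone--\v{C}ech compactification, as in the proof of Theorem \ref{thm:456}, but your direct argument is cleaner.) In short: same strategy as the paper, with a more complete verification of the normalization clause.
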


Note that $\opn{F}_{\mrm{bc}}(X, \C)$ is indeed a BC $\C$-ring, by 
Theorem \ref{thm:456}.

\begin{proof}
Given an arbitrary BC $\C$-ring $A$, let $X := \opn{MSpec}(A)$
and $B := \opn{F}_{\mrm{bc}}(X, \C)$.
By Lemma \ref{lem:422}(4) the $\C$-ring homomorphism
$\opn{dev}_A : A \to \opn{F}_{\mrm{bc}}(X, \C) = B$
is bijective. The ring $B$ has the involution imposed by condition (ii). 
The canonical involution of $A$ is defined to be the unique 
involution such that $\opn{dev}_A : A \to B$ is an 
isomorphism of involutive $\C$-rings. Conditions (i) and (ii) force this 
involution of $A$ to be unique.

We need to prove functoriality, i.e.\ that condition (i) holds for an arbitrary 
homomorphism  $\phi : A \to B$ in $\cat{Rng}\xover{bc} \C$. 
Define the topological spaces $X := \opn{MSpec}(A)$ and 
$Y := \opn{MSpec}(B)$. In view of Lemma \ref{lem:421} and Proposition 
\ref{prop:450} there is an induced map 
$f := \opn{MSpec}(\phi) : Y \to X$ in $\cat{Top}$.
Consider the following diagram 
\begin{equation*} 
\begin{tikzcd} [column sep = 12ex, row sep = 6ex]
A
\ar[r, "{\phi}"]
\ar[d, "{\opn{dev}_A}"', "{\simeq}"]
\ar[dd, bend right = 35, start anchor = west, end anchor = north west,
"{\opn{dev}_A}"']
&
B
\ar[d, "{\opn{dev}_B}", "{\simeq}"']
\ar[dd, bend left = 35, start anchor = east, end anchor = north east,
"{\opn{dev}_B}"]
\\
\opn{F}_{\mrm{bc}}(X, \C)
\ar[r, "{\opn{F}_{\mrm{bc}}(f, \C)}"]
\ar[d, "{\ep_X}"]
&
\opn{F}_{\mrm{bc}}(Y, \C)
\ar[d, "{\ep_Y}"']
\\
\opn{F}(X, \C)
\ar[r, "{\opn{F}(f, \C)}"]
&
\opn{F}(Y, \C)
\end{tikzcd} 
\end{equation*}
in $\cat{Rng} \over \C$ (which is similar to diagram (\ref{eqn:445})).  
The vertical arrows marked $\ep_X$ and $\ep_Y$ are the inclusions, and 
the bottom square is commutative, since $\opn{F}_{\mrm{bc}}(-, \C)$ is a 
subfunctor of  $\opn{F}(-, \C)$. 
The two outer paths from $A$ to $\opn{F}(Y, 
\C)$ are equal, by Proposition \ref{prop:453}.
Because $\ep_Y$ is an injection, the top square is commutative too: 
$\opn{F}_{\mrm{bc}}(f, \C) \circ \opn{dev}_A = 
\opn{dev}_B \circ \, \phi$. 
The $\C$-ring homomorphism $\opn{F}_{\mrm{bc}}(f, \C)$ respects the canonical 
involutions on the function rings, and by construction the $\C$-ring 
isomorphisms $\opn{dev}_A : A \iso \opn{F}_{\mrm{bc}}(X, \C)$
and $\opn{dev}_B : B \iso \opn{F}_{\mrm{bc}}(Y, \C)$
respect the canonical involutions on these rings. It follows that 
$\phi : A \to B$ respects the canonical involutions. 
\end{proof}

\begin{cor} \label{cor:455}
The procedure of equipping a BC $\C$-ring $A$ with its canonical involution 
$(-)^{\Ast}$ is a fully faithful functor 
\[ G : \cat{Rng}\xover{bc} \C  \to \cat{Rng}^{\Ast} \over \C . \]
\end{cor}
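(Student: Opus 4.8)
The plan is to observe that this corollary is an almost entirely formal consequence of Theorem \ref{thm:435}, so the work splits into three short verifications: that $G$ is a well-defined functor, that it is faithful, and that it is full. The substance has already been absorbed into the functoriality clause of Theorem \ref{thm:435}.

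First I would pin down $G$ precisely. On objects, $G$ sends a BC $\C$-ring $A$ to the pair $(A, (-)^{\Ast})$, where $(-)^{\Ast}$ is the canonical involution supplied by Theorem \ref{thm:435}; this is an object of $\cat{Rng}^{\Ast} \over \C$ by construction. On a morphism $\phi : A \to B$ in $\cat{Rng}\xover{bc} \C$, I would set $G(\phi) := \phi$, now viewed as a map of involutive $\C$-rings. The only thing to check is that $\phi$ really is a morphism in $\cat{Rng}^{\Ast} \over \C$, i.e.\ that it intertwines the canonical involutions; but this is exactly condition (i) of Theorem \ref{thm:435}. Since $G$ acts as the identity on the underlying ring homomorphisms, respect for composition and identities is automatic, so $G$ is a functor.

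Faithfulness is then immediate, because for BC $\C$-rings $A, B$ the induced map
\[ G : \opn{Hom}_{\cat{Rng}\xover{bc} \C}(A, B) \to
\opn{Hom}_{\cat{Rng}^{\Ast} \over \C}(G(A), G(B)) \]
sends $\phi$ to $\phi$ and is therefore injective. For fullness I would show the same map is surjective: a morphism $\psi : G(A) \to G(B)$ in $\cat{Rng}^{\Ast} \over \C$ is by definition a $\C$-ring homomorphism $A \to B$ that happens to respect the involutions, and forgetting that extra condition leaves a $\C$-ring homomorphism between two objects of $\cat{Rng}\xover{bc} \C$. Since $\cat{Rng}\xover{bc} \C$ is a \emph{full} subcategory of $\cat{Rng} \over \C$ (its morphisms carry no continuity or involution constraint), this $\psi$ is already a morphism of $\cat{Rng}\xover{bc} \C$ with $G(\psi) = \psi$. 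Hence the map on Hom-sets is a bijection and $G$ is fully faithful; indeed Theorem \ref{thm:435}(i) shows the two Hom-sets coincide as subsets of the $\C$-ring homomorphisms $A \to B$, so imposing the involution condition discards no morphisms at all.

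I do not expect a genuine obstacle here. The only point requiring a moment's care is to keep straight that $\cat{Rng}\xover{bc} \C$ imposes no involution condition on its morphisms, so that the fullness of the inclusion $\cat{Rng}\xover{bc} \C \hookrightarrow \cat{Rng} \over \C$ can be invoked for surjectivity on Hom-sets; everything else reduces to unwinding the definitions and citing Theorem \ref{thm:435}.
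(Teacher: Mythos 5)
Your proposal is correct and takes the same route as the paper: the paper's entire proof is ``Clear from condition (i) in Theorem \ref{thm:435}'', and your argument is simply the careful unwinding of that one-line claim, using that morphisms in $\cat{Rng}\xover{bc} \C$ carry no involution condition while condition (i) guarantees every such morphism automatically respects the canonical involutions. Your observation that the two Hom-sets literally coincide as subsets of the $\C$-ring homomorphisms $A \to B$ is exactly the content the paper deems clear.
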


\begin{proof}
Clear from condition (i) in Theorem \ref{thm:435}.
\end{proof}

\begin{dfn} \label{dfn:437}
Given a BC $\C$-ring $A$, the hermitian subring of $A$, with respect to its 
canonical involution, is called the {\em canonical hermitian subring of 
$A$}. 
\end{dfn}

\begin{cor} \label{cor:445}
Let $A$ be a BC $\C$-ring
\begin{enumerate}
\item The canonical hermitian subring $A_0$ of $A$ is a BC $\R$-ring.

\item The ring homomorphism $\C \ot_{\R} A_0 \to A$, 
$\la \ot a_0 \mapsto \la \cd a_0$, is an isomorphism of involutive $\C$-rings. 
Here $A$ has the canonical involution, and $\C \ot_{\R} A_0$ has the  
involution from Example \ref{exa:440}.
\end{enumerate}
\end{cor}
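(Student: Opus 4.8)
The plan is to reduce everything to the standard conjugation involution on a ring of functions, using the defining property of the canonical involution. Write $X := \opn{MSpec}(A)$; by Lemma \ref{lem:422}(1) this is a compact space, and by Theorem \ref{thm:435} the canonical involution on $A$ is precisely the one making the double evaluation map $\opn{dev}_A : A \to \opn{F}_{\mrm{bc}}(X, \C)$ an isomorphism of involutive $\C$-rings, where the target carries the conjugation involution of Example \ref{exa:441}.

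First I would prove (1). Any isomorphism of involutive $\C$-rings carries hermitian elements to hermitian elements, so $\opn{dev}_A$ restricts to an $\R$-ring isomorphism from $A_0 = \{ a \in A \mid a^{\Ast} = a \}$ onto the hermitian subring of $\opn{F}_{\mrm{bc}}(X, \C)$. By Example \ref{exa:441} the latter is exactly $\opn{F}_{\mrm{bc}}(X, \R)$. Concretely, for $a \in A$ one has $a^{\Ast} = a$ iff $\opn{dev}_A(a)^{\Ast} = \opn{dev}_A(a)$ iff $\opn{dev}_A(a)$ is $\R$-valued. Since $X$ is compact, $\opn{F}_{\mrm{bc}}(X, \R)$ is a BC $\R$-ring (Definition \ref{dfn:201}), and therefore so is $A_0$.

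For (2) I would compare the canonical map $\theta : \C \ot_{\R} A_0 \to A$, $\la \ot a_0 \mapsto \la \cd a_0$, with the analogous map for the function ring by means of a commutative square. Let $B_0 := \opn{F}_{\mrm{bc}}(X, \R)$ and let $\theta_B : \C \ot_{\R} B_0 \to \opn{F}_{\mrm{bc}}(X, \C)$ be $\la \ot b_0 \mapsto \la \cd b_0$. The square with top $\theta$, bottom $\theta_B$, left vertical $\opn{id}_{\C} \ot (\opn{dev}_A|_{A_0})$ and right vertical $\opn{dev}_A$ commutes, by $\C$-linearity of $\opn{dev}_A$. Both verticals are isomorphisms, the left one because $\opn{dev}_A|_{A_0}$ is an isomorphism by part (1). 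The map $\theta_B$ is an isomorphism of $\C$-rings because every bounded continuous function $X \to \C$ splits uniquely into real and imaginary parts, $a = a_0 + \bi \cd b_0$ with $a_0, b_0 \in B_0$ --- this is the same decomposition used in the proof of Lemma \ref{lem:422}(2). Hence $\theta$ is a $\C$-ring isomorphism. Finally, each of these four maps intertwines the relevant involutions (those of Examples \ref{exa:440} and \ref{exa:441}): on $\C \ot_{\R} A_0$ one has $(\la \ot a_0)^{\Ast} = \bar{\la} \ot a_0 \mapsto \bar{\la} \cd a_0 = (\la \cd a_0)^{\Ast}$, the last equality holding because $a_0$ is hermitian. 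Therefore $\theta$ is an isomorphism of involutive $\C$-rings, as claimed.

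I expect the only point requiring care to be the bookkeeping in the commutative square --- in particular checking that $\opn{dev}_A|_{A_0}$ lands in $B_0$ and that $\theta_B$ respects the involutions --- but there is no genuine obstacle: all the real content is packaged in the defining property of the canonical involution (Theorem \ref{thm:435}) and in the real/imaginary part decomposition already exploited in Lemma \ref{lem:422}.
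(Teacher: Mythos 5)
Your proposal is correct and follows essentially the same route as the paper's proof: both parts transfer the problem through the isomorphism of involutive $\C$-rings $\operatorname{dev}_A : A \to \operatorname{F}_{\mrm{bc}}(X,\C)$ guaranteed by Theorem \ref{thm:435}, identify the hermitian subring of the function ring as $\operatorname{F}_{\mrm{bc}}(X,\R)$, and deduce (2) from the corresponding multiplication isomorphism $\C \ot_{\R} \operatorname{F}_{\mrm{bc}}(X,\R) \to \operatorname{F}_{\mrm{bc}}(X,\C)$ via the evident commutative square. Your write-up merely makes explicit the square and the real/imaginary-part argument that the paper leaves implicit (citing its earlier use in Lemma \ref{lem:422}), so there is nothing to correct.
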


\begin{proof} \mbox{}

\smallskip \noindent
(1) Let $X := \opn{MSpec}(A)$ and $B := \opn{F}_{\mrm{bc}}(X, \C)$.
Put on $A$ and $B$ their canonical involutions.
By Lemma \ref{lem:422} and Theorem \ref{thm:435} the homomorphism 
$\opn{dev}_A : A \to B$ is an isomorphism of $\C$-rings. 
Condition (i) of Theorem \ref{thm:435} says that $\phi$ is an isomorphism of 
involutive $\C$-rings. The hermitian subring of $B$ is 
$B_0 := \opn{F}_{\mrm{bc}}(X, \R)$, which is a BC $\R$-ring. 
Since $\opn{dev}_A$ induces an $\R$-ring isomorphism 
$A_0 \iso B_0$, we see that $A_0$ is a BC $\R$-ring. 

\medskip \noindent
(2) For $B = \opn{F}_{\mrm{bc}}(X, \C)$ as above the homomorphism 
\[ \C \ot_{\R} B_0 = \C \ot_{\R} \opn{F}_{\mrm{bc}}(X, \R) 
\xar{\lmsp \opn{id}_{\C} \ot \opn{F}_{\mrm{bc}}(X, \ga_{\C}) \lmsp}
\opn{F}_{\mrm{bc}}(X, \C) = B \]
is an isomorphism of involutive $\C$-rings. Because 
$\opn{dev}_A : A \to B$ is an isomorphism of involutive $\C$-rings, 
it follows that $\C \ot_{\R} A_0 \to A$ is also an  isomorphism of involutive 
$\C$-rings.
\end{proof}

\begin{thm} \label{thm:475}  
The procedure of sending a BC $\C$-ring $A$ to its canonical hermitian subring 
$H(A)$ is an equivalence of categories
\[ H : \cat{Rng}\xover{bc} \C \to \cat{Rng}\xover{bc} \R . \]
The quasi-inverse of $H$ is the functor 
\[ I : \cat{Rng}\xover{bc} \R \to \cat{Rng}\xover{bc} \C \, , \
I(A_0) := \C \ot_{\R} A_0 . \]
\end{thm}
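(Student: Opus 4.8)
The plan is to exhibit mutually quasi-inverse natural isomorphisms between $\opn{Id}$ and the two composites $I \circ H$ and $H \circ I$, building on Corollaries \ref{cor:455} and \ref{cor:445}. First I would check that $H$ and $I$ are genuinely functors. For $H$: by Corollary \ref{cor:445}(1) the canonical hermitian subring $H(A) = A_0$ of a BC $\C$-ring $A$ is again a BC $\R$-ring, and given a morphism $\phi : A \to B$ in $\cat{Rng}\xover{bc} \C$, condition (i) of Theorem \ref{thm:435} gives $\phi(a)^{\Ast} = \phi(a^{\Ast}) = \phi(a)$ for $a \in A_0$, so $\phi$ restricts to an $\R$-ring homomorphism $A_0 \to B_0$; this restriction is $H(\phi)$, and functoriality is immediate. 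For $I$: if $A_0 \cong \opn{F}_{\mrm{bc}}(X, \R)$ with $X$ compact, then $\C \ot_{\R} A_0 \cong \opn{F}_{\mrm{bc}}(X, \C)$ (the isomorphism already used in Lemma \ref{lem:422}(2)), so $I(A_0) = \C \ot_{\R} A_0$ is a BC $\C$-ring; on morphisms $I(\psi) := \opn{id}_{\C} \ot \psi$, which is clearly functorial.

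Next I would produce the natural isomorphism $I \circ H \iso \opn{Id}_{\cat{Rng}\xover{bc} \C}$. For a BC $\C$-ring $A$ with canonical hermitian subring $A_0 = H(A)$, Corollary \ref{cor:445}(2) already supplies a $\C$-ring isomorphism $\mu_A : \C \ot_{\R} A_0 \to A$, $\la \ot a_0 \mapsto \la \cd a_0$, which is even an isomorphism of involutive $\C$-rings. The only thing left is naturality: for $\phi : A \to B$ in $\cat{Rng}\xover{bc} \C$, both composites in the relevant square send $\la \ot a_0$ to $\la \cd \phi(a_0) = \phi(\la \cd a_0)$, using that $I(H(\phi)) = \opn{id}_{\C} \ot (\phi|_{A_0})$; so the square commutes and $\mu$ is a natural isomorphism.

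The remaining---and main---step is the natural isomorphism $\opn{Id}_{\cat{Rng}\xover{bc} \R} \iso H \circ I$, whose candidate is $\eta_{A_0} : A_0 \to H(I(A_0)) = (\C \ot_{\R} A_0)_0$, $a_0 \mapsto 1 \ot a_0$. For this to make sense I must identify the hermitian subring $(\C \ot_{\R} A_0)_0$ taken with respect to the \emph{canonical} involution of the BC $\C$-ring $\C \ot_{\R} A_0$ with the hermitian subring $1 \ot A_0$ for the explicit involution of Example \ref{exa:440}. This is the key obstacle, since a priori the canonical involution is defined abstractly via $\opn{dev}$, not by the tensor formula. To resolve it I would fix an $\R$-ring isomorphism $\al : A_0 \iso \opn{F}_{\mrm{bc}}(X, \R)$ and form the composed $\C$-ring isomorphism $\theta : \C \ot_{\R} A_0 \iso \opn{F}_{\mrm{bc}}(X, \C)$ through $\opn{id}_{\C} \ot \al$ and the isomorphism $\la \ot f \mapsto \la \cd f$. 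Since $\theta$ is a morphism in $\cat{Rng}\xover{bc} \C$, condition (i) of Theorem \ref{thm:435} makes it compatible with the canonical involutions, and by the normalization condition (ii) the canonical involution on $\opn{F}_{\mrm{bc}}(X, \C)$ is ordinary complex conjugation. As $\theta(1 \ot a_0) = \al(a_0)$ is a real-valued function, it is fixed by conjugation, whence $1 \ot a_0$ is fixed by the canonical involution of $\C \ot_{\R} A_0$. Together with the conjugate-linearity identity $(\la \cd a)^{\Ast} = \bar{\la} \cd a^{\Ast}$ valid for any involution, and the fact that $1 \ot A_0$ generates $\C \ot_{\R} A_0$ as a $\C$-module, this forces the canonical involution to coincide with the Example \ref{exa:440} involution, so $(\C \ot_{\R} A_0)_0 = 1 \ot A_0$ and $\eta_{A_0}$ is an isomorphism onto it. Naturality of $\eta$ is immediate from $H(I(\psi)) = (\opn{id}_{\C} \ot \psi)|_{1 \ot A_0}$. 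Having both natural isomorphisms $\mu$ and $\eta$ exhibits $H$ as an equivalence with quasi-inverse $I$.
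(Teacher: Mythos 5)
Your proposal is correct and follows essentially the same route as the paper: functoriality of $H$ via Theorem \ref{thm:435}(i), functoriality of $I$ via the isomorphism $\C \ot_{\R} \opn{F}_{\mrm{bc}}(X,\R) \cong \opn{F}_{\mrm{bc}}(X,\C)$, the isomorphism $I \circ H \cong \opn{Id}$ from Corollary \ref{cor:445}(2), and the isomorphism $\opn{Id} \cong H \circ I$ by showing the canonical involution on $\C \ot_{\R} A_0$ coincides with the tensor involution of Example \ref{exa:440}. Your detailed transfer argument for that last point (conditions (i) and (ii) of Theorem \ref{thm:435} applied to the isomorphism $\theta$, plus conjugate-linearity and $\C$-module generation by $1 \ot A_0$) is exactly what the paper compresses into a single sentence in Step 3 of its proof.
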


\begin{proof} \mbox{}

\smallskip \noindent 
Step 1. Let us show that $H$ is a functor. 
Consider a homomorphism $\phi : A \to B$ in $\cat{Rng}\xover{bc} \C$.
By Corollary \ref{cor:455} there is a homomorphism 
$G(\phi) : G(A) \to G(B)$ in the category $\cat{Rng}^{\Ast} \over \C$; 
this just means that $\phi$ respects the canonical involutions on $A$ and $B$. 
Passing to hermitian subrings we obtain an $\R$-ring homomorphism 
$H(\phi) : H(A) \to H(B)$. 
By Corollary \ref{cor:445}(1) the $\R$-rings 
$A_0 := H(A)$ and $B_0 := H(B)$ are BC $\R$-rings. We conclude that 
$H : \cat{Rng}\xover{bc} \C \to \cat{Rng}\xover{bc} \R$
is a functor. 

\medskip \noindent 
Step 2. Obviously the procedure 
$I(A_0) := \C \ot_{\R} A_0$ and 
$I(\phi_0) :=  \opn{id}_{\C} \ot \, \phi_0$
is a functor
$I : \cat{Rng} \over \R \to \cat{Rng} \over \C$. 
If $A_0$ is a BC $\R$-ring, then, taking $X := \opn{MSpec}(A_0)$,
we have a $\C$-ring isomorphism 
\begin{equation} \label{eqn:476}
 \C \ot_{\R} A_0 \cong \C \ot_{\R} \opn{F}_{\mrm{bc}}(X, \R) 
\xar{\lmsp \opn{id}_{\C} \ot \opn{F}_{\mrm{bc}}(X, \ga_{\C}) \lmsp}
\opn{F}_{\mrm{bc}}(X, \C) . 
\end{equation}
In view of Theorem \ref{thm:456} we see that $I(A_0)$ is a BC $\C$-ring. 
Therefore we obtain a functor
$I : \cat{Rng}\xover{bc} \R \to \cat{Rng}\xover{bc} \C$. 

\medskip \noindent 
Step 3. Take some $A_0 \in  \cat{Rng}\xover{bc} \R$.
The $\C$-ring isomorphism (\ref{eqn:476}), with conditions (i) and (ii) of 
Theorem \ref{thm:435}, say that the canonical involution on 
$I(A_0) = \C \ot_{\R} A_0$ coincides with that of Example \ref{exa:440}.
Therefore the homomorphism 
$A_0 \to \C \ot_{\R} A_0$, $a_0 \mapsto 1_{\C} \ot a_0$, 
induces an $\R$-ring isomorphism 
\begin{equation} \label{eqn:477}
\ze_{A_0} : A_0 \iso (H \circ I)(A_0) . 
\end{equation}
This isomorphism is functorial in $A_0$. 

\medskip \noindent 
Step 4. Now take some $A \in \cat{Rng}\xover{bc} \C$.
Corollary \ref{cor:445}(2) gives a $\C$-ring isomorphism 
\begin{equation} \label{eqn:478}
\eta_A : (I \circ H)(A) \iso A .  
\end{equation}
It is easy to verify that $\eta_A$ is functorial in $A$.
\end{proof}

\section{Involutive Complex Banach Rings }
\label{sec:inv-c-b}

In this final section of the paper we relate the results from the previous  
sections of the paper to the theory of {\em complex $\mrm{C}^{\Ast}$-algebras} 
from functional analysis. Our source for material on functional analysis is 
\cite{Co}. 

As before, all rings in this section are commutative unital, and all ring 
homomorphisms preserve units. 

The notions of a Banach $\R$-ring, and a Banach $\R$-ring 
homomorphism, were recalled in Definitions \ref{dfn:485} and \ref{dfn:505} . The 
complex variants are the same, just with $\C$ instead of $\R$. 
The conventional name is "commutative unital Banach algebra", see 
\cite[Definition VII.1.1]{Co}. 

\begin{dfn} \label{dfn:376} 
The category of commutative Banach $\C$-rings, with Banach $\C$-ring 
homomorphisms, is denoted by $\cat{BaRng} \over \C$. 
\end{dfn}

The next definition is a rephrasing of \cite[Definition VIII.1.1]{Co}. 

\begin{dfn} \label{dfn:377} 
A {\em commutative Banach$^{\Ast}$ $\C$-ring}, commonly known as a 
{\em commutative unital $\mrm{C}^{\Ast}$-algebra over $\C$}, is a commutative 
Banach $\C$-ring $A$, with a norm $\norm{-}$ and 
an involution $(-)^{\Ast}$, satisfying the extra condition
$\norm{a^{\Ast} \cd a} = \norm{a}^2$ for all $a \in A$.

The category of commutative Banach$^{\Ast}$ $\C$-rings, with 
involutive Banach $\C$-ring homomorphisms, as in Definitions \ref{dfn:505}
and \ref{dfn:435}(3), is denoted by 
$\cat{BaRng}^{\Ast} \over \C$. 
\end{dfn}

There are two obvious forgetful functors: the functor
$\cat{BaRng}^{\Ast} \over \C \to \cat{BaRng} \over \C$ that forgets the 
involutions, and the functor
$\cat{BaRng}^{\Ast} \over \C \to \cat{Rng}^{\Ast} \over \C$
that forgets the norms.

Here is an easy to prove classical fact:

\begin{prop}[{\cite[Proposition VIII.1.11$\text{(d)}$]{Co}}] \label{prop:385}
Let $A$ and $B$ be involutive commutative Banach $\C$-rings, and let 
$\phi : A \to B$ be a homomorphism of involutive $\C$-rings. 
Then $\norm{\phi(a)} \leq \norm{a}$ for all $a \in A$. Thus $\phi$ is a 
Banach $\C$-ring homomorphism. 
\end{prop}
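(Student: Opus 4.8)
The plan is to reduce the claim to the classical fact that in a commutative Banach$^{\Ast}$ $\C$-ring the norm of a \emph{hermitian} element equals its spectral radius, and then to exploit that spectral radii can only shrink under a unital ring homomorphism. So the proof will have two formal inputs---a spectrum inclusion and a hermitian norm identity---which combine via the $\mrm{C}^{\Ast}$-identity.

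First I would record the spectrum inclusion. For a $\C$-ring $A$ and $a \in A$, write $\si_A(a) \sub \C$ for the spectrum of $a$, namely the set of $\la \in \C$ such that $\la \cd 1_A - a$ is not invertible in $A$, and let $\opn{r}_A(a) := \sup \{ \abs{\la} \mid \la \in \si_A(a) \}$ be its spectral radius. Since by Convention \ref{conv:451} the homomorphism $\phi$ preserves units, it sends invertible elements to invertible elements; hence if $\la \cd 1_A - a$ is invertible in $A$ then $\la \cd 1_B - \phi(a) = \phi(\la \cd 1_A - a)$ is invertible in $B$. This yields the inclusion $\si_B(\phi(a)) \sub \si_A(a)$, and therefore $\opn{r}_B(\phi(a)) \leq \opn{r}_A(a)$ for every $a \in A$. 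This is the only place where the \emph{direction} of $\phi$ is used.

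Next I would establish the identity $\norm{h} = \opn{r}(h)$ for every hermitian element $h$ (one with $h^{\Ast} = h$), in both $A$ and $B$. The analytic input is the spectral-radius formula $\opn{r}(h) = \lim_{n \to \infty} \norm{h^n}^{1/n}$, valid in any complex Banach ring. Because $h^{\Ast} = h$, the defining axiom $\norm{h^{\Ast} \cd h} = \norm{h}^2$ of Definition \ref{dfn:377} collapses to $\norm{h^2} = \norm{h}^2$; iterating gives $\norm{h^{2^n}} = \norm{h}^{2^n}$, and evaluating the spectral-radius formula along the subsequence $n = 2^k$ then forces $\opn{r}(h) = \norm{h}$. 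Finally I would combine the two inputs: for arbitrary $a \in A$ set $h := a^{\Ast} \cd a$, which is hermitian, so that $\phi(h) = \phi(a)^{\Ast} \cd \phi(a)$ is hermitian in $B$; then
\[ \norm{\phi(a)}^2 = \norm{\phi(a)^{\Ast} \cd \phi(a)} = \norm{\phi(h)} = \opn{r}_B(\phi(h)) \leq \opn{r}_A(h) = \norm{h} = \norm{a}^2 , \]
giving $\norm{\phi(a)} \leq \norm{a}$. The final clause of the statement is then immediate from Definition \ref{dfn:505}.

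The main obstacle is the hermitian identity $\norm{h} = \opn{r}(h)$, as it is the sole step that genuinely uses analysis---the spectral-radius formula rests on the holomorphy of the resolvent and the nonemptiness of the spectrum over $\C$---whereas the spectrum inclusion and the algebraic manipulations are purely formal. The one point requiring care is bookkeeping: the spectrum and spectral radius must always be interpreted in the ring in which invertibility is tested, which is precisely what makes $\si_B(\phi(a)) \sub \si_A(a)$ meaningful and is the heart of the argument.
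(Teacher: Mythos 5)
The paper offers no proof of this proposition: it is quoted as a classical fact from \cite[Proposition VIII.1.11(d)]{Co}, so there is no internal argument to compare yours against. Your proof is correct, and it is essentially the standard textbook argument: the spectral inclusion $\si_B(\phi(a)) \sub \si_A(a)$ for unital homomorphisms, combined with the identity $\norm{h} = \opn{r}(h)$ for hermitian $h$, which you correctly derive from the $\mrm{C}^{\Ast}$-identity and the Gelfand--Beurling spectral radius formula.

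One point deserves attention, and it concerns the hypothesis rather than your chain of inequalities. The proposition as phrased assumes only that $A$ and $B$ are \emph{involutive} commutative Banach $\C$-rings (Definitions \ref{dfn:376} and \ref{dfn:435}), whereas your proof invokes the axiom $\norm{a^{\Ast} \cd a} = \norm{a}^2$ of Definition \ref{dfn:377} in both rings. This discrepancy is not a flaw in your argument but in the literal statement, which is false without the $\mrm{C}^{\Ast}$-identity: take $A = B = \C[x]/(x^2)$ with involution $(a + b x)^{\Ast} := \bar{a} + \bar{b} x$, normed by $\norm{a + b x}_A := \abs{a} + \abs{b}$ and $\norm{a + b x}_B := \abs{a} + 2 \lmsp \abs{b}$; both are involutive commutative Banach $\C$-rings, and the identity map is a homomorphism of involutive $\C$-rings sending $x$, of norm $1$, to $x$, of norm $2$. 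So your reading of the hypothesis as ``Banach$^{\Ast}$'' in the sense of Definition \ref{dfn:377} is forced; it is also the reading consistent with the only uses of the proposition in the paper (Corollaries \ref{cor:385} and \ref{cor:458}, where both rings are Banach$^{\Ast}$) and with the cited source, which concerns $\mrm{C}^{\Ast}$-algebras. A final remark: on the source ring $A$ your use of the $\mrm{C}^{\Ast}$-identity can be weakened. After the step $\opn{r}_B(\phi(h)) \leq \opn{r}_A(h)$ you may conclude with $\opn{r}_A(h) \leq \norm{h} = \norm{a^{\Ast} \cd a} \leq \norm{a^{\Ast}} \cd \norm{a}$, so that only isometry of the involution on $A$, together with the full $\mrm{C}^{\Ast}$-identity on the target $B$, is actually needed; this yields the sharpest form of the statement.
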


Proposition \ref{prop:385} directly implies:

\begin{cor} \label{cor:385}
The forgetful functor 
$\cat{BaRng}^{\Ast} \over \C \to \cat{Rng}^{\Ast} \over \C$
is fully faithful. 
\end{cor}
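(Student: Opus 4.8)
The plan is to unwind the two category definitions and check that $F$ alters neither objects nor morphisms except to discard the norm, so that the only real content is the norm compatibility recorded in Proposition \ref{prop:385}. Abbreviate $\cat{C} := \cat{BaRng}^{\Ast} \over \C$ and $\cat{D} := \cat{Rng}^{\Ast} \over \C$, and fix objects $A, B \in \cat{C}$. I must show that the function
\[ F_{A, B} : \opn{Hom}_{\cat{C}}(A, B) \to \opn{Hom}_{\cat{D}}(F(A), F(B)) \]
is a bijection.

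First I would dispatch faithfulness, which is immediate. By Definition \ref{dfn:377} a morphism of $\cat{C}$ is an involutive Banach $\C$-ring homomorphism $\phi : A \to B$, and $F$ sends it to the very same underlying map, now regarded merely as a homomorphism of involutive $\C$-rings. The defining inequality $\norm{\phi(a)} \leq \norm{a}$ is a property of $\phi$ rather than additional structure, so two morphisms of $\cat{C}$ sharing the same underlying involutive $\C$-ring homomorphism must coincide. Hence $F_{A, B}$ is injective.

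The substantive point is fullness, and here Proposition \ref{prop:385} does essentially all the work. Given an arbitrary $\phi \in \opn{Hom}_{\cat{D}}(F(A), F(B))$, that is, a homomorphism of involutive $\C$-rings, I would observe that $A$ and $B$, being commutative Banach$^{\Ast}$ $\C$-rings, are in particular involutive commutative Banach $\C$-rings; the norms have been forgotten only at the level of the target category $\cat{D}$, but they still reside on $A$ and $B$ themselves. Proposition \ref{prop:385} therefore applies verbatim and yields $\norm{\phi(a)} \leq \norm{a}$ for all $a \in A$. Consequently $\phi$ is simultaneously involutive and norm-decreasing, hence an involutive Banach $\C$-ring homomorphism, i.e.\ a morphism of $\cat{C}$, and $F$ sends it back to $\phi$. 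Thus $F_{A, B}$ is surjective, and $F$ is fully faithful.

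I expect no genuine obstacle, since the whole analytic content is packaged in the automatic-contractivity statement of Proposition \ref{prop:385}; the only point demanding care is bookkeeping---making explicit that although $F$ forgets the norm, the objects $A$ and $B$ still carry their Banach norms, so that Proposition \ref{prop:385} is genuinely applicable to the given $\phi$.
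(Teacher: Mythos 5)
Your proposal is correct and is exactly the argument the paper intends: the paper compresses the whole thing into the phrase ``Proposition \ref{prop:385} directly implies,'' and your write-up simply unwinds that into the trivial faithfulness check plus the application of Proposition \ref{prop:385} for fullness. No discrepancy with the paper's reasoning.
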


A much stronger result is our Corollary \ref{cor505} below. 

Here are some examples of involutive commutative Banach $\C$-rings.

\begin{exa} \label{exa:375}
The field $\C$ is a Banach${}^{\Ast}$ $\C$-ring,
with norm $\norm{\la} := \abs{\la}$ and involution 
$\la^{\Ast} := \bar{\la}$.

Let $n$ be a positive integer, and let $A$ be the $\C$-ring
$A := \C^n = \C \times \cdots \times \C$,
with componentwise ring operations. Put on $A$ the norm 
$\norm{(\la_1, \ldots, \la_n)} := \sup_{i} \lmsp \abs{\la_i}$ 
and the involution  
$(\la_1, \ldots, \la_n)^{\Ast} := (\bar{\la}_1, \ldots, \bar{\la}_n)$.
Then $A$ is a Banach${}^{\Ast}$ $\C$-ring. In fact, this is a special case of 
Example \ref{exa:376}, taking for $X$ the finite discrete topological space 
$X := \{ 1, \ldots, n \}$. 
\end{exa}

\begin{exa} \label{exa:376}
Let $X$ be a compact topological space, and let 
$A := \opn{F}_{\mrm{c}}(X, \C)$, the $\C$-ring of continuous functions 
$a : X \to \C$, where $\C$ is given its standard norm topology. 
Put on $A$ the involution $(-)^{\Ast}$ induced by complex conjugation, namely 
$a^{\Ast}(x) := \ol{a(x)}$ for all $a \in A$ and $x \in X$. 
Also put on $A$ the sup norm, namely 
$\norm{a} := \opn{sup}_{x \in X} \abs{a(x)}$. 
Then $A$ is a Banach${}^{\Ast}$ $\C$-ring. 
See \cite[Example VIII.1.4]{Co}.

If $f : Y \to X$ is a map in $\cat{Top}_{\mrm{cp}}$, then 
\[ \opn{F}_{\mrm{c}}(f, \C) : \opn{F}_{\mrm{c}}(X, \C) \to
\opn{F}_{\mrm{c}}(Y, \C) \]
is a Banach${}^{\Ast}$ $\C$-ring homomorphism. 
Thus we have a functor 
\begin{equation} \label{eqn:375}
\opn{F}_{\mrm{c}}(-, \C) : (\cat{Top}_{\mrm{cp}})^{\mrm{op}} 
\to \cat{BaRng}^{\Ast} \over \C .
\end{equation}
\end{exa}

The main classical theorem about Banach${}^{\Ast}$ $\C$-rings is the 
next one, quoted from \cite[Theorem VIII.2.1]{Co} and translated to our 
language. 

\begin{thm}[Gelfand-Naimark] \label{thm:375}
Let $A$ be a commutative Banach$^{\Ast}$ $\C$-ring.
\begin{enumerate}
\item The set $X$ of maximal ideals of $A$, suitably topologized, is 
compact. 

\item The Gelfand transform $A \to \opn{F}_{\mrm{c}}(X, \R)$
is an isomorphism of Banach$^{\Ast}$ $\C$-rings.
\end{enumerate}
\end{thm}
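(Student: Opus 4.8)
The plan is to follow the classical route, since this statement is precisely the commutative Gelfand--Naimark theorem; I note in passing that the target ring in part (2) should read $\opn{F}_{\mrm{c}}(X, \C)$ rather than $\opn{F}_{\mrm{c}}(X, \R)$, as the Gelfand transform of a $\C$-algebra is $\C$-valued. First I would identify $X := \opn{MSpec}(A)$ with the set $\what{A}$ of nonzero $\C$-algebra homomorphisms (characters) $\chi : A \to \C$. Because $A$ is a commutative unital Banach $\C$-ring, Gelfand--Mazur gives $A / \m \cong \C$ for every maximal ideal $\m$, so $\m \mapsto \opn{ev}_{\m}$ is a bijection between maximal ideals and characters, and each character is automatically continuous of norm $1$. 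I would then put on $X$ the Gelfand topology (pointwise convergence of characters). For part (1): the character set sits inside the closed unit ball of the topological dual of $A$, which is weak-$*$ compact by Banach--Alaoglu; since $\what{A}$ is carved out by the closed conditions $\chi(1)=1$ and $\chi(a\cd b)=\chi(a)\cd\chi(b)$, it is a closed, hence quasi-compact, subset, and it is Hausdorff because distinct characters are separated by some evaluation. This settles (1).

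Second, I would introduce the Gelfand transform $\Ga : A \to \opn{F}_{\mrm{c}}(X, \C)$, $\Ga(a)(\chi) := \chi(a)$, which is a unital homomorphism of $\C$-rings landing in continuous functions by the very definition of the Gelfand topology. Two $C^{\Ast}$-facts then drive everything: \emph{(a)} hermitian elements have real spectrum, whence $\chi(a^{\Ast}) = \ol{\chi(a)}$ for every character (writing $a = h + \bi\cd k$ with $h,k$ hermitian and $\chi(h),\chi(k)$ real), so $\Ga$ is a homomorphism of involutive rings; and \emph{(b)} the isometry $\norm{\Ga(a)}_{\infty} = \norm{a}$, obtained from the $C^{\Ast}$ identity via the chain $\norm{a}^2 = \norm{a^{\Ast} \cd a} = r(a^{\Ast} \cd a) = \norm{\Ga(a^{\Ast} \cd a)}_{\infty} = \norm{\Ga(a)}_{\infty}^2$, where $r(-)$ is the spectral radius, one uses that $r$ equals the sup of $\abs{\chi(-)}$ over characters in any commutative unital Banach ring, and the equality $\norm{a^{\Ast}\cd a} = r(a^{\Ast}\cd a)$ comes from $\norm{b^2}=\norm{b}^2$ on the hermitian element $b = a^{\Ast}\cd a$.

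Third, once $\Ga$ is an isometric involutive homomorphism it is injective with closed image; that image is a unital involutive subalgebra of $\opn{F}_{\mrm{c}}(X,\C)$ separating the points of $X$, so by Stone--Weierstrass it is dense, and being closed it is all of $\opn{F}_{\mrm{c}}(X, \C)$. Hence $\Ga$ is a surjective isometric involutive isomorphism, which is part (2). The main obstacle is the isometry in step (b): reducing $\norm{a} = \norm{\Ga(a)}_{\infty}$ to the coincidence of norm and spectral radius for normal elements, which rests on the $C^{\Ast}$ identity together with the fact that hermitian elements have real spectrum. An alternative more in the spirit of this paper would be to reduce to the real case: the canonical hermitian subring $A_0 := \{ a \in A \mid a^{\Ast} = a \}$, with the restricted norm, is a commutative Banach$^{\Ast}$ $\R$-ring in the sense of Definition \ref{dfn:486} --- completeness is clear, $\norm{a^2} = \norm{a^{\Ast} \cd a} = \norm{a}^2$ on hermitian elements, and $1 + a^2$ is invertible with hermitian inverse once $a$ has real spectrum --- so the real Arens--Kaplansky Theorem \ref{thm:485} gives $A_0 \cong \opn{F}_{\mrm{c}}(X, \R)$ with $X$ compact, and then complexification $A \cong \C \ot_{\R} A_0 \cong \opn{F}_{\mrm{c}}(X, \C)$ through Theorem \ref{thm:475} closes the loop. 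Either way, the one genuinely analytic input that cannot be bypassed is that hermitian elements have real spectrum.
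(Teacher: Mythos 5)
Your proposal is not competing with an internal argument: the paper does not prove Theorem \ref{thm:375} at all, but quotes it from \cite[Theorem VIII.2.1]{Co} and uses it as a black box (its only role is in the proof of Corollary \ref{cor505}). What you have written is, in essence, the standard proof found in that cited source, and it is correct as an outline: Gelfand--Mazur identifies $\opn{MSpec}(A)$ with the character space, with characters automatically continuous of norm $1$; Banach--Alaoglu plus the weak-$\Ast$ closedness of the conditions $\chi(1) = 1$ and multiplicativity give compactness; reality of the spectrum of hermitian elements gives $\chi(a^{\Ast}) = \ol{\chi(a)}$, hence compatibility with involutions; the $\mrm{C}^{\Ast}$-identity combined with the spectral radius formula gives the isometry; and Stone--Weierstrass (legitimately applicable because the image is a closed, conjugation-stable, point-separating unital subalgebra) gives surjectivity. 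You are also right that the codomain in part (2) should read $\opn{F}_{\mrm{c}}(X, \C)$ rather than $\opn{F}_{\mrm{c}}(X, \R)$; this is a typo in the paper, evidently copied from the real-case Theorem \ref{thm:485}. Your alternative route through the hermitian subring $A_0$ is more in the spirit of the paper (it mirrors Section \ref{inv-c}), with one correction: Theorem \ref{thm:475} is a statement about BC rings, so it cannot be invoked until $A_0$ is known to be a BC $\R$-ring; what is actually needed at that point is only the isomorphism $\C \ot_{\R} \opn{F}_{\mrm{c}}(X, \R) \cong \opn{F}_{\mrm{c}}(X, \C)$ of equation (\ref{eqn:476}), applied after Theorem \ref{thm:485} yields $A_0 \cong \opn{F}_{\mrm{c}}(X, \R)$, together with the decomposition $a = h + \bi \cd k$ into hermitian parts and the observation that the norm on $A$ is recovered from its restriction to $A_0$ via $\norm{a}^2 = \norm{a^{\Ast} \cd a}$.
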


\begin{rem} \label{rem:410}
Let $A$ and $X$ be as in Theorem \ref{thm:375}. The suitable topology on 
$X$ is called the weak${}^{\Ast}$ topology. One can show that the 
weak${}^{\Ast}$ topology on $X$ coincides with its Zariski 
topology, and that the Gelfand transform coincides with the double 
evaluation homomorphism $\opn{dev}_{A}$. We won't need these facts in our paper.
\end{rem}

The category $\cat{Rng}\xover{bc} \C$ of commutative BC $\C$-rings was 
introduced in Definition \ref{dfn:420}.

\begin{thm}[Canonical Norm] \label{thm:505} 
Every BC $\C$-ring $A$ admits a unique norm $\norm{-}$, called the 
{\em canonical norm}, satisfying the three conditions below. 
\begin{itemize}
\rmitem{i} Given a homomorphism $\phi : A \to B$ in 
$\cat{Rng} \xover{bc} \C$, and an element $a \in A$, the inequality
$\norm{\phi(a)} \leq \norm{a}$ holds. 

\rmitem{ii} For the $\C$-ring $\opn{F}_{\mrm{bc}}(X, \C)$
of bounded continuous functions on a topological space $X$, 
the canonical norm is the sup norm. 

\rmitem{iii} The $\C$-ring $A$, equipped with the canonical norm $\norm{-}$
and the canonical involution $(-)^{\Ast}$ from Theorem \ref{thm:435}, 
is a Banach$^{\Ast}$ $\C$-ring. 
\end{itemize}
\end{thm}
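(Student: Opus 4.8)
The plan is to follow the proof of the real analogue, Theorem \ref{thm:486}, replacing each real input by its complex counterpart and invoking the canonical involution from Theorem \ref{thm:435} to secure the $\mrm{C}^{\Ast}$ identity.

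First I would fix a BC $\C$-ring $A$ and set $X := \opn{MSpec}(A)$ and $B := \opn{F}_{\mrm{bc}}(X, \C)$. By Lemma \ref{lem:422}(1) the space $X$ is compact, so $B = \opn{F}_{\mrm{c}}(X, \C)$, and by Example \ref{exa:376} the ring $B$, equipped with its sup norm and its complex-conjugation involution, is a Banach$^{\Ast}$ $\C$-ring. By Lemma \ref{lem:422}(4) the $\C$-ring homomorphism $\opn{dev}_A : A \to B$ is bijective, and by the construction of the canonical involution in Theorem \ref{thm:435} it carries the canonical involution of $A$ to the conjugation involution of $B$; that is, $\opn{dev}_A$ is an isomorphism of involutive $\C$-rings. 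I would then define the canonical norm of $A$ by transport of structure, $\norm{a} := \norm{\opn{dev}_A(a)}$, the unique norm making $\opn{dev}_A$ an isometric isomorphism of normed $\C$-rings. Because $\opn{dev}_A$ is simultaneously a ring isomorphism and a $\ast$-isomorphism onto $B$, condition (iii) is immediate: the triangle inequality, submultiplicativity, completeness, and the identity $\norm{a \cd a^{\Ast}} = \norm{a}^2$ all transport from $B$, so $A$ becomes a Banach$^{\Ast}$ $\C$-ring. Condition (ii) holds by construction, since for $A = \opn{F}_{\mrm{bc}}(X,\C)$ the map $\opn{dev}_A$ is the identity up to the homeomorphism $\opn{refl}^{\mrm{\lmsp alg}}_X$. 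Uniqueness follows as in the real case: condition (ii) fixes the norm on every function ring, and applying condition (i) to the isomorphism $\opn{dev}_A$ in both directions forces the norm on a general $A$.

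It remains to verify functoriality, condition (i). I would repeat the diagram chase of Theorem \ref{thm:486} verbatim with $\C$ in place of $\R$. Given $\phi : A \to B$ in $\cat{Rng}\xover{bc}\C$, set $X := \opn{MSpec}(A)$, $Y := \opn{MSpec}(B)$, and $f := \opn{MSpec}(\phi) : Y \to X$, which exists by Lemma \ref{lem:421} and Proposition \ref{prop:450}. The complex version of diagram (\ref{eqn:520}) --- with $\opn{dev}_A$, $\opn{dev}_B$, the pullback $\opn{F}_{\mrm{bc}}(f, \C)$, and the inclusions into $\opn{F}(-,\C)$ --- commutes by Proposition \ref{prop:453} together with the injectivity of $\opn{F}_{\mrm{bc}}(Y,\C) \hookrightarrow \opn{F}(Y,\C)$, giving $\opn{F}_{\mrm{bc}}(f, \C) \circ \opn{dev}_A = \opn{dev}_B \circ \phi$. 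Since pullback along a continuous map never increases the sup norm, $\opn{F}_{\mrm{bc}}(f, \C)$ is norm-nonincreasing, and since $\opn{dev}_A$ and $\opn{dev}_B$ are isometries for the canonical norms, it follows that $\norm{\phi(a)} \leq \norm{a}$.

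The argument carries essentially no new difficulty beyond the real case, and the only genuinely complex ingredient is condition (iii). The potential obstacle is confirming that the transported norm satisfies the full $\mrm{C}^{\Ast}$ identity rather than merely the weaker Banach axioms, and this is exactly where the compatibility of $\opn{dev}_A$ with the canonical involution (Theorem \ref{thm:435}) is needed: without knowing that $\opn{dev}_A$ is a $\ast$-isomorphism one could only transport the multiplicative norm structure, not its interaction $\norm{a \cd a^{\Ast}} = \norm{a}^2$ with the involution. Once that compatibility is in hand, the identity passes from $B = \opn{F}_{\mrm{c}}(X,\C)$ to $A$ automatically.
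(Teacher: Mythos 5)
Your proposal is correct and follows essentially the same route as the paper: the paper's proof of Theorem \ref{thm:505} simply declares that the argument for Theorem \ref{thm:486} carries over, substituting Example \ref{exa:376} for Example \ref{exa:485}, Lemma \ref{lem:421} for Lemma \ref{lem:251}, and reading diagram (\ref{eqn:520}) in $\cat{Rng} \over \C$, which is exactly the substitution scheme you carry out (with Lemma \ref{lem:422}(4) in place of Lemma \ref{lem:272}(3)). Your explicit remark that the $\mrm{C}^{\Ast}$ identity transports only because $\opn{dev}_A$ is compatible with the canonical involution of Theorem \ref{thm:435} is a detail the paper leaves implicit, and it is a welcome clarification rather than a deviation.
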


\begin{proof}
The proof of Theorem \ref{thm:486} is valid also in the complex case, except 
for a few minor changes: we now use Example \ref{exa:376} instead of Example 
\ref{exa:485}, and Lemma \ref{lem:421} instead of Lemma \ref{lem:251}.
Also diagram (\ref{eqn:520}) is now in the category $\cat{Rng} \over \C$. 
\end{proof}

\begin{cor} \label{cor505}
The functor 
\[ F : \cat{BaRng}^{\Ast} \over \C \to \cat{Rng}\xover{bc} \C \]
that forgets the norms and the involutions is an 
equivalence of categories. 
\end{cor}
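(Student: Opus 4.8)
The plan is to mirror the proof of the real analogue, Corollary \ref{cor:485}, by exhibiting an explicit quasi-inverse $G$ built from the canonical norm (Theorem \ref{thm:505}) and the canonical involution (Theorem \ref{thm:435}). First I would check that $F$ is well-defined, i.e.\ that a commutative Banach$^{\Ast}$ $\C$-ring $A$, once we discard its norm and involution, is a BC $\C$-ring. This is exactly what Gelfand--Naimark (Theorem \ref{thm:375}) provides: the Gelfand transform is an isomorphism $A \iso \opn{F}_{\mrm{c}}(\opn{MSpec}(A), \C)$ with $\opn{MSpec}(A)$ compact, and since $\opn{F}_{\mrm{c}}(X,\C) = \opn{F}_{\mrm{bc}}(X,\C)$ for compact $X$, the underlying $\C$-ring of $A$ is a BC $\C$-ring.

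Next I would define $G : \cat{Rng}\xover{bc} \C \to \cat{BaRng}^{\Ast} \over \C$ to be the functor sending a BC $\C$-ring to itself, equipped with its canonical norm and canonical involution; this is a Banach$^{\Ast}$ $\C$-ring by condition (iii) of Theorem \ref{thm:505}. That $G$ is a functor follows from the functoriality clauses: condition (i) of Theorem \ref{thm:435} says every morphism of $\cat{Rng}\xover{bc}\C$ preserves canonical involutions, and condition (i) of Theorem \ref{thm:505} says it is norm-decreasing, hence an involutive Banach $\C$-ring homomorphism in the sense of Definitions \ref{dfn:505} and \ref{dfn:435}(3). Since $G$ merely adjoins structure that $F$ then discards, the equality $F \circ G = \opn{id}$ on $\cat{Rng}\xover{bc}\C$ is immediate, both on objects and on morphisms.

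The substantive step is $G \circ F = \opn{id}$ on $\cat{BaRng}^{\Ast} \over \C$. Given such an $A$, I must show that the norm and involution recovered by $G(F(A))$ -- namely the canonical norm and involution of the BC $\C$-ring $F(A)$ -- coincide with the ones $A$ started with. I would argue by functoriality: the Gelfand transform $\gamma : A \iso \opn{F}_{\mrm{c}}(X, \C)$, with $X := \opn{MSpec}(A)$, is an isomorphism of Banach$^{\Ast}$ $\C$-rings, hence an isometric, involution-preserving $\C$-ring isomorphism, and it is simultaneously an isomorphism in $\cat{Rng}\xover{bc}\C$. Applying condition (i) of Theorems \ref{thm:505} and \ref{thm:435} to $\gamma$ and to $\gamma^{-1}$ shows that $\gamma$ is isometric and involution-preserving for the canonical structures as well. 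On the target the canonical norm is the sup norm and the canonical involution is complex conjugation, by the normalization clauses (ii) of the two theorems (cf.\ Example \ref{exa:441}); but these are precisely the norm and involution that $\gamma$ already transports from $A$. Hence the canonical norm of $F(A)$ agrees with the given norm and the canonical involution agrees with the given one, giving $G(F(A)) = A$; on morphisms both composites act as the identity on the underlying homomorphism.

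The main obstacle is exactly this last identification: a priori the norm and involution on $A$ are the analytically given data of the $\mrm{C}^{\Ast}$-structure, whereas the canonical norm and involution of $F(A)$ are defined purely algebraically through $\opn{dev}_{F(A)}$ and $\opn{MSpec}(F(A))$. One could match them directly by identifying the Gelfand transform with $\opn{dev}$ and the weak$^{\Ast}$ topology with the Zariski topology (Remark \ref{rem:410}), but I prefer to bypass that comparison and lean only on the uniqueness and functoriality already packaged into Theorems \ref{thm:505} and \ref{thm:435}, so that Gelfand--Naimark is invoked solely to produce one Banach$^{\Ast}$ isomorphism onto a function ring. With $F \circ G = \opn{id}$ and $G \circ F = \opn{id}$ established, $F$ is an equivalence (indeed an isomorphism) of categories.
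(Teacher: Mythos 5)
Your proposal is correct and follows essentially the same route as the paper: $F$ lands in $\cat{Rng}\xover{bc} \C$ by Gelfand--Naimark (Theorem \ref{thm:375}), and the quasi-inverse $G$ equips a BC $\C$-ring with its canonical norm and canonical involution from Theorems \ref{thm:505} and \ref{thm:435}. Your third paragraph simply spells out the verification of $G \circ F = \opn{id}$ that the paper leaves implicit (the ``short calculation'' alluded to in the real analogue, Corollary \ref{cor:485}), and it does so correctly by transporting the canonical structures along the Gelfand transform using the functoriality and normalization clauses.
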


\begin{proof}
Take $A \in \cat{BaRng}^{\Ast} \over \C$. 
According to Theorem \ref{thm:375} there is a 
$\C$-ring isomorphism $F(A) \cong \opn{F}_{\mrm{c}}(X, \C)$, where 
$X$ is a compact topological space. 
By Definition \ref{dfn:420} the $\C$-ring $F(A)$ is a BC $\C$-ring.

The quasi-inverse of $F$ is the functor 
$G : \cat{Rng}\xover{bc} \C  \to \cat{BaRng}^{\Ast} \over \C$,
which puts on a BC $\C$-ring $A_0$ the canonical involution from Theorem 
\ref{thm:435} and the canonical norm from Theorem \ref{thm:505} .
\end{proof}

\begin{exa} \label{exa:473}
Consider the ring $A := \C[t]$, the polynomial ring in one variable over $\C$.
In Example \ref{exa:466} we saw that $A$ is not a BC $\C$-ring. 
Corollary \ref{cor505} implies that $A$ does not admit a structure of 
Banach$^{\Ast}$ $\C$-ring.
\end{exa}

Let $A$ and $B$ be commutative involutive Banach $\C$-rings, and let
$\phi : A \to B$ be a $\C$-ring homomorphism. 
The classical Proposition \ref{prop:385} says that $\phi$ is a homomorphism of 
involutive Banach $\C$-rings (Definition \ref{dfn:377}) iff $\phi$ respects 
the involutions; namely the norms are automatically respected. 
Our final result says that much more is true: the involutions are also 
automatically respected. 

\begin{cor} \label{cor:458}
Let $A$ and $B$ be commutative Banach$^{\Ast}$ $\C$-rings, and let 
$\phi : A \to B$ be a $\C$-ring homomorphism. Then $\phi$ is a homomorphism of 
Banach$^{\Ast}$ $\C$-rings, i.e.\ $\phi$ respects the involutions and the 
norms. 
\end{cor}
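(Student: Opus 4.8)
The plan is to let Corollary~\ref{cor505} carry the entire argument. That corollary states that the forgetful functor
\[ F : \cat{BaRng}^{\Ast} \over \C \to \cat{Rng}\xover{bc} \C , \]
which discards both the norm and the involution, is an equivalence of categories; in particular $F$ is fully faithful.

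First I would translate the hypotheses into categorical language. By assumption $A$ and $B$ are objects of $\cat{BaRng}^{\Ast} \over \C$, and $F(A)$, $F(B)$ are their underlying BC $\C$-rings. Since $\cat{Rng}\xover{bc} \C$ is a full subcategory of $\cat{Rng} \over \C$, a morphism $F(A) \to F(B)$ in $\cat{Rng}\xover{bc} \C$ is nothing more than a $\C$-ring homomorphism between the underlying $\C$-rings. Hence the given homomorphism $\phi$ is precisely a morphism $\phi : F(A) \to F(B)$ in $\cat{Rng}\xover{bc} \C$.

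Next I would invoke fullness and faithfulness of $F$. Because $F$ is fully faithful, the induced map from the set of morphisms $A \to B$ in $\cat{BaRng}^{\Ast} \over \C$ to the set of morphisms $F(A) \to F(B)$ in $\cat{Rng}\xover{bc} \C$ is a bijection. By surjectivity there is a morphism $\psi : A \to B$ in $\cat{BaRng}^{\Ast} \over \C$ with $F(\psi) = \phi$. Since $F$ is a forgetful functor, it leaves the underlying map of $\C$-rings untouched, so $F(\psi)$ and $\psi$ are literally the same function; therefore $\psi = \phi$. Being a morphism in $\cat{BaRng}^{\Ast} \over \C$ means, by Definitions~\ref{dfn:505} and \ref{dfn:435}(3), exactly that $\phi$ respects the involutions, $\phi(a^{\Ast}) = \phi(a)^{\Ast}$, and the norms, $\norm{\phi(a)} \leq \norm{a}$, for every $a \in A$. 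This is the assertion of the corollary.

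I do not anticipate a genuine obstacle, as all the substance is packaged inside Corollary~\ref{cor505} (which in turn rests on the Gelfand--Naimark Theorem~\ref{thm:375} together with the canonical norm and involution of Theorems~\ref{thm:505} and \ref{thm:435}). The single point requiring care is the bookkeeping in the last step: one must observe that $F$ acts as the identity on underlying functions, so that the abstractly produced lift $\psi$ coincides \emph{as a map} with $\phi$, rather than merely being some homomorphism carrying the same underlying ring. Once this is noted, the conclusion is immediate.
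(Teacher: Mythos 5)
Your proposal is correct and is essentially the paper's own argument: both deduce the statement from Corollary~\ref{cor505} by observing that the forgetful functor is fully faithful, so the $\C$-ring homomorphism $\phi$ lifts to (and, since $F$ does nothing to underlying maps, literally equals) a morphism of Banach$^{\Ast}$ $\C$-rings. The only cosmetic difference is that the paper phrases fullness via the composite functor into $\cat{Rng} \over \C$, while you factor through the full subcategory $\cat{Rng} \xover{bc} \C$ first; the content is identical.
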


\begin{proof}
Corollary \ref{cor505} implies that the forgetful functor 
\[ F : \cat{BaRng}^{\Ast} \over \C \to \cat{Rng} \over \C \]
is fully faithful. In plain terms this says that for 
$A, B \in \cat{BaRng}^{\Ast} \over \C$
the function 
\[ F : \opn{Hom}_{\cat{BaRng}^{\Ast} \over \C}(A, B) \to 
\opn{Hom}_{\cat{Rng} \over \C}(A, B) \]
is bijective. Thus the homomorphism 
$\phi \in \opn{Hom}_{\cat{Rng} \over \C}(A, B)$
actually belongs to 
$\opn{Hom}_{\cat{BaRng}^{\Ast} \over \C}(A, B)$. 
\end{proof}


\end{document}